\documentclass[a4paper]{amsart}

\usepackage{url}
\usepackage{amssymb}
\usepackage{latexsym}
\usepackage{amsfonts}
\usepackage{amsmath}
\usepackage{eucal}
\usepackage{bm}
\usepackage{bbm}
\usepackage{graphicx}
\usepackage[english]{varioref}
\usepackage[nice]{nicefrac}
\usepackage[all]{xy}
\usepackage{amsthm}
\usepackage[colorlinks,linkcolor=blue,urlcolor=cyan,citecolor=red]{hyperref}

\newtheorem{thm}{Theorem}[section]
\newtheorem{prop}[thm]{Proposition}
\newtheorem{lem}[thm]{Lemma}
\newtheorem{defn}[thm]{Definition}

\newtheorem{rmk}[thm]{Remark}
\newtheorem{conj}[thm]{Conjecture}

\title[On ADLV for $Sp_4(L)$]{On affine Deligne-Lusztig varieties for $Sp_4(L)$}
\author[Zhongwei Yang]{Zhongwei Yang}
\address{Southwest Jiaotong University\\
Chengdu, Sichuan, China}
\email{merciyang@home.swjtu.edu.cn}

\begin{document}

\begin{abstract}
  In this paper, we study the emptiness/nonemptiness and the dimension formulas of affine Deligne-Lusztig varieties for $Sp_4(L)$. We mainly calculate the degree of class polynomials for the Iwahori-Hecke algebra of type $\widetilde{C}_2$. Then, give an explicit description on the emptiness/nonemptiness and dimension formulas of affine Deligne-Lusztig varieties for the group $Sp_4(L)$.
\end{abstract}

\maketitle
\tableofcontents

\section{Introduction}
The notion of an affine Deligne-Lusztig variety first introduced by Rapoport in \cite{Ra05}, which is an analogue of a (classical) Deligne-Lusztig variety. An affine Deligne-Lusztig variety $X_{\tilde{w}}(b)$ is ingredient in the study of arithmetic geometry. More precisely, it plays a key role in the study of the reduction of Shimura varieties with Iwahori level structure. Namely, it is closely related to the intersection of the Newton stratum and the Kottwitz-Rapoport stratum. There are two important stratifications on the special fiber of a Shimura variety: The Newton stratification indexed by certain $\sigma$-conjugacy classes $[b]\subset G(L)$; The Kottwitz-Rapoport stratification indexed by some $\tilde{w}\in\widetilde{W}$. The intersection of the Newton stratum associated with $[b]$ and the Kottwitz-Rapoport stratum associated with $\tilde{w}$ is related to the affine Deligne-Lusztig variety $X_{\tilde{w}}(b)$, see \cite{HR17}. Inspired by the study of Shimura varieties, one might be interested in investigating the emptiness/nonemptiness and dimension formulas of the affine Deligne-Lusztig varieties. Although these fundamental questions have been studied by many people (see \cite{GHKR10}, \cite{GH10}, \cite{GHN15}, \cite{He14}, \cite{He21}, \cite{MST19}, \cite{R02}, \cite{R04}, \cite{Y16} for example), we still don't have a complete answer for these questions on $X_{\tilde{w}}(b)$ with arbitrary $b$ and $\tilde{w}$. Recently, X.~He established the nonemptiness and the dimension formula of $X_{\tilde{w}}(b)$ for any $b\in G(L)$ and most $\tilde{w}\in\widetilde{W}$ (see \cite{He21}).

We are inspired by a remarkable result, the ``Dimension $=$ Degree" Theorem (Theorem \ref{DimDeg}), discovered by He in \cite{He14}, which points out that the emptiness/nonemptiness and dimension formulas of affine Deligne-Lusztig varieties can be obtained from class polynomials for the corresponding affine Hecke algebras.

In this paper, we calculate class polynomials for the Iwahori-Hecke algebra of type $\widetilde{C}_2$. And then, we use results on class polynomials to give an explicit description on the emptiness/nonemptiness and dimension formulas of the affine Deligne-Lusztig varieties for $Sp_4(L)$.

Let's recall the definition of an affine Deligne-Lusztig variety. Let $\mathbb{F}_q$ be a finite field with $q$ element, $\textbf{k}$ be its algebraic closure and $L=\textbf{k}((\epsilon))$ be the field of the Laurent series. Let $\sigma$ be the standard Frobenius automorphism on $L$, $G$ be a connected reductive group over $\mathbb{F}_q$. The automorphism $\sigma$ induces an automorphism on the loop group $G(L)$ which will also be denoted by $\sigma$. Let $I$ be a $\sigma$-stable Iwahori subgroup of $G(L)$ and $\widetilde{W}$ be the corresponding Iwahori-Weyl group. We have the Iwahori-Burhat decomposition $G(L)=\bigsqcup_{\tilde{w}\in\widetilde{W}}I\dot{\tilde{w}}I$, where $\dot{\tilde{w}}\in G(L)$ is a representative of $\tilde{w}\in\widetilde{W}$. For any $\tilde{w}\in\widetilde{W}$ and $b\in G(L)$, the \emph{\textbf{affine Deligne-Lusztig variety}} $X_{\tilde{w}}(b)$ is defined as $$X_{\tilde{w}}(b)=\{gI\in G(L)/I | g^{-1}b\sigma(g)\in I\dot{\tilde{w}}I\}.$$

\section{Preliminary}\label{Sect2}
\subsection{The Iwahori-Weyl group}\label{IWgp}
We keep the notations: $\mathbb{F}_q,\ \textbf{k},\ L$ and $\sigma$ as before. Assume that $F=\mathbb{F}_q((\epsilon))$ is the field of Laurent series. Let $G$ be a connected reductive reductive group over $F$ and splits over a tamely ramified extension of $L$. Let $S\subset G$ be a maximal $L$-split torus over $F$, $T=Z_G(S)$ be its centralizer and $N$ be the normalizer of $T$.
\begin{defn}
The algebraic loop group $LG$ associated with $G$ is the ind-group scheme over $\textbf{\emph{k}}$ that represents the functor $$R\longmapsto LG(R)=G(R((\epsilon)))$$on the category of $\textbf{\emph{k}}$-algebras.
\end{defn}
Let $\mathbb{A}$ be the \emph{\textbf{apartment}} of the loop group $G(L)$ corresponding to $S$ and $\mathfrak{a}_C$ be a $\sigma$-invariant alcove in $\mathbb{A}$. Let $I\subset G(L)$ be the Iwahori subgroup corresponding to $\mathfrak{a}_C$ over $L$ and $\widetilde{\mathbb{S}}$ be the set of simple reflections at the walls of $\mathfrak{a}_C$.
\begin{defn}
The \textbf{finite Weyl group} $W$ associated with $S$ is $W=N(L)/T(L)$, and the \textbf{Iwahori-Weyl group} $\widetilde{W}$ associated with $S$ is $\widetilde{W}=N(L)/T(L)_1$, here $T(L)_1$ is the unique parahoric subgroup of $T(L)$.
\end{defn}
The Weyl group $W$ is a Coxeter group (see \cite{Bo02} for definition). The Iwahori-Weyl group $\widetilde{W}$ is in bijection with $I\backslash G(L)/I$, and we have the \emph{Iwahori-Bruhat decomposition} $G(L)=\bigsqcup_{\tilde{w}\in\widetilde{W}}I\dot{\tilde{w}}I$, here $\dot{\tilde{w}}\in G(L)$ is any representative of $\tilde{w}\in\widetilde{W}$.

Let $\Gamma=\text{Gal}(\bar{L}/L)$ and $P$ be the $\Gamma$-coinvariants of $X_*(T)$. Following \cite{HR08}, by choosing a special vertex in $\mathbb{A}$ we identify $T(L)/T(L)_1$ with $P$. And we have a split short exact sequence $1\longrightarrow P\longrightarrow\widetilde{W}\longrightarrow W\longrightarrow 1$ and a semi-direct product $\widetilde{W}=P\rtimes W$. The automorphism $\sigma$ on $G(L)$ induces an automorphism on $\widetilde{W}$, which will be denoted by $\delta$. $\delta$ gives a bijection on $\widetilde{\mathbb{S}}$. We choose a special vertex in $\mathbb{A}$ such that the previous split short exact sequence is preserved by $\delta$. So $\delta$ induces an automorphism on $W$ and we denote it by the same symbol.

Let $\Phi$ be the set of roots of $(G,\ S)$ over $L$, $\Phi^+$ be the set of positive roots and $\Phi_a$ be the set of affine roots. Let $\mathbb{S}$ be the set of simple roots in $\Phi$. We identify $\mathbb{S}$ with the set of simple reflections in $W$, and thus $\mathbb{S}$ is a $\delta$-stable proper subset of $\widetilde{\mathbb{S}}$. Let $G_1$ be the subgroup of $G(L)$ generated by all parahoric subgroups and let $N_1=N(L)\cap G_1$. By \cite{BT84}, the quadruple $(G_1,\ I,\ N_1,\ \widetilde{\mathbb{S}})$ is a double Tits system with affine Weyl group $W_a=N_1/(N(L)\cap I)$. We identify $W_a$ with the Iwahori-Weyl group of the simply connected cover $G_{\text{sc}}$ of the derived group $G_{\text{der}}$ of $G$. Let $T_{\text{sc}}$ be the maximal torus of $G_{\text{sc}}$ given by $T$. Thus we have $W_a=X_*(T_{\text{sc}})_{\Gamma}\rtimes W$. This shows that a reduced root system $\Delta$ exists such that $W_a=Q^\vee(\Delta)\rtimes W(\Delta)$, where $Q^\vee(\Delta)$ is the coroot lattice of $\Delta$. In the following, we will write $Q$ for $Q^\vee(\Delta)$ and identify $Q$ with $X_*(T_{\text{sc}})_{\Gamma}$ and $W$ with $W(\Delta)$.

\subsection{Class polynomials}\label{Cp}
In general, the Iwahori-Weyl group $\widetilde{W}$ is not a Coxeter group, it is a quasi-Coxeter group in the sense that $\widetilde{W}=W_a\rtimes\Omega$, where $\Omega$ is the subgroup of $\widetilde{W}$ consisting length 0 elements and the length of $\tilde{w}\in\widetilde{W}$ (denoted by $\ell(\tilde{w})$) is defined to be the number of ``affine root hyperplanes" between $\tilde{w}(\mathfrak{a}_C)$ and $\mathfrak{a}_C$ in $\mathbb{A}$.

Now, we recall the Hecke algebra associated to a quasi-Coxeter group by analogy the definition of a Hecke algebra associated with a Coxeter system.
\begin{defn}
Let $\widetilde{H}$ be the Hecke algebra associated with $\widetilde{W}$, i.e., $\widetilde{H}$ is the associative $A=\mathbb{Z}[v,\ v^{-1}]$-algebra with basis $T_{\tilde{w}}$ for $\tilde{w}\in\widetilde{W}$ and the multiplication is given by$$T_{\tilde{x}}T_{\tilde{y}}=T_{\tilde{x}\tilde{y}},\ \ \ \ \emph{if}\ \ \ell(\tilde{x})+\ell(\tilde{y})=\ell(\tilde{x}\tilde{y});$$$$(T_s-v)(T_s+v^{-1})=0,\ \ \ \ \emph{for}\ \ \ s\in\widetilde{\mathbb{S}}.$$
\end{defn}
Note that the map $T_{\tilde{w}}\mapsto T_{\delta(\tilde{w})}$ defines an $A$-algebra automorphism of $\widetilde{H}$, and we still denote it as $\delta$.

For any $\tilde{w},\ \tilde{w}'\in\widetilde{W}$ are said to be $\delta$-conjugate if $\tilde{w}'=\tilde{x}\tilde{w}\delta(\tilde{x})^{-1}$ for some $\tilde{x}\in\widetilde{W}$. For $\tilde{w},\ \tilde{w}'\in \widetilde{W}$ and $s_i\in\widetilde{\mathbb{S}}$, we write $\tilde{w}\xrightarrow[]{s_i}_{\delta}\tilde{w}'$ if $\tilde{w}'=s_i\tilde{w}s_{\delta(i)}$ and $\ell(\tilde{w})'\leqslant\ell(\tilde{w})$. We write $\tilde{w}\xrightarrow[]{}_{\delta}\tilde{w}'$ if there is a sequence $\tilde{w}=\tilde{w}_0,\ \tilde{w}_1,\ \cdots,\ \tilde{w}_n=\tilde{w}'$ of elements in $\widetilde{W}$ such that for any $k$, $\tilde{w}_{k-1}\xrightarrow[]{s_i}_{\delta}\tilde{w}_k$ for some $s_i\in\tilde{\mathbb{S}}$. We write $\tilde{w}\approx_{\delta}\tilde{w}'$ if $\tilde{w}\xrightarrow[]{}_{\delta}\tilde{w}'$ and $\tilde{w}'\xrightarrow[]{}_{\delta}\tilde{w}$. Write $\tilde{w}\tilde{\approx}_{\delta}\tilde{w}'$ if $\tilde{w}\approx_{\delta}\tau\tilde{w}'\delta(\tau)^{-1}$ for some $\tau\in\Omega$.

We say that $\tilde{w},\ \tilde{w}'\in \widetilde{W}$ are elementarily strongly $\delta$-conjugate if $\ell(\tilde{w})=\ell(\tilde{w}')$ and there exists $\tilde{x}\in\widetilde{W}$ such that $\tilde{w}'=\tilde{x}\tilde{w}\delta(\tilde{x})^{-1}$ and $\ell(\tilde{x}\tilde{w})=\ell(\tilde{x})+\ell(\tilde{w})$ or $\ell(\widetilde{w}\delta(\widetilde{x})^{-1})=\ell(\tilde{x})+\ell(\tilde{w})$. And we call that $\tilde{w},\ \tilde{w}'$ are strongly $\delta$-conjugate if there is a sequence $\tilde{w}=\tilde{w}_0,\ \tilde{w}_1,\ \cdots,\ \tilde{w}_n=\tilde{w}'$ of elements in $\widetilde{W}$ such that for any $i$, $\tilde{w}_{i-1}$ is elementarily strongly $\delta$-conjugate to $\tilde{w}_i$. We write $\tilde{w}\tilde{\thicksim}_{\delta}\tilde{w}'$ if $\tilde{w}$ and $\tilde{w}'$ are strongly $\delta$-conjugate.

In \cite{HN14}, He and Nie proved that minimal length elements $w_{\mathbb{O}}$ of any $\delta$-conjugacy class $\mathbb{O}$ of $\widetilde{W}$ satisfy some special properties, generalizing the results of Geck and Pfeiffer \cite{GP93} on finite Weyl groups. These properties play a key role in the study of affine Deligne-Lusztig varieties and affine Hecke algebras. In \cite{He14} and \cite{HN14}, it is showed that for any $\delta$-conjugacy class $\mathbb{O}$, we can fix a minimal length representative $w_{\mathbb{O}}$ and the image of $T_{w_{\mathbb{O}}}$ in $\tilde{H}/[\tilde{H},\ \tilde{H}]_{\delta}$ where $[\tilde{H},\ \tilde{H}]_{\delta}$ is the $A$-submodule (we regard $\tilde{H}$ as a left $A$-module) of $\tilde{H}$ generated by all $\delta$-commutators (i.e. for any $h,\ h'\in\tilde{H}$, $[h,\ h']_{\delta}=hh'-h'\delta{(h)}$). Moreover, $T_{w_{\mathbb{O}}}$ is independent of the choice of $w_{\mathbb{O}}$ and forms a basis of  $\tilde{H}/[\tilde{H},\ \tilde{H}]_{\delta}$. It is proved in \cite[Theorem 6.7]{HN14} that

\begin{thm}[He-Nie]
The elements $T_{\tilde{w}_{\mathbb{O}}}$ form an $A$-basis of $\widetilde{H}/[\widetilde{H},\ \widetilde{H}]_{\delta}$, here $\mathbb{O}$ runs over all the $\delta$-conjugacy classes of $\widetilde{W}$.
\end{thm}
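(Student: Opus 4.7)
The plan is to prove spanning and linear independence separately, both hinging on reduction of arbitrary elements to minimal length representatives inside their $\delta$-conjugacy class, with He-Nie's structural theorems on such representatives used as the main tool.

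For spanning, I would induct on $\ell(\tilde{w})$. The base case $\ell(\tilde{w})=0$ puts $\tilde{w}\in\Omega$, which is automatically of minimal length in its class. For the inductive step, if $\tilde{w}$ is not of minimal length in its class $\mathbb{O}$, the He-Nie reduction produces a strongly $\delta$-conjugate element $\tilde{w}'$ and a simple reflection $s\in\widetilde{\mathbb{S}}$ with $\ell(s\tilde{w}'\delta(s))<\ell(\tilde{w})$. The key cocenter identity
\[
T_{\tilde{x}}T_{\tilde{y}}\equiv T_{\tilde{y}}T_{\delta(\tilde{x})}\pmod{[\widetilde{H},\widetilde{H}]_{\delta}},
\]
immediate from the definition of the $\delta$-commutator, together with the length-additivity condition in the definition of elementarily strongly $\delta$-conjugate, forces $T_{\tilde{w}}\equiv T_{\tilde{w}'}$ in the cocenter. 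A short case analysis using the quadratic relation $T_s^2=1+(v-v^{-1})T_s$ and the cocenter identity (for instance, when $s\tilde{w}'<\tilde{w}'$ and $\tilde{w}'\delta(s)<\tilde{w}'$, writing $T_{\tilde{w}'}=T_sT_{s\tilde{w}'}\equiv T_{s\tilde{w}'}T_{\delta(s)}=T_{s\tilde{w}'\delta(s)}+(v-v^{-1})T_{s\tilde{w}'}$) produces
\[
T_{\tilde{w}'}\equiv T_{s\tilde{w}'\delta(s)}+(v-v^{-1})\,T_{\tilde{u}}\pmod{[\widetilde{H},\widetilde{H}]_{\delta}},
\]
where $\ell(s\tilde{w}'\delta(s))$ and $\ell(\tilde{u})$ are both strictly less than $\ell(\tilde{w})$. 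The inductive hypothesis then places each term on the right in the $A$-span of $\{T_{\tilde{w}_{\mathbb{O}'}}\}$, completing the induction.

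For linear independence, I would exploit the class polynomial assignment $\tilde{w}\mapsto (f_{\tilde{w},\mathbb{O}})_{\mathbb{O}}$ produced by iterating the above reduction. The preamble to the theorem asserts that this assignment is intrinsic, i.e.\ independent of the chosen reduction path, so it descends to a well-defined $A$-linear map from $\widetilde{H}/[\widetilde{H},\widetilde{H}]_{\delta}$ to the free $A$-module on the set of $\delta$-conjugacy classes. Since $\tilde{w}_{\mathbb{O}}$ is already of minimal length, the reduction halts immediately, giving $f_{\tilde{w}_{\mathbb{O}},\mathbb{O}'}=1$ if $\mathbb{O}=\mathbb{O}'$ and $0$ otherwise. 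Hence any putative relation $\sum_{\mathbb{O}}a_{\mathbb{O}}(v)\,T_{\tilde{w}_{\mathbb{O}}}\equiv 0$ in the cocenter is sent by this map to the zero tuple, forcing $a_{\mathbb{O}}(v)=0$ for all $\mathbb{O}$.

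The main obstacle is the well-definedness of $f_{\tilde{w},\mathbb{O}}$. Two distinct reduction sequences starting from $\tilde{w}$ can terminate at different minimal length representatives of $\mathbb{O}$, and one must verify that the tuples of coefficients produced agree modulo $[\widetilde{H},\widetilde{H}]_{\delta}$. This reduces to showing that any two minimal length elements of a $\delta$-conjugacy class in $\widetilde{W}$ are strongly $\delta$-conjugate, whereupon the cocenter identity forces their $T$-values to coincide. Establishing this structural fact, which extends the Geck-Pfeiffer theorem from finite Weyl groups to the quasi-Coxeter group $\widetilde{W}=W_a\rtimes\Omega$, requires a delicate combinatorial analysis of the interaction between the translation lattice $P$ and the finite Weyl group $W$, and constitutes the technical core of the argument in \cite{HN14}.
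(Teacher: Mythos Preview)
The paper does not prove this theorem; it is quoted without proof from \cite[Theorem 6.7]{HN14} as a preliminary result, so there is no ``paper's own proof'' to compare your proposal against. That said, your outline is an accurate sketch of the argument in \cite{HN14}: spanning by induction on length via the reduction $T_{\tilde{w}}\equiv T_{s\tilde{w}'\delta(s)}+(v-v^{-1})T_{s\tilde{w}'}$ modulo $[\widetilde{H},\widetilde{H}]_\delta$, and linear independence via the well-definedness of the class polynomial map, the latter resting on the structural fact that any two minimal length elements of a $\delta$-conjugacy class are strongly $\delta$-conjugate.
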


From now on, we denote $T_{\tilde{w}_{\mathbb{O}}}$ as $T_{\mathbb{O}}$ for simplicity. For any $\tilde{w}\in\widetilde{W}$ and a $\delta$-conjugacy class $\mathbb{O}$, there exists a unique $f_{\tilde{w},\mathbb{O}}\in A$ such that\[T_{\tilde{w}}\equiv\sum_{\mathbb{O}}f_{\tilde{w},\mathbb{O}}T_{\mathbb{O}}\ \ \mod\ [\widetilde{H},\ \widetilde{H}]_{\delta}.\]

$f_{\tilde{w},\mathbb{O}}$ is a polynomial in $\mathbb{Z}[v-v^{-1}]$ with nonnegative coefficient. This is called the \textbf{class polynomial} attached to $\tilde{w}$ and $\mathbb{O}$, and it can be constructed inductively as follows:

If $\tilde{w}$ is a minimal length element in a $\delta$-conjugacy class of $\widetilde{W}$, then we set \[f_{\tilde{w},\mathbb{O}}=\left\{
                                                                                                                                                 \begin{array}{ll}
                                                                                                                                                   1, & \text{if}\ \tilde{w}\in\mathbb{O},\\
                                                                                                                                                   0, & \text{if}\ \tilde{w}\notin\mathbb{O}.
                                                                                                                                                 \end{array}
                                                                                                                                               \right.
\]
If $\tilde{w}$ is not a minimal length element in its $\delta$-conjugacy class and for any $\tilde{w}'\in\tilde{W}$ with $\ell(\tilde{w}')<\ell(\tilde{w})$, $f_{\tilde{w}',\mathbb{O}}$ is constructed. By \cite{HN14}, there exists $\tilde{w}_1\approx_{\delta}\tilde{w}$ and $s_i\in\widetilde{\mathbb{S}}$ such that $\ell(s_i\tilde{w}_1s_{\delta(i)})<\ell(\tilde{w}_1)=\ell(\tilde{w}).$ In this case, $\ell(s_i\tilde{w})<\ell(\tilde{w})$ and we define $f_{\tilde{w}, \mathbb{O}}$ as $$f_{\tilde{w},\mathbb{O}}=(v-v^{-1})f_{s_i\tilde{w}_1,\mathbb{O}}+f_{s_i\tilde{w}_1s_{\delta(i)},\mathbb{O}}.$$

\subsection{The ``Dimension \texorpdfstring{$=$}. Degree" Theorem}\label{DimDegtm}
We call an element $\tilde{w}\in\widetilde{W}$ a $\delta$-\emph{straight element} if and only if for any $n\in\mathbb{N}$ we have $\ell(\tilde{w}\delta(\tilde{w})\cdots\delta^{n-1}(\tilde{w}))=n\ell(\tilde{w})$. We call a $\delta$-conjugacy class in $\widetilde{W}$ \emph{straight} if it contains some straight element. We denote by $\cdot_{\sigma}$ the $\sigma$-conjugation action on $G(L)$ and it is defined by that for any $g,\ g'\in G(L)$, $g\cdot_{\sigma}g'=gg'\sigma(g)^{-1}$. We have the following Kottwitz's classification of $\sigma$-conjugacy classes $B(G)$ on $G(L)$.

\begin{thm}[Kottwitz, He]
For any straight $\delta$-conjugacy class $\mathbb{O}$ of $\widetilde{W}$, we fix a minimal length representative $\tilde{w}_{\mathbb{O}}$. Then\[G(L)=\bigsqcup_{\mathbb{O}}G(L)\cdot_{\sigma}\tilde{w}_{\mathbb{O}},\]here $\mathbb{O}$ runs over all the straight $\delta$-conjugacy classes of $\widetilde{W}$.
\end{thm}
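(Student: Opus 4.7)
The plan is to construct a bijection between the set of straight $\delta$-conjugacy classes in $\widetilde{W}$ and the set $B(G)$ of $\sigma$-conjugacy classes in $G(L)$, and then read off the disjoint-union decomposition. The natural candidate is the map $\Psi$ that sends a straight class $\mathbb{O}$ to the $\sigma$-conjugacy class of any lift $\dot{\tilde w}_{\mathbb{O}}\in G(L)$ of a chosen minimal length representative $\tilde{w}_{\mathbb{O}}\in\mathbb{O}$.

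First I would verify that $\Psi$ is well defined. If $\tilde{w}'=\tilde{x}\tilde{w}\delta(\tilde{x})^{-1}$ then any lift satisfies $\dot{\tilde{w}}'=\dot{\tilde{x}}\dot{\tilde{w}}\sigma(\dot{\tilde{x}})^{-1}\cdot t$ for some $t\in T(L)_1\subset I$, so $[\dot{\tilde{w}}']_\sigma=[\dot{\tilde{w}}]_\sigma$; the independence from the choice of minimal length representative then follows from the fact, proved in \cite{HN14}, that any two minimal length representatives inside $\mathbb{O}$ are connected via the relation $\tilde{\approx}_\delta$, together with the remark that each elementary step in $\approx_\delta$ lifts to a $\sigma$-conjugation of Iwahori-Bruhat representatives. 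Next I would check that the image of a straight element is a \emph{basic-modulo-translation} lift whose Newton point and Kottwitz point are easy to compute: if $\tilde{w}$ is $\delta$-straight and $\ell(\tilde{w}^n\cdot\delta^{n-1}(\tilde{w})\cdots)=n\ell(\tilde{w})$, then some power $\tilde{w}\delta(\tilde{w})\cdots\delta^{n-1}(\tilde{w})$ is a pure translation $\varepsilon^\lambda$ with $\lambda\in P$, which reads off both the Newton vector $\bar\nu_{\tilde{w}}=\lambda/n$ and the image in $\pi_1(G)_\delta$.

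Injectivity of $\Psi$ would then follow from Kottwitz's theorem that the pair (Newton vector, $\kappa$-invariant) separates points of $B(G)$: if two straight classes $\mathbb{O}\neq\mathbb{O}'$ go to the same class in $B(G)$, then their common Newton and Kottwitz data would force minimal length representatives $\tilde{w}_\mathbb{O}$ and $\tilde{w}_{\mathbb{O}'}$ to have the same image in $\widetilde W/\!\sim_\delta$, a contradiction. For surjectivity I would start from an arbitrary $b\in G(L)$; by the Iwahori-Bruhat decomposition some $\tilde{w}\in\widetilde{W}$ satisfies $[b]_\sigma=[\dot{\tilde{w}}]_\sigma$, and then I would use the He-Nie reduction (cf.\ \cite{HN14}, as already invoked in the definition of class polynomials) to replace $\tilde{w}$ step by step, within its $\delta$-conjugacy class and by the $\approx_\delta$-moves plus multiplication by $\tau\in\Omega$, by a minimal length element of a straight $\delta$-conjugacy class; this uses precisely that every minimal length element of a $\delta$-conjugacy class of $\widetilde W$ is a straight element times a finite Weyl group correction that contributes trivially in $B(G)$.

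The main obstacle I anticipate is the surjectivity step, specifically the passage from an arbitrary minimal length element to a genuinely $\delta$-straight one. This is not formal: it requires invoking the deep structural result of He-Nie that every minimal length element in a $\delta$-conjugacy class is of the form $\tau$ times a fundamental element, and that each such class contains a straight element when we allow enlarging by $\Omega$. Everything else, including the separation by Kottwitz invariants and the well-definedness of $\Psi$, is a matter of bookkeeping. Thus the strategy is essentially to combine Kottwitz's classical parametrization of $B(G)$ with He's identification of its index set with the set of straight $\delta$-conjugacy classes of $\widetilde W$.
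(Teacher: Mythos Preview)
The paper does not prove this theorem. It is stated as a known result, attributed in the theorem heading to Kottwitz and He, and is used as input for the ``Dimension $=$ Degree'' machinery; no argument is supplied in the paper itself. So there is no proof here for your proposal to be compared against.

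As for the proposal itself, the overall architecture---define $\Psi$ from straight classes to $B(G)$, check well-definedness via the $\tilde{\approx}_\delta$ relation among minimal length elements, and separate the two directions using Kottwitz's $(\bar\nu,\kappa)$-invariants on one side and the He--Nie reduction on the other---is the correct shape of the argument as carried out in \cite{He14}. Two points deserve tightening. First, in your surjectivity step you write that by Iwahori--Bruhat ``some $\tilde w$ satisfies $[b]_\sigma=[\dot{\tilde w}]_\sigma$''; Iwahori--Bruhat only places $b$ in a double coset $I\dot{\tilde w}I$, and passing from there to an honest representative in $N(L)$ already requires the He reduction (iterated $\sigma$-conjugation by simple reflections to decrease length, plus Lang's theorem inside $I$ at the bottom). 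Second, your injectivity sketch presupposes that distinct straight $\delta$-conjugacy classes have distinct $(\bar\nu,\kappa)$-data; this is true, but it is a theorem (proved in \cite{He14}) rather than a formality, and your sentence as written is close to circular. Your identification of the hard point---that minimal length elements are, up to $\Omega$, genuinely straight---is accurate; that is exactly where the He--Nie structural results from \cite{HN14} enter.
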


Let $(P/Q)_{\delta}$ be the $\delta$-coinvariants on $P/Q$, let \[\kappa:\widetilde{W}\longrightarrow\widetilde{W}/W_a\cong P/Q\longrightarrow(P/Q)_{\delta}\] be the natural projection. We call $\kappa$ the \emph{Kottwitz map}.

Let $P_{\mathbb{Q}}=P\otimes_{\mathbb{Z}}\mathbb{Q}$ and $P_{\mathbb{Q}}/W$ be the quotient of $P_{\mathbb{Q}}$ by the natural action of $W$. We can identify $P_{\mathbb{Q}}/W$ with $P_{\mathbb{Q},+}=\{\chi\in P_{\mathbb{Q}}\mid \alpha(\chi)\geqslant0,\ for\ all\ \alpha\in \Phi^+\}.$ Let $P^{\delta}_{\mathbb{Q},+}$ be the set of $\delta$-invariant points in $P_{\mathbb{Q},+}$.

Since the image of $W\rtimes\langle\delta\rangle$ in $Aut(W)$ is a finite group, for each $\tilde{w}=t^{\chi}w\in\widetilde{W}$, there exists $n\in\mathbb{N}$ such that $\delta^n=1$ and $w\delta(w)\delta^2(w)\cdots\delta^{n-1}(w)=1$. Then $\tilde{w}\delta(\tilde{w})\delta^2(\tilde{w})\cdots\delta^{n-1}(\tilde{w})=t^{\lambda}$ for some $\lambda\in P$. Let $\nu_{\tilde{w}}=\lambda/n\in P_{\mathbb{Q}}$ and $\bar{\nu}_{\tilde{w}}$ be the corresponding element in $P_{\mathbb{Q},+}$. Note that $\nu_{\tilde{w}}$ is independent of the choice of $n$. Let $\bar{\nu}_{\tilde{w}}$ be the unique element in $P_{\mathbb{Q},+}$ that lies in the $W$-orbit of $\nu_{\tilde{w}}$. Since $t^{\lambda}=\tilde{w}t^{\delta(\lambda)}\tilde{w}^{-1}=t^{w\delta(\lambda)}$, $\bar{\nu}_{\tilde{w}}\in P^{\delta}_{\mathbb{Q},+}$. We call the map $\widetilde{W}\longrightarrow P^{\delta}_{\mathbb{Q},+}$ with $\tilde{w}\mapsto\bar{\nu}_{\tilde{w}}$ the \emph{Newton map}. We define \[f:\widetilde{W}\longrightarrow P^{\delta}_{\mathbb{Q},+}\times(P/Q)_{\delta}\ \ \ \ \ by\ \ \ \  \tilde{w}\longmapsto(\bar{\nu}_{\tilde{w}},\ \kappa(\tilde{w})).\]Follows \cite{Ko97}, $f$ 
is constant on each $\delta$-conjugacy class of $\widetilde{W}$. We denote the image of the map by $B(\widetilde{W},\ \delta)$.

\begin{rmk}
(1) There is a bijection between the set of $\sigma$-conjugacy classes $B(G)$ of $G(L)$ and the set of straight conjugacy classes of $\widetilde{W}$.

(2) Any $\sigma$-conjugacy class of $G(L)$ contains a representative in $\widetilde{W}$. Moreover, the map $f:\widetilde{W}\longrightarrow P^{\delta}_{\mathbb{Q},+}\times(P/Q)_{\delta}$ is in fact the restriction of a map defined on $G(L)$ as $b\mapsto (\bar{\nu}_{b},\ \kappa(b))$ and we call $\bar{\nu}_{b}$ the \emph{Newton vector} of $b$.
\end{rmk}

The ``Dimension $=$ Degree'' Theorem is a main result in \cite{He14} which is

\begin{thm}[He]\label{DimDeg}
Let $b\in G(L)$ and $\tilde{w}\in\widetilde{W}$. Then\[\dim(X_{\tilde{w}}(b))=\max_\mathbb{O}\frac{1}{2}(\ell(\tilde{w})+\ell(\mathbb{O})+\deg(f_{\tilde{w},\mathbb{O}}))-\langle\bar{\nu}_b,2\rho\rangle,\]
here $\mathbb{O}$ runs over $\delta$-conjugacy classes of $\widetilde{W}$ with $f(\mathbb{O})=f(b)$ and $\ell(\mathbb{O})$ is the length of any minimal length element in $\mathbb{O}$, and $\rho$ is the half sum of all the positive roots of $\Phi^+$.
\end{thm}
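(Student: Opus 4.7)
The plan is to prove the formula by induction on $\ell(\tilde{w})$, mirroring the recursive definition of the class polynomials $f_{\tilde{w},\mathbb{O}}$. Each step of the polynomial recursion should be paralleled by a geometric decomposition of $X_{\tilde{w}}(b)$, with the length and degree shifts on the algebraic side matching the dimension shifts on the geometric side exactly.

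For the base case, $\tilde{w}$ is of minimal length in its $\delta$-conjugacy class $\mathbb{O}_0$, so $f_{\tilde{w},\mathbb{O}_0}=1$ and $f_{\tilde{w},\mathbb{O}}=0$ otherwise. The claim then reduces to the assertion that $X_{\tilde{w}}(b)$ is empty unless $f(\mathbb{O}_0)=f(b)$, and $\dim X_{\tilde{w}}(b)=\ell(\tilde{w})-\langle\bar{\nu}_b,2\rho\rangle$ when nonempty. I would invoke the structural results of He-Nie \cite{HN14}: up to $\delta$-conjugation, a minimal length $\tilde{w}$ has a ``$P$-alcove'' form, so $X_{\tilde{w}}(b)$ admits an explicit description (via a Görtz-Haines-Kottwitz-Reuman type reduction) as an iterated affine bundle over a classical Deligne-Lusztig variety inside a Levi subgroup. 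Both the nonemptiness criterion (by the Kottwitz-He classification recalled above) and the dimension count can then be read off, with the term $\langle\bar{\nu}_b,2\rho\rangle$ arising from the dimension of the $\sigma$-centralizer of $b$.

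For the inductive step, suppose $\tilde{w}$ is not of minimal length. By \cite{HN14} there exist $\tilde{w}_1\approx_\delta\tilde{w}$ and $s_i\in\widetilde{\mathbb{S}}$ with $\ell(s_i\tilde{w}_1 s_{\delta(i)})=\ell(\tilde{w})-2$ and $\ell(s_i\tilde{w}_1)=\ell(\tilde{w})-1$. Two geometric inputs are needed. First, $\tilde{w}\approx_\delta\tilde{w}_1$ should imply $\dim X_{\tilde{w}}(b)=\dim X_{\tilde{w}_1}(b)$; this follows by iterating the elementary move $\tilde{w}\mapsto s\tilde{w}s_{\delta(i)}$ when $\ell(s\tilde{w}s_{\delta(i)})\le\ell(\tilde{w})$, since left-multiplication by $\dot{s}$ induces an isomorphism (or universal homeomorphism) between the two affine Deligne-Lusztig varieties. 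Second, one constructs a stratification $X_{\tilde{w}_1}(b)=Y_1\sqcup Y_2$, where $Y_1$ is a $\mathbb{G}_m$-bundle over $X_{s_i\tilde{w}_1}(b)$ and $Y_2$ is an $\mathbb{A}^1$-bundle over $X_{s_i\tilde{w}_1 s_{\delta(i)}}(b)$, yielding
\[
\dim X_{\tilde{w}}(b)=\dim X_{\tilde{w}_1}(b)=\max\bigl\{\dim X_{s_i\tilde{w}_1}(b)+1,\ \dim X_{s_i\tilde{w}_1 s_{\delta(i)}}(b)+1\bigr\}.
\]
Applying the inductive hypothesis to $s_i\tilde{w}_1$ and $s_i\tilde{w}_1 s_{\delta(i)}$, and combining with the polynomial identity $\deg f_{\tilde{w},\mathbb{O}}=\max\{\deg f_{s_i\tilde{w}_1,\mathbb{O}}+1,\ \deg f_{s_i\tilde{w}_1 s_{\delta(i)},\mathbb{O}}\}$ (which follows from the recursion, using nonnegativity of coefficients to rule out cancellation) together with the length identities above, the $\max$ on the right-hand side of the desired formula collapses precisely to the geometric $\max$.

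The hardest part is the base case: producing the explicit description of $X_{\tilde{w}}(b)$ for minimal length $\tilde{w}$ requires the full $P$-alcove/superbasic reduction machinery, together with the identification of $\langle\bar{\nu}_b,2\rho\rangle$ as the codimension of the Newton stratum containing $b$ (equivalently, the defect of the $\sigma$-centralizer). The stratification in the inductive step is conceptually more transparent but still requires careful bookkeeping of the cell $I\dot{s}_i I\dot{\tilde{w}}_1 I$ under both descent conditions $\ell(s_i\tilde{w}_1)<\ell(\tilde{w}_1)$ and $\ell(s_i\tilde{w}_1 s_{\delta(i)})<\ell(s_i\tilde{w}_1)$.
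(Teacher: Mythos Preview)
The paper does not give its own proof of this theorem: it is stated as a result of He and attributed to \cite{He14}, so there is nothing in the present paper to compare your argument against.

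That said, your outline is essentially the strategy of \cite{He14}. The inductive step is exactly the Deligne--Lusztig reduction method: the decomposition of $X_{\tilde{w}_1}(b)$ into a piece fibred in $\mathbb{G}_m$ over $X_{s_i\tilde{w}_1}(b)$ and a piece fibred in $\mathbb{A}^1$ over $X_{s_i\tilde{w}_1 s_{\delta(i)}}(b)$ is the geometric counterpart of the Hecke recursion $f_{\tilde{w},\mathbb{O}}=(v-v^{-1})f_{s_i\tilde{w}_1,\mathbb{O}}+f_{s_i\tilde{w}_1 s_{\delta(i)},\mathbb{O}}$, and the dimension bookkeeping you wrote is correct. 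For the $\approx_\delta$ step, the relevant map is not literally left multiplication by $\dot{s}$ on $G(L)/I$ but rather $gI\mapsto g\dot{s}I$ (or its inverse), and what one gets is a universal homeomorphism rather than an isomorphism of varieties; this is harmless for the dimension statement. The base case is, as you say, the substantial part: in \cite{He14} it is handled by reducing a minimal-length element to a standard representative in a Levi (via the He--Nie theory of straight conjugacy classes) and then computing directly, which is what your sketch gestures at.
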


\begin{rmk}
We use the convention that the dimension of an empty variety and the degree of a zero polynomial are both $-\infty$.
\end{rmk}

Theorem \ref{DimDeg} shows that the dimension and emptiness/nonemptiness pattern of affine Deligne-Lusztig varieties $X_{\tilde{w}}(b)$ only depend on the data $(\widetilde{W}, \delta, \tilde{w}, f(b))$ and thus independent of the choice of $G$. And this theorem inspired our study of the class polynomials of Iwahori-Hecke algebras.

\section{Class polynomials for the Iwahori-Hecke algebra of type \texorpdfstring{$\widetilde{C}_2$}.}\label{Classcomput}
\subsection{Conjugacy classes}
Let $\widetilde{W}$ be the Iwahori-Weyl group of $Sp_4(L)$. Then $\widetilde{W}=P\rtimes W=W_a\rtimes\Omega$ where $P$ is the coweight lattice, $W$ is the Weyl group and $W_a$ is the affine Weyl group generated by $\mathbb{S}=\{s_1,\ s_2\}$ and $\tilde{\mathbb{S}}=\{s_0,\ s_1,\ s_2\}$ respectively. Here $s_1,\ s_2$ are the simple reflections correspond to the simple roots $\alpha_1$ and $\alpha_2$ respectively, and $s_0$ is the simple reflection corresponds to $\alpha_0=2\alpha_1+\alpha_2$.

Set $\epsilon_1=\alpha_1+\frac{1}{2}\alpha_2,\ \epsilon_2=\frac{1}{2}(\alpha_1+\alpha_2)$, then the coweight lattice $P=\mathbb{Z}\epsilon_1\oplus\mathbb{Z}\epsilon_2=\{m\epsilon_1+n\epsilon_2|m,\ n\in\mathbb{Z}\}$. Let $\Lambda_1=\epsilon_1=\alpha_1+\frac{1}{2}\alpha_2,\ \Lambda_2= 2\epsilon_2=\alpha_1+\alpha_2$, and then the coroot lattice $Q=\mathbb{Z}\Lambda_1\oplus\mathbb{Z}\Lambda_2=\{m\Lambda_1+n\Lambda_2|m,\ n\in\mathbb{Z}\}$. We have $W_a=Q\rtimes W$.

It will be convenience for us to give a partition of $Q$. Let $\Xi_1=\{-2m\Lambda_1+(m+n+1)\Lambda_2,\ -(2m+1)\Lambda_1+(m+n+1)\Lambda_2| m,\ n\in\mathbb{N}\}$, $\Xi_2=\{(m+1)\Lambda_1+n\Lambda_2|m,\ n\in\mathbb{N}\}$, $\Xi_3=\{(m+n+2)\Lambda_1-(n+1)\Lambda_2|m,\ n\in\mathbb{N}\}$, $\Xi_4=\{(n-m+1)\Lambda_1-(n+1)\Lambda_2|m,\ n\in\mathbb{N}\}$, and $\Xi_5=\{-(m+2n)\Lambda_1+n\Lambda_2|m,\ n\in\mathbb{N}\}$. Then $Q=\bigsqcup_{i=1}^{5}\Xi_i$.

The subgroup $\Omega$ is generated by one element $\tau$ such that $\tau^2=1$ (1 is the identity in $\widetilde{W}$) and $\tau s_0 \tau^{-1}=s_2,\ \tau s_1 \tau^{-1}=s_1,\ \tau s_2 \tau^{-1}=s_0$. The element $\tau$ will be written by $t^{\epsilon_2}s_2s_1s_2$, while the simple reflection $s_0$ will be written as $t^{\Lambda_1}s_1s_2s_1$.

For any $\tilde{w}\in\widetilde{W}$, the conjugacy class of $\tilde{w}$ is $\widetilde{W}\cdot\tilde{w}=\{\tilde{w}'\tilde{w}\tilde{w}'^{-1}|\ \forall\ \tilde{w}'\in\widetilde{W}\}$. We will write an element in $\widetilde{W}$ as $t^\lambda w\tau^r$, where $\lambda\in Q$, $w\in W$ and $r\in\{0,\ 1\}$. For the loop group $Sp_4(L)$, the Iwahori-Weyl group $\widetilde{W}$ is the disjoint union of the affine Weyl group $W_a$ and $W_a\tau=\{t^\lambda w\tau|\lambda\in Q,\ \text{and}\ w\in W\}$. It is easy to see that if $\tilde{w}\in W_a$, then $\widetilde{W}\cdot\tilde{w}\subset W_a$. And if $\tilde{w}\in W_a\tau$, then $\widetilde{W}\cdot\tilde{w}\subset W_a\tau$. Now, we give all the conjugacy classes in $W_a$ and in $W_a\tau$.

\begin{lem}\label{s12}
Let $\mathbb{O}_2=\{t^{\lambda}s_1s_2,\ t^{\lambda}s_2s_1|\lambda\in Q\}$. Then $\widetilde{W}\cdot(s_1s_2)=\mathbb{O}_2$.
\end{lem}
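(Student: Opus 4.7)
The plan is to work directly inside the semidirect product $\widetilde{W} = P \rtimes W$. Writing an arbitrary element as $t^\mu w$ with $\mu \in P$, $w \in W$, the semidirect product conjugation formula gives
\[
(t^\mu w)\,(s_1 s_2)\,(t^\mu w)^{-1} \;=\; t^{\mu - v\mu}\,v, \qquad v := w\,s_1 s_2\,w^{-1},
\]
so the conjugacy class is controlled by two pieces: which $v$ can arise, and what translations $\mu - v\mu$ can appear.

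For the easy inclusion $\widetilde{W}\cdot(s_1 s_2) \subseteq \mathbb{O}_2$, I would note that $s_1 s_2$ is a Coxeter element of order $4$ in the dihedral Weyl group $W$ of type $C_2$, so its $W$-conjugates are exactly $\{s_1 s_2,\, s_2 s_1\}$ (since $s_2 s_1 = (s_1 s_2)^{-1}$ is its only nontrivial conjugate). Moreover, for any $\mu \in P$ and $v \in W$ one has $\mu - v\mu \in Q$, because each reflection $s_\alpha$ contributes $\mu - s_\alpha \mu = \langle\alpha,\mu\rangle \alpha^\vee \in Q$ and $W$ is generated by simple reflections. Hence every conjugate of $s_1 s_2$ lies in $\mathbb{O}_2$.

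The substantive direction is $\mathbb{O}_2 \subseteq \widetilde{W}\cdot(s_1 s_2)$. For this I need to show that for each $v \in \{s_1 s_2,\, s_2 s_1\}$ the $\mathbb{Z}$-linear map $1 - v : P \to Q$ is surjective. Since $s_1 s_2$ is elliptic, acting on $P_{\mathbb{Q}}$ as a rotation of order $4$, the map is $\mathbb{Q}$-invertible, but the integral surjectivity onto $Q$ still requires an explicit matrix calculation. Working in the basis $(\epsilon_1, \epsilon_2)$ and using $s_i(\mu) = \mu - \langle\alpha_i, \mu\rangle\alpha_i^\vee$ together with the identifications $\alpha_1^\vee = 2\epsilon_1 - 2\epsilon_2$ and $\alpha_2^\vee = -\epsilon_1 + 2\epsilon_2$ forced by $\langle \alpha_i, \epsilon_j\rangle = \delta_{ij}$, a short computation gives the matrix of $1 - s_1 s_2$ and shows its image on $\mathbb{Z}^2 \cong P$ is exactly $\mathbb{Z}\epsilon_1 \oplus 2\mathbb{Z}\epsilon_2 = \mathbb{Z}\Lambda_1 \oplus \mathbb{Z}\Lambda_2 = Q$. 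The analogous computation for $s_2 s_1$ gives the same image. Combining the two cases realises every element of $\mathbb{O}_2$ as a conjugate of $s_1 s_2$.

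The only real obstacle is coordinate bookkeeping: since $P$ is the coweight lattice and $Q$ the coroot lattice with $\Lambda_1 = \epsilon_1$ but $\Lambda_2 = 2\epsilon_2$, the index $[P : Q] = 2$ enters in an essential way, so the $\mathbb{Q}$-invertibility of $1 - v$ cannot by itself be used to conclude integral surjectivity; the matrix check is unavoidable. Once the matrices of $s_1 s_2$ and $s_2 s_1$ on $P$ are in hand, the surjectivity onto $Q$ is an immediate integer-arithmetic verification.
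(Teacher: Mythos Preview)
Your proof is correct and takes a genuinely different route from the paper's argument. The paper establishes the inclusion $\mathbb{O}_2\subseteq\widetilde{W}\cdot(s_1s_2)$ by an induction on length: it splits $Q$ into five explicit regions $\Xi_1,\ldots,\Xi_5$, and for a given $t^\lambda s_1s_2$ exhibits, case by case, a specific conjugating element (chosen from $s_0,s_1,s_2,s_0s_1,s_1s_0,s_1s_2,s_2s_1$) that strictly decreases the length, thereby driving every element down to $s_1s_2$. Your argument instead reduces the problem to a single linear-algebra fact, namely that for the order-four Coxeter element $v\in\{s_1s_2,\,s_2s_1\}$ the map $1-v\colon P\to Q$ is surjective, which you verify by computing the matrix of $1-v$ in the basis $(\epsilon_1,\epsilon_2)$ and reading off the image as $\mathbb{Z}\epsilon_1\oplus 2\mathbb{Z}\epsilon_2=Q$.

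Your approach is shorter, more conceptual, and avoids the somewhat laborious region-by-region casework; it also makes transparent why the index $[P:Q]=2$ is exactly absorbed. The paper's inductive method, on the other hand, produces explicit length-decreasing conjugations, and this concrete reduction machinery is reused repeatedly in the later lemmas of the section (for $\mathbb{O}'_2$, $\mathbb{O}_{s_{1212}}$, the classes $\mathbb{O}_{4k+1}$, $\mathbb{O}'_{6k+1}$, $\mathbb{O}'_{6k+3}$, and their $\tau$-twisted analogues). So while your argument is the cleaner proof of this particular lemma, the paper's approach is part of a uniform template for all the conjugacy classes it needs.
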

\begin{proof}
Let $\tilde{w}=s_1s_2\in W_a$. Then \[\tilde{w}'\tilde{w}\tilde{w}'^{-1}=\left\{
                                                             \begin{array}{ll}
                                                               t^{\lambda-ws_1s_2w^{-1}(\lambda)}ws_1s_2w^{-1}, & \text{if}\ \tilde{w}'=t^\lambda w\\
                                                               t^{\lambda+w(\Lambda_2-\Lambda_1)-ws_2s_1w^{-1}(\lambda)}ws_2s_1w^{-1}, & \text{if}\ \tilde{w}'=t^\lambda w\tau .
                                                             \end{array}
                                                           \right.\]
This shows that $\widetilde{W}\cdot(s_1s_2)\subseteq\mathbb{O}_2$. Now, for any $\tilde{w}\in\mathbb{O}_2$, by induction on $\ell(\tilde{w})$, we show that $\tilde{w}$ is conjugate to $s_1s_2$. If $\tilde{w}\in\mathbb{O}_2$ with $\ell(\tilde{w})=2$, $\tilde{w}$ is obviously conjugate to $s_1s_2$. We assume that for any $\tilde{w}_1\in\mathbb{O}_2$ and $\ell(\tilde{w}_1)<\ell(\tilde{w})$, $\tilde{w}_1$ is conjugate to $s_1s_2$. For any $\tilde{w}\in\mathbb{O}_2$, then $\tilde{w}$ is of form $t^\lambda s_1s_2$ or $t^\lambda s_2s_1$ with $\lambda\in Q$. Since $\tau t^\lambda s_2s_1\tau^{-1}=t^{\Lambda_1+s_2s_1s_2(\lambda)}s_1s_2$ and $\ell(t^\lambda s_2s_1)=\ell(\tau t^\lambda s_2s_1\tau^{-1})$, it is sufficient to prove that $\tilde{w}=t^\lambda s_1s_2$ is conjugate to $s_1s_2$.

If $\lambda\in\Xi_1$ and one can take $\tilde{w}'=s_1$, then $\tilde{w}_1=\tilde{w}'t^\lambda s_1s_2\tilde{w}'^{-1}=t^{s_1(\lambda)}s_2s_1$ and $\ell(t^{s_1(\lambda)}s_2s_1)=\ell(t^\lambda s_1s_2)-2$. If $\lambda\in\Xi_2$ and $\lambda\neq(m+1)\Lambda_1$ ($m\in\mathbb{N}$), let $\tilde{w}'=s_0s_1=t^{\Lambda_1}s_1s_2$, then $\tilde{w}_1=\tilde{w}'t^\lambda s_1s_2\tilde{w}'^{-1}=t^{2\Lambda_1-\Lambda_2+s_1s_2(\lambda)}s_1s_2$ and $\ell(t^{2\Lambda_1-\Lambda_2+s_1s_2(\lambda)}s_1s_2)=\ell(t^\lambda s_1s_2)-2$. If $\lambda=(m+1)\Lambda_1$ ($m\in\mathbb{N}_+$), let $\tilde{w}'=s_1s_0=t^{\Lambda_2-\Lambda_1}s_2s_1$, then $\tilde{w}_1=\tilde{w}'t^\lambda s_1s_2\tilde{w}'^{-1}=t^{\Lambda_2+s_2s_1(\lambda)}s_1s_2$ and $\ell(t^{\Lambda_2+s_2s_1(\lambda)}s_1s_2)=\ell(t^\lambda s_1s_2)-2$. If $\lambda=\Lambda_1$, $\tilde{w}=t^\lambda s_1s_2=t^{\Lambda_1}s_1s_2=s_0s_1$, it is conjugate to $s_1s_2$. If $\lambda\in\Xi_3$, let $\tilde{w}'=s_0=t^{\Lambda_1}s_1s_2s_1$, then $\tilde{w}_1=\tilde{w}'t^\lambda s_1s_2\tilde{w}'^{-1}=t^{2\Lambda_1-\Lambda_2+s_1s_2s_1(\lambda)}s_2s_1$ and $\ell(t^{2\Lambda_1-\Lambda_2+s_1s_2s_1(\lambda)}s_2s_1)=\ell(t^\lambda s_1s_2)-2$. If $\lambda\in\Xi_4$, let $\tilde{w}'=s_2$, then $\tilde{w}_1=\tilde{w}'t^\lambda s_1s_2\tilde{w}'^{-1}=t^{s_2(\lambda)}s_2s_1$ and $\ell(t^{s_2(\lambda)}s_2s_1)=\ell(t^\lambda s_1s_2)-2$. If $\lambda\in\Xi_5$ and $\lambda\neq-m\Lambda_1$ ($m\in\mathbb{N}$), let $\tilde{w}'=s_2s_1$, then $\tilde{w}_1=\tilde{w}'t^\lambda s_1s_2\tilde{w}'^{-1}=t^{s_2s_1(\lambda)}s_1s_2$ and $\ell(t^{s_2s_1(\lambda)}s_1s_2)=\ell(t^\lambda s_1s_2)-2$. If $\lambda=-m\Lambda_1$ ($m\in\mathbb{N}_+$), let $\tilde{w}'=s_1s_2$, then $\tilde{w}_1=\tilde{w}'t^\lambda s_1s_2\tilde{w}'^{-1}=t^{s_1s_2(\lambda)}s_1s_2$ and $\ell(t^{s_1s_2(\lambda)}s_1s_2)=\ell(t^\lambda s_1s_2)-2$. Thus, by induction $\tilde{w}_1=\tilde{w}'t^\lambda s_1s_2\tilde{w}'^{-1}$ is conjugate to $s_1s_2$. Thus, $\tilde{w}=t^\lambda s_1s_2$ is conjugate to $s_1s_2$. Hence, $\widetilde{W}\cdot(s_1s_2)=\mathbb{O}_2$.
\end{proof}

\begin{lem}\label{s02}
Let $\mathbb{O}'_2=\{t^{(2m+1)\Lambda_1+n\Lambda_2}s_1s_2s_1s_2| m,\ n\in\mathbb{Z}\}$. Then $\widetilde{W}\cdot(s_0s_2)=\mathbb{O}'_2$.
\end{lem}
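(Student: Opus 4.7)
The key observation driving the proof is
\[
s_0 s_2 = t^{\Lambda_1} s_1 s_2 s_1 s_2 = t^{\Lambda_1} w_0,
\]
where $w_0 = s_1 s_2 s_1 s_2$ is the longest element of $W$. In type $C_2$, $w_0$ acts as $-\mathrm{id}$ on the reflection representation, so $w_0$ is central in $W$ and $w_0 t^\mu w_0^{-1} = t^{-\mu}$ for every $\mu \in P$. In particular, every element in the conjugacy class of $s_0 s_2$ has finite part $w_0$, and only the translation part needs to be tracked.

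My plan is to establish $\widetilde{W}\cdot(s_0 s_2) = \mathbb{O}'_2$ by two inclusions. For ``$\subseteq$'' I first record the two conjugation formulas
\[
(t^\mu v)\, t^\lambda w_0\, (t^\mu v)^{-1} = t^{2\mu + v(\lambda)} w_0,
\qquad
(t^\mu v \tau)\, t^\lambda w_0\, (t^\mu v \tau)^{-1} = t^{2\mu + v(\sigma(\lambda) + \Lambda_2)} w_0,
\]
where $\sigma = s_2 s_1 s_2$ is the finite part of $\tau$. The second formula uses the auxiliary identity $\tau w_0 \tau^{-1} = t^{\Lambda_2} w_0$, which I would verify by rewriting $\tau w_0 \tau^{-1} = (s_1 s_0)^2$ (using $\tau s_1 \tau^{-1} = s_1$ and $\tau s_2 \tau^{-1} = s_0$) and expanding via $s_0 = t^{\Lambda_1} s_1 s_2 s_1$. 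Stability of $\mathbb{O}'_2$ under both operations then reduces to checking that the $\Lambda_1$-coefficient of the translation part remains odd, and this is a finite check based on the formulas
\[
s_1(\Lambda_1) = \Lambda_2 - \Lambda_1,\quad s_1(\Lambda_2) = \Lambda_2,\quad s_2(\Lambda_1) = \Lambda_1,\quad s_2(\Lambda_2) = 2\Lambda_1 - \Lambda_2,
\]
from which one reads off that for every $v \in W$ the $\Lambda_1$-coefficient of $v(\Lambda_1)$ is $\pm 1$ and that of $v(\Lambda_2)$ is even; the same then holds for $\sigma(\lambda) + \Lambda_2$, while $2\mu$ contributes evenly.

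For the reverse inclusion ``$\supseteq$'' I would realize every element of $\mathbb{O}'_2$ directly as a conjugate of $t^{\Lambda_1} w_0$. Pure translation conjugation $t^\mu(t^{\Lambda_1} w_0) t^{-\mu} = t^{2\mu + \Lambda_1} w_0$ with $\mu = m\Lambda_1 + k\Lambda_2 \in Q$ produces every element of the form $t^{(2m+1)\Lambda_1 + 2k\Lambda_2} w_0$, i.e.\ the even-$\Lambda_2$-coefficient piece. For the odd-coefficient piece, I first conjugate by $s_1$ to land at $s_1(t^{\Lambda_1} w_0) s_1 = t^{\Lambda_2 - \Lambda_1} w_0$, and then conjugate by a further translation to reach any $t^{(2m-1)\Lambda_1 + (2k+1)\Lambda_2} w_0$. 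Together these two families cover all of $\mathbb{O}'_2$.

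The main obstacle is the $\tau$-side bookkeeping, namely establishing $\tau w_0 \tau^{-1} = t^{\Lambda_2} w_0$ cleanly and then confirming that adding $\Lambda_2$ to $\sigma(\lambda)$ does not spoil the odd-$\Lambda_1$-coefficient invariant. Once these ingredients are in place, the case analysis is short and the proof sidesteps the lengthy length-reduction induction used for Lemma~\ref{s12}, because the centrality of $w_0$ completely rigidifies the finite part.
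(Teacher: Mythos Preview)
Your proof is correct, and for the inclusion $\widetilde{W}\cdot(s_0s_2)\subseteq\mathbb{O}'_2$ it is essentially the paper's argument: both compute the conjugation action and observe that the $\Lambda_1$-coefficient of the translation part is always odd. The paper streamlines this step by only conjugating the specific element $t^{\Lambda_1}w_0$ rather than a general element of $\mathbb{O}'_2$; since $\sigma(\Lambda_1)+\Lambda_2=\Lambda_1$, the $\tau$-case then collapses to the same formula $\tilde{w}'(s_0s_2)\tilde{w}'^{-1}=t^{w(\Lambda_1)+2\lambda}w_0$ as the non-$\tau$ case, so no separate parity check is needed. Your version, proving that $\mathbb{O}'_2$ is closed under conjugation, is slightly more than required but perfectly fine.

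Where your approach genuinely differs is the reverse inclusion $\mathbb{O}'_2\subseteq\widetilde{W}\cdot(s_0s_2)$. The paper proceeds by induction on $\ell(\tilde{w})$: it reduces a general element $t^{(2m+1)\Lambda_1+(n-m)\Lambda_2}w_0$ to a shorter one via case-by-case conjugations (using $\tau$, then $s_1$ or $s_0$ depending on the signs of $m,n$). Your argument bypasses this entirely by writing down explicit conjugators: translations $t^\mu$ hit every element with even $\Lambda_2$-coefficient, and a single preliminary conjugation by $s_1$ followed by a translation hits the odd-$\Lambda_2$-coefficient elements. This is shorter and cleaner, exploiting the centrality of $w_0$ to reduce the problem to a $2Q$-coset computation in $Q$. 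The paper's inductive method has the advantage of being uniform across all the lemmas in this section (and of tracking lengths, which matters elsewhere), but for this particular lemma your direct construction is the more economical route.
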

\begin{proof}
Since $s_0s_2=s_2s_0=\tau s_0s_2\tau^{-1}=t^{\Lambda_1}s_1s_2s_1s_2$, and for any $\tilde{w}'=t^\lambda w\ \text{or}\ t^\lambda w\tau$ we have $\tilde{w}'s_0s_2\tilde{w}'^{-1}=t^{w(\Lambda_1)+2\lambda}s_1s_2s_1s_2$, so $\widetilde{W}\cdot(s_0s_2)\subseteq\mathbb{O}'_2$.

For any $\tilde{w}\in\mathbb{O}'_2$, we use induction on $\ell(\tilde{w})$ to prove that $\tilde{w}$ is conjugate to $s_0s_2$. If $\tilde{w}=t^{\Lambda_1}s_1s_2s_1s_2$, then $\tilde{w}=s_0s_2$ and $\ell(\tilde{w})=2$, $\tilde{w}$ is obviously conjugate to $s_0s_2$. For arbitrary $\tilde{w}\in\mathbb{O}'_2$, we assume that if $\tilde{w}_1\in\mathbb{O}'_2$ with $\ell(\tilde{w}_1)<\ell(\tilde{w})$, then $\tilde{w}_1$ is conjugate to $s_0s_2$.

For any $\tilde{w}\in\mathbb{O}'_2$, it can be written as $t^{(2m+1)\Lambda_1+(\pm n-m)\Lambda_2}s_1s_2s_1s_2$, where $m\in\mathbb{Z}$ and $n\in\mathbb{N}$. Since $\tau t^{(2m+1)\Lambda_1+(-n-m)\Lambda_2}s_1s_2s_1s_2\tau^{-1}=t^{(2m+1)\Lambda_1+(n-m)\Lambda_2}s_1s_2s_1s_2$ and $\ell(t^{(2m+1)\Lambda_1+(-n-m)\Lambda_2}s_1s_2s_1s_2)=\ell(t^{(2m+1)\Lambda_1+(n-m)\Lambda_2}s_1s_2s_1s_2)$, it is sufficient to consider $\tilde{w}=t^{(2m+1)\Lambda_1+(n-m)\Lambda_2}s_1s_2s_1s_2$ with $m\in\mathbb{Z}$ and $n\in\mathbb{N}$. If $m\leqslant-1$, let $\tilde{w}'=s_1$, then $\tilde{w}_1=\tilde{w}'\tilde{w}\tilde{w}'^{-1}=t^{-(2m+1)\Lambda_1+(m+n+1)\Lambda_2}s_1s_2s_1s_2$ and $\ell(\tilde{w}_1)=\ell(\tilde{w})-2$. If $m=n=0$, then $\tilde{w}=t^{\Lambda_1}s_1s_2s_1s_2=s_0s_2$. If $m>0,\ n\geqslant0$ or $m=0,\ n>0$, let $\tilde{w}'=s_0$, then $\tilde{w}_1=\tilde{w}'\tilde{w}\tilde{w}'^{-1}=t^{-(2n-1)\Lambda_1+(n-m)\Lambda_2}s_1s_2s_1s_2$ and $\ell(\tilde{w}_1)=\ell(\tilde{w})-2$. By induction, $\tilde{w}_1$ is conjugate to $s_0s_2$. Hence, $\tilde{w}$ is conjugate to $s_0s_2$.
\end{proof}

\begin{lem}\label{s1212}
Let $\mathbb{O}_{s_{1212}}=\{t^{2m\Lambda_1+n\Lambda_2}s_1s_2s_1s_2| m,\ n\in\mathbb{Z}\}$. Then $\widetilde{W}\cdot(s_1s_2s_1s_2)=\mathbb{O}_{s_{1212}}$.
\end{lem}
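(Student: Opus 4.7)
My plan is to follow the same two-part structure used in Lemmas \ref{s12} and \ref{s02}, while exploiting a useful feature of the specific element $s_1s_2s_1s_2$: it is the longest element $w_0$ of the finite Weyl group $W$ of type $C_2$, so $w_0$ is central in $W$ and acts on $Q$ as $-\mathrm{id}$.

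For the inclusion $\widetilde{W}\cdot(s_1s_2s_1s_2)\subseteq\mathbb{O}_{s_{1212}}$, I would compute the two possible conjugates directly. For $\tilde{w}'=t^\lambda w$, centrality of $w_0$ together with $w_0 t^{-\lambda}=t^\lambda w_0$ gives
\[
\tilde{w}'\, w_0\, \tilde{w}'^{-1}=t^{2\lambda}w_0,
\]
which lies in $\mathbb{O}_{s_{1212}}$ on the nose. For $\tilde{w}'=t^\lambda w\tau$, I would first compute $\tau w_0\tau^{-1}$ using $\tau s_1\tau^{-1}=s_1$ and $\tau s_2\tau^{-1}=s_0$, reducing the question to simplifying $s_1s_0s_1s_0$; substituting $s_0=t^{\Lambda_1}s_1s_2s_1$ and rearranging yields $\tau w_0\tau^{-1}=t^{\Lambda_2}w_0$. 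Consequently
\[
\tilde{w}'\, w_0\, \tilde{w}'^{-1}=t^{2\lambda+w(\Lambda_2)}w_0,
\]
and the inclusion reduces to the routine check that $w(\Lambda_2)$ has even $\Lambda_1$-coefficient for every $w\in W$.

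For the reverse inclusion $\mathbb{O}_{s_{1212}}\subseteq\widetilde{W}\cdot(s_1s_2s_1s_2)$, I would induct on $\ell(\tilde{w})$ for $\tilde{w}=t^{2m\Lambda_1+n\Lambda_2}w_0$, paralleling the inductions of Lemmas \ref{s12} and \ref{s02}. The base case is $(m,n)=(0,0)$, where $\tilde{w}=w_0$ itself. In the inductive step I would exhibit, for each sign pattern of $(m,n)$, an explicit $\tilde{w}'\in\widetilde{W}$ chosen from among $s_0,s_1,s_2,\tau$ or a short product thereof, such that the conjugate $\tilde{w}_1:=\tilde{w}'\tilde{w}\tilde{w}'^{-1}$ is still of the form $t^{2m'\Lambda_1+n'\Lambda_2}w_0\in\mathbb{O}_{s_{1212}}$ but has $\ell(\tilde{w}_1)=\ell(\tilde{w})-2$. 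A preliminary $\tau$-conjugation can fold the $n<0$ cases into the $n\geq 0$ cases, and the remaining sub-cases split on the sign of $m$ and on the boundary values.

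The main obstacle will be the case analysis in the inductive step: each region of $(m,n)\in\mathbb{Z}^2$ requires choosing the correct length-reducing conjugator and verifying that the length drops by exactly $2$. This is the same sort of bookkeeping already done in Lemmas \ref{s12} and \ref{s02}, but it should in fact be cleaner here because the centrality of $w_0$ forces the non-translation part of every conjugate to remain equal to $w_0$; only the translation part $2m\Lambda_1+n\Lambda_2$ varies, making the length formula easier to control.
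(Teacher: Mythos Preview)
Your proposal is correct and follows essentially the same approach the paper intends: the paper omits the proof with the remark that it is ``quite similar to that of Lemma~\ref{s02},'' and your two-part argument (direct computation of conjugates for the forward inclusion, induction on $\ell(\tilde{w})$ with explicit length-reducing conjugators for the reverse) is exactly that template. Your observation that the centrality of $w_0=s_1s_2s_1s_2$ and its action by $-\mathrm{id}$ on $Q$ streamline both the conjugation formula $\tilde{w}'w_0\tilde{w}'^{-1}=t^{2\lambda+w(\Lambda_2)}w_0$ and the inductive case analysis is a genuine simplification over the parallel arguments in Lemmas~\ref{s12} and~\ref{s02}.
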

\begin{proof}
The proof is quite similar to that of Lemma \ref{s02}, and we omit it.
\end{proof}

\begin{lem}\label{lambda}
Let $Q_+=\{\lambda\in Q|\alpha(\lambda)\geqslant0,\ \alpha\in\Phi^+\}=\{m\Lambda_1+n\Lambda_2|m,\ n\in\mathbb{N}\}$. For any $\lambda\in Q_+$, let $\mathbb{O}_\lambda=\{t^{w(\lambda)}|w\in W\}$. Then $\widetilde{W}\cdot t^\lambda=\mathbb{O}_\lambda$.
\end{lem}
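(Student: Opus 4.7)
The plan is to prove $\widetilde{W}\cdot t^\lambda=\mathbb{O}_\lambda$ by a direct conjugation calculation, without the length-induction used in Lemmas \ref{s12}--\ref{s1212}. The easy inclusion $\mathbb{O}_\lambda\subseteq \widetilde{W}\cdot t^\lambda$ is immediate from $w t^\lambda w^{-1}=t^{w(\lambda)}$, which holds for every $w\in W\subset\widetilde{W}$ by the semi-direct product structure $\widetilde{W}=P\rtimes W$ recalled in Section \ref{IWgp}. So each $t^{w(\lambda)}\in\mathbb{O}_\lambda$ already appears as a conjugate of $t^\lambda$ by a finite Weyl group element.

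For the reverse inclusion, I would use the decomposition $\widetilde{W}=W_a\sqcup W_a\tau$ and write an arbitrary conjugator as $\tilde{w}'=t^\mu w$ or $\tilde{w}'=t^\mu w\tau$ with $\mu\in Q$ and $w\in W$. Since translations commute among themselves and $vt^\lambda v^{-1}=t^{v(\lambda)}$ for any $v\in W$, the first case gives
$$\tilde{w}' t^\lambda \tilde{w}'^{-1}=t^\mu w t^\lambda w^{-1} t^{-\mu}=t^\mu t^{w(\lambda)}t^{-\mu}=t^{w(\lambda)}\in\mathbb{O}_\lambda.$$
For the second case, the author has already identified $\tau=t^{\epsilon_2}s_2s_1s_2$. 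Because the translation part $t^{\epsilon_2}$ cancels through commutation, one obtains $\tau t^\lambda \tau^{-1}=t^{s_2s_1s_2(\lambda)}$, so
$$\tilde{w}' t^\lambda \tilde{w}'^{-1}=t^\mu w\tau t^\lambda \tau^{-1}w^{-1}t^{-\mu}=t^{ws_2s_1s_2(\lambda)}\in\mathbb{O}_\lambda,$$
since $ws_2s_1s_2\in W$. Thus every $\widetilde{W}$-conjugate of $t^\lambda$ lies in $\mathbb{O}_\lambda$.

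The underlying content of the lemma is just the observation that conjugating a pure translation in $\widetilde{W}$ always yields another pure translation, with translation vector in the $W$-orbit of $\lambda$. I do not anticipate any real obstacle: there is no length-reduction step, so no induction is required, and the hypothesis $\lambda\in Q_+$ is used only as a bookkeeping convention for picking a distinguished dominant representative indexing $\mathbb{O}_\lambda$.
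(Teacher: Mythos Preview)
Your proof is correct and follows essentially the same approach as the paper: the paper's proof is precisely the two-case conjugation calculation
\[
\tilde{w}'t^\lambda\tilde{w}'^{-1}=
\begin{cases}
t^{w(\lambda)}, & \tilde{w}'=t^\chi w,\\
t^{ws_2s_1s_2(\lambda)}, & \tilde{w}'=t^\chi w\tau,
\end{cases}
\]
together with the remark $wt^\lambda w^{-1}=t^{w(\lambda)}$. The only additional observation the paper records is that $\ell(t^{w(\lambda)})=\ell(t^\lambda)$, which is not needed for the set equality itself but is used later when identifying $T_{\mathbb{O}_\lambda}$.
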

\begin{proof}
For any $\lambda\in Q_+$ and $w\in W$, $wt^\lambda w^{-1}=t^{w(\lambda)}$. Moreover, $\ell(t^{w(\lambda)})=\ell(t^\lambda)$. The lemma follows from \[\tilde{w}'t^\lambda\tilde{w}'^{-1}=\left\{
                                                             \begin{array}{ll}
                                                               t^{w(\lambda)}, & \text{if}\ \tilde{w}'=t^\chi w\\
                                                               t^{ws_2s_1s_2(\lambda)}, & \text{if}\ \tilde{w}'=t^\chi w\tau .
                                                             \end{array}
                                                           \right.\]
\end{proof}

In the following, let $k$ be any nonnegative integer.

\begin{lem}\label{4k+1}
Let $\mathbb{O}_{4k+1}=\{t^{\pm k(\Lambda_2-\Lambda_1)+m'\Lambda_1}s_1s_2s_1,\ t^{\pm k\Lambda_1+m'(\Lambda_2-\Lambda_1)}s_2|m'\in\mathbb{Z}\}$. Then $\widetilde{W}\cdot (t^{k\Lambda_1}s_2)=\mathbb{O}_{4k+1}$.
\end{lem}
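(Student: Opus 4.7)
The proof should follow the template of Lemmas~\ref{s12}, \ref{s02}, and \ref{s1212}: first establish the easy containment $\widetilde{W}\cdot(t^{k\Lambda_1}s_2)\subseteq\mathbb{O}_{4k+1}$ by direct computation of all conjugates, and then prove the reverse inclusion by induction on length.

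For the forward containment, I would parametrize an arbitrary conjugator as $\tilde{w}'=t^\chi w$ or $\tilde{w}'=t^\chi w\tau$. Using the semidirect product identity $wt^\mu=t^{w(\mu)}w$, the relation $\tau=t^{\epsilon_2}s_2s_1s_2$, and the fact that $\tau s_2\tau^{-1}=s_0=t^{\Lambda_1}s_1s_2s_1$, a short computation gives
\[
\tilde{w}'\,t^{k\Lambda_1}s_2\,\tilde{w}'^{-1}=\begin{cases}
t^{\chi+kw(\Lambda_1)-ws_2w^{-1}(\chi)}\,ws_2w^{-1}, & \tilde{w}'=t^\chi w,\\[2pt]
t^{\chi+\Lambda_1+kws_2s_1s_2(\Lambda_1)-ws_1s_2s_1w^{-1}(\chi)}\,ws_1s_2s_1w^{-1}, & \tilde{w}'=t^\chi w\tau.
\end{cases}
\]
The Weyl part necessarily lies in $\{s_2,\,s_1s_2s_1\}$, since these exhaust the $W$-conjugates of the reflection $s_2$, and evaluating the $W$-orbits of $\pm k\Lambda_1$ together with the freedom in $\chi\in Q$ produces translations of exactly the form appearing in $\mathbb{O}_{4k+1}$.

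For the reverse containment, I would induct on $\ell(\tilde{w})$ for $\tilde{w}\in\mathbb{O}_{4k+1}$. In the base case, one checks that the minimal-length representatives of each of the four subfamilies are conjugate to $t^{k\Lambda_1}s_2$; conjugation by $\tau$ matches the $s_2$-families with the $s_1s_2s_1$-families, while conjugation by $s_1$ (using $s_1(\Lambda_1)=-\Lambda_1+\Lambda_2$) toggles the sign of $k$. For the inductive step on an element whose length exceeds the minimum, I would split according to which cell of the partition $Q=\bigsqcup_{i=1}^{5}\Xi_i$ the translation part occupies, and in each cell exhibit an explicit conjugator $\tilde{w}'\in\{s_0,s_1,s_2,s_0s_1,s_1s_0,s_1s_2,s_2s_1,\tau\}$ such that $\tilde{w}'\tilde{w}\tilde{w}'^{-1}$ remains in $\mathbb{O}_{4k+1}$ and satisfies $\ell(\tilde{w}'\tilde{w}\tilde{w}'^{-1})=\ell(\tilde{w})-2$.

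The main obstacle is the case analysis for the inductive step. Because $\mathbb{O}_{4k+1}$ decomposes into four families indexed by the sign of the $k$-coefficient and by whether the Weyl part is $s_2$ or $s_1s_2s_1$, and because the parameter $m'$ can place the translation in any of the five chambers $\Xi_1,\ldots,\Xi_5$, the enumeration splits into roughly twenty subcases. In each, one must exhibit the correct conjugator and verify the length drop via the standard affine length formula $\ell(t^\lambda w)=\sum_{\alpha\in\Phi^+}|\langle\alpha,\lambda\rangle+\eta_w(\alpha)|$, where $\eta_w(\alpha)\in\{0,1\}$ according to the sign of $w^{-1}\alpha$. This is precisely the style of bookkeeping driving the long case enumeration in Lemma~\ref{s12}; each individual case is routine, but correctness of the full list of chamber-family pairings is the principal challenge.
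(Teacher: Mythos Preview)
Your proposal is correct, but it takes a more laborious route than the paper for the reverse inclusion. The paper does \emph{not} follow the template of Lemma~\ref{s12} here: rather than inducting on length, it simply carries out the direct computation of $\tilde{w}'\,t^{k\Lambda_1}s_2\,\tilde{w}'^{-1}$ for all sixteen choices of $w\in W$ and $w\tau$, writing $\tilde{w}'=t^{m\Lambda_1+n\Lambda_2}w$ or $t^{m\Lambda_1+n\Lambda_2}w\tau$ with $m,n\in\mathbb{Z}$ free. Because the resulting translation parts are affine-linear in $(m,n)$, one can read off from the table not only that every conjugate lies in $\mathbb{O}_{4k+1}$ but also that every element of $\mathbb{O}_{4k+1}$ is hit (e.g.\ the even values of $m'$ in $t^{k\Lambda_1+m'(\Lambda_2-\Lambda_1)}s_2$ come from $w=1$, the odd values from $w=s_1s_2$ in the $\tau$-branch). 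So both inclusions follow at once from the enumeration, and the proof ends with ``Thus, the lemma is proved.''

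Your inductive approach would certainly work, but the twenty-odd subcases you anticipate are unnecessary: the conjugacy class here is a finite union of affine lines in the translation lattice, and the two-parameter freedom in $\chi=m\Lambda_1+n\Lambda_2$ already surjects onto each line. The induction-on-length template was needed in Lemma~\ref{s12} only because $\mathbb{O}_2$ is all of $Q\cdot\{s_1s_2,s_2s_1\}$, which is two-dimensional and not obviously exhausted by the direct formulas.
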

\begin{proof}
Let $\tilde{w}'=t^{m\Lambda_1+n\Lambda_2}w$. Then\[\tilde{w}'t^{k\Lambda_1}s_2\tilde{w}'^{-1}=\left\{
                                                             \begin{array}{ll}
                                                             t^{k\Lambda_1+2n(\Lambda_2-\Lambda_1)}s_2, & \text{if}\ w\in\{1,\ s_2\}\\
                                                             t^{-k\Lambda_1+2n(\Lambda_2-\Lambda_1)}s_2, & \text{if}\ w\in\{s_1s_2s_1,\ s_1s_2s_1s_2\}\\
                                                             t^{k(\Lambda_2-\Lambda_1)+2(m+n)\Lambda_1}s_1s_2s_1, & \text{if}\ w\in\{s_1,\ s_1s_2\}\\
                                                             t^{-k(\Lambda_2-\Lambda_1)+2(m+n)\Lambda_1}s_1s_2s_1, & \text{if}\ w\in\{s_2s_1,\ s_2s_1s_2\}.
                                                             \end{array}
                                                           \right.\]
Let $\tilde{w}'=t^{m\Lambda_1+n\Lambda_2}w\tau$. Then \[\tilde{w}'t^{k\Lambda_1}s_2\tilde{w}'^{-1}=\left\{
                                                             \begin{array}{ll}
                                                             t^{-k(\Lambda_2-\Lambda_1)+(2m+2n+1)\Lambda_1}s_1s_2s_1, &\ \ \ \ \ \ \ \ \text{if}\ w=1\\
                                                             t^{-k\Lambda_1+(2n+1)(\Lambda_2-\Lambda_1)}s_2, &\ \ \ \ \ \ \ \ \text{if}\ w=s_1\\
                                                             t^{k(\Lambda_2-\Lambda_1)+(2m+2n+1)\Lambda_1}s_1s_2s_1, &\ \ \ \ \ \ \ \  \text{if}\  w=s_2\\
                                                             t^{k\Lambda_1+(2n+1)(\Lambda_2-\Lambda_1)}s_2, &\ \ \ \ \ \ \ \ \text{if}\  w=s_1s_2\\
                                                             t^{-k\Lambda_1+(2n-1)(\Lambda_2-\Lambda_1)}s_2, &\ \ \ \ \ \ \ \ \text{if}\ w=s_2s_1\\
                                                             t^{-k(\Lambda_2-\Lambda_1)+(2m+2n-1)\Lambda_1}s_1s_2s_1, &\ \ \ \ \ \ \ \ \text{if}\  w=s_1s_2s_1\\
                                                             t^{k\Lambda_1+(2n-1)(\Lambda_2-\Lambda_1)}s_2, &\ \ \ \ \ \ \ \ \text{if}\ w=s_2s_1s_2\\
                                                             t^{k(\Lambda_2-\Lambda_1)+(2m+2n-1)\Lambda_1}s_1s_2s_1, &\ \ \ \ \ \ \ \ \text{if}\  w=s_1s_2s_1s_2.
                                                             \end{array}
                                                           \right.\]
Thus, the lemma is proved.
\end{proof}

\begin{lem}\label{6k+1}
Let $\mathbb{O}'_{6k+1}=\{t^{\pm k\Lambda_2+m'(2\Lambda_1-\Lambda_2)}s_1,\ t^{\pm k(2\Lambda_1-\Lambda_2)+m'\Lambda_2}s_2s_1s_2|m'\in\mathbb{Z}\}$. Then $\widetilde{W}\cdot (t^{k\Lambda_2}s_1)=\mathbb{O}'_{6k+1}$.
\end{lem}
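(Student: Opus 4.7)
The plan is to parallel Lemma \ref{4k+1} exactly: give a direct case-by-case enumeration of the conjugates $\tilde{w}'(t^{k\Lambda_2}s_1)\tilde{w}'^{-1}$ as $\tilde{w}'$ runs over $\widetilde{W}$, and verify that the collection of conjugates is precisely $\mathbb{O}'_{6k+1}$. Every $\tilde{w}'\in\widetilde{W}$ can be written in one of the two forms $t^{m\Lambda_1+n\Lambda_2}w$ or $t^{m\Lambda_1+n\Lambda_2}w\tau$ with $w\in W$ and $m,n\in\mathbb{Z}$, so applying the standard semidirect-product identity
\[
(t^\mu u)(t^\lambda v)(t^\mu u)^{-1}=t^{\mu+u(\lambda)-u v u^{-1}(\mu)}\cdot u v u^{-1}
\]
with $\lambda=k\Lambda_2$ and $v=s_1$ gives $8+8=16$ subcases to tabulate.

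The first key observation is that $u s_1 u^{-1}\in\{s_1,\,s_2s_1s_2\}$ for every $u\in W$, since these are the two short-root reflections in the Weyl group of type $C_2$; specifically $u s_1 u^{-1}=s_1$ exactly when $u$ lies in the centralizer $\{1,\,s_1,\,s_2s_1s_2,\,s_1s_2s_1s_2\}$, and $u s_1 u^{-1}=s_2s_1s_2$ otherwise. This explains why only the two finite Weyl parts $s_1$ and $s_2s_1s_2$ appear in $\mathbb{O}'_{6k+1}$. The second key observation is that $u(\Lambda_2)$ takes only the four values $\pm\Lambda_2$ and $\pm(2\Lambda_1-\Lambda_2)$ as $u$ ranges over $W$; this accounts for the coefficient $\pm k$ and for how the two families inside $\mathbb{O}'_{6k+1}$ get produced. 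The remaining translation contribution $\mu-us_1u^{-1}(\mu)$ for $\mu=m\Lambda_1+n\Lambda_2$ then lies in the sublattice fixed orthogonally by $s_1$ or $s_2s_1s_2$, i.e.\ in $\mathbb{Z}(2\Lambda_1-\Lambda_2)$ in the $s_1$-cases and $\mathbb{Z}\Lambda_2$ in the $s_2s_1s_2$-cases. This proves $\widetilde{W}\cdot(t^{k\Lambda_2}s_1)\subseteq\mathbb{O}'_{6k+1}$, and the reverse inclusion follows from the same tabulation once we check that the coefficient $m'$ produced in each case sweeps out all of $\mathbb{Z}$ (the $\tau$-conjugation cases introduce the odd parities that the pure-$W$-conjugation cases miss).

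The main obstacle is the meticulous bookkeeping across the sixteen subcases. A deceptive subtlety is that although the statement of Lemma \ref{6k+1} is structurally parallel to that of Lemma \ref{4k+1}, one cannot obtain the present proof by naively swapping indices $1\leftrightarrow 2$: here $s_1$ is the short-root reflection whereas in Lemma \ref{4k+1} the element was built from the long-root reflection $s_2$, so the relevant sublattice decomposition is $\mathbb{Z}\Lambda_2\oplus\mathbb{Z}(2\Lambda_1-\Lambda_2)$ rather than $\mathbb{Z}\Lambda_1\oplus\mathbb{Z}(\Lambda_2-\Lambda_1)$. The coefficient $2$ in $2\Lambda_1-\Lambda_2$ is the critical point: it reflects the fact that $s_1$ reflects in a short coroot direction, and its appearance must be rederived from the action of $W$ on $\{\Lambda_1,\Lambda_2\}$ rather than transported by symbolic substitution from the previous lemma.
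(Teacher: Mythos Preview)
Your approach is correct and is exactly the case-by-case tabulation carried out in the paper. One minor correction to your parenthetical: unlike in Lemma~\ref{4k+1}, here the $W_a$-conjugation already produces every $m'\in\mathbb{Z}$ (the relevant coefficients are $m$ and $m+2n$, not $2n$ and $2(m+n)$), and $\tau$-conjugation is redundant since $\tau s_1\tau^{-1}=s_1$ and $s_2s_1s_2(\Lambda_2)=-\Lambda_2$ merely flip the sign of $k$.
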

\begin{proof}
Let $\tilde{w}'=t^{m\Lambda_1+n\Lambda_2}w$. Then\[\tilde{w}'t^{k\Lambda_2}s_1\tilde{w}'^{-1}=\left\{
                                                             \begin{array}{ll}
                                                             t^{k\Lambda_2+m(2\Lambda_1-\Lambda_2)}s_1, & \text{if}\ w\in\{1,\ s_1\}\\
                                                             t^{-k\Lambda_2+m(2\Lambda_1-\Lambda_2)}s_1, & \text{if}\ w\in\{s_2s_1s_2,\ s_1s_2s_1s_2\}\\
                                                             t^{k(2\Lambda_1-\Lambda_2)+(m+2n)\Lambda_2}s_2s_1s_2, & \text{if}\  w\in\{s_2,\ s_2s_1\}\\
                                                             t^{-k(2\Lambda_1-\Lambda_2)+(m+2n)\Lambda_2}s_2s_1s_2, & \text{if}\  w\in\{s_1s_2,\ s_1s_2s_1\}.
                                                             \end{array}
                                                           \right.\]
Let $\tilde{w}'=t^{m\Lambda_1+n\Lambda_2}w\tau$. Then \[\tilde{w}'t^{k\Lambda_2}s_1\tilde{w}'^{-1}=\left\{
                                                             \begin{array}{ll}
                                                             t^{-k\Lambda_2+m(2\Lambda_1-\Lambda_2)}s_1, & \text{if}\ w\in\{1,\ s_1\}\\
                                                             t^{k\Lambda_2+m(2\Lambda_1-\Lambda_2)}s_1, & \text{if}\  w\in\{s_2s_1s_2,\ s_1s_2s_1s_2\}\\
                                                             t^{-k(2\Lambda_1-\Lambda_2)+(m+2n)\Lambda_2}s_2s_1s_2, & \text{if}\  w\in\{s_2,\ s_2s_1\}\\
                                                             t^{k(2\Lambda_1-\Lambda_2)+(m+2n)\Lambda_2}s_2s_1s_2, & \text{if}\  w\in\{s_1s_2,\ s_1s_2s_1\}.
                                                             \end{array}
                                                           \right.\]
This lemma is proved by the above relations.
\end{proof}

\begin{lem}\label{6k+3}
Let $$\mathbb{O}'_{6k+3}=\{t^{\pm (k\Lambda_2+\Lambda_1)+m'(2\Lambda_1-\Lambda_2)}s_1,\ t^{\pm (k(2\Lambda_1-\Lambda_2)+\Lambda_1)+m'\Lambda_2}s_2s_1s_2|m'\in\mathbb{Z}\}.$$ Then $\widetilde{W}\cdot (t^{k(2\Lambda_1-\Lambda_2)+\Lambda_1}s_2s_1s_2)=\mathbb{O}'_{6k+3}$.
\end{lem}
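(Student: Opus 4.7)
My plan is to first reduce to a computation parallel to Lemma~\ref{6k+1}, and then apply the same method verbatim.

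Since $s_2(\Lambda_1)=\Lambda_1$ and $s_2(\Lambda_2)=2\Lambda_1-\Lambda_2$, a direct calculation gives
\[
s_2\,\bigl(t^{k\Lambda_2+\Lambda_1}s_1\bigr)\,s_2\;=\;t^{k(2\Lambda_1-\Lambda_2)+\Lambda_1}\,s_2s_1s_2,
\]
so $t^{k(2\Lambda_1-\Lambda_2)+\Lambda_1}s_2s_1s_2$ and $t^{k\Lambda_2+\Lambda_1}s_1$ are $\widetilde{W}$-conjugate. It therefore suffices to show $\widetilde{W}\cdot(t^{k\Lambda_2+\Lambda_1}s_1)=\mathbb{O}'_{6k+3}$, which is of exactly the same shape as Lemma~\ref{6k+1} but with the base translation shifted by $\Lambda_1$.

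I would then construct the sixteen-row conjugation table as in Lemma~\ref{6k+1}. Setting $\mu=k\Lambda_2+\Lambda_1$, $\lambda'=m\Lambda_1+n\Lambda_2$, and running $\tilde{w}'$ over $\{t^{\lambda'}w,\,t^{\lambda'}w\tau\mid w\in W\}$, the identity
\[
\tilde{w}'\,t^{\mu}s_1\,\tilde{w}'^{-1}\;=\;t^{\,\lambda'+w(\mu)-(ws_1w^{-1})(\lambda')}\,ws_1w^{-1}
\]
(together with its $\tau$-variant, handled via $\tau=t^{\epsilon_2}s_2s_1s_2$, $\tau s_0\tau^{-1}=s_2$ and $\tau s_2\tau^{-1}=s_0$) reduces every conjugate to a closed form. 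Since $s_1$ and $s_2s_1s_2$ commute in $W$ (by the braid relation $s_1s_2s_1s_2=s_2s_1s_2s_1$), we have $ws_1w^{-1}=s_1$ exactly for $w\in\{1,s_1,s_2s_1s_2,s_1s_2s_1s_2\}$ and $ws_1w^{-1}=s_2s_1s_2$ for the remaining four $w\in W$. The first group of rows should produce conjugates of the form $t^{\pm(k\Lambda_2+\Lambda_1)+m'(2\Lambda_1-\Lambda_2)}s_1$ and the second group conjugates of the form $t^{\pm(k(2\Lambda_1-\Lambda_2)+\Lambda_1)+m'\Lambda_2}s_2s_1s_2$, where $m'\in\mathbb{Z}$ depends linearly on $(m,n)$. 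This yields the inclusion $\widetilde{W}\cdot(t^{\mu}s_1)\subseteq\mathbb{O}'_{6k+3}$, and reading the table in reverse while letting $(m,n)$ range over $\mathbb{Z}^2$ realises every element of $\mathbb{O}'_{6k+3}$, giving the reverse inclusion.

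The main obstacle is purely combinatorial bookkeeping across the sixteen cases. The one substantive point beyond Lemma~\ref{6k+1} is that the extra summand $\Lambda_1$ inside $\mu$ is itself moved by $W$ (its $W$-orbit being $\{\pm\Lambda_1,\pm(\Lambda_2-\Lambda_1)\}$), and one must check that for each $w$ the ``$\Lambda_1$-sign'' appearing in $w(\mu)$ coincides, modulo $\mathbb{Z}(2\Lambda_1-\Lambda_2)$ in the $s_1$-rows and modulo $\mathbb{Z}\Lambda_2$ in the $s_2s_1s_2$-rows, with the sign of the $k\Lambda_2$-contribution, so that the combined translation shift appears with a single overall $\pm$ sign as demanded by the statement of $\mathbb{O}'_{6k+3}$. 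Once this alignment is verified row-by-row the argument is complete, with no length-reduction step needed, exactly as in Lemma~\ref{6k+1}.
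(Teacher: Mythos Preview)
Your proposal is correct and matches the paper's approach: the paper simply says ``The proof is similar to that of Lemma~\ref{6k+1} and we omit it,'' and your plan is precisely to rerun the sixteen-row conjugation table of Lemma~\ref{6k+1} with the shifted translation. Your preliminary $s_2$-conjugation reducing $t^{k(2\Lambda_1-\Lambda_2)+\Lambda_1}s_2s_1s_2$ to $t^{k\Lambda_2+\Lambda_1}s_1$ is a small convenience (it puts the element in the literal form $t^{\mu}s_1$ so the Lemma~\ref{6k+1} formulas apply verbatim) rather than a different method, and your identification of the ``sign alignment'' of the extra $\Lambda_1$ summand as the one new bookkeeping point is exactly right.
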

\begin{proof}
The proof is similar to that of Lemma \ref{6k+1} and we omit it.
\end{proof}
By Lemma \ref{s12}, \ref{s02}, \ref{s1212}, \ref{lambda}, \ref{4k+1}, \ref{6k+1}, \ref{6k+3}, and $$W_a=\mathbb{O}_2\sqcup\mathbb{O}'_2\sqcup\mathbb{O}_{s_{1212}}\sqcup(\sqcup_{\lambda\in Q_+}\mathbb{O}_\lambda)\sqcup_{k\in\mathbb{N}}(\mathbb{O}_{4k+1}\sqcup\mathbb{O}'_{6k+1}\sqcup\mathbb{O}'_{6k+3}).$$ We have
\begin{prop}\label{conjWa}
The congugacy classes in $W_a$ are: $\mathbb{O}_2,\ \mathbb{O}'_2,\ \mathbb{O}_{s_{1212}},\ \mathbb{O}_\lambda\ (\lambda\in Q_+)$ and $\mathbb{O}_{4k+1},\ \mathbb{O}'_{6k+1},\ \mathbb{O}'_{6k+3}\ (k\in\mathbb{N})$.
\end{prop}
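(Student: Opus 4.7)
The plan is to assemble Lemmas \ref{s12}, \ref{s02}, \ref{s1212}, \ref{lambda}, \ref{4k+1}, \ref{6k+1}, \ref{6k+3} into a complete classification. Recall that for any $\tilde{w}\in W_a$ we have $\widetilde{W}\cdot\tilde{w}\subset W_a$, so each $\widetilde{W}$-conjugacy class contained in $W_a$ is fully described by those lemmas. Thus the content of the proposition is precisely the displayed decomposition of $W_a$ immediately before it, and the proof reduces to verifying two things: (a) the union on the right-hand side equals $W_a$, and (b) the listed classes are pairwise disjoint.

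For (a), every element of $W_a$ has the form $t^\lambda w$ with $\lambda\in Q$ and $w\in W$, where $|W|=8$. I would case-split on $w\in\{1,s_1,s_2,s_1s_2,s_2s_1,s_1s_2s_1,s_2s_1s_2,s_1s_2s_1s_2\}$ and locate each slice inside the listed classes. Writing $\lambda=p\Lambda_1+q\Lambda_2$, the distribution is: $w=1$ gives $t^\lambda\in\mathbb{O}_{\bar\lambda}$ for the unique $\bar\lambda\in W\lambda\cap Q_+$ by Lemma \ref{lambda}; $w\in\{s_1s_2,s_2s_1\}$ is absorbed entirely into $\mathbb{O}_2$ by Lemma \ref{s12}; $w=s_1s_2s_1s_2$ splits according to the parity of $p$ into $\mathbb{O}'_2$ (odd) and $\mathbb{O}_{s_{1212}}$ (even) by Lemmas \ref{s02} and \ref{s1212}; $w\in\{s_2,s_1s_2s_1\}$ is covered by $\bigsqcup_k\mathbb{O}_{4k+1}$ after rewriting $\lambda$ in the basis $\{\Lambda_1,\Lambda_2-\Lambda_1\}$ and reading off $k$ from Lemma \ref{4k+1}; finally $w\in\{s_1,s_2s_1s_2\}$ is handled by $\bigsqcup_k(\mathbb{O}'_{6k+1}\sqcup\mathbb{O}'_{6k+3})$, where one switches to the basis $\{2\Lambda_1-\Lambda_2,\Lambda_2\}$ and uses parity of the $\Lambda_1$-coefficient to decide between Lemmas \ref{6k+1} and \ref{6k+3}.

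For (b), classes sitting in different $w$-slices are automatically disjoint since the $W$-component is preserved. Within a single slice, the explicit parametrizations given in the lemma statements make disjointness transparent: distinct $\lambda\in Q_+$ give disjoint $W$-orbits $\mathbb{O}_\lambda$; $\mathbb{O}'_2$ and $\mathbb{O}_{s_{1212}}$ are separated by the parity condition on the $\Lambda_1$-coefficient; the family $\{\mathbb{O}_{4k+1}\}_k$ is separated by $k=|q|$ (or $|p+q|$, depending on the $w$); and $\{\mathbb{O}'_{6k+1},\mathbb{O}'_{6k+3}\}_k$ is separated by the parity of $p$ together with the value of $k$.

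The only non-mechanical step is the indexing in the last case: matching an arbitrary $t^{p\Lambda_1+q\Lambda_2}s_1$ to the correct class requires a careful coordinate change and a sign choice so that the resulting $k$ is nonnegative, and then a verification that $\mathbb{O}'_{6k+1}$ and $\mathbb{O}'_{6k+3}$ partition (rather than overlap) the set of such elements. This is pure bookkeeping, but it is where any accidental double-counting would show up, so it is the step I would execute most carefully. Once it is checked, both covering and disjointness hold, and the proposition follows.
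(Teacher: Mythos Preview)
Your proposal is correct and takes essentially the same approach as the paper, which simply cites Lemmas \ref{s12}--\ref{6k+3} together with the displayed disjoint-union decomposition of $W_a$; you have just spelled out the covering and disjointness checks that the paper leaves implicit. One small wording point: in (b) it is not the $W$-component itself but its $W$-conjugacy class that is preserved under $\widetilde{W}$-conjugation (as your own slice groupings $\{s_2,s_1s_2s_1\}$, $\{s_1,s_2s_1s_2\}$, etc.\ already reflect), so you should phrase the invariant accordingly.
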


\begin{lem}\label{s2tau}
Let $\mathbb{O}_{1,\tau}=\{t^{\lambda}s_2\tau,\ t^{\lambda}s_1s_2s_1\tau|\lambda\in Q\}$. Then $\widetilde{W}\cdot (s_2\tau)=\mathbb{O}_{1,\tau}$.
\end{lem}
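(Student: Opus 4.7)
The plan is to mirror the argument of Lemma \ref{s12} but in the coset $W_a\tau$ rather than $W_a$. The proof splits naturally into the containment $\widetilde{W}\cdot(s_2\tau)\subseteq\mathbb{O}_{1,\tau}$ and the reverse inclusion.

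For the containment direction I would write any $\tilde{w}'\in\widetilde{W}$ as $t^\chi w$ or $t^\chi w\tau$ with $\chi\in Q$ and $w$ ranging over the $8$ elements of the finite Weyl group $W$. Using $\tau^2=1$, the relations $\tau s_1\tau^{-1}=s_1$ and $\tau s_2\tau^{-1}=s_0=t^{\Lambda_1}s_1s_2s_1$, together with $wt^\lambda w^{-1}=t^{w(\lambda)}$ for $w\in W$, I would tabulate $\tilde{w}'(s_2\tau)\tilde{w}'^{-1}$ case by case. In every case the outcome simplifies to $t^\mu v\tau$ with $v\in\{s_2,s_1s_2s_1\}$, so it lies in $\mathbb{O}_{1,\tau}$. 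This is entirely analogous to the case tables appearing in Lemmas \ref{4k+1} and \ref{6k+1}, and amounts to a finite mechanical calculation.

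For the reverse inclusion I would induct on $\ell(\tilde{w})$ for $\tilde{w}\in\mathbb{O}_{1,\tau}$, with base case $s_2\tau$ itself. A direct computation gives $\tau(t^\lambda s_2\tau)\tau^{-1}=t^{s_2s_1s_2(\lambda)+\Lambda_1}s_1s_2s_1\tau$, and this conjugation preserves length since $\ell(\tau)=0$. Hence the two ``types'' of elements comprising $\mathbb{O}_{1,\tau}$ are interchanged by conjugation by $\tau$, and it suffices to prove that every $\tilde{w}=t^\lambda s_2\tau$ with $\lambda\in Q$ is conjugate to $s_2\tau$. I would then case-split on which of the strata $\Xi_1,\dots,\Xi_5$ contains $\lambda$, and for each stratum exhibit an explicit $\tilde{w}'$ (built from $s_0,s_1,s_2,\tau$) whose conjugation on $t^\lambda s_2\tau$ strictly decreases the length while remaining in $\mathbb{O}_{1,\tau}$. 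The inductive hypothesis then identifies the shortened element with $s_2\tau$ up to conjugation.

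The main obstacle will be the boundary sub-cases of the strata $\Xi_i$, mirroring the special values $\lambda=(m+1)\Lambda_1$, $\lambda=\Lambda_1$ and $\lambda=-m\Lambda_1$ that required separate treatment in the proof of Lemma \ref{s12}. For those $\lambda$ the naive choice of conjugator fails to reduce length, so one must replace it by a different $\tilde{w}'$, typically involving $\tau$ itself in order to pass between the $s_2\tau$ and $s_1s_2s_1\tau$ normal forms. The $\tau$-twist modifies the translation formulas in the adjoint action relative to those of Lemma \ref{s12}, so that lemma cannot be invoked directly and the length-reduction bookkeeping has to be redone, although the structural pattern of the argument is the same.
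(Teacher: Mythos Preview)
Your proposal is correct and would go through, but the paper takes a shorter route for the reverse inclusion. Rather than inducting on length and case-splitting on the strata $\Xi_i$, the paper simply tabulates $\tilde{w}'(s_2\tau)\tilde{w}'^{-1}$ for $\tilde{w}'=t^{m\Lambda_1+n\Lambda_2}w$ and $\tilde{w}'=t^{m\Lambda_1+n\Lambda_2}w\tau$ over all eight $w\in W$, exactly as you describe for the containment direction, and then reads off surjectivity directly from the same tables. For example, the entries for $w=1$ and $w=s_1$ together already produce $t^{a\Lambda_1+b\Lambda_2}s_2\tau$ for arbitrary $a,b\in\mathbb{Z}$ (the former giving even $a$, the latter odd $a$), and the remaining rows similarly cover all $t^\lambda s_1s_2s_1\tau$. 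Since $\mathbb{O}_{1,\tau}$ is the \emph{entire} set $\{t^\lambda v\tau:\lambda\in Q,\ v\in\{s_2,s_1s_2s_1\}\}$ with no arithmetic constraint on $\lambda$, surjectivity is immediate once the tables are written down; this is why the paper concludes with a bare ``Thus, the lemma is proved.'' Your length-induction argument mirrors Lemma~\ref{s12} and would work, but that machinery is only really needed when the target set carries a nontrivial constraint on the translation (as in Lemmas~\ref{s02} or~\ref{s1212}); here it is overkill.
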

\begin{proof}
Let $\tilde{w}'=t^{m\Lambda_1+n\Lambda_2}w$. Then\[\tilde{w}'s_2\tau\tilde{w}'^{-1}=\left\{
                                                             \begin{array}{ll}
                                                             t^{2(m+n)\Lambda_1-m\Lambda_2}s_2\tau, & \text{if}\ w=1\\
                                                             t^{(2m+2n-1)\Lambda_1-(m-1)\Lambda_2}s_2\tau, & \text{if}\  w=s_1\\
                                                             t^{-(2n-1)\Lambda_1+(m+2n)\Lambda_2}s_1s_2s_1\tau, & \text{if}\ w=s_2\\
                                                             t^{(2m+2n-1)\Lambda_1-(m-1)\Lambda_2}s_2\tau, & \text{if}\ w=s_1s_2\\
                                                             t^{(2m+2n-1)\Lambda_1-m\Lambda_2}s_2\tau, & \text{if}\ w=s_2s_1\\
                                                             t^{-(2n-1)\Lambda_1+(m+2n-1)\Lambda_2}s_1s_2s_1\tau, & \text{if}\ w=s_1s_2s_1\\
                                                             t^{-2(n-1)\Lambda_1+(m+2n-1)\Lambda_2}s_1s_2s_1\tau, & \text{if}\ w=s_2s_1s_2\\
                                                             t^{2(m+n-1)\Lambda_1-(m-1)\Lambda_2}s_2\tau, & \text{if}\  w=s_1s_2s_1s_2.
                                                             \end{array}
                                                           \right.\]
Let $\tilde{w}'=t^{m\Lambda_1+n\Lambda_2}w\tau$. Then\[\tilde{w}'s_2\tau\tilde{w}'^{-1}=\left\{
                                                             \begin{array}{ll}
                                                             t^{-(2n-1)\Lambda_1+(m+2n)\Lambda_2}s_1s_2s_1\tau, & \text{if}\ w=1\\
                                                             t^{(2m+2n-1)\Lambda_1-(m-1)\Lambda_2}s_2\tau, & \text{if}\ w=s_1\\
                                                             t^{2(m+n)\Lambda_1-m\Lambda_2}s_2\tau, & \text{if}\ w=s_2\\
                                                             t^{-2n\Lambda_1+(m+2n)\Lambda_2}s_1s_2s_1\tau, & \text{if}\ w=s_1s_2\\
                                                             t^{-2(n-1)\Lambda_1=(m+2n-1)\Lambda_2}s_1s_2s_1\tau, & \text{if}\ w=s_2s_1\\
                                                             t^{2(m+n-1)\Lambda_1-(m-1)\Lambda_2}s_2\tau, & \text{if}\ w=s_1s_2s_1\\
                                                             t^{(2m+2n-1)\Lambda_1-m\Lambda_2}s_2\tau, & \text{if}\ w=s_2s_1s_2\\
                                                             t^{-(2n-1)\Lambda_1+(m+2n-1)\Lambda_2}s_1s_2s_1\tau, & \text{if}\ w=s_1s_2s_1s_2.
                                                             \end{array}
                                                           \right.\]
Thus, the lemma is proved.
\end{proof}

\begin{lem}\label{s1tau}
Let $\mathbb{O}'_{1,\tau}=\{t^{\lambda}s_1\tau| \lambda\in Q\}$. Then $\widetilde{W}\cdot(s_1\tau)=\mathbb{O}'_{1,\tau}$.
\end{lem}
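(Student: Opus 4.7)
The plan is to follow the direct-computation style used in Lemma \ref{s2tau}: I would write any $\tilde{w}'\in\widetilde{W}$ as $t^\mu w$ or $t^\mu w\tau$ with $\mu\in Q$ and $w\in W$, then compute $\tilde{w}'(s_1\tau)\tilde{w}'^{-1}$ directly and show both that every conjugate lies in $\mathbb{O}'_{1,\tau}$ and that every element of $\mathbb{O}'_{1,\tau}$ is realized.

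The key simplification I would isolate first is $\tau s_1\tau^{-1}=s_1$, which gives $s_1\tau=\tau s_1$ and, in particular, that $\tau$ commutes with $s_1\tau$ (since $\tau(s_1\tau)\tau^{-1}=\tau s_1=s_1\tau$). This reduces the case $\tilde{w}'=t^\mu w\tau$ to the case $\tilde{w}'=t^\mu w$. Combining $s_1\tau=\tau s_1$ with $\tau=t^{\epsilon_2}s_2s_1s_2$ also rewrites
\[
s_1\tau=s_1t^{\epsilon_2}s_2s_1s_2=t^{s_1(\epsilon_2)}s_1s_2s_1s_2=t^{\epsilon_2}s_1s_2s_1s_2,
\]
where $s_1s_2s_1s_2$ is the longest element of $W$; it is central in $W$ and acts as $-1$ on $P$. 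For $\tilde{w}'=t^\mu w\in W_a$, using these two properties of $s_1s_2s_1s_2$ yields
\[
\tilde{w}'(s_1\tau)\tilde{w}'^{-1}=t^{w(\epsilon_2)+2\mu}s_1s_2s_1s_2=t^{2\mu+w(\epsilon_2)-\epsilon_2}s_1\tau.
\]
The inclusion $\widetilde{W}\cdot(s_1\tau)\subseteq\mathbb{O}'_{1,\tau}$ then reduces to verifying $w(\epsilon_2)-\epsilon_2\in Q$ for each $w\in W$; a quick eight-case check shows $w(\epsilon_2)-\epsilon_2\in\{0,\ \Lambda_1-\Lambda_2,\ -\Lambda_1,\ -\Lambda_2\}\subset Q$.

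For the reverse inclusion, I would observe that the four values $0,\ \Lambda_1-\Lambda_2,\ -\Lambda_1,\ -\Lambda_2$ represent precisely the four cosets of $2Q$ in $Q$, while varying $\mu\in Q$ sweeps out $2Q$. Hence the translations $\{2\mu+w(\epsilon_2)-\epsilon_2:\mu\in Q,\ w\in W\}$ exhaust $Q$, which gives $\mathbb{O}'_{1,\tau}\subseteq\widetilde{W}\cdot(s_1\tau)$ and completes the proof.

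The main obstacle I anticipate is organizing the preliminary identities $s_1\tau=t^{\epsilon_2}s_1s_2s_1s_2$, $\tau(s_1\tau)\tau^{-1}=s_1\tau$, and the translation-commutation $\tau t^\mu\tau^{-1}=t^{s_2s_1s_2(\mu)}$ (all of which rest on $\tau=t^{\epsilon_2}s_2s_1s_2$, $\tau s_1\tau^{-1}=s_1$, and $(s_2s_1s_2)^2=1$). Once these are in place the remainder is a routine computation.
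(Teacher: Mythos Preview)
Your proof is correct and follows essentially the same direct-computation approach as the paper: compute $\tilde{w}'(s_1\tau)\tilde{w}'^{-1}$ for all $\tilde{w}'\in\widetilde{W}$ and read off both containments from the resulting translations. Your rewriting $s_1\tau=t^{\epsilon_2}w_0$ (valid since $s_1(\epsilon_2)=\epsilon_2$ in this setup) together with the fact that $w_0$ is central and acts as $-1$ yields the uniform formula $t^{2\mu+w(\epsilon_2)-\epsilon_2}s_1\tau$, whereas the paper simply tabulates the eight cases of $w$; your coset-of-$2Q$ argument also makes the surjectivity step explicit where the paper leaves it implicit in the table, but the substance is identical.
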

\begin{proof}
For any $\tilde{w}'=t^{m\Lambda_1+n\Lambda_2}w$ or $t^{m\Lambda_1+n\Lambda_2}w\tau$, we have \[\tilde{w}'s_1\tau\tilde{w}'^{-1}=\left\{
                                                             \begin{array}{ll}
                                                             t^{2m\Lambda_1+2n\Lambda_2}s_1\tau, & \text{if}\ w\in\{1,\ s_1\}\\
                                                             t^{(2m+1)\Lambda_1+(2n-1)\Lambda_2}s_1\tau, & \text{if}\  w\in\{s_2,\ s_2s_1\}\\
                                                             t^{(2m-1)\Lambda_1+2n\Lambda_2}s_1\tau, & \text{if}\  w\in\{s_1s_2,\ s_1s_2s_1\}\\
                                                             t^{2m\Lambda_1+(2n-1)\Lambda_2}s_1\tau, & \text{if}\  \in\{s_2s_1s_2,\ s_1s_2s_1s_2\}.
                                                             \end{array}
                                                           \right.\]
This lemma is proved.
\end{proof}

\begin{lem}\label{lambdatau}
Let $Q'_+=\{\lambda\in Q_+|\lambda\neq m'\Lambda_1,\ m'\in\mathbb{N}\}$. For any $\lambda\in Q'_+$, set $\mathbb{O}_{\lambda,\tau}=\widetilde{W}\cdot(t^{\lambda}s_2s_1s_2\tau)$. Then $$\mathbb{O}_{\lambda,\tau}=\{t^{\lambda'}s_2s_1s_2\tau\ |\ \lambda'\ \text{is one of the following elements}\},$$

\[\lambda'=\left\{
                                                             \begin{array}{ll}
                                                             w(\lambda), & \emph{if}\ w\in\{1,\ s_1\}\\
                                                             w(\lambda)-\Lambda_1+\Lambda_2, & \emph{if}\ w\in\{s_2,\ s_2s_1\}\\
                                                             w(\lambda)+\Lambda_1, & \emph{if}\ w\in\{s_1s_2,\ s_1s_2s_1\}\\
                                                             w(\lambda)+\Lambda_2, & \emph{if}\ w\in\{s_2s_1s_2,\ s_1s_2s_1s_2\}.
                                                             \end{array}
                                                           \right.\]
\end{lem}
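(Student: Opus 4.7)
The plan is to reduce the conjugation computation to a Weyl orbit of a translation. The key structural observation is that, writing $\tau = t^{\epsilon_2} s_2 s_1 s_2$ in the $P \rtimes W$ presentation of $\widetilde{W}$ and using $(s_2 s_1 s_2)^2 = 1$ together with $s_2 s_1 s_2(\epsilon_2) = -\epsilon_2$, the element $t^{\lambda} s_2 s_1 s_2 \tau$ collapses to a pure translation:
\[
t^{\lambda} s_2 s_1 s_2 \tau \;=\; t^{\lambda + s_2 s_1 s_2(\epsilon_2)} (s_2 s_1 s_2)^2 \;=\; t^{\lambda - \epsilon_2},
\]
where $\lambda - \epsilon_2 \in P \setminus Q$. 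This is the feature that distinguishes the present case from Lemmas \ref{s2tau} and \ref{s1tau}, whose elements retain a nontrivial $W$-component and therefore require the more involved case-by-case conjugation tables used there.

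Once this reduction is in place, the computation is essentially formal. Since translations commute, conjugation by any $\tilde{w}' = t^{\chi} w' \in \widetilde{W}$ yields $\tilde{w}' t^{\lambda - \epsilon_2} \tilde{w}'^{-1} = t^{w'(\lambda - \epsilon_2)}$, and the analogous formula holds when $\tilde{w}' = t^{\chi} w' \tau$ (the extra $\tau$ just replaces $w'$ by $w' s_2 s_1 s_2$, which again ranges over $W$). Hence
\[
\widetilde{W} \cdot (t^{\lambda} s_2 s_1 s_2 \tau) \;=\; \bigl\{ t^{w(\lambda - \epsilon_2)} \;:\; w \in W \bigr\}.
\]

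The final step is to rewrite each $t^{w(\lambda - \epsilon_2)}$ in the form $t^{\lambda'} s_2 s_1 s_2 \tau$ demanded by the lemma. Inverting the identity of the first step gives $\lambda' = w(\lambda) + \epsilon_2 - w(\epsilon_2)$. A short direct computation of $\epsilon_2 - w(\epsilon_2)$ for each of the eight $w \in W$, using $s_1(\epsilon_2) = \epsilon_2$, $s_2(\epsilon_2) = \epsilon_2 - (\Lambda_2 - \Lambda_1)$, and $s_1 s_2(\epsilon_2) = \epsilon_2 - \Lambda_1$, produces exactly the four shift values $0$, $-\Lambda_1 + \Lambda_2$, $\Lambda_1$, $\Lambda_2$ appearing in the lemma, each attained by a pair $\{w, w s_1\}$ because $s_1$ fixes $\epsilon_2$; this matches the pairing displayed in the four cases.

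There is no substantive obstacle: the whole argument rests on the algebraic identity $t^{\lambda} s_2 s_1 s_2 \tau = t^{\lambda - \epsilon_2}$ and on the fact that the $\widetilde{W}$-conjugacy class of a translation is simply its $W$-orbit. The hypothesis $\lambda \in Q'_+$ is not used in the proof itself; it is imposed only to single out a canonical dominant representative for each class and to avoid overlap with the enumeration in the other lemmas of this section.
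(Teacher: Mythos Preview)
Your proof is correct. The identity $t^{\lambda}s_2s_1s_2\tau = t^{\lambda-\epsilon_2}$ is valid in the paper's conventions (indeed $\tau^2=1$ forces $s_2s_1s_2(\epsilon_2)=-\epsilon_2$, and $\tau s_1\tau^{-1}=s_1$ forces $s_1(\epsilon_2)=\epsilon_2$), and once the element is recognized as a pure translation the conjugacy class is just the $W$-orbit $\{t^{w(\lambda-\epsilon_2)}:w\in W\}$; translating back via $t^{-\epsilon_2}=s_2s_1s_2\tau$ gives exactly the eight shifts listed, paired by right multiplication by $s_1$.

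This is a genuinely cleaner route than the paper's. The paper proves the lemma ``by direct computation'', which in the surrounding lemmas (e.g.\ Lemmas~\ref{s2tau}, \ref{s1tau}, \ref{4k+2tau}) means writing out the conjugate $\tilde{w}'(t^{\lambda}s_2s_1s_2\tau)\tilde{w}'^{-1}$ separately for each of the sixteen choices $\tilde{w}'=t^{m\Lambda_1+n\Lambda_2}w$ or $t^{m\Lambda_1+n\Lambda_2}w\tau$ with $w\in W$. Your observation that this particular element is secretly a translation in $P\setminus Q$ collapses all sixteen cases to a single $W$-orbit computation, and it also explains structurally why the answer has the shape it does (four shifts, each hit by a coset of $\langle s_1\rangle$), something the brute-force table does not make visible. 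The only thing the direct computation buys is uniformity with the neighboring lemmas; your argument is shorter and more illuminating for this specific case.
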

\begin{proof}
It is proved by direct computation.
\end{proof}

\begin{lem}\label{4k+2tau}
Let $\mathfrak{o}(k)\in\{k+1,\ -k\}$, and $$\mathbb{O}_{4k+2,\tau}=\{t^{\mathfrak{o}(k)\Lambda_1+m'(\Lambda_2-\Lambda_1)}s_1s_2\tau,\ t^{\mathfrak{o}(k)(\Lambda_2-\Lambda_1)+m'\Lambda_1}s_2s_1\tau|\ m'\in\mathbb{Z}\}.$$ 
Then $\widetilde{W}\cdot(t^{(k+1)\Lambda_1}s_1s_2\tau)=\mathbb{O}_{4k+2,\tau}$.
\end{lem}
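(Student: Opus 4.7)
The proof will follow the same template as Lemmas \ref{4k+1}, \ref{6k+1}, and \ref{s2tau}: a direct case analysis of the conjugates $\tilde{w}'\,(t^{(k+1)\Lambda_1}s_1s_2\tau)\,\tilde{w}'^{-1}$ for every $\tilde{w}'\in\widetilde{W}$, followed by the observation that the resulting list exhausts $\mathbb{O}_{4k+2,\tau}$. First I would split the computation according to whether $\tilde{w}'\in W_a$ or $\tilde{w}'\in W_a\tau$, writing $\tilde{w}'=t^{m\Lambda_1+n\Lambda_2}w$ or $\tilde{w}'=t^{m\Lambda_1+n\Lambda_2}w\tau$ with $w$ ranging over the eight elements of $W$, giving a total of $16$ cases.

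In each case I would unfold the conjugate using the standard relations $\tau^2=1$, $\tau s_0\tau=s_2$, $\tau s_1\tau=s_1$, $\tau s_2\tau=s_0$, together with the semidirect product rule $w\,t^{\mu}w^{-1}=t^{w(\mu)}$ and the fact that $t^{(k+1)\Lambda_1}s_1s_2\tau$ has translation part fixed by $s_1$ and flipped to $-(k+1)\Lambda_1=-k\Lambda_1-\Lambda_1$ under $s_1s_2s_1$, which is the source of the two possible values $\mathfrak{o}(k)\in\{k+1,-k\}$. The output of each of the $16$ cases should fall into one of the two templates $t^{\mathfrak{o}(k)\Lambda_1+m'(\Lambda_2-\Lambda_1)}s_1s_2\tau$ or $t^{\mathfrak{o}(k)(\Lambda_2-\Lambda_1)+m'\Lambda_1}s_2s_1\tau$, with $m'$ an explicit linear function of $m,n$. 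This establishes the inclusion $\widetilde{W}\cdot(t^{(k+1)\Lambda_1}s_1s_2\tau)\subseteq\mathbb{O}_{4k+2,\tau}$.

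For the reverse inclusion, I would note that in the resulting $16$-row table, the free integer $m'$ already covers all of $\mathbb{Z}$ (because the coefficient of $n$ in the formula for $m'$ is $\pm 1$ or $\pm 2$ with the complementary parity supplied by varying $m$), and both signs $\mathfrak{o}(k)=k+1$ and $\mathfrak{o}(k)=-k$ are attained, and both factor types $s_1s_2\tau$ and $s_2s_1\tau$ appear. Therefore every element in $\mathbb{O}_{4k+2,\tau}$ is a conjugate of $t^{(k+1)\Lambda_1}s_1s_2\tau$, yielding the desired equality.

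The main obstacle is purely bookkeeping: the $\tau$-conjugation interchanges $\Lambda_1$ with $\Lambda_2-\Lambda_1$ (via $s_2s_1s_2$ acting on $P$) and simultaneously switches the ``$s_1s_2$-branch'' and the ``$s_2s_1$-branch'' of the conjugacy class, and one must verify that the sixteen translation parts produced land correctly on the two prescribed lattice cosets $(k+1)\Lambda_1+\mathbb{Z}(\Lambda_2-\Lambda_1)$ and $-k\Lambda_1+\mathbb{Z}(\Lambda_2-\Lambda_1)$ (and the analogous pair for the $s_2s_1\tau$ branch), without producing spurious elements outside $\mathbb{O}_{4k+2,\tau}$. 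As in the preceding lemmas, I would present the verification as a single large case table rather than argue by induction on length, since the conjugation by pure translations already generates all the $m'$-shifts on each branch and the finite-Weyl-group conjugations supply the sign change $\mathfrak{o}(k)=k+1\leftrightarrow -k$ as well as the branch swap.
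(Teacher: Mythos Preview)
Your proposal is correct and is exactly the approach the paper takes: the paper computes $\tilde{w}'\,(t^{(k+1)\Lambda_1}s_1s_2\tau)\,\tilde{w}'^{-1}$ for $\tilde{w}'=t^{m\Lambda_1+n\Lambda_2}w$ over all eight $w\in W$, displays the resulting table (which already hits every element of $\mathbb{O}_{4k+2,\tau}$ as $m,n$ vary), remarks that the computation for $\tilde{w}'=t^{m\Lambda_1+n\Lambda_2}w\tau$ is similar, and concludes. Your description of the bookkeeping and of how the two values $\mathfrak{o}(k)\in\{k+1,-k\}$ and the two branches $s_1s_2\tau$, $s_2s_1\tau$ arise matches the paper's table precisely.
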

\begin{proof}
Let $\tilde{w}'=t^{m\Lambda_1+n\Lambda_2}w$. Then\[\tilde{w}'t^{(k+1)\Lambda_1}s_1s_2\tau\tilde{w}'^{-1}=\left\{
                                                             \begin{array}{ll}
                                                             t^{(k+1)\Lambda_1+2n(\Lambda_2-\Lambda_1)}s_1s_2\tau, & \text{if}\  w=1\\
                                                             t^{(k+1)\Lambda_1+(2n-1)(\Lambda_2-\Lambda_1)}s_1s_2\tau, & \text{if}\  w=s_2\\
                                                             t^{(k+1)(\Lambda_2-\Lambda_1)+(2m+2n)\Lambda_1}s_2s_1\tau, & \text{if}\  w=s_1\\
                                                             t^{(k+1)(\Lambda_2-\Lambda_1)+(2m+2n-1)\Lambda_1}s_2s_1\tau, & \text{if}\  w=s_1s_2\\
                                                             t^{-k(\Lambda_2-\Lambda_1)+(2m+2n)\Lambda_1}s_2s_1\tau, & \text{if}\  w=s_2s_1\\
                                                             t^{-k(\Lambda_2-\Lambda_1)+(2m+2n-1)\Lambda_1}s_2s_1\tau, & \text{if}\  w=s_2s_1s_2\\
                                                             t^{-k\Lambda_1+2n(\Lambda_2-\Lambda_1)}s_1s_2\tau, & \text{if}\  w=s_1s_2s_1\\
                                                             t^{-k\Lambda_1+(2n-1)(\Lambda_2-\Lambda_1)}s_1s_2\tau, & \text{if}\  w=s_1s_2s_1s_2.
                                                             \end{array}
                                                           \right.\]
A similar computation for $\tilde{w}'=t^{m\Lambda_1+n\Lambda_2}w\tau$. Thus, the lemma is proved.
\end{proof}

\begin{lem}\label{6ktau}
Let $$\mathbb{O}'_{6k,\tau}=\{t^{\pm k(2\Lambda_1-\Lambda_2)+m'\Lambda_2}\tau,\ t^{\pm k\Lambda_2+\Lambda_1+m'(2\Lambda_1-\Lambda_2)}s_1s_2s_1s_2\tau|\ m'\in\mathbb{Z}\}.$$ Then $\widetilde{W}\cdot(t^{k\Lambda_2+\Lambda_1}s_1s_2s_1s_2\tau)=\mathbb{O}'_{6k,\tau}$.
\end{lem}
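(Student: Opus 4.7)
The plan is to follow the same case-by-case strategy used in the proofs of Lemmas \ref{s2tau}, \ref{s1tau}, and \ref{4k+2tau}. Write the base element as $\tilde{w} = t^\nu w_0 \tau$ with $\nu = k\Lambda_2 + \Lambda_1$ and $w_0 = s_1s_2s_1s_2$ the longest element of $W$. Then compute $\tilde{w}'\tilde{w}(\tilde{w}')^{-1}$ in turn for each of the $16$ possible forms $\tilde{w}' = t^{m\Lambda_1 + n\Lambda_2} w$ and $\tilde{w}' = t^{m\Lambda_1 + n\Lambda_2} w\tau$ with $w$ running over the eight elements of $W$, and display the outputs in two tables, exactly as in the proof of Lemma \ref{s2tau}.

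Before starting I would isolate the bookkeeping identities needed in every case: $w t^\mu = t^{w(\mu)} w$ for $w \in W$ and $\mu \in P$, the relation $\tau t^\mu \tau^{-1} = t^{\delta(\mu)}$ (derived from $\tau = t^{\epsilon_2}s_2s_1s_2$), and $\tau s_i \tau^{-1} = s_{\delta(i)}$ with $s_0 \leftrightarrow s_2$ and $s_1$ fixed. The key simplifying observation is that $w_0$ is central in $W$ and acts as $-1$ on $P$, while conjugation by $\tau$ restricted to $W$ induces the inner automorphism $\sigma = \mathrm{conj}_{s_1}$. From this one sees that the Weyl component of $\tilde{w}'\tilde{w}(\tilde{w}')^{-1}$ is necessarily either $w_0$ or $1$: the four elements $w \in \{1,\,s_1,\,s_2s_1s_2,\,w_0\}$ that commute with $s_1$, together with all eight cases $\tilde{w}' = t^\chi w\tau$, yield Weyl part $w_0$, while the four elements $w \in \{s_2,\,s_1s_2,\,s_2s_1,\,s_1s_2s_1\}$ yield Weyl part $1$. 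This dichotomy matches the two families appearing in $\mathbb{O}'_{6k,\tau}$, and the $\pm$ symmetry in the lattice parameters is accounted for by the fact that $w(\nu)$ ranges over $\pm$-translates of $\nu$ as $w$ varies over $W$ (since $w_0\nu = -\nu$).

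With these structural reductions, filling in the $16$ translation computations establishes $\widetilde{W}\cdot\tilde{w} \subseteq \mathbb{O}'_{6k,\tau}$. The reverse inclusion follows by inspection of the same tables: by choosing appropriate $(m,n) \in \mathbb{Z}^2$ together with a suitable $w$, every element of the form $t^{\pm k(2\Lambda_1-\Lambda_2) + m'\Lambda_2}\tau$ and every element of the form $t^{\pm k\Lambda_2 + \Lambda_1 + m'(2\Lambda_1-\Lambda_2)}w_0\tau$ is realized as a conjugate of $\tilde{w}$.

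The main obstacle is purely computational bookkeeping. The $16$ cases are tedious, particularly those with a trailing $\tau$, where one must track the twist coming from conjugating $t^\nu$ by both $w$ and $\delta$ (so two linear maps compose with the starting translation before combining with $\chi$). There is no conceptual difficulty, and as in Lemma \ref{6k+3} a complete verification here amounts to direct linear arithmetic in $Q$ which would be asserted after computation.
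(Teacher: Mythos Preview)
Your approach is exactly what the paper intends (its own proof just says ``similar to the proof of Lemma~\ref{4k+2tau}, we omit it''), so the overall plan is fine. However, your structural preview contains an error. You claim that all eight cases $\tilde{w}' = t^\chi w\tau$ yield Weyl part $w_0$. This is false: the Weyl part of $\tilde{w}'\tilde{w}(\tilde{w}')^{-1}$ is $w_0 \cdot w s_1 w^{-1} s_1$ regardless of whether $\tilde{w}'$ carries a trailing $\tau$ or not (the extra $\tau$ on each side cancels one from $\tilde w$ and contributes only translation terms, since conjugation by $\tau$ on $W$ is inner). So in both tables the dichotomy is the same: the four $w\in\{1,\,s_1,\,s_2s_1s_2,\,w_0\}$ give Weyl part $w_0$, and the four $w\in\{s_2,\,s_1s_2,\,s_2s_1,\,s_1s_2s_1\}$ give Weyl part $1$. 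You can see this pattern already in the two displayed tables of Lemma~\ref{6k+1}. A quick sanity check: for $\tilde w'=s_2\tau$ one computes $\tilde w'\tilde w(\tilde w')^{-1}=t^{s_2s_2s_1s_2(\nu)+s_2(\Lambda_2-\Lambda_1)}\tau$, which has trivial Weyl part.

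This misstatement does not break the proof, since once you actually carry out the sixteen computations you will observe the correct split and the union of outputs will still be $\mathbb{O}'_{6k,\tau}$. But you should revise the organizing remark before writing up.
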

The proofs of this lemma and the next one are quite similar to the proof of Lemma \ref{4k+2tau}, we omit it.
\begin{lem}\label{6k+4tau}
Let $\mathfrak{o}(k)$ be as in \emph{Lemma \ref{4k+2tau}} and $$\mathbb{O}'_{6k+4,\tau}=\{t^{\mathfrak{o}(k)\Lambda_2+m'(2\Lambda_1-\Lambda_2)}s_1s_2s_1s_2\tau,\ t^{\mathfrak{o}(k)(2\Lambda_1-\Lambda_2)-\Lambda_1+m'\Lambda_2}\tau|\ m'\in\mathbb{Z}\}.$$
Then $\widetilde{W}\cdot(t^{(k+1)\Lambda_2}s_1s_2s_1s_2\tau)=\mathbb{O}'_{6k+4,\tau}$.
\end{lem}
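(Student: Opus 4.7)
The plan is to mirror the enumeration proof of Lemma \ref{4k+2tau}: compute $\tilde{w}'\tilde{w}\tilde{w}'^{-1}$ for $\tilde{w}=t^{(k+1)\Lambda_2}s_1s_2s_1s_2\tau$ as $\tilde{w}'$ ranges over the sixteen representative cases $t^{m\Lambda_1+n\Lambda_2}w$ and $t^{m\Lambda_1+n\Lambda_2}w\tau$ with $w\in\{1,s_1,s_2,s_1s_2,s_2s_1,s_1s_2s_1,s_2s_1s_2,s_1s_2s_1s_2\}$. Each row of the resulting case table should produce one of two normal forms: either $t^{\mathfrak{o}(k)\Lambda_2+m'(2\Lambda_1-\Lambda_2)}s_1s_2s_1s_2\tau$ or $t^{\mathfrak{o}(k)(2\Lambda_1-\Lambda_2)-\Lambda_1+m'\Lambda_2}\tau$, with $\mathfrak{o}(k)\in\{k+1,-k\}$ and $m'$ sweeping $\mathbb{Z}$ as $m,n$ vary. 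This yields the inclusion $\widetilde{W}\cdot\tilde{w}\subseteq\mathbb{O}'_{6k+4,\tau}$; the reverse inclusion follows by reading off explicit $(m,n,w)$ that realize each element of $\mathbb{O}'_{6k+4,\tau}$, exactly as in the previous lemmas.

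Before executing the table, I would record the commutation rules that drive every row: $wt^\lambda w^{-1}=t^{w(\lambda)}$, $\tau t^\lambda\tau^{-1}=t^{s_2s_1s_2(\lambda)}$ (using $\tau=t^{\epsilon_2}s_2s_1s_2$), $\tau s_1\tau^{-1}=s_1$, and $\tau s_2\tau^{-1}=s_0=t^{\Lambda_1}s_1s_2s_1$. The two shapes appearing in $\mathbb{O}'_{6k+4,\tau}$ then have a clean origin: the ``$s_1s_2s_1s_2\tau$-type'' elements arise when the $\tau$-conjugated $w^{-1}$ leaves the Weyl part as the longest element $w_0=s_1s_2s_1s_2$, while the pure ``$\tau$-type'' elements arise when $w$ together with $\tau s_2\tau^{-1}=t^{\Lambda_1}s_1s_2s_1$ cancels the $w_0$ factor—this is where the residual $-\Lambda_1$ translation in the second family originates. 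The dichotomy $\mathfrak{o}(k)\in\{k+1,-k\}$ reflects the fact that $w_0$ acts as $-1$ on $P$, so conjugating $(k+1)\Lambda_2$ by $w_0$ produces $-(k+1)\Lambda_2$; combining with the $t^{\Lambda_1}$-shift from $\tau s_2\tau^{-1}$ then converts the coefficient $-(k+1)$ into $-k$ modulo the lattice $\mathbb{Z}(2\Lambda_1-\Lambda_2)$.

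The main obstacle is purely bookkeeping. Each of the sixteen rows requires one to re-express $ww_0\tau w^{-1}$ in the Iwahori-Weyl normal form $t^\mu v$ or $t^\mu v\tau$ with $v\in W$, and then to merge that translation part with the contribution $2(m\Lambda_1+n\Lambda_2)=(1-w_0)(m\Lambda_1+n\Lambda_2)$ coming from conjugating $t^{(k+1)\Lambda_2}$ by $\tilde{w}'$. Because no new conceptual ingredient is needed beyond the template already used in Lemma \ref{4k+2tau}, and because the author explicitly defers this computation, I expect the difficulty to lie solely in carefully reducing each row modulo the braid relation $(s_1s_2)^4=1$ and verifying that its translation part lies in the correct arithmetic progression parametrizing $\mathbb{O}'_{6k+4,\tau}$. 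Once the table is tabulated correctly, both inclusions follow immediately, as in Lemmas \ref{s2tau} and \ref{4k+2tau}.
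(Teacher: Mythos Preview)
Your proposal is correct and is precisely the approach the paper intends: the paper omits the proof with the remark that it is ``quite similar to the proof of Lemma~\ref{4k+2tau}'', i.e.\ the same sixteen-case enumeration of $\tilde{w}'\tilde{w}\tilde{w}'^{-1}$ over $\tilde{w}'=t^{m\Lambda_1+n\Lambda_2}w$ and $t^{m\Lambda_1+n\Lambda_2}w\tau$. Your additional commentary on why the two normal forms and the dichotomy $\mathfrak{o}(k)\in\{k+1,-k\}$ arise is a helpful gloss but not required for the argument.
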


By Lemma \ref{s2tau}, \ref{s1tau}, \ref{lambdatau}, \ref{4k+2tau}, \ref{6ktau}, \ref{6k+4tau}, and $$W_a\tau=\mathbb{O}_{1,\tau}\sqcup\mathbb{O}'_{1,\tau}\sqcup(\sqcup_{\lambda\in Q'_+}\mathbb{O}_{\lambda,\tau})\sqcup_{k\in\mathbb{N}}(\mathbb{O}_{4k+2,\tau}\sqcup\mathbb{O}'_{6k,\tau}\sqcup\mathbb{O}'_{6k+4,\tau}).$$ We have
\begin{prop}\label{conjWatau}
The congugacy classes in $W_a\tau$ are: $\mathbb{O}_{1,\tau},\ \mathbb{O}'_{1,\tau},\ \mathbb{O}_{\lambda,\tau}\ (\lambda\in Q'_+)$ and $\mathbb{O}_{4k+2,\tau},\ \mathbb{O}'_{6k,\tau},\ \mathbb{O}'_{6k+4,\tau}\ (k\in\mathbb{N})$.
\end{prop}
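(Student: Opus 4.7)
The proposition packages Lemmas \ref{s2tau}--\ref{6k+4tau}, each of which already identifies a full conjugacy class in $W_a\tau$. What remains is the set-theoretic decomposition
\[W_a\tau=\mathbb{O}_{1,\tau}\sqcup\mathbb{O}'_{1,\tau}\sqcup\Bigl(\bigsqcup_{\lambda\in Q'_+}\mathbb{O}_{\lambda,\tau}\Bigr)\sqcup\bigsqcup_{k\in\mathbb{N}}\bigl(\mathbb{O}_{4k+2,\tau}\sqcup\mathbb{O}'_{6k,\tau}\sqcup\mathbb{O}'_{6k+4,\tau}\bigr).\]
Once this is established, a conjugacy class of $W_a\tau$ must coincide with exactly one of the summands, which is the statement of the proposition.

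For the exhaustion step, every element of $W_a\tau$ has the form $t^\mu w\tau$ with $\mu\in Q$ and $w\in W$, and there are eight choices of $w$. The plan is to run through each $w$: $w=s_1$ lands in $\mathbb{O}'_{1,\tau}$ regardless of $\mu$; $w\in\{s_2,s_1s_2s_1\}$ lands in $\mathbb{O}_{1,\tau}$; and the remaining five values of $w$ fall into the families $\mathbb{O}_{\lambda,\tau}$, $\mathbb{O}_{4k+2,\tau}$, $\mathbb{O}'_{6k,\tau}$, $\mathbb{O}'_{6k+4,\tau}$ according to the explicit lists in Lemmas \ref{lambdatau}--\ref{6k+4tau}. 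Writing $\mu=a\Lambda_1+b\Lambda_2$ with $a,b\in\mathbb{Z}$, the parameterizations in those lemmas by $(\mathfrak{o}(k),m')\in\{k+1,-k\}\times\mathbb{Z}$ (or by $(\pm k,m')$, or by $\lambda\in Q'_+$) become a linear-affine change of coordinates on $(a,b)$, and a direct verification shows every pair $(a,b)$ is attained exactly once across the various $w$.

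Disjointness is then established using the two invariants $\bar\nu_{\tilde w}\in P^\delta_{\mathbb{Q},+}$ and $\kappa(\tilde w)\in(P/Q)_\delta$, both constant on conjugacy classes (see Section \ref{DimDegtm}). The Newton vector separates classes with distinct translation magnitudes—for example, $\bar\nu$ of $t^{k\Lambda_2+\Lambda_1}s_1s_2s_1s_2\tau$ depends non-trivially on $k$, so the three $k$-indexed families are pairwise disjoint internally—and it distinguishes the classes $\mathbb{O}_{\lambda,\tau}$ for distinct $\lambda\in Q'_+$ via the $W\rtimes\langle\tau\rangle$-orbit of $\lambda$. Remaining possible overlaps between the $k$-indexed families and the $Q'_+$-indexed family are ruled out by the explicit $w$-component or by the $\Lambda_i$-coset conditions built into each definition.

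The main obstacle is the bookkeeping in the exhaustion step: ensuring that the parameterizations in Lemmas \ref{4k+2tau}, \ref{6ktau}, \ref{6k+4tau} by $\mathfrak{o}(k)\in\{k+1,-k\}$ together with $m'\in\mathbb{Z}$ genuinely exhaust each fibre of the projection $W_a\tau\to W$ over the corresponding $w$ without overlapping $\mathbb{O}_{\lambda,\tau}$. The lemmas themselves encode the full conjugation table, so the remaining argument collapses to a systematic tabulation and a change-of-variables check on $\mathbb{Z}^2\cong Q$.
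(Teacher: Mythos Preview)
Your proposal is correct and follows essentially the same approach as the paper: the paper simply records the disjoint union
\[W_a\tau=\mathbb{O}_{1,\tau}\sqcup\mathbb{O}'_{1,\tau}\sqcup(\sqcup_{\lambda\in Q'_+}\mathbb{O}_{\lambda,\tau})\sqcup_{k\in\mathbb{N}}(\mathbb{O}_{4k+2,\tau}\sqcup\mathbb{O}'_{6k,\tau}\sqcup\mathbb{O}'_{6k+4,\tau})\]
immediately before the proposition and lets it, together with Lemmas \ref{s2tau}--\ref{6k+4tau}, carry the result; you spell out the fibre-by-fibre bookkeeping that justifies this partition. One minor remark: your appeal to the Newton map for disjointness is legitimate but heavier than necessary, since the explicit set descriptions in the lemmas already make the disjointness a direct inspection (distinct conjugacy classes are automatically disjoint, so one only needs to see that the listed sets are pairwise unequal, which is immediate from their $w$-components and the coset conditions on the translation parts).
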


\subsection{Class polynomials}
For simplicity, for any $\tilde{w},\ \tilde{w}'\in\widetilde{W}$, we will write $T_{\tilde{w}}\equiv T_{\tilde{w}'} \mod[\widetilde{H},\widetilde{H}]$  (or $[\widetilde{H},\widetilde{H}]_{\delta}$) by $T_{\tilde{w}}\equiv T_{\tilde{w}'}$, and write $v-v^{-1}$ by $[v]$. Let $Q_{++}:=Q_+-(\mathbb{N}\Lambda_1\cup\mathbb{N}\Lambda_2)$, and for any $\lambda=m\Lambda_1+n\Lambda_2\in Q_+$, let $\Lambda_{\lambda}:=Q_{++}\cap(\{\lambda-n'(2\Lambda_1-\Lambda_2)|n'\in\mathbb{N}_+\}\cup\{\lambda-n'(\Lambda_2-\Lambda_1)|n'\in\mathbb{N}_+\})$, $\Lambda^1_{\lambda}=Q_{++}\cap\{\lambda-n'(2\Lambda_1-\Lambda_2)|n'\in\mathbb{N}\}$, $\Lambda^2_{\lambda}=Q_{++}\cap\{\lambda-n'(\Lambda_2-\Lambda_1)|n'\in\mathbb{N}_+\}$, $\Lambda^{1,2}_{\lambda}=\Lambda^1_{\lambda}\cup\Lambda^2_{\lambda}$, $\Lambda^{<,1}_\lambda=\Lambda^1_\lambda-\{\lambda\},\ \Lambda^{\leqslant, 2}_{\lambda}=\Lambda^2_\lambda\cup\{\lambda\}$. Moreover, we set $\diamondsuit_\lambda$ to be the closed convex set bounded by the following vertices: $0,\ (\frac{m}{2}+n)\Lambda_2,\ (m+n)\Lambda_1,\ \lambda$. Let $\Lambda^{<}_{\lambda}=Q_{++}\cap\diamondsuit_\lambda\setminus\{\lambda\}$ and $\Lambda^{\leqslant}_{\lambda}=Q_{++}\cap\diamondsuit_\lambda$.

In the remaining of this section, we will calculate degrees of class polynomials for all $\tilde{w}$ and $\mathbb{O}$.

\subsubsection{$\tilde{w}\in W_a$}

\begin{prop}\label{deglambda}
If $\lambda\in Q_+$ and $\tilde{w}\in\mathbb{O}_{\lambda}$, then \[\deg(f_{\tilde{w},\mathbb{O}})=\left\{
                                                             \begin{array}{ll}
                                                               0, & \emph{if}\ \mathbb{O}=\mathbb{O}_{\lambda}\\
                                                               -\infty, & \emph{otherwise}.
                                                             \end{array}
                                                           \right.\]
\end{prop}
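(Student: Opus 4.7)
The plan is to show that every element of the conjugacy class $\mathbb{O}_\lambda$ is already a minimal length element, so that the base case of the inductive definition of the class polynomial applies directly.

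First, I would invoke the standard length formula for translations in an affine Weyl group:
\[
\ell(t^\mu) = \sum_{\alpha\in\Phi^+} \bigl|\alpha(\mu)\bigr|,
\]
which in the $\widetilde{C}_2$ setting one can also verify directly on the explicit generators. Because $W$ permutes $\Phi$ up to sign, the right-hand side is $W$-invariant in $\mu$, so $\ell(t^{w(\lambda)}) = \ell(t^\lambda)$ for every $w\in W$. By Lemma \ref{lambda}, $\mathbb{O}_\lambda = \{t^{w(\lambda)}\mid w\in W\}$, so all elements of $\mathbb{O}_\lambda$ have the common length $\ell(t^\lambda) = \langle\lambda,2\rho\rangle$ (using that $\lambda\in Q_+$ is dominant). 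In particular, every $\tilde{w}\in\mathbb{O}_\lambda$ is trivially a minimal length element of its own conjugacy class.

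Next, recall from Section \ref{Cp} that when $\tilde{w}$ is a minimal length element in its $\delta$-conjugacy class, the inductive construction of $f_{\tilde{w},\mathbb{O}}$ gives the base case
\[
f_{\tilde{w},\mathbb{O}} = \begin{cases} 1, & \tilde{w}\in\mathbb{O},\\ 0, & \tilde{w}\notin\mathbb{O}. \end{cases}
\]
Since for $Sp_4$ (split) the Frobenius acts trivially on $\widetilde{W}$, $\delta$-conjugacy coincides with ordinary conjugacy, and the previous paragraph shows that this base case applies to every $\tilde{w}\in\mathbb{O}_\lambda$. Therefore $f_{\tilde{w},\mathbb{O}_\lambda}=1$ and $f_{\tilde{w},\mathbb{O}}=0$ for all other conjugacy classes $\mathbb{O}$ listed in Proposition \ref{conjWa}. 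This yields the stated degrees $0$ and $-\infty$ respectively (using the convention in Remark after Theorem \ref{DimDeg} that $\deg(0) = -\infty$).

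There is essentially no obstacle here: the only thing to check is the $W$-invariance of $\ell(t^\mu)$, which is elementary. The proof is therefore a short application of the length formula combined with the base case of the class polynomial recursion; no reduction step of the form $(v-v^{-1})f_{s_i\tilde{w}_1,\mathbb{O}} + f_{s_i\tilde{w}_1 s_{\delta(i)},\mathbb{O}}$ ever needs to be invoked, which is why only degree $0$ can appear.
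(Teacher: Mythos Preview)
Your proof is correct and follows essentially the same approach as the paper: both observe (via Lemma~\ref{lambda}) that every element of $\mathbb{O}_\lambda$ has the same length, hence is minimal, so the base case of the class polynomial recursion applies directly. You are just slightly more explicit in justifying the equal-length claim via the length formula for translations, whereas the paper relies on this fact being stated in the proof of Lemma~\ref{lambda}.
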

\begin{proof}
By Lemma \ref{lambda}, all elements in $\mathbb{O}_{\lambda}$ have the same length. Thus, for any $\tilde{w},\ \tilde{w}'\in\mathbb{O}_\lambda$, $T_{\tilde{w}}\equiv T_{\tilde{w}'}=T_{\mathbb{O}_\lambda}$.
\end{proof}
Let $\tilde{w}\in\mathbb{O}_2$,\ then we can write $\tilde{w}$ as $t^\lambda w$, where $w\in\{s_1s_2,\ s_2s_1\}$ and $\lambda=m\Lambda_1+n\Lambda_2\in Q$, with $m,\ n\in\mathbb{Z}$. We want to calculate $f_{\tilde{w},\ \mathbb{O}}$ for $\tilde{w}\in\mathbb{O}_2$. Since $$T_{t^{m\Lambda_1+n\Lambda_2}s_1s_2}\equiv T_{\tau\cdot(t^{m\Lambda_1+n\Lambda_2}s_1s_2)}= T_{t^{(m-1)\Lambda_1-(m+n-1)\Lambda_2}s_2s_1}$$ and $$T_{t^{m\Lambda_1+n\Lambda_2}s_2s_1}\equiv T_{\tau\cdot(t^{m\Lambda_1+n\Lambda_2}s_2s_1)}=T_{t^{(m+1)\Lambda_1-(m+n)\Lambda_2}s_1s_2},$$ it is sufficient to consider $\lambda=m(2\Lambda_1-\Lambda_2)+n\Lambda_1$ where $m\in\mathbb{Z}$, $n\in\mathbb{N}_+$.

Now, we assume $m,\ n\in\mathbb{N}_+$.
\begin{prop}\label{1degso2}
For any $\tilde{w}\in\mathbb{O}_2$, we set $\tilde{w}=\tilde{w}_i$ $(1\leqslant i\leqslant 8)$, where$$\tilde{w}_1=t^{n\Lambda_1}s_2s_1,\ \tilde{w}_2\in\{t^{n(\Lambda_2-\Lambda_1)}s_2s_1,\ t^{n\Lambda_1}s_1s_2,\ t^{(n-1)(\Lambda_2-\Lambda_1)}s_1s_2\},$$ $$\tilde{w}_3\in\{t^{ m(2\Lambda_1-\Lambda_2)+n\Lambda_1}s_1s_2,\ t^{-m(2\Lambda_1-\Lambda_2)+n(\Lambda_2-\Lambda_1)}s_2s_1\},$$ $$\tilde{w}_4=t^{m\Lambda_2+(n-1)(\Lambda_2-\Lambda_1)}s_1s_2,\ \tilde{w}_5=t^{m\Lambda_2+(n-1)\Lambda_1}s_2s_1,$$ $$\tilde{w}_6=t^{m(2\Lambda_1-\Lambda_2)+n\Lambda_1}s_2s_1,\ \tilde{w}_7\in\{t^{m\Lambda_2+n\Lambda_1}s_1s_2,$$ $$t^{m\Lambda_2+n(\Lambda_2-\Lambda_1)}s_2s_1\},\ \tilde{w}_8=t^{(1-m)(2\Lambda_1-\Lambda_2)+n(\Lambda_2-\Lambda_1)}s_1s_2.$$Then
\[\deg(f_{\tilde{w},\mathbb{O}})=\left\{
                                                             \begin{array}{ll}
                                                              2, & \emph{if}\ \mathbb{O}\in\{\mathbb{O}_{\lambda}|\ \lambda\in\lambda(\tilde{w})\}\\
                                                              1, & \emph{if}\ \mathbb{O}\in \mathbb{O}^<_{\tilde{w}}\\
                                                              0, & \emph{if}\ \mathbb{O}=\mathbb{O}_{2}\\
                                                              -\infty, & \emph{otherwise}.
                                                             \end{array}
                                                           \right.\]
In the above formula,\[\lambda(\tilde{w})=\left\{
                                                             \begin{array}{ll}
                                                             \Lambda^<_{n\Lambda_1}, & \emph{if}\ \tilde{w}=\tilde{w}_1\\
                                                             \Lambda^<_{(n-1)\Lambda_1}, & \emph{if}\ \tilde{w}=\tilde{w}_2\\
                                                             \Lambda^{\leqslant}_{m\Lambda_2+(n-1)\Lambda_1}, & \emph{if}\ \tilde{w}=\tilde{w}_3\ \emph{or}\ \tilde{w}_7\\
                                                             \Lambda^{\leqslant}_{(m-1)\Lambda_2+n\Lambda_1}, & \emph{if}\ \tilde{w}=\tilde{w}_4\ \emph{or}\ \tilde{w}_8\\
                                                             \Lambda^{\leqslant}_{m\Lambda_2+(n-2)\Lambda_1}, & \emph{if}\ \tilde{w}=\tilde{w}_5\\
                                                             \Lambda^{\leqslant}_{(m+1)\Lambda_2+(n-2)\Lambda_1}, & \emph{if}\ \tilde{w}=\tilde{w}_6.
                                                             \end{array}
                                                           \right.\]
$\mathbb{O}^<_{\tilde{w}}=\{\mathbb{O}_{4i+1}|1\leqslant i\leqslant n(\tilde{w})\}\cup\{\mathbb{O}'_{6i+3}|0\leqslant i\leqslant n'(\tilde{w})\} \cup\{\mathbb{O}'_{6i+1}|1\leqslant i\leqslant n''(\tilde{w})\}$, where \[n(\tilde{w})=\left\{
\begin{array}{ll}
n-1, & \emph{if}\ \tilde{w}=\tilde{w}_1\ \emph{or}\ \tilde{w}_2\\
m+n-1, & \emph{if}\ \tilde{w}\in\{\tilde{w}_3,\ \tilde{w}_4,\ \tilde{w}_6,\ \tilde{w}_7,\ \tilde{w}_8\}\\
m+n-2, & \emph{if}\ \tilde{w}=\tilde{w}_5,
\end{array}
\right.\]
\[n'(\tilde{w})=\left\{
\begin{array}{ll}
\lfloor\frac{n-1}{2}\rfloor, & \emph{if}\ \tilde{w}=\tilde{w}_1\\
\lfloor\frac{n}{2}\rfloor-1, & \emph{if}\ \tilde{w}=\tilde{w}_2\\
m+\lfloor\frac{n}{2}\rfloor-1, & \emph{if}\ \tilde{w}=\tilde{w}_3\ \emph{or}\ \tilde{w}_7\\
m+\lfloor\frac{n-1}{2}\rfloor-1, & \emph{if}\ \tilde{w}\in\{\tilde{w}_4,\ \tilde{w}_5,\ \tilde{w}_8\}\\
m+\lfloor\frac{n-1}{2}\rfloor, & \emph{if}\ \tilde{w}=\tilde{w}_6,
\end{array}
\right.\]
\[n''(\tilde{w})=\left\{
\begin{array}{ll}
\lfloor\frac{n}{2}\rfloor, & \emph{if}\ \tilde{w}=\tilde{w}_1\\
\lfloor\frac{n-1}{2}\rfloor, & \emph{if}\ \tilde{w}=\tilde{w}_2\\
m+\lfloor\frac{n-1}{2}\rfloor, & \emph{if}\ \tilde{w}=\tilde{w}_3\ \emph{or}\ \tilde{w}_7\\
m+\lfloor\frac{n}{2}\rfloor-1, & \emph{if}\ \tilde{w}\in\{\tilde{w}_4,\ \tilde{w}_5,\ \tilde{w}_8\}\\
m+\lfloor\frac{n}{2}\rfloor, & \emph{if}\ \tilde{w}=\tilde{w}_6.
\end{array}
\right.\]
\end{prop}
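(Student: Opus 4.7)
The plan is to prove Proposition \ref{1degso2} by induction on $\ell(\tilde{w})$, using the recursive construction of $f_{\tilde{w}, \mathbb{O}}$ from Section \ref{Cp}. Since $T_{\tau \tilde{w} \tau^{-1}} \equiv T_{\tilde{w}}$ modulo $[\widetilde{H}, \widetilde{H}]$ and $\ell(\tau \tilde{w} \tau^{-1}) = \ell(\tilde{w})$, the two $\tau$-conjugation identities displayed just before the proposition reduce the task to representatives $\tilde{w} = t^{\lambda}w$ with $w \in \{s_1s_2, s_2s_1\}$ and $\lambda = m(2\Lambda_1 - \Lambda_2) + n\Lambda_1$, $m \in \mathbb{Z}$, $n \in \mathbb{N}_+$; the eight families $\tilde{w}_1, \ldots, \tilde{w}_8$ are exactly the canonical representatives of these regions, partitioned by the sign of $m$, the value of $n$, and the choice of $w$.

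For the base of the induction, $\ell(\tilde{w}) = 2$ forces $\tilde{w} \in \{s_1s_2, s_2s_1\}$, which is minimal length in $\mathbb{O}_2$; then $f_{\tilde{w}, \mathbb{O}_2} = 1$ and $f_{\tilde{w}, \mathbb{O}} = 0$ otherwise, matching the last two lines of the claim. For the inductive step, I would reuse the explicit length-reducing conjugators constructed in the proof of Lemma \ref{s12}: for each partition cell $\lambda \in \Xi_i$ ($i = 1, \ldots, 5$), that proof produces $\tilde{w}' \in \widetilde{W}$ and $s_j \in \widetilde{\mathbb{S}}$ such that $\tilde{w}_{(1)} := \tilde{w}'\tilde{w}\tilde{w}'^{-1}$ satisfies $\tilde{w}_{(1)} \approx \tilde{w}$, $\ell(\tilde{w}_{(1)}) = \ell(\tilde{w})$, and $\ell(s_j \tilde{w}_{(1)} s_j) = \ell(\tilde{w}) - 2$. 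Substituting into
\[
f_{\tilde{w}, \mathbb{O}} \;=\; (v - v^{-1})\, f_{s_j \tilde{w}_{(1)}, \mathbb{O}} \;+\; f_{s_j \tilde{w}_{(1)} s_j, \mathbb{O}}
\]
turns the problem into tracking the degrees of the two summands by the induction hypothesis: the first summand contributes degree $1 + \deg f_{s_j \tilde{w}_{(1)}, \mathbb{O}}$ and the second contributes $\deg f_{s_j \tilde{w}_{(1)} s_j, \mathbb{O}}$, with no cancellation possible since every $f_{\tilde{x}, \mathbb{O}}$ has nonnegative coefficients as a polynomial in $v - v^{-1}$.

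The degree maxima then split into two streams. For pure-translation classes $\mathbb{O} = \mathbb{O}_{\lambda'}$, the shorter element $s_j \tilde{w}_{(1)}$ has the form $t^{\lambda''} w'$ with $w' \in W$ of length $< 2$, so by Proposition \ref{deglambda} its class polynomial sees only $\mathbb{O}_{\lambda''}$, contributing degree $0$; the factor $v - v^{-1}$ upgrades this to degree $1$, and iterating the recursion sweeps $\lambda''$ through the polytopal regions $\Lambda^{\leqslant}_{\mu}$, $\Lambda^{<}_{\mu}$, and $\diamondsuit_\lambda$ described in $\lambda(\tilde{w})$, yielding overall degree $2$ after a second application of the recursion. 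For the infinite families $\mathbb{O}_{4k+1}$, $\mathbb{O}'_{6k+1}$, $\mathbb{O}'_{6k+3}$, it is the second summand that contributes: because $s_j \tilde{w}_{(1)} s_j$ remains in $\mathbb{O}_2$ with $n$ (or $m$) decreased, each pair of recursion steps descends into one new representative of each infinite family, so $n(\tilde{w}), n'(\tilde{w}), n''(\tilde{w})$ count how many such descents occur before termination, and the floor functions $\lfloor n/2 \rfloor, \lfloor (n-1)/2 \rfloor$ reflect a parity distinction between even and odd $n$.

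The main obstacle will be the combinatorial bookkeeping across the eight cases $\tilde{w}_1, \ldots, \tilde{w}_8$: each requires its own choice of $(\tilde{w}', s_j)$ dictated by which $\Xi_i$ contains the translation part, and for each one must verify that $s_j \tilde{w}_{(1)} s_j$ lies in a family whose class polynomials are already computed by induction, and that the aggregate over the recursion tree matches the stated $\lambda(\tilde{w})$ and $\mathbb{O}^<_{\tilde{w}}$ exactly. The small differences among the formulas for $\lambda(\tilde{w})$, $n(\tilde{w})$, $n'(\tilde{w})$, $n''(\tilde{w})$ come from delicate boundary behavior at $n = 1$, $m = 0$, and at the interface between the $s_1s_2$ and $s_2s_1$ representatives; for any $\mathbb{O}$ outside the listed collections both summands vanish throughout the recursion tree, giving $\deg f_{\tilde{w}, \mathbb{O}} = -\infty$ and closing the induction.
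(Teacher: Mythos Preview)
Your inductive skeleton is right, but there is a genuine gap in how you analyze the first summand $s_j\tilde{w}_{(1)}$. You claim this element has the form $t^{\lambda''}w'$ with $\ell(w')<2$ and then invoke Proposition~\ref{deglambda}; but Proposition~\ref{deglambda} only handles pure translations ($w'=1$). In every case here the first summand has $\ell(w')=1$: for instance, for $\tilde{w}_1=t^{n\Lambda_1}s_2s_1$ one gets $s_2\tilde{w}_1=t^{n\Lambda_1}s_1$, and after the next length-$2$ drop the first summand is $t^{(n-1)\Lambda_1}s_2$. These are \emph{not} covered by Proposition~\ref{deglambda}; they lie in the classes $\mathbb{O}_{4k+1}$, $\mathbb{O}'_{6k+1}$, $\mathbb{O}'_{6k+3}$, and their class polynomials must be computed separately before the induction can proceed.

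The paper handles this by deriving, inside the proof, auxiliary identities of the form
\[
T_{t^{i\Lambda_1}s_1}\equiv\sum_{\lambda\in\Lambda_{i\Lambda_1}}[v]\,T_{\mathbb{O}_\lambda}+T_{\mathbb{O}'_{\ast}},\qquad
T_{t^{i\Lambda_2+j\Lambda_1}s_2}\equiv\sum_{\lambda\in\Lambda^2_{i\Lambda_2+j\Lambda_1}}[v]\,T_{\mathbb{O}_\lambda}+T_{\mathbb{O}_{4(i+j)+1}},
\]
where $\mathbb{O}'_{\ast}$ is $\mathbb{O}'_{6\frac{i-1}{2}+3}$ or $\mathbb{O}'_{6\frac{i}{2}+1}$ according to the parity of $i$. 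These are what actually produce the degree-$2$ contributions on $\mathbb{O}_\lambda$ (one factor $[v]$ from the outer recursion, one from the inner) and the degree-$1$ contributions on $\mathbb{O}_{4k+1},\mathbb{O}'_{6k+1},\mathbb{O}'_{6k+3}$. Consequently your attribution is also inverted: both the degree-$2$ and the degree-$1$ terms arise from expanding the \emph{first} summands $T_{t^\lambda s_1}$, $T_{t^\lambda s_2}$ along the recursion, while the second summand $s_j\tilde{w}_{(1)}s_j$ simply stays in $\mathbb{O}_2$ and carries the induction down to $T_{\mathbb{O}_2}$. Until you supply and prove those auxiliary reductions for length-$1$ finite parts, the induction cannot close.
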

\begin{proof}
If $\tilde{w}=\tilde{w}_1$, then $$T_{\tilde{w}}\equiv [v]T_{s_2t^{n\Lambda_1}s_2s_1}+T_{s_2\cdot(t^{n\Lambda_1}s_2s_1)}=[v]T_{t^{n\Lambda_1}s_1}+T_{t^{n\Lambda_1}s_1s_2}$$

\ \ \ \ \ \ \ \ \ \ $\equiv[v]T_{t^{n\Lambda_1}s_1}+T_{(\tau s_0)\cdot(t^{n\Lambda_1}s_1s_2)}=[v]T_{t^{n\Lambda_1}s_1}+T_{t^{(n-1)(\Lambda_2-\Lambda_1)}s_1s_2}$\\

\ \ \ \ \ \ \ \ \ \ $\equiv[v]T_{t^{n\Lambda_1}s_1}+[v]T_{s_1t^{(n-1)(\Lambda_2-\Lambda_1)}s_1s_2}+T_{s_1\cdot(t^{(n-1)(\Lambda_2-\Lambda_1)}s_1s_2)}$\\

\ \ \ \ \ \ \ \ \ \ $=[v](T_{t^{n\Lambda_1}s_1}+T_{t^{(n-1)\Lambda_1}s_2})+T_{t^{(n-1)\Lambda_1}s_2s_1}$\\

\ \ \ \ \ \ \ \ \ \ $\equiv \cdots\cdots$
$$\equiv[v](\sum_{i=1}^{n-1}T_{\mathbb{O}_{4i+1}}+\sum_{i=1}^{n}T_{t^{i\Lambda_1}s_1})+T_{\mathbb{O}_2}.\ \ \ \ \ \ \ \ \ \ \ \ \ \ \ \ \ \ \ \ \ \ \ \ \ \ \ \ \ \ \ \ \ $$
And \[T_{t^{i\Lambda_1}s_1}\equiv\left\{
\begin{array}{ll}
\sum_{\lambda\in\Lambda_{i\Lambda_1}}[v]T_{\mathbb{O}_\lambda}+T_{\mathbb{O}'_{6\frac{i-1}{2}+3}}, & i\ \text{is odd}\\
\sum_{\lambda\in\Lambda_{i\Lambda_1}}[v]T_{\mathbb{O}_\lambda}+T_{\mathbb{O}'_{6\frac{i}{2}+1}}, & i\ \text{is even}.
\end{array}
\right.\]
From the above calculation, we deduce that $$f_{t^{n(\Lambda_2-\Lambda_1)}s_2s_1,\mathbb{O}}=f_{t^{n\Lambda_1}s_1s_2,\mathbb{O}}=f_{t^{(n-1)(\Lambda_2-\Lambda_1)}s_1s_2,\mathbb{O}}.$$Thus, the results on $\tilde{w}_2$ is obtained.

If $\tilde{w}=\tilde{w}_3=t^{m(2\Lambda_1-\Lambda_2)+n\Lambda_1}s_1s_2$, then
$$\tilde{w}\xrightarrow[]{s_0} t^{-n\Lambda_1-(m-1)\Lambda_2}s_2s_1\ \tilde{\thicksim}\ t^{m\Lambda_2+(n-1)(\Lambda_2-\Lambda_1)}s_1s_2$$ $$\xrightarrow[]{s_1} t^{m\Lambda_2+(n-1)\Lambda_1}s_2s_1\ \xrightarrow[]{s_0}\ t^{-(m-1)(2\Lambda_1-\Lambda_2)-(n-1)\Lambda_1}s_1s_2$$ $$\tilde{\thicksim} t^{-(m-1)(2\Lambda_1-\Lambda_2)+n(\Lambda_2-\Lambda_1)}s_2s_1\ \approx\\ t^{(m-1)(2\Lambda_1-\Lambda_2)+n\Lambda_1}s_1s_2.$$
Thus
$T_{\tilde{w}}\equiv [v] T_{t^{(1-n)\Lambda_1-m\Lambda_2}s_1}+T_{t^{-n\Lambda_1-(m-1)\Lambda_2}s_2s_1}$\\

$\equiv [v]T_{t^{m\Lambda_2+(n-1)(\Lambda_2-\Lambda_1)}s_1}+T_{t^{m\Lambda_2+(n-1)(\Lambda_2-\Lambda_1)}s_1s_2}$\\

$\equiv [v]T_{t^{m\Lambda_2+(n-1)(\Lambda_2-\Lambda_1)}s_1}+[v]T_{t^{m\Lambda_2+(n-1)\Lambda_1}s_2}+T_{t^{m\Lambda_2+(n-1)\Lambda_1}s_2s_1}$\\

$\equiv [v]T_{t^{m\Lambda_2+(n-1)(\Lambda_2-\Lambda_1)}s_1}+[v]T_{t^{m\Lambda_2+(n-1)\Lambda_1}s_2}$\\

$+[v]T_{t^{-m(2\Lambda_1-\Lambda_2)-(n-2)\Lambda_1}s_{212}}+\ T_{t^{(1-m)(2\Lambda_1-\Lambda_2)-(n-1)\Lambda_1}s_1s_2}$\\

$\equiv [v]T_{t^{m\Lambda_2+(n-1)(\Lambda_2-\Lambda_1)}s_1}+[v]T_{t^{m\Lambda_2+(n-1)\Lambda_1}s_2}+[v]T_{t^{m\Lambda_2+(n-2)(\Lambda_2-\Lambda_1)}s_1}$\\

$\ \ \ \ +\ T_{t^{(m-1)(2\Lambda_1-\Lambda_2)+n\Lambda_1}s_1s_2}$\\

$\equiv \cdots\cdots$\\

$\equiv [v]\sum_{i=1}^m(T_{t^{i\Lambda_2+(n-1)(\Lambda_2-\Lambda_1)}s_1}+T_{t^{i\Lambda_2+(n-1)\Lambda_1}s_2}+T_{t^{i\Lambda_2+(n-2)(\Lambda_2-\Lambda_1)}s_1})$\\

$+ T_{t^{n\Lambda_1}s_1s_2}.$

Together with the previous cases, $$T_{t^{i\Lambda_2+j\Lambda_1}s_2}\equiv [v]\sum_{\lambda\in\Lambda^2_{i\Lambda_2+j\Lambda_1}}T_{\mathbb{O}_{\lambda}}+T_{\mathbb{O}_{4(i+j)+1}},$$ and \[T_{t^{i\Lambda_2+j(\Lambda_2-\Lambda_1)}s_1}\equiv\left\{
\begin{array}{ll}
\sum_{\lambda\in\Lambda^1_{i\Lambda_2+j\Lambda_1}}[v]T_{\mathbb{O}_\lambda}+T_{\mathbb{O}'_{6(i+\frac{j-1}{2})+3}}, & j\ \text{is odd}\\
\sum_{\lambda\in\Lambda^1_{i\Lambda_2+j\Lambda_1}}[v]T_{\mathbb{O}_\lambda}+T_{\mathbb{O}'_{6(i+\frac{j}{2})+1}}, & j\ \text{is even},
\end{array}
\right.\]
the results on $\tilde{w}_3$ is proved. And the cases of $\tilde{w}=\tilde{w}_4$ or $\tilde{w}_5$ follow directly from that of $\tilde{w}_3$.

If $\tilde{w}=\tilde{w}_6$, we have\\
$\ \ T_{\tilde{w}}\equiv [v]T_{s_2\cdot t^{m(2\Lambda_1-\Lambda_2)+n\Lambda_1}s_2s_1}+T_{t^{m\Lambda_2+n\Lambda_1}s_1s_2} =[v]T_{t^{m\Lambda_2+n\Lambda_1}s_1}+T_{t^{m\Lambda_2+n\Lambda_1}s_1s_2}$\\

\ $\equiv [v]T_{t^{m\Lambda_2+n\Lambda_1}s_1}+T_{s_1\cdot(t^{m\Lambda_2+n\Lambda_1}s_1s_2)}= [v]T_{t^{m\Lambda_2+n\Lambda_1}s_1}+T_{t^{m\Lambda_2+n(\Lambda_2-\Lambda_1)}s_2s_1}$\\

\ $\equiv [v]T_{t^{m\Lambda_2+n\Lambda_1}s_1}+[v]T_{s_0t^{m\Lambda_2+n(\Lambda_2-\Lambda_1)}s_2s_1}+T_{s_0t^{m\Lambda_2+n(\Lambda_2-\Lambda_1)}s_2s_1s_0}$\\

\ $= [v]T_{t^{m\Lambda_2+n\Lambda_1}s_1}+[v]T_{t^{(1-m)(2\Lambda_1-\Lambda_2)+(n+1)(\Lambda_2-\Lambda_1)}s_2s_1s_2}+ T_{t^{(1-m)(2\Lambda_1-\Lambda_2)+n(\Lambda_2-\Lambda_1)}s_1s_2}$\\

\ $\equiv [v]T_{t^{m\Lambda_2+n\Lambda_1}s_1}+[v]T_{t^{(m-1)\Lambda_2+(n+1)\Lambda_1}s_1}+T_{t^{(1-m)(2\Lambda_1-\Lambda_2)+n(\Lambda_2-\Lambda_1)}s_1s_2}$\\

\ $\equiv [v]T_{t^{m\Lambda_2+n\Lambda_1}s_1}+[v]T_{t^{(m-1)\Lambda_2+(n+1)\Lambda_1}s_1}+[v]T_{t^{(m-1)(2\Lambda_1-\Lambda_2)+n\Lambda_1}s_2}$\\

\ $+\ \  T_{t^{(m-1)(2\Lambda_1-\Lambda_2)+n\Lambda_1}s_2s_1}$\\

\ $\equiv \cdots\cdots$
\ $$\equiv [v]\sum_{i=1}^m(T_{t^{i\Lambda_2+n\Lambda_1}s_1}+T_{t^{(i-1)\Lambda_2+(n+1)\Lambda_1}s_1}+T_{t^{(i-1)(2\Lambda_1-\Lambda_2)+n\Lambda_1}s_2}) +T_{t^{n\Lambda_1}s_2s_1}.$$
Note that $$T_{t^{i(2\Lambda_1-\Lambda_2)+j\Lambda_1}s_2}\equiv [v]T_{t^{i\Lambda_2+j\Lambda_1}}+T_{t^{i\Lambda_2+j\Lambda_1}s_2}$$ and $$T_{t^{i\Lambda_2+j\Lambda_1}s_1}\equiv T_{t^{i\Lambda_2+j(\Lambda_2-\Lambda_1)}s_1}-[v]T_{t^{i\Lambda_2+j\Lambda_1}}.$$
Along with those previous cases, results on $\tilde{w}_6$ is proved. And the cases of $\tilde{w}=\tilde{w}_7$ or $\tilde{w}_8$ follow directly from that of $\tilde{w}_6$.
\end{proof}

\begin{prop}\label{deg1}
Let $\tilde{w}\in\{\tilde{w}_1,\ \tilde{w}_2\}\subset\mathbb{O}_{1}$, where $\ell(\tilde{w}_1)=4m-1$, $\ell(\tilde{w}_2)=4m-3$. Then \[\deg(f_{\tilde{w},\mathbb{O}})=\left\{
                                        \begin{array}{ll}
                                        3, & \emph{if}\ \mathbb{O}\in\{\mathbb{O}_\lambda|\ \lambda\in\Lambda^{<}_{\lambda(\tilde{w})-\Lambda_1}\}\\
                                        2, & \emph{if}\ \mathbb{O}\in\mathbb{O}^<_{t^{\lambda(\tilde{w})}s_1s_2}\\
                                        1, & \emph{if}\ \mathbb{O}\in\{\mathbb{O}_2\}\cup\{\mathbb{O}_{i\Lambda_1}|\ 1\leqslant i\leqslant m-1\}\\
                                        0, & \emph{if}\ \mathbb{O}=\mathbb{O}_1\\
                                        -\infty, & \emph{otherwise},
                                        \end{array}
                                        \right.\]
where \[\lambda(\tilde{w})=\left\{
                                                             \begin{array}{ll}
                                                              m\Lambda_1, & \emph{if}\ \tilde{w}=\tilde{w}_1\\
                                                              (m-1)\Lambda_1, & \emph{if}\ \tilde{w}=\tilde{w}_2.
                                                             \end{array}
                                                           \right.\]
\end{prop}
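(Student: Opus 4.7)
The plan is to iterate the reduction rule
\[f_{\tilde w,\mathbb{O}}=[v]\,f_{s_i\tilde w_1,\mathbb{O}}+f_{s_i\tilde w_1 s_{\delta(i)},\mathbb{O}}\]
from Section \ref{Cp} (here $\delta$ acts trivially since $Sp_4$ is split), starting from $\tilde w_1=t^{m\Lambda_1}s_1s_2s_1$ of length $4m-1$, and reducing to elements already treated in Propositions \ref{deglambda} and \ref{1degso2}. Each length-dropping step multiplies a prior class polynomial by a factor of $[v]$, so the final degrees can be tracked by counting how many reduction steps are needed to reach each base-case term.

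Concretely, for $\tilde w_1$ I would first use the identity $s_0=t^{\Lambda_1}s_1s_2s_1$ together with a suitable $\tau$- or $s_0$-conjugation (as in the proof of Lemma \ref{4k+1}) to produce an $\tilde w_1'\approx\tilde w_1$ admitting a simple reflection $s_i$ with $\ell(s_i\tilde w_1')<\ell(\tilde w_1')$ and $\ell(s_i\tilde w_1' s_i)<\ell(\tilde w_1')$. This splits $T_{\tilde w_1}$ as $[v]\,T_{t^\mu s_j s_k}+T_{t^{\mu'}s_1s_2s_1}$, where the first summand has length $4m-2$ and is handled by Proposition \ref{1degso2}, while the second is a shorter element of $\mathbb{O}_1$ that, after one further $\approx$-move, becomes $\tilde w_2$ at parameter $m$ (equivalently, $\tilde w_1$ at parameter $m-1$). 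Induction on $m$ closes the recursion; the base case $m=1$ reduces to $T_{s_2}=T_{\mathbb{O}_1}$, giving degree $0$. The polynomial for $\tilde w_2$ is obtained as a by-product, by reading off the short term at each iteration.

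The main obstacle is bookkeeping. At each step, Proposition \ref{1degso2} already contributes its own $[v]$-factors in front of certain $T_{\mathbb{O}_\lambda}$'s and $T_{\mathbb{O}^<}$'s, and one must verify that the additional $[v]$ picked up here combines with those to yield precisely the stated degrees $3$, $2$, $1$, $0$ on the index sets $\Lambda^<_{\lambda(\tilde w)-\Lambda_1}$, $\mathbb{O}^<_{t^{\lambda(\tilde w)}s_1s_2}$, $\{\mathbb{O}_2\}\cup\{\mathbb{O}_{i\Lambda_1}\mid 1\leqslant i\leqslant m-1\}$, and $\{\mathbb{O}_1\}$. The delicate checks are that the $\Lambda^{1,2}$-regions produced by the expansions of $T_{t^\mu s_1}$ and $T_{t^\mu s_2}$ assemble to the convex set $\diamondsuit$ without overshooting, that the parity indices distinguishing $\mathbb{O}'_{6i+1}$ from $\mathbb{O}'_{6i+3}$ propagate correctly through the extra conjugation, and that the degree-$1$ contributions concentrate only on $\mathbb{O}_2$ and the $\mathbb{O}_{i\Lambda_1}$ and not on any other $\mathbb{O}_{4k+1}$, $\mathbb{O}'_{6k+1}$, or $\mathbb{O}'_{6k+3}$.
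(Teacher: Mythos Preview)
Your approach is essentially the paper's, but there are two concrete slips. First, $t^{m\Lambda_1}s_1s_2s_1$ has length $4m-3$, not $4m-1$ (for $m=1$ it is $s_0$); the paper instead takes $\tilde w_1=t^{m(\Lambda_2-\Lambda_1)}s_2$ of length $4m-1$ and $\tilde w_2=t^{m\Lambda_1}s_1s_2s_1$ of length $4m-3$, using the $\tau$-conjugation $\tau t^{(1-m)(\Lambda_2-\Lambda_1)}s_2\tau^{-1}=t^{m\Lambda_1}s_1s_2s_1$ to pass between forms. Second, the passage from $\tilde w_2$ back to $\tilde w_1$ at parameter $m-1$ is not an $\approx$-move as you claim: the length drops by $2$, so it is a genuine reduction step and it peels off an additional term $[v]\,T_{t^{(m-1)\Lambda_1}}$. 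These are exactly the terms that produce the degree-$1$ contributions on $\{\mathbb{O}_{i\Lambda_1}\mid 1\le i\le m-1\}$ which you list in your final paragraph but do not generate in your iteration. The paper's two-step recursion is
\[
T_{t^{m(\Lambda_2-\Lambda_1)}s_2}\equiv [v]\,T_{t^{m\Lambda_1}s_1s_2}+T_{t^{m\Lambda_1}s_1s_2s_1},\qquad
T_{t^{m\Lambda_1}s_1s_2s_1}\equiv [v]\,T_{t^{(m-1)\Lambda_1}}+T_{t^{(m-1)(\Lambda_2-\Lambda_1)}s_2},
\]
which telescopes to $T_{\tilde w_1}\equiv [v]\sum_{i=1}^{m-1}T_{\mathbb{O}_{i\Lambda_1}}+[v]\sum_{i=1}^{m}T_{t^{i\Lambda_1}s_1s_2}+T_{\mathbb{O}_1}$; then Proposition~\ref{1degso2} (applied to the $\tilde w_2$ case there, i.e.\ $t^{i\Lambda_1}s_1s_2$) handles the middle sum and supplies the degree-$2$ and degree-$3$ pieces. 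Once you make these two corrections your plan matches the paper exactly.
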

\begin{proof}
Since $$\tau t^{(1-m)(\Lambda_2-\Lambda_1)}s_2\tau^{-1}=t^{m\Lambda_1}s_1s_2s_1,\ \tau t^{(1-m)\Lambda_1}s_1s_2s_1\tau^{-1}=t^{m(\Lambda_2-\Lambda_1)}s_2,$$ it is sufficient to assume $\tilde{w}_1=t^{m(\Lambda_2-\Lambda_1)}s_2$, $\tilde{w}_2=t^{m\Lambda_1}s_1s_2s_1$. Thus, $$T_{t^{m(\Lambda_2-\Lambda_1)}s_2}\equiv [v]T_{t^{m\Lambda_1}s_1s_2}+T_{t^{m\Lambda_1}s_1s_2s_1}\ \ \ \ \ \ \ \ \ \ \ \ \ \ \ \ \ \ \ \ \ \ \ \ \ \ \ \ \ \ \ \ \ \ \ \ \ \ \ \ $$
$\ \ \ \ \ \ \ \ \ \ \ \ \ \ \ \ \ \ \ \ \ \ \ \ \equiv [v]T_{t^{m\Lambda_1}s_1s_2}+[v]T_{t^{(m-1)\Lambda_1}}+T_{t^{(m-1)(\Lambda_2-\Lambda_1)}s_2}$\\

$\ \ \ \ \ \ \ \ \ \ \ \ \ \ \ \ \ \ \ \ \ \equiv \cdots\cdots$\\

$\ \ \ \ \ \ \ \ \ \ \ \ \ \ \ \ \ \ \ \ \ \equiv [v]\sum_{i=1}^{m-1}T_{t^{i\Lambda_1}} +[v]\sum_{i=1}^{m}T_{t^{i\Lambda_1}s_1s_2}+T_{\mathbb{O}_1}$.\\
By Proposition \ref{1degso2}, the proposition is proved.
\end{proof}

\begin{prop}\label{deg4k+1}
(1) Let $\tilde{w}\in\mathbb{O}_{4k+1}$, and $\ell(\tilde{w})=4k+2m-1$, where $1\leqslant m\leqslant k+1$, then \[\deg(f_{\tilde{w},\mathbb{O}})=\left\{
                                                             \begin{array}{ll}
                                                              1, & \emph{if}\ \mathbb{O}\in\{\mathbb{O}_{k\Lambda_1+i(\Lambda_2-\Lambda_1)}\ |\ 1\leqslant i\leqslant m\}\\
                                                               0, & \emph{if}\ \mathbb{O}=\mathbb{O}_{4k+1}\\
                                                               -\infty, & \emph{otherwise}.
                                                             \end{array}
                                                           \right.\]

(2) Let $\tilde{w}\in\{\tilde{w}_1,\ \tilde{w}_2\}\subset\mathbb{O}_{4k+1}$, and $\ell(\tilde{w}_1)= 6k+4m-1,\ \ell(\tilde{w}_2)=6k+4m+1$, then \[\deg(f_{\tilde{w},\mathbb{O}})=\left\{
                                                             \begin{array}{ll}
                                                              3, & \emph{if}\ \mathbb{O}\in\{\mathbb{O}_\lambda\ |\ \lambda\in\Lambda^{\leqslant}_{k\Lambda_2+(m-1)\Lambda_1}\}\\
                                                              2, & \emph{if}\ \mathbb{O}\in\mathbb{O}^<_{t^{k\Lambda_2+m\Lambda_1}s_1s_2}\\
                                                              1, & \emph{if}\ \mathbb{O}\in\mathbb{O}(\tilde{w})\\
                                                              -\infty, & \emph{otherwise},
                                                             \end{array}
                                                           \right.\]
where \[\mathbb{O}(\tilde{w})=\left\{
\begin{array}{ll}
\{\mathbb{O}_2,\ \mathbb{O}_{k\Lambda_2}\}, & \emph{if}\ \tilde{w}=\tilde{w}_1\\
\{\mathbb{O}_2,\ \mathbb{O}_{k\Lambda_2},\ \mathbb{O}_{k\Lambda_2+m\Lambda_1}\}, & \emph{if}\ \tilde{w}=\tilde{w}_2.
\end{array}
\right.\]
\end{prop}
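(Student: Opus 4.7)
The plan is to compute $f_{\tilde{w},\mathbb{O}}$ by iterating the recursive definition from Section \ref{Cp}, using induction on $m$ (equivalently, on $\ell(\tilde{w})$). Since $\tau$-conjugation preserves both length and the conjugacy class (and $\delta$ is trivial here as $Sp_4$ is split), I would first reduce to a canonical representative in each length stratum of $\mathbb{O}_{4k+1}$: for part (1) it is natural to take $\tilde{w}=t^{k\Lambda_1+(m-1)(\Lambda_2-\Lambda_1)}s_2$, while for part (2) the canonical forms would be based on $t^{k\Lambda_2+(m-1)\Lambda_1}s_1s_2s_1$ and $t^{k\Lambda_2+m\Lambda_1}s_1s_2s_1$, or their $\tilde{\approx}_\delta$-equivalents guaranteed by Lemma \ref{4k+1}. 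The base case $m=1$ of part (1) is the minimal length situation: $t^{k\Lambda_1}s_2$ has length $4k+1$, which is the minimum in $\mathbb{O}_{4k+1}$, so the definition directly gives $f_{\tilde{w},\mathbb{O}}=\mathbf{1}_{\mathbb{O}=\mathbb{O}_{4k+1}}$.

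For the inductive step of part (1), I would choose $s_i\in\widetilde{\mathbb{S}}$ so that both $\ell(s_i\tilde{w})<\ell(\tilde{w})$ and $\ell(s_i\tilde{w}s_i)<\ell(\tilde{w})$, yielding
$$T_{\tilde{w}}\equiv [v]\,T_{s_i\tilde{w}}+T_{s_i\tilde{w}s_i}.$$
With the expected choice ($s_i=s_1$ in the normal form above), $s_i\tilde{w}$ becomes a pure translation $t^{k\Lambda_1+(m-1)(\Lambda_2-\Lambda_1)}$ lying in the conjugacy class $\mathbb{O}_{k\Lambda_1+(m-1)(\Lambda_2-\Lambda_1)}$, whose class polynomial is constant by Proposition \ref{deglambda} and so contributes the degree-$1$ term $[v]\,T_{\mathbb{O}_{k\Lambda_1+(m-1)(\Lambda_2-\Lambda_1)}}$. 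The second term, after a $\tilde{\approx}_\delta$-reduction, is the canonical representative of $\mathbb{O}_{4k+1}$ at length $4k+2m-3$, and the inductive hypothesis finishes it.

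For part (2) the same recursive scheme applies, but requires more steps because the elements are substantially longer. At each stage, one branch drops into a pure translation class (handled by Proposition \ref{deglambda}), into an element of $\mathbb{O}_2$ (handled by Proposition \ref{1degso2}), or stays inside $\mathbb{O}_{4k+1}$ at strictly smaller length and is handled by part (1). Combining these contributions, the $[v]$ factor produced by the recursive step multiplied by the degree-$2$ residue of Proposition \ref{1degso2} pushes the degree up to $3$ precisely on $\Lambda^{\leqslant}_{k\Lambda_2+(m-1)\Lambda_1}$; degree $2$ is inherited on the set $\mathbb{O}^<_{t^{k\Lambda_2+m\Lambda_1}s_1s_2}$ from the degree-$1$ terms of the $\mathbb{O}_2$ expansion; while degree $1$ on $\mathbb{O}_2$, $\mathbb{O}_{k\Lambda_2}$ and (for $\tilde{w}_2$) $\mathbb{O}_{k\Lambda_2+m\Lambda_1}$ comes from the leading $T_{\mathbb{O}_2}$ and $T_{\mathbb{O}_{4k+1}}$ residual terms.

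The main obstacle is the combinatorial bookkeeping: matching the sets $\Lambda^{\leqslant}_\lambda$, $\Lambda^{1,2}_\lambda$ and $\mathbb{O}^<_{t^\lambda s_1s_2}$ that appear in Proposition \ref{1degso2} with the residuals produced by the successive recursions, and verifying that no extra classes contribute. A secondary subtlety is choosing the correct $\tilde{w}_1\approx_\delta\tilde{w}$ at each step so that the recursion actually reduces length in the intended direction; this is where the explicit $\tilde{\approx}_\delta$-moves in Lemma \ref{4k+1} become essential. Once the reduction path is fixed, the remainder is a lengthy but routine chain of identities $T_{\tilde{w}}\equiv[v]T_{s_i\tilde{w}}+T_{s_i\tilde{w}s_i}$, modeled directly on the computations carried out in Propositions \ref{1degso2} and \ref{deg1}.
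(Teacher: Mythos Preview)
Your approach is essentially the paper's own: reduce to canonical representatives in $\mathbb{O}_{4k+1}$, then run the recursion $T_{\tilde w}\equiv[v]T_{s_i\tilde w_1}+T_{s_i\tilde w_1 s_i}$ inductively on length, feeding the side terms into Proposition~\ref{deglambda} (pure translations) and Proposition~\ref{1degso2} (elements of $\mathbb{O}_2$). For part (2) the paper takes $\tilde w_1=t^{k\Lambda_2+m(\Lambda_2-\Lambda_1)}s_2$ and iterates a two-step cycle
\[
T_{t^{k\Lambda_2+m(\Lambda_2-\Lambda_1)}s_2}\equiv[v]T_{t^{k\Lambda_2+m\Lambda_1}s_1s_2}+[v]T_{t^{k\Lambda_2+(m-1)\Lambda_1}}+T_{t^{k\Lambda_2+(m-1)(\Lambda_2-\Lambda_1)}s_2},
\]
which is exactly the ``$\mathbb{O}_2$ branch $+$ translation branch $+$ shorter $\mathbb{O}_{4k+1}$ branch'' pattern you describe.

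One small correction: in your inductive step for part (1), with $\tilde w=t^{k\Lambda_1+(m-1)(\Lambda_2-\Lambda_1)}s_2$, the element $s_1\tilde w$ is \emph{not} a pure translation---it lands in $\mathbb{O}_2$ (finite part $s_1s_2$). The single-step reduction that actually peels off a pure translation passes through the $s_1s_2s_1$-form of the representative (or, equivalently, uses a two-step cycle as in part (2), where the $\mathbb{O}_2$ contribution happens to vanish in this length range). This is a routine adjustment of the reflection choice, not a structural issue; once fixed, your induction and the paper's coincide.
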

\begin{proof}
Since $$T_{t^{k\Lambda_1+m(\Lambda_2-\Lambda_1)}s_2}\equiv T_{t^{-k\Lambda_1+m(\Lambda_2-\Lambda_1)}s_2},\ \ \ \ \ \ \ \ \ $$
 $$T_{t^{k(\Lambda_2-\Lambda_1)+m\Lambda_1}s_1s_2s_1}\equiv T_{t^{-k(\Lambda_2-\Lambda_1)+m\Lambda_1}s_1s_2s_1},$$ $$T_{t^{k(\Lambda_2-\Lambda_1)+m\Lambda_1}s_1s_2s_1}\equiv T_{t^{-k\Lambda_1+(1-m)(\Lambda_2-\Lambda_1)}s_2},\ $$ $$T_{t^{-k(\Lambda_2-\Lambda_1)+m\Lambda_1}s_1s_2s_1}\equiv T_{t^{k\Lambda_1+(1-m)(\Lambda_2-\Lambda_1)}s_2},\ $$ it is sufficient to consider $\tilde{w}=t^{k\Lambda_1+m(\Lambda_2-\Lambda_1)}s_2$ or $\tilde{w}=t^{k\Lambda_2+m\Lambda_1}s_1s_2s_1$.

(1) If $\ell(\tilde{w})=4k+2m-1$ with $1\leqslant m\leqslant k$, one can assume that $\tilde{w}=t^{k\Lambda_1+m(\Lambda_2-\Lambda_1)}s_2$. If $\ell(\tilde{w})=6k+1$, we can assume that  $\tilde{w}=t^{k\Lambda_2+\Lambda_1}s_1s_2s_1$. Thus, $$T_{\tilde{w}}\equiv [v]\sum_{i=1}^{m-1}T_{t^{k\Lambda_1+i(\Lambda_2-\Lambda_1)}}+T_{\mathbb{O}_{4k+1}}.$$

(2) If $\ell(\tilde{w})\geqslant 6k+3$, one can take $\tilde{w}_1=t^{k\Lambda_2+m(\Lambda_2-\Lambda_1)}s_2,\ \tilde{w}_2=t^{k\Lambda_2+(m+1)\Lambda_1}s_1s_2s_1$. So

$T_{t^{k\Lambda_2+m(\Lambda_2-\Lambda_1)}s_2}\equiv [v]T_{t^{k\Lambda_2+m\Lambda_1}s_1s_2} +T_{t^{k\Lambda_2+m\Lambda_1}s_1s_2s_1}$\\

$\equiv [v]T_{t^{k\Lambda_2+m\Lambda_1}s_1s_2}+[v]T_{t^{k\Lambda_2+(m-1)\Lambda_1}}+T_{t^{k\Lambda_2+(m-1)(\Lambda_2-\Lambda_1)}s_2}$\\

$\equiv[v]\sum_{i=1}^{m}T_{t^{k\Lambda_2+i\Lambda_1}s_1s_2}+[v]\sum^{m-1}_{i=0}T_{t^{k\Lambda_2+i\Lambda_1}}+T_{t^{k\Lambda_2}s_2}.$\\
At last, we use Proposition \ref{1degso2} to finish the proof.
\end{proof}

\begin{prop}\label{deg1p}
Let $\tilde{w}\in\mathbb{O}'_1$, then
\[\deg(f_{\tilde{w},\mathbb{O}})=\left\{
                                        \begin{array}{ll}
                                        3, & \emph{if}\ \mathbb{O}\in\{\mathbb{O}_\lambda\ |\ \lambda\in\Lambda^{\leqslant}_{\lambda(\tilde{w})}\}\\
                                        2, & \emph{if}\ \mathbb{O}\in\mathbb{O}^<_{x(\tilde{w})}\\
                                        1, & \emph{if}\ \ell(\tilde{w})>1,\ \mathbb{O}\in\{\mathbb{O}_2\}\cup\{\mathbb{O}_{i\Lambda_2}\ |\ 1\leqslant i\leqslant m-1\}\\
                                        0, & \emph{if}\ \mathbb{O}=\mathbb{O}'_1\\
                                        -\infty, & \emph{otherwise},
                                        \end{array}
                                        \right.\]
where \[\lambda(\tilde{w})=\left\{
                                                             \begin{array}{ll}
                                                              m\Lambda_2-\Lambda_1, & \emph{if}\ \ell(\tilde{w})=6m-1\\
                                                              (m-1)\Lambda_2, & \emph{if}\ \ell(\tilde{w})=6m-3\\
                                                              (m-1)\Lambda_2-\Lambda_1, & \emph{if}\ \ell(\tilde{w})=6m-5,
                                                             \end{array}
                                                           \right.\]
\[x(\tilde{w})=\left\{
                                                             \begin{array}{ll}
                                                              t^{m\Lambda_2}s_2s_1, & \emph{if}\ \ell(\tilde{w})=6m-1\\
                                                              t^{(m-1)(2\Lambda_1-\Lambda_2)+\Lambda_1}s_1s_2, & \emph{if}\ \ell(\tilde{w})=6m-3\\
                                                              t^{(m-1)\Lambda_2}s_2s_1, & \emph{if}\ \ell(\tilde{w})=6m-5.
                                                             \end{array}
                                                           \right.\]
\end{prop}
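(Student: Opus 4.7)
The plan is to proceed by induction on $\ell(\tilde{w})$, following the same template used in Propositions \ref{deg1} and \ref{deg4k+1}. By Lemma \ref{6k+1} with $k=0$, every $\tilde{w}\in\mathbb{O}'_1$ has the form $t^{m'(2\Lambda_1-\Lambda_2)}s_1$ or $t^{m'\Lambda_2}s_2s_1s_2$, and conjugation by $\tau$ together with $s_0$ and $s_2$ permits me to assume, in each of the three length-regimes $\ell(\tilde{w})\in\{6m-5,\, 6m-3,\, 6m-1\}$, a convenient representative with translation part in $Q_+$. The base case $m=1$, $\ell(\tilde{w})=1$ (so $\tilde{w}=s_1$) is immediate since $T_{s_1}=T_{\mathbb{O}'_1}$.

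For the inductive step I would apply the recursion
\[
T_{\tilde{w}} \equiv [v]\, T_{s_i\tilde{w}_1} + T_{s_i\tilde{w}_1 s_{\delta(i)}}
\]
with $s_i\in\widetilde{\mathbb{S}}$ chosen so that $s_i\tilde{w}_1 s_{\delta(i)}$ represents an element of $\mathbb{O}'_1$ of length $\ell(\tilde{w})-2$, while $s_i\tilde{w}_1$ is either a translation $t^\mu$ or an element of $\mathbb{O}_2$. Iterating down the chain $6m-1 \to 6m-3 \to 6m-5 \to 6(m-1)-1 \to \cdots \to 1$ produces at each step a contribution $[v]\,T_{x_j}$ with $x_j\in\mathbb{O}_2$, together with possibly a translation contribution $[v]\,T_{t^\mu}$, and terminates at $T_{s_1}$.

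Unwinding the recursion should yield an expression
\[
T_{\tilde{w}} \equiv [v]\sum_j T_{x_j} + [v]\sum_\mu T_{t^\mu} + T_{s_1}.
\]
Substituting Proposition \ref{1degso2} for each $T_{x_j}$ lifts its degree-$2$ contributions on classes $\mathbb{O}_\lambda$ to degree $3$ (producing the $\Lambda^{\leqslant}_{\lambda(\tilde{w})}$ in the statement), lifts its degree-$1$ contributions on the classes $\mathbb{O}_{4k+1}$, $\mathbb{O}'_{6k+1}$, $\mathbb{O}'_{6k+3}$ (the $\mathbb{O}^<_{x(\tilde{w})}$ terms) to degree $2$, and lifts its degree-$0$ contribution on $\mathbb{O}_2$ to degree $1$. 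Combined with Proposition \ref{deglambda} for the translation terms $t^\mu\in\mathbb{O}_{i\Lambda_2}$ (degree $1$), this matches the four-tier degree pattern $3,2,1,0$ claimed in the proposition.

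The main obstacle is purely combinatorial: selecting the reduction path in each of the three length-regimes so that the apex $x(\tilde{w})\in\mathbb{O}_2$ emerging at the final step is exactly $t^{m\Lambda_2}s_2s_1$, $t^{(m-1)(2\Lambda_1-\Lambda_2)+\Lambda_1}s_1s_2$, or $t^{(m-1)\Lambda_2}s_2s_1$ respectively, and verifying that the translation indices accumulated along the way give precisely $\{i\Lambda_2\mid 1\leqslant i\leqslant m-1\}$. A wrong choice of reduction shifts the apex by one lattice unit and alters $\Lambda^{\leqslant}_{\lambda(\tilde{w})}$ by a whole row of points, so this bookkeeping must be done carefully; however, no conceptually new input beyond the machinery already deployed in Propositions \ref{1degso2}, \ref{deg1}, and \ref{deg4k+1} is required.
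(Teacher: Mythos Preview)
Your proposal is correct and matches the paper's proof: the paper reduces (via $\tau$-conjugation) to the representatives $t^{m(2\Lambda_1-\Lambda_2)}s_1$, $t^{m\Lambda_2}s_2s_1s_2$, $t^{(1-m)(2\Lambda_1-\Lambda_2)}s_1$ for the three length regimes, writes down exactly the three recursion steps you describe (with the $[v]$-term landing in $\mathbb{O}_2$ at the first two steps and on the translation $t^{(m-1)\Lambda_2}$ at the third), and then invokes Proposition~\ref{1degso2}. Your one phrasing imprecision---that ``each step'' produces an $\mathbb{O}_2$ contribution---should be corrected to ``two of every three steps'', since the $6m-5\to 6(m-1)-1$ step yields only the translation term; otherwise the argument is identical.
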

\begin{proof}
Since $$\tau t^{m\Lambda_2}s_2s_1s_2\tau^{-1}=t^{(1-m)\Lambda_2}s_2s_1s_2,\ \ \ \ \ \ \ \ \ \ \ \ \ \ $$ $$\tau t^{m'(2\Lambda_1-\Lambda_2)}s_1\tau^{-1}=t^{m'(2\Lambda_1-\Lambda_2)}s_1,\ (m'\in\mathbb{Z}),$$ it is enough to consider $\tilde{w}\in\{t^{m(2\Lambda_1-\Lambda_2)}s_1,\ t^{(1-m)(2\Lambda_1-\Lambda_2)}s_1,\ t^{m\Lambda_2}s_2s_1s_2\ |\ m\in\mathbb{N}_+\}$. Moreover, we have\\

$T_{t^{m(2\Lambda_1-\Lambda_2)}s_1}\equiv [v]T_{t^{m\Lambda_2}s_2s_1}+T_{t^{m\Lambda_2}s_2s_1s_2},$\\

$T_{t^{m\Lambda_2}s_2s_1s_2}\equiv [v]T_{t^{(1-m)(2\Lambda_1-\Lambda_2)+(\Lambda_2-\Lambda_1)}s_2s_1}+T_{t^{(1-m)(2\Lambda_1-\Lambda_2)}s_1},$\\

$T_{t^{(1-m)(2\Lambda_1-\Lambda_2)}s_1}\equiv [v]T_{t^{(m-1)\Lambda_2}}+T_{t^{(m-1)(2\Lambda_1-\Lambda_2)}s_1}.$

Together with Proposition \ref{1degso2}, the proposition is proved.

\end{proof}

\begin{prop}\label{deg6k+1}
(1) Let $\tilde{w}\in\mathbb{O}'_{6k+1}$, where $k\in\mathbb{N}_+$. If $\ell(\tilde{w})=6k+2m-1,\ 1\leqslant m\leqslant k$, then \[\deg(f_{\tilde{w},\mathbb{O}})=\left\{
                                                             \begin{array}{ll}
                                                              1, & \emph{if}\ \mathbb{O}\in\{\mathbb{O}_{k\Lambda_2+i(2\Lambda_1-\Lambda_2)}\ |\ 1\leqslant i\leqslant m-1\}\\
                                                               0, & \emph{if}\ \mathbb{O}=\mathbb{O}'_{6k+1}\\
                                                               -\infty, & \emph{otherwise}.
                                                             \end{array}
                                                           \right.\]

(2) If $\tilde{w}\in\mathbb{O}'_{6k+1}$, where $k\in\mathbb{N}_+$ and $\ell(\tilde{w})>8k-1$, then \[\deg(f_{\tilde{w},\mathbb{O}})=\left\{
                                        \begin{array}{ll}
                                        3, & \emph{if}\ \mathbb{O}\in\{\mathbb{O}_\lambda,\ \lambda\in\Lambda^{\leqslant}_{\lambda(\tilde{w})}\}\\
                                        2, & \emph{if}\ \mathbb{O}\in\mathbb{O}^<_{x(\tilde{w})}\\
                                        1, & \emph{if}\ \mathbb{O}\in\{\mathbb{O}_2,\ \mathbb{O}_{2k\Lambda_1}\}\ \emph{or}\\
                                        &\ \ \  \mathbb{O}=\mathbb{O}_{2k\Lambda_1+(m-1)\Lambda_2}\ \emph{and}\ \ell(\tilde{w})=8k+6m-5\\
                                        0, & \emph{if}\ \mathbb{O}=\mathbb{O}'_{6k+1}\ \emph{and}\ \ell(\tilde{w})=8k+1\\
                                        -\infty, & \emph{otherwise},
                                        \end{array}
                                        \right.\]
where \[\lambda(\tilde{w})=\left\{
                                                             \begin{array}{ll}
                                                              m\Lambda_2+(2k-1)\Lambda_1, & \emph{if}\ \ell(\tilde{w})=8k+6m-1\\
                                                              (m-1)\Lambda_2+2k\Lambda_1, & \emph{if}\ \ell(\tilde{w})=8k+6m-3\\
                                                              (m-1)\Lambda_2+(2k-1)\Lambda_1, & \emph{if}\ \ell(\tilde{w})=8k+6m-5,
                                                             \end{array}
                                                           \right.\]
\[x(\tilde{w})=\left\{
                                                             \begin{array}{ll}
                                                              t^{m\Lambda_2+2k\Lambda_1}s_2s_1, & \emph{if}\ \ell(\tilde{w})=8k+6m-1\\
                                                              t^{(1-m)(2\Lambda_1-\Lambda_2)+2k(\Lambda_2-\Lambda_1)}s_1s_2, & \emph{if}\ \ell(\tilde{w})=8k+6m-3\\
                                                              t^{(m-1)\Lambda_2+2k\Lambda_1}s_2s_1, & \emph{if}\ \ell(\tilde{w})=8k+6m-5.
                                                             \end{array}
                                                           \right.\]
\end{prop}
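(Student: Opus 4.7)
The plan is to imitate the strategy of Propositions \ref{deg4k+1} and \ref{deg1p}, with separate handling of the ``short'' regime (Part (1)) and the ``long'' regime (Part (2)). By Lemma \ref{6k+1}, every element of $\mathbb{O}'_{6k+1}$ is of the form $t^{\pm k\Lambda_2+m'(2\Lambda_1-\Lambda_2)}s_1$ or $t^{\pm k(2\Lambda_1-\Lambda_2)+m'\Lambda_2}s_2s_1s_2$. Using $\tau$-conjugation (which, as in the proofs of Propositions \ref{deg4k+1} and \ref{deg1p}, identifies the $\pm$ choices and swaps the two shapes) together with $s_i$-conjugations that preserve length, one first reduces the problem to a small list of canonical representatives indexed by $m\in\mathbb{N}_+$ and by $\ell(\tilde w)\bmod 6$ in the long regime.

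For Part~(1), with $\ell(\tilde w)=6k+2m-1$ and $1\leqslant m\leqslant k$, I would take the representative $\tilde w=t^{k\Lambda_2+m(2\Lambda_1-\Lambda_2)}s_1$ (or its counterpart of shape $t^{\bullet}s_2s_1s_2$). Choosing an $s_i\in\widetilde{\mathbb{S}}$ with $\ell(s_i\tilde w)<\ell(\tilde w)$ and applying the recursion
\[ T_{\tilde w}\equiv [v]\,T_{s_i\tilde w}+T_{s_i\tilde w s_{\delta(i)}}, \]
the conjugate $s_i\tilde w s_{\delta(i)}$ remains in $\mathbb{O}'_{6k+1}$ with length reduced by $2$, while the residual $s_i\tilde w$ is a pure translation $t^{k\Lambda_2+i(2\Lambda_1-\Lambda_2)}$. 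Iterating $m-1$ times gives
\[ T_{\tilde w}\equiv [v]\sum_{i=1}^{m-1}T_{t^{k\Lambda_2+i(2\Lambda_1-\Lambda_2)}}+T_{\mathbb{O}'_{6k+1}}, \]
and the degree claims follow by Proposition \ref{deglambda}.

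For Part~(2), with $\ell(\tilde w)>8k-1$, one splits into three subcases $\ell(\tilde w)=8k+6m+j$ for $j\in\{-5,-3,-1\}$ and picks canonical representatives matching the three forms of $\lambda(\tilde w)$ and $x(\tilde w)$. Now the residual terms produced by the recursion no longer land on pure translations but on elements of the form $t^\mu s_1s_2$ or $t^\mu s_1$ with $\mu\in Q_{++}$, which are handled by Proposition \ref{1degso2} (together with the auxiliary relations $T_{t^{i(2\Lambda_1-\Lambda_2)+j\Lambda_1}s_2}\equiv[v]T_{t^{i\Lambda_2+j\Lambda_1}}+T_{t^{i\Lambda_2+j\Lambda_1}s_2}$ and the analogous identity for $s_1$ that already appeared in the proof of \ref{1degso2}). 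After iterating until the length drops to the short regime, one invokes Part~(1) to dispose of the remaining $T_{\mathbb{O}'_{6k+1}}$-term at length $8k+1$, producing the stated degree-$3$, degree-$2$ and degree-$1$ contributions; the degree-$3$ region $\Lambda^{\leqslant}_{\lambda(\tilde w)}$ is exactly the union of the $\Lambda^{1,2}$-sets that Proposition \ref{1degso2} attaches to each intermediate residual.

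The main obstacle will be the bookkeeping: at each recursion step I must identify precisely which simple reflection $s_i$ lowers the length, verify that $s_i\tilde w s_{\delta(i)}$ really has the predicted canonical shape (not merely lies in the same $\delta$-conjugacy class), and keep careful track of which $\mathbb{O}_\lambda$ absorb multiple contributions so that the final sum matches the sets $\Lambda^{\leqslant}_{\lambda(\tilde w)}$ and $\mathbb{O}^<_{x(\tilde w)}$ verbatim. The delicate boundary cases are $m=1$ (where $\Lambda^{\leqslant}_{\lambda(\tilde w)}$ or $\Lambda^{<,1}$ may degenerate), the transition between Parts (1) and (2) at lengths $8k-1$ versus $8k+1$, and the exceptional appearance of the class $\mathbb{O}_{2k\Lambda_1+(m-1)\Lambda_2}$ in the length $8k+6m-5$ subcase, which has to be extracted from a cross-term between the two intermediate residuals rather than from a single reduction step.
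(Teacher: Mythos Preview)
Your plan is essentially the paper's proof: Part~(1) is carried out exactly as you describe with the representative $t^{k\Lambda_2+i(2\Lambda_1-\Lambda_2)}s_1$, and Part~(2) chains the three representatives $t^{2k\Lambda_1+m(2\Lambda_1-\Lambda_2)}s_1$, $t^{2k\Lambda_1+m\Lambda_2}s_2s_1s_2$, $t^{2k(\Lambda_2-\Lambda_1)+(1-m)(2\Lambda_1-\Lambda_2)}s_1$, each recursion spitting out a $[v]$-residual that is then fed into Proposition~\ref{1degso2}. Two small corrections to your sketch: the residuals in Part~(2) have shape $t^\mu s_2s_1$ or $t^\mu s_1s_2$ (elements of $\mathbb{O}_2$), not $t^\mu s_1$; and the exceptional contribution $\mathbb{O}_{2k\Lambda_1+(m-1)\Lambda_2}$ in the $8k+6m-5$ case arises directly as the $[v]$-term of a single reduction step (the third recursion gives $[v]T_{\mathbb{O}_{2k\Lambda_1+(m-1)\Lambda_2}}$ plus the next element in the chain), not as a cross-term.
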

\begin{proof}
By the argument of Lemma \ref{6k+1}, it is sufficient to consider $$\tilde{w}=t^{k\Lambda_2+i(2\Lambda_1-\Lambda_2)}s_1,\ (1\leqslant i\leqslant k)\ \ \text{in}\ \ (1),$$ and $$\tilde{w}\in\{t^{2k\Lambda_1+m(2\Lambda_1-\Lambda_2)}s_1,\ t^{2k\Lambda_1+m\Lambda_2}s_2s_1s_2,\ t^{2k(\Lambda_2-\Lambda_1)+(1-m)(2\Lambda_1-\Lambda_2)}s_1\} \ \text{in}\ \ (2).$$ Moreover, $$T_{t^{k\Lambda_2+i(2\Lambda_1-\Lambda_2)}s_1}\equiv [v]\sum_{j=1}^{i-1}T_{\mathbb{O}_{k\Lambda_2+j(2\Lambda_1-\Lambda_2)}}+T_{\mathbb{O}'_{6k+1}},\ \ \ \ \ \ \ \ \ \ \ \ \ \ \ \  \ \ \ \ \ \ \ \ \ \ \ \ \ \ \ \ \ $$
$$T_{t^{2k\Lambda_1+m(2\Lambda_1-\Lambda_2)}s_1}\equiv [v]T_{t^{2k\Lambda_1+m\Lambda_2}s_2s_1}+T_{t^{2k\Lambda_1+m\Lambda_2}s_2s_1s_2}, \ \ \ \ \ \ \ \ \ \ \  \ \ \ \ \ \ \ \ \ \ \ \ \ \ \ $$
$$T_{t^{2k\Lambda_1+m\Lambda_2}s_2s_1s_2}\equiv [v]T_{t^{(1-m)(2\Lambda_1-\Lambda_2)+2k(\Lambda_2-\Lambda_1)}s_1s_2}+T_{t^{2k(\Lambda_2-\Lambda_1)+(1-m)(2\Lambda_1-\Lambda_2)}s_1}$$
$$T_{t^{2k(\Lambda_2-\Lambda_1)+(1-m)(2\Lambda_1-\Lambda_2)}s_1}\equiv [v]T_{\mathbb{O}_{2k\Lambda_1+(m-1)\Lambda_2}}+T_{t^{2k\Lambda_1+(m-1)(2\Lambda_1-\Lambda_2)}s_1}.\ \ \ \ \ \ \ \ \ \ \ $$ Thus the proposition follows from Proposition \ref{1degso2}.
\end{proof}

\begin{prop}\label{deg6k+3}
(1) Let $\tilde{w}\in\mathbb{O}'_{6k+3}$, where $k\in\mathbb{N}$. If $\ell(\tilde{w})=6k+2m+3,\ 0\leqslant m\leqslant k$, then \[\deg(f_{\tilde{w},\mathbb{O}})=\left\{
                                                             \begin{array}{ll}
                                                              1, & \emph{if}\ \mathbb{O}\in\{\mathbb{O}_{\Lambda_1+k\Lambda_2+i(2\Lambda_1-\Lambda_2)}\ |\ 0\leqslant i\leqslant m-1\}\\
                                                               0, & \emph{if}\ \mathbb{O}=\mathbb{O}'_{6k+3}\\
                                                               -\infty, & \emph{if}\ \emph{otherwise}.
                                                             \end{array}
                                                           \right.\]

(2) If $\tilde{w}\in\mathbb{O}'_{6k+3}$, where $k\in\mathbb{N}$ and $\ell(\tilde{w})>8k+3$, then \[\deg(f_{\tilde{w},\mathbb{O}})=\left\{
                                        \begin{array}{ll}
                                        3, & \emph{if}\ \mathbb{O}\in\{\mathbb{O}_\lambda,\ \lambda\in\Lambda^{\leqslant}_{\lambda(\tilde{w})}\}\\
                                        2, & \emph{if}\ \mathbb{O}\in\mathbb{O}^<_{x(\tilde{w})}\\
                                        1, & \emph{if}\ \mathbb{O}\in\{\mathbb{O}_2,\ \mathbb{O}_{(2k+1)\Lambda_1}\}\ \emph{or}\\
                                         &  \ell(\tilde{w})=8k+6m-1\ \emph{and}\ \mathbb{O}=\mathbb{O}_{(2k+1)\Lambda_1+(m-1)\Lambda_2}\ \\
                                        0, & \emph{if}\ \ell(\tilde{w})=8k+5\ \emph{and}\ \mathbb{O}=\mathbb{O}'_{6k+3}\\
                                        -\infty, & \emph{otherwise},
                                        \end{array}
                                        \right.\]
where \[\lambda(\tilde{w})=\left\{
                                                             \begin{array}{ll}
                                                              m\Lambda_2+2k\Lambda_1, & \emph{if}\ \ell(\tilde{w})=8k+6m+3\\
                                                              (m-1)\Lambda_2+(2k+1)\Lambda_1, & \emph{if}\ \ell(\tilde{w})=8k+6m+1\\
                                                              (m-1)\Lambda_2+2k\Lambda_1, & \emph{if}\ \ell(\tilde{w})=8k+6m-1,
                                                             \end{array}
                                                           \right.\]
and \[x(\tilde{w})=\left\{
                                                             \begin{array}{ll}
                                                              t^{m\Lambda_2+(2k+1)\Lambda_1}s_2s_1, & \emph{if}\ \ell(\tilde{w})=8k+6m+3\\
                                                              t^{(1-m)(2\Lambda_1-\Lambda_2)+(2k+1)(\Lambda_2-\Lambda_1)}s_1s_2, & \emph{if}\ \ell(\tilde{w})=8k+6m+1\\
                                                              t^{(m-1)\Lambda_2+(2k+1)\Lambda_1}s_2s_1, & \emph{if}\ \ell(\tilde{w})=8k+6m-1.
                                                             \end{array}
                                                           \right.\]
\end{prop}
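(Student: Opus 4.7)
The plan is to mirror the strategy used for Proposition \ref{deg6k+1}, adapting every formula by the shift that distinguishes $\mathbb{O}'_{6k+3}$ from $\mathbb{O}'_{6k+1}$: the extra translation by $\Lambda_1$ coming from the minimal representative $t^{k(2\Lambda_1-\Lambda_2)+\Lambda_1}s_2s_1s_2$ of $\mathbb{O}'_{6k+3}$ given in Lemma \ref{6k+3}. First I will exploit the $\delta$-conjugation symmetries already exhibited in the proof of Proposition \ref{deg6k+1} (conjugation by $\tau$ and the reflections $s_0,s_2$, which all preserve class polynomials modulo $[\widetilde H,\widetilde H]_\delta$) to cut down the verification to one representative per length. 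For part (1) this representative will be $\tilde w=t^{\Lambda_1+k\Lambda_2+i(2\Lambda_1-\Lambda_2)}s_1$ with $0\le i\le m$, and for part (2) the three candidates
\[
t^{(2k+1)\Lambda_1+m(2\Lambda_1-\Lambda_2)}s_1,\quad t^{(2k+1)\Lambda_1+m\Lambda_2}s_2s_1s_2,\quad t^{(2k+1)(\Lambda_2-\Lambda_1)+(1-m)(2\Lambda_1-\Lambda_2)}s_1,
\]
one for each of the three length-residues $8k+6m+3,\ 8k+6m+1,\ 8k+6m-1$ occurring above $8k+3$.

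Next I would unwind the class polynomial recursion $T_{\tilde w}\equiv [v]T_{s_i\tilde w_1}+T_{s_i\tilde w_1 s_{\delta(i)}}$ exactly as in Proposition \ref{deg6k+1}: for part (1) a single reflection by $s_1$ peels off one term $[v]T_{\mathbb{O}_{\Lambda_1+k\Lambda_2+j(2\Lambda_1-\Lambda_2)}}$ at each step and replaces the remainder by the next shorter representative, terminating at $T_{\mathbb{O}'_{6k+3}}$ and giving the degree pattern $1,0,-\infty$. For part (2) I will chain three reductions---first by $s_1$ to land in a $t^{\cdot}s_2s_1s_2$ element, then by $s_0$ (using $s_0=t^{\Lambda_1}s_1s_2s_1$) to flip into a $t^{\cdot}s_1s_2$ element, then by a further simple reflection back to an element of the form $t^{\cdot}s_1$ with strictly smaller length---so that the recursion eventually terminates in an element of $\mathbb{O}_2$ whose class polynomial contribution is read off from Proposition \ref{1degso2}. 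Each recursive step produces a factor of $[v]$ on either an $\mathbb{O}_\lambda$-term (contributing the degree-3 entries via Proposition \ref{1degso2}) or on an intermediate $t^\chi s_1s_2$ or $t^\chi s_2s_1$ term (contributing the degree-2 entries via the sets $\mathbb{O}^<_{x(\tilde w)}$).

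The final degrees are then assembled by combining three pieces: the $[v]$-factor from each reduction step; the explicit formula for $T_{t^{i\Lambda_2+j\Lambda_1}s_1}$ and $T_{t^{i(2\Lambda_1-\Lambda_2)+j\Lambda_1}s_2}$ given inside the proof of Proposition \ref{1degso2}; and the description of $\Lambda^{\leqslant}_\lambda$, $\Lambda^{1,2}_\lambda$ from the subsection's notational setup. The sets $\Lambda^{\leqslant}_{\lambda(\tilde w)}$ and $\mathbb{O}^<_{x(\tilde w)}$ appearing in the statement are precisely the unions of the $\Lambda_\lambda$-sets swept out by the successive $[v]$-factors, so the bookkeeping mirrors that of Proposition \ref{deg6k+1} with $2k\Lambda_1$ systematically replaced by $(2k+1)\Lambda_1$, and the role of the minimal element $\mathbb{O}_{2k\Lambda_1}$ (length $1$ entry) replaced by $\mathbb{O}_{(2k+1)\Lambda_1}$.

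The main obstacle I anticipate is not the individual reductions---each is a two-line computation of the same type already verified in Propositions \ref{1degso2}, \ref{deg4k+1}, \ref{deg6k+1}---but rather the combinatorial bookkeeping needed to verify that the union of sets $\Lambda_{\lambda'}$ accumulated over the recursion equals exactly the convex region $\Lambda^{\leqslant}_{\lambda(\tilde w)}$ (respectively $\Lambda^{\leqslant}_{\lambda(\tilde w)-\Lambda_1}$) defined by the diamond $\diamondsuit_{\lambda(\tilde w)}$, and that the degree-$1$ contributions from the minimal base $\mathbb{O}_{(2k+1)\Lambda_1+(m-1)\Lambda_2}$ occur only in the length residue class $8k+6m-1$. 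I will address this by an induction on $m$, noting that increasing $m$ by $1$ extends the diamond by exactly one $W$-orbit layer, matching exactly the new $\Lambda_\lambda$-set produced by one additional round of the recursion.
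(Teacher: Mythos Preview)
Your proposal is correct and follows exactly the approach the paper takes: the paper's own proof reads in full ``The proof is exactly the same as that of Proposition \ref{deg6k+1}, we omit it.'' Your explicit choice of representatives $t^{\Lambda_1+k\Lambda_2+i(2\Lambda_1-\Lambda_2)}s_1$ for part (1) and the triple $t^{(2k+1)\Lambda_1+m(2\Lambda_1-\Lambda_2)}s_1$, $t^{(2k+1)\Lambda_1+m\Lambda_2}s_2s_1s_2$, $t^{(2k+1)(\Lambda_2-\Lambda_1)+(1-m)(2\Lambda_1-\Lambda_2)}s_1$ for part (2), together with the three-step chain of reductions feeding into Proposition \ref{1degso2}, is precisely the shift of the Proposition \ref{deg6k+1} argument by the extra $\Lambda_1$ coming from Lemma \ref{6k+3}.
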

\begin{proof}
The proof is exactly the same as that of Proposition \ref{deg6k+1}, we omit it.
\end{proof}

\begin{prop}\label{2degso2}
Let $\tilde{w}\in\mathbb{O}'_2$, and we set $$\tilde{w}_1=t^{(2n+1)\Lambda_1}s_1s_2s_1s_2,\ \tilde{w}_2=t^{\Lambda_2+(2n-1)(\Lambda_2-\Lambda_1)}s_1s_2s_1s_2,$$ $$\tilde{w}_3=t^{\Lambda_2+(2n-1)\Lambda_1}s_1s_2s_1s_2,\ \tilde{w}_4=t^{(2n-1)(\Lambda_2-\Lambda_1)}s_1s_2s_1s_2,$$ $$\tilde{w}_5=t^{m(2\Lambda_1-\Lambda_2)+(2n+1)\Lambda_1}s_1s_2s_1s_2,\ \tilde{w}_6=t^{m\Lambda_2+(2n+1)\Lambda_1}s_1s_2s_1s_2,$$ $$\tilde{w}_7=t^{\Lambda_1-m(2\Lambda_1-\Lambda_2)+2n(\Lambda_2-\Lambda_1)}s_1s_2s_1s_2,\ \tilde{w}_8=t^{m\Lambda_2+(2n+1)(\Lambda_2-\Lambda_1)}s_1s_2s_1s_2.$$
Then
\[\deg(f_{\tilde{w},\mathbb{O}})=\left\{
                             \begin{array}{ll}
                             4, & \emph{if}\ \mathbb{O}\in\{\mathbb{O}_\lambda\ |\ \lambda\in\lambda(\tilde{w})\}\\
                             3, & \emph{if}\ \mathbb{O}\in\mathbb{O}^<_{\tilde{w}}\\
                             2, & \emph{if}\ \mathbb{O}\in\{\mathbb{O}_2\}\cup\mathbb{O}_{(\tilde{w})}\\
                             1, & \emph{if}\ \mathbb{O}\in\mathbb{O}^{(\tilde{w})}\\
                             0, & \emph{if}\ \mathbb{O}=\mathbb{O}'_2\\
                             -\infty, & \emph{otherwise},
                             \end{array}
                           \right.\]
where \[\lambda(\tilde{w})=\left\{
                                                             \begin{array}{ll}
                                                             \Lambda^{\leqslant}_{(2n-1)\Lambda_1}, & \emph{if}\ \tilde{w}\in\{\tilde{w}_1,\ \tilde{w}_2\}\\
                                                             \Lambda^{\leqslant}_{(2n-3)\Lambda_1}, & \emph{if}\ \tilde{w}\in\{\tilde{w}_3,\ \tilde{w}_4\}\\
                                                             \Lambda^{\leqslant}_{m\Lambda_2+(2n-1)\Lambda_1}, & \emph{if}\ \tilde{w}=\tilde{w}_5\\
                                                             \Lambda^{\leqslant}_{(m-1)\Lambda_2+(2n-1)\Lambda_1}, & \emph{if}\ \tilde{w}\in\{\tilde{w}_6,\ \tilde{w}_7\}\\
                                                             \Lambda^{\leqslant}_{(m-1)\Lambda_2+(2n+1)\Lambda_1}, & \emph{if}\ \tilde{w}=\tilde{w}_8,
                                                             \end{array}
                                                           \right.\]
$\mathbb{O}^<_{\tilde{w}}=\{\mathbb{O}_{4i+1}|1\leqslant i\leqslant n(\tilde{w})\}\cup\{\mathbb{O}'_{6i+3}|0\leqslant i\leqslant n'(\tilde{w})\} \cup\{\mathbb{O}'_{6i+1}|1\leqslant i\leqslant n''(\tilde{w})\}$, where \[n(\tilde{w})=\left\{
\begin{array}{ll}
2n-1, & \emph{if}\ \tilde{w}\in\{\tilde{w}_1,\ \tilde{w}_2\}\\
2n-3, & \emph{if}\ \tilde{w}\in\{\tilde{w}_3,\ \tilde{w}_4\}\\
m+2n-1, & \emph{if}\ \tilde{w}=\tilde{w}_5\\
m+2n-2, & \emph{if}\ \tilde{w}\in\{\tilde{w}_6,\ \tilde{w}_7\}\\
m+2n, & \emph{if}\ \tilde{w}=\tilde{w}_8,
\end{array}
\right.\]
\[n'(\tilde{w})=\left\{
\begin{array}{ll}
n-1, & \emph{if}\ \tilde{w}\in\{\tilde{w}_1,\ \tilde{w}_2\}\\
n-2, & \emph{if}\ \tilde{w}\in\{\tilde{w}_3,\ \tilde{w}_4\}\\
m+n-1, & \emph{if}\ \tilde{w}\in\{\tilde{w}_5,\ \tilde{w}_8\}\\
m+n-2, & \emph{if}\ \tilde{w}\in\{\tilde{w}_6,\ \tilde{w}_7\},
\end{array}
\right.\]
\[n''(\tilde{w})=\left\{
\begin{array}{ll}
n-1, & \emph{if}\ \tilde{w}\in\{\tilde{w}_1,\ \tilde{w}_2\}\\
n-2, & \emph{if}\ \tilde{w}\in\{\tilde{w}_3,\ \tilde{w}_4\}\\
m+n-1, & \emph{if}\ \tilde{w}\in\{\tilde{w}_5,\ \tilde{w}_8\}\\
m+n-2, & \emph{if}\ \tilde{w}\in\{\tilde{w}_6,\ \tilde{w}_7\},
\end{array}
\right.\]
and \[\mathbb{O}_{(\tilde{w})}=\left\{
\begin{array}{ll}
\{\mathbb{O}_{(2i-1)\Lambda_1}\ |\ 1\leqslant i\leqslant n\}, & \emph{if}\ \tilde{w}\in\{\tilde{w}_1,\ \tilde{w}_2\}\\
\{\mathbb{O}_{(2i-1)\Lambda_1}\ |\ 1\leqslant i\leqslant n-1\}\cup\{\mathbb{O}_{\lambda}\ |\ \lambda\in\Lambda^1_{(2n-1)\Lambda_1}\}, & \emph{if}\ \tilde{w}\in\{\tilde{w}_3,\ \tilde{w}_4\}\\
\{\mathbb{O}_{i\Lambda_2}\ |\ 1\leqslant i\leqslant m\}\cup\{\mathbb{O}_{(2i-1)\Lambda_1}\ |\ 1\leqslant i\leqslant n\}\\
\cup\{\mathbb{O}_{\lambda}\ |\ \lambda\in\Lambda^2_{m\Lambda_2+2n\Lambda_1}\}, & \emph{if}\ \tilde{w}=\tilde{w}_5\\
\{\mathbb{O}_{i\Lambda_2}\ |\ 1\leqslant i\leqslant m-1\}\cup\{\mathbb{O}_{(2i-1)\Lambda_1}\ |\ 1\leqslant i\leqslant n\}\\
\cup\{\mathbb{O}_{\lambda}\ |\ \lambda\in\Lambda^2_{m\Lambda_2+2n\Lambda_1}\cup\Lambda^1_{m\Lambda_2+(2n-1)\Lambda_1}\}\\
\cup\{\mathbb{O}_{(m-1)\Lambda_2+2n\Lambda_1}\}, & \emph{if}\ \tilde{w}=\tilde{w}_6\\
\{\mathbb{O}_{i\Lambda_2}\ |\ 1\leqslant i\leqslant m-1\}\cup\{\mathbb{O}_{(2i-1)\Lambda_1}\ |\ 1\leqslant i\leqslant n\}\\
\cup\{\mathbb{O}_{\lambda}\ |\ \lambda\in\Lambda^2_{m\Lambda_2+(2n-1)\Lambda_1}\cup\Lambda^1_{m\Lambda_2+(2n-1)\Lambda_1}\}, & \emph{if}\ \tilde{w}=\tilde{w}_7\\
\{\mathbb{O}_{i\Lambda_2}\ |\ 1\leqslant i\leqslant m-1\}\cup\{\mathbb{O}_{(2i-1)\Lambda_1}\ |\ 1\leqslant i\leqslant n+1\}, & \emph{if}\ \tilde{w}=\tilde{w}_8,
\end{array}
\right.\]
\[\mathbb{O}^{(\tilde{w})}=\left\{
\begin{array}{ll}
\{\mathbb{O}_{8n+1}\}, & \emph{if}\ \tilde{w}=\tilde{w}_1\\
\{\mathbb{O}_{4(2n-1)+1},\ \mathbb{O}_{4(2n-2)+1},\ \mathbb{O}'_{6(n-1)+3}\}, & \tilde{w}=\tilde{w}_3\\
\{\mathbb{O}_{4(2n-2)+1},\ \mathbb{O}'_{6(n-1)+3}\}, & \emph{if}\ \tilde{w}=\tilde{w}_4\\
\{\mathbb{O}_{4(m+2n)+1}\}, & \emph{if}\ \tilde{w}=\tilde{w}_5\\
\{\mathbb{O}_{4(m+2n)+1},\ \mathbb{O}_{4(m+2n-1)+1},\ \mathbb{O}'_{6(m+n-1)+3}\}, & \emph{if}\ \tilde{w}=\tilde{w}_6\\
\{\mathbb{O}_{4(m+2n-1)+1},\ \mathbb{O}'_{6(m+n-1)+3}\}, & \emph{if}\ \tilde{w}=\tilde{w}_7.
\end{array}
\right.\]
\end{prop}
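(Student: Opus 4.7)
The plan is to adapt the reduction scheme of Proposition \ref{1degso2} to the setting where the Weyl part is now the longest element $s_1s_2s_1s_2 = w_0$ of $W$ rather than $s_1s_2$ or $s_2s_1$. Since the Weyl part has length $4$, we expect up to four ``peeling'' iterations of the basic identity $T_{\tilde{w}} \equiv [v]T_{s_i \tilde{w}_1} + T_{s_i \tilde{w}_1 s_{\delta(i)}}$ (with $\tilde{w}_1 \approx \tilde{w}$ and $\ell(s_i \tilde{w}_1 s_{\delta(i)}) < \ell(\tilde{w}_1)$), which explains why the top degree is exactly $4$ and contributions of each intermediate degree $0,1,2,3$ arise.

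First I would use $\tau$-conjugation and the symmetries exhibited in the proof of Lemma \ref{s02} (in particular, $\tau \cdot t^\mu s_1s_2s_1s_2 \cdot \tau^{-1}$ and conjugation by $s_0,s_1,s_2$) to collapse the eight cases onto a smaller set of canonical representatives: $\tilde{w}_2$ follows from $\tilde{w}_1$, $\tilde{w}_4$ from $\tilde{w}_3$, $\tilde{w}_7$ from $\tilde{w}_6$, and $\tilde{w}_8$ from $\tilde{w}_5$, so only $\tilde{w}_1, \tilde{w}_3, \tilde{w}_5, \tilde{w}_6$ really require separate reduction chains. For $\tilde{w}_1 = t^{(2n+1)\Lambda_1}s_1s_2s_1s_2$, left multiplication by $s_0$ yields $s_0 \tilde{w}_1 = t^{\lambda'}s_2$ (for some $\lambda'$ determined by $s_1s_2s_1$ acting on $(2n+1)\Lambda_1$), which lies in classes handled by Propositions \ref{deg1} and \ref{deg4k+1}, while $s_0 \tilde{w}_1 s_0$ is again an element of $\mathbb{O}'_2$ with strictly smaller length, so the ``diagonal'' piece is controlled by induction down to the base element $s_0 s_2$. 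For $\tilde{w}_3, \tilde{w}_5, \tilde{w}_6$ I would follow the same template as in Proposition \ref{1degso2}, namely use reduction chains of the form $\tilde{w} \xrightarrow{s_0} \cdots \tilde{\thicksim} \cdots \xrightarrow{s_1} \cdots \xrightarrow{s_0} \cdots$, extracting one factor of $[v]$ at each step and accumulating $T_{\tilde{w}'}$ terms whose Weyl parts progressively shorten from $w_0$ to $s_2s_1s_2$ (or $s_1s_2s_1$), to $s_1s_2$ (or $s_2s_1$), to a single reflection, and finally to a pure translation. The degree $k$ contributions at stage $k$ are then pinned down by applying Propositions \ref{deglambda}, \ref{1degso2}, \ref{deg1}, \ref{deg4k+1}, \ref{deg1p}, \ref{deg6k+1}, \ref{deg6k+3} to the intermediate terms.

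The main obstacle will be the combinatorial bookkeeping. Each of the eight cases produces contributions to several distinct $\delta$-conjugacy class families simultaneously, and the expressions for $\lambda(\tilde{w})$, $\mathbb{O}^<_{\tilde{w}}$, $\mathbb{O}_{(\tilde{w})}$, and $\mathbb{O}^{(\tilde{w})}$ show that degree-$1$ and degree-$2$ contributions can arrive at the same class through two or more distinct branches of the reduction tree, especially in cases $\tilde{w}_6$ and $\tilde{w}_7$ where $\mathbb{O}_{(\tilde{w})}$ involves both $\Lambda^1_{m\Lambda_2+(2n-1)\Lambda_1}$ and $\Lambda^2_{m\Lambda_2+2n\Lambda_1}$. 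Verifying that these overlaps compose with the correct multiplicities (so that no cancellation lowers the claimed degree), together with careful tracking of which $\mathbb{O}_{4k+1}, \mathbb{O}'_{6k+1}, \mathbb{O}'_{6k+3}$ classes appear with which shift in the $n, m$ parameters, is where the bulk of the work will lie; the algebraic reductions themselves are fairly mechanical once the initial $\approx$-move is chosen correctly.
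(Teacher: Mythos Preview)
Your overall strategy is right and matches the paper's: repeatedly apply the Hecke reduction identity to peel off $[v]$ factors, keep the ``diagonal'' term inside $\mathbb{O}'_2$ until you reach the base $s_0s_2$, and feed the $[v]$ terms into the earlier propositions. But two organizational claims in your plan are off.

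First, the assertion that $\tilde w_2,\tilde w_4,\tilde w_7,\tilde w_8$ collapse onto $\tilde w_1,\tilde w_3,\tilde w_6,\tilde w_5$ by $\tau$-conjugation is incorrect: $\tau$-conjugation preserves length, whereas $\ell(\tilde w_1)>\ell(\tilde w_2)>\ell(\tilde w_3)>\ell(\tilde w_4)$. In the paper the eight elements are not symmetry-equivalent pairs at all; they form reduction \emph{chains}. Concretely,
\[
T_{\tilde w_1}\equiv[v]T_{t^{\Lambda_1+2n(\Lambda_2-\Lambda_1)}s_1s_2s_1}+T_{\tilde w_2},\quad
T_{\tilde w_2}\equiv[v]T_{t^{\Lambda_2+(2n-1)\Lambda_1}s_2s_1s_2}+T_{\tilde w_3},
\]
and so on through $\tilde w_3\to\tilde w_4\to t^{(2n-1)\Lambda_1}s_1s_2s_1s_2$; similarly $\tilde w_5\to\tilde w_6\to\tilde w_7\to(\text{smaller }\tilde w_5)$ and $\tilde w_8\to\tilde w_6$. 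The coincidence of degree patterns for, say, $\tilde w_1$ and $\tilde w_2$ is thus an \emph{output} of the analysis (the extra $[v]$ term contributes nothing new at top degree), not an a priori symmetry.

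Second, your choice of $s_0$ as the reducing reflection for $\tilde w_1$ gives $s_0\tilde w_1$ with Weyl part $s_2$, jumping straight to length-one Weyl part. The paper instead uses reflections that produce $[v]$ terms with Weyl part $s_1s_2s_1$ or $s_2s_1s_2$ at each step; these are exactly the elements treated in Propositions~\ref{deg4k+1} and~\ref{deg6k+3}, which is why those (and not Proposition~\ref{deg1}) are the relevant inputs. Your route may still close, but you would have to redo the degree bookkeeping from scratch rather than quoting the length-three results directly, and you would need to verify separately that $s_0\tilde w_1s_0$ lands back in $\mathbb{O}'_2$ (rather than $\mathbb{O}_{s_{1212}}$) with strictly smaller length.
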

\begin{proof}
It is clear that $T_{t^{\Lambda_1}s_1s_2s_1s_2}\equiv T_{\mathbb{O}'_2}$, and for $n\geqslant1$, we have
$$T_{t^{(2n+1)\Lambda_1}s_1s_2s_1s_2}\equiv [v]T_{t^{\Lambda_1+2n(\Lambda_2-\Lambda_1)}s_1s_2s_1}+T_{t^{\Lambda_2+(2n-1)(\Lambda_2-\Lambda_2)}s_1s_2s_1s_2},$$
$$T_{t^{\Lambda_2+(2n-1)(\Lambda_2-\Lambda_2)}s_1s_2s_1s_2}\equiv [v]T_{t^{\Lambda_2+(2n-1)\Lambda_1}s_2s_1s_2}+T_{t^{\Lambda_2+(2n-1)\Lambda_1}s_1s_2s_1s_2},$$
$$T_{t^{\Lambda_2+(2n-1)\Lambda_1}s_1s_2s_1s_2}\equiv [v]T_{t^{(2n-1)(\Lambda_2-\Lambda_1)}s_1s_2s_1}+T_{t^{(2n-1)(\Lambda_2-\Lambda_1)}s_1s_2s_1s_2}$$ and
$$T_{t^{(2n-1)(\Lambda_2-\Lambda_1)}s_1s_2s_1s_2}\equiv [v]T_{t^{(2n-1)\Lambda_1}s_2s_1s_2}+T_{t^{(2n-1)\Lambda_1}s_1s_2s_1s_2}.$$
By propositions \ref{deg4k+1} and \ref{deg6k+3}, the cases for $\tilde{w}\in\{\tilde{w}_1,\ \tilde{w}_2,\ \tilde{w}_3,\ \tilde{w}_4\}$ are proved.

Together with the following relations
$$T_{t^{m(2\Lambda_1-\Lambda_2)+(2n+1)\Lambda_1}s_1s_2s_1s_2}\equiv [v]T_{t^{m\Lambda_2+(2n+1)\Lambda_1}s_1s_2s_1}+T_{t^{m\Lambda_2+(2n+1)\Lambda_1}s_1s_2s_1s_2},$$
$$T_{t^{m\Lambda_2+(2n+1)\Lambda_1}s_1s_2s_1s_2}\equiv [v]T_{t^{\Lambda_1-m(2\Lambda_1-\Lambda_2)+2n(\Lambda_2-\Lambda_1)}s_1s_2s_1}+\ \ \ \ \ \ \ \ \ \ \ \ \ \ \ \ \ \ \ \ \ \ \ \ $$ $$\ \ \ \ \ +T_{t^{\Lambda_1-m(2\Lambda_1-\Lambda_2)+2n(\Lambda_2-\Lambda_1)}s_1s_2s_1s_2},\ \ \ $$
$$T_{t^{\Lambda_1-m(2\Lambda_1-\Lambda_2)+2n(\Lambda_2-\Lambda_1)}s_1s_2s_1s_2}\equiv [v]T_{t^{m(2\Lambda_1-\Lambda_2)+2n\Lambda_1+(\Lambda_2-\Lambda_1)}s_2s_1s_2}+\ \ \ \ \ \ \ \ \ \ \ \ $$ $$\ \ \ \ \ \ \ \ \ \ \ \ \ \ \ \ \ \ \ \ \ \ \ +T_{t^{(m-1)(2\Lambda_1-\Lambda_2)+(2n+1)\Lambda_1}s_1s_2s_1s_2}$$
$$T_{t^{m\Lambda_2+(2n+1)(\Lambda_2-\Lambda_1)}s_1s_2s_1s_2}\equiv [v]T_{t^{m\Lambda_2+(2n+1)\Lambda_1}s_2s_1s_2}+T_{t^{m\Lambda_2+(2n+1)\Lambda_1}s_1s_2s_1s_2},$$we finish the proof.
\end{proof}

Similarly, we have

\begin{prop}\label{2degs1212}
Let $\tilde{w}\in\mathbb{O}_{s_{1212}}$, and we set $$\tilde{w}_1=t^{2n\Lambda_1}s_1s_2s_1s_2,\ \tilde{w}_2=t^{\Lambda_2+2(n-1)(\Lambda_2-\Lambda_1)}s_1s_2s_1s_2,$$ $$\tilde{w}_3=t^{\Lambda_2+2(n-1)\Lambda_1}s_1s_2s_1s_2,\ \tilde{w}_4=t^{2(n-1)(\Lambda_2-\Lambda_1)}s_1s_2s_1s_2,$$ $$\tilde{w}_5=t^{m(2\Lambda_1-\Lambda_2)+2n\Lambda_1}s_1s_2s_1s_2,\ \tilde{w}_6=t^{m\Lambda_2+2n\Lambda_1}s_1s_2s_1s_2,$$ $$\tilde{w}_7=t^{(1-m)(2\Lambda_1-\Lambda_2)+2n(\Lambda_2-\Lambda_1)}s_1s_2s_1s_2,\ \tilde{w}_8=t^{m\Lambda_2+2n(\Lambda_2-\Lambda_1)}s_1s_2s_1s_2.$$
Then
\[\deg(f_{\tilde{w},\mathbb{O}})=\left\{
                             \begin{array}{ll}
                             4, & \emph{if}\ \mathbb{O}\in\{\mathbb{O}_\lambda\ |\ \lambda\in\lambda(\tilde{w})\}\\
                             3, & \emph{if}\ \mathbb{O}\in\mathbb{O}^<_{\tilde{w}}\\
                             2, & \emph{if}\ \mathbb{O}\in\{\mathbb{O}_2\}\cup\mathbb{O}_{(\tilde{w})}\\
                             1, & \emph{if}\ \mathbb{O}\in\mathbb{O}^{(\tilde{w})}\\
                             0, & \emph{if}\ \mathbb{O}=\mathbb{O}_{s_{1212}}\\
                             -\infty, & \emph{otherwise},
                             \end{array}
                           \right.\]
where \[\lambda(\tilde{w})=\left\{
                                                             \begin{array}{ll}
                                                             \Lambda^{\leqslant}_{(2n-2)\Lambda_1}, & \emph{if}\ \tilde{w}\in\{\tilde{w}_1,\ \tilde{w}_2\}\\
                                                             \Lambda^{\leqslant}_{(2n-4)\Lambda_1}, & \emph{if}\ \tilde{w}\in\{\tilde{w}_3,\ \tilde{w}_4\}\\
                                                             \Lambda^{\leqslant}_{m\Lambda_2+(2n-2)\Lambda_1}, & \emph{if}\ \tilde{w}=\tilde{w}_5\\
                                                             \Lambda^{\leqslant}_{(m-1)\Lambda_2+(2n-2)\Lambda_1}, & \emph{if}\ \tilde{w}\in\{\tilde{w}_6,\ \tilde{w}_7\}\\
                                                             \Lambda^{\leqslant}_{(m-1)\Lambda_2+2n\Lambda_1}, & \emph{if}\ \tilde{w}=\tilde{w}_8,
                                                             \end{array}
                                                           \right.\]
$\mathbb{O}^<_{\tilde{w}}=\{\mathbb{O}_{4i+1}|1\leqslant i\leqslant n(\tilde{w})\}\cup\{\mathbb{O}'_{6i+3}|0\leqslant i\leqslant n'(\tilde{w})\} \cup\{\mathbb{O}'_{6i+1}|1\leqslant i\leqslant n''(\tilde{w})\}$, \[n(\tilde{w})=\left\{
\begin{array}{ll}
2n-2, & \emph{if}\ \tilde{w}\in\{\tilde{w}_1,\ \tilde{w}_2\}\\
2n-4, & \emph{if}\ \tilde{w}\in\{\tilde{w}_3,\ \tilde{w}_4\}\\
m+2n-2, & \emph{if}\ \tilde{w}=\tilde{w}_5\\
m+2n-3, & \emph{if}\ \tilde{w}\in\{\tilde{w}_6,\ \tilde{w}_7\}\\
m+2n-1, & \emph{if}\ \tilde{w}=\tilde{w}_8,
\end{array}
\right.\]
\[n'(\tilde{w})=\left\{
\begin{array}{ll}
n-2, & \emph{if}\ \tilde{w}\in\{\tilde{w}_1,\ \tilde{w}_2\}\\
n-3, & \emph{if}\ \tilde{w}\in\{\tilde{w}_3,\ \tilde{w}_4\}\\
m+n-2, & \emph{if}\ \tilde{w}\in\{\tilde{w}_5,\ \tilde{w}_8\}\\
m+n-3, & \emph{if}\ \tilde{w}\in\{\tilde{w}_6,\ \tilde{w}_7\},
\end{array}
\right.\]
\[n''(\tilde{w})=\left\{
\begin{array}{ll}
n-1, & \emph{if}\ \tilde{w}\in\{\tilde{w}_1,\ \tilde{w}_2\}\\
n-2, & \emph{if}\ \tilde{w}\in\{\tilde{w}_3,\ \tilde{w}_4\}\\
m+n-1, & \emph{if}\ \tilde{w}\in\{\tilde{w}_5,\ \tilde{w}_8\}\\
m+n-2, & \emph{if}\ \tilde{w}\in\{\tilde{w}_6,\ \tilde{w}_7\},
\end{array}
\right.\]
and \[\mathbb{O}_{(\tilde{w})}=\left\{
\begin{array}{ll}
\{\mathbb{O}_{2i\Lambda_1}\ |\ 1\leqslant i\leqslant n-1\}, & \emph{if}\ \tilde{w}\in\{\tilde{w}_1,\ \tilde{w}_2\}\\
\{\mathbb{O}_{2i\Lambda_1}\ |\ 1\leqslant i\leqslant n-2\}\cup\{\mathbb{O}_{\lambda}\ |\ \lambda\in\Lambda^1_{(2n-2)\Lambda_1}\}, & \emph{if}\ \tilde{w}\in\{\tilde{w}_3,\ \tilde{w}_4\}\\
\{\mathbb{O}_{i\Lambda_2}\ |\ 1\leqslant i\leqslant m\}\cup\{\mathbb{O}_{2i\Lambda_1}\ |\ 1\leqslant i\leqslant n-1\}\\
\cup\{\mathbb{O}_{\lambda}\ |\ \lambda\in\Lambda^2_{(m+1)\Lambda_2+(2n-2)\Lambda_1}\}, & \emph{if}\ \tilde{w}=\tilde{w}_5\\
\{\mathbb{O}_{i\Lambda_2}\ |\ 1\leqslant i\leqslant m-1\}\cup\{\mathbb{O}_{2i\Lambda_1}\ |\ 1\leqslant i\leqslant n-1\}\\
\cup\{\mathbb{O}_{\lambda}\ |\ \lambda\in\Lambda^2_{(m-1)\Lambda_2+2n\Lambda_1}\cup\Lambda^1_{(m-1)\Lambda_2+2n\Lambda_1}\}, & \emph{if}\ \tilde{w}=\tilde{w}_6\\
\{\mathbb{O}_{i\Lambda_2}\ |\ 1\leqslant i\leqslant m-1\}\cup\{\mathbb{O}_{2i\Lambda_1}\ |\ 1\leqslant i\leqslant n-1\}\\
\cup\{\mathbb{O}_{\lambda}\ |\ \lambda\in\Lambda^2_{m\Lambda_2+(2n-2)\Lambda_1}\cup\Lambda^1_{m\Lambda_2+(2n-2)\Lambda_1}\}, & \emph{if}\ \tilde{w}=\tilde{w}_7\\
\{\mathbb{O}_{i\Lambda_2}\ |\ 1\leqslant i\leqslant m-1\}\cup\{\mathbb{O}_{2i\Lambda_1}\ |\ 1\leqslant i\leqslant n\}, & \emph{if}\ \tilde{w}=\tilde{w}_8,
\end{array}
\right.\]
\[\mathbb{O}^{(\tilde{w})}=\left\{
\begin{array}{ll}
\{\mathbb{O}_{4(2n-1)+1}\}, & \emph{if}\ \tilde{w}=\tilde{w}_1\\
\{\mathbb{O}_{4(2n-2)+1},\ \mathbb{O}_{4(2n-3)+1},\ \mathbb{O}'_{6(n-1)+1}\}, & \emph{if}\ \tilde{w}=\tilde{w}_3\\
\{\mathbb{O}_{4(2n-3)+1},\ \mathbb{O}'_{6(n-1)+1}\}, & \emph{if}\ \tilde{w}=\tilde{w}_4\\
\{\mathbb{O}_{4(m+2n-1)+1}\}, & \emph{if}\ \tilde{w}=\tilde{w}_5\\
\{\mathbb{O}_{4(m+2n-1)+1},\ \mathbb{O}_{4(m+2n-2)+1},\ \mathbb{O}'_{6(m+n-1)+1}\}, & \emph{if}\ \tilde{w}=\tilde{w}_6\\
\{\mathbb{O}_{4(m+2n-2)+1},\ \mathbb{O}'_{6(m+n-1)+1}\}, & \emph{if}\ \tilde{w}=\tilde{w}_7.
\end{array}
\right.\]
\end{prop}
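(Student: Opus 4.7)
The plan is to mirror the argument of Proposition \ref{2degso2} almost line-for-line, since by Lemmas \ref{s02} and \ref{s1212} the orbits $\mathbb{O}'_2$ and $\mathbb{O}_{s_{1212}}$ differ only in the parity of the coefficient of $\Lambda_1$ in the translation part, and since $s_1s_2s_1s_2$ is the longest element of $W$ and commutes with every $w\in W$. First I would verify that, modulo $[\widetilde{H},\widetilde{H}]$, every element of $\mathbb{O}_{s_{1212}}$ is equivalent to one of the eight canonical representatives $\tilde{w}_1,\ldots,\tilde{w}_8$ listed in the statement. The reduction uses $\tau$-conjugation together with ordinary $W$-conjugation; because $w(s_1s_2s_1s_2)w^{-1}=s_1s_2s_1s_2$, the only thing being moved around is the translation part, and the eight listed forms exhaust the $W\rtimes\langle\tau\rangle$-orbits on $\{t^{2m\Lambda_1+n\Lambda_2}s_1s_2s_1s_2\mid m,n\in\mathbb{Z}\}$ once we restrict the parameters $m,n$ to the range $m,n\in\mathbb{N}_+$ given in the statement.

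Next I would install the same four ``strip'' identities used in the proof of Proposition \ref{2degso2}, adapted so that the $\Lambda_1$-coefficient remains even. Concretely, one shows by a direct application of the defining recursion $T_{\tilde{w}}\equiv[v]T_{s_i\tilde{w}_1}+T_{s_i\tilde{w}_1 s_i}$ that
\[T_{t^{2n\Lambda_1}s_1s_2s_1s_2}\equiv [v]T_{t^{\Lambda_1+(2n-2)(\Lambda_2-\Lambda_1)}s_1s_2s_1}+T_{t^{\Lambda_2+2(n-1)(\Lambda_2-\Lambda_1)}s_1s_2s_1s_2},\]
and three parallel identities for the other three basic forms $\tilde{w}_2,\tilde{w}_3,\tilde{w}_4$, together with analogous relations linking $\tilde{w}_5,\ldots,\tilde{w}_8$. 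Each identity strips off one simple reflection and produces a length-$3$ tail plus a shorter length-$4$ tail. Iterating these identities reduces everything to the base case $T_{s_1s_2s_1s_2}\equiv T_{\mathbb{O}_{s_{1212}}}$, which is the degree-$0$ contribution.

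At each step the length-$3$ tail $T_{t^\lambda s_1s_2s_1}$ or $T_{t^\lambda s_2s_1s_2}$ that falls out of a $[v]$ factor is evaluated using Propositions \ref{deg4k+1}, \ref{deg6k+1}, \ref{deg6k+3}, and the length-$2$ residues arising from further reductions are handled by Proposition \ref{1degso2}. Multiplying the degree contributions by the appropriate powers of $[v]$ gives the total degrees: two $[v]$ factors multiplying a degree-$2$ term from Proposition \ref{1degso2} yields the degree-$4$ contributions indexed by $\lambda(\tilde{w})$; a $[v]$ factor times a degree-$2$ term, or a degree-$3$ term from one of Propositions \ref{deg4k+1}, \ref{deg6k+1}, \ref{deg6k+3}, yields the degree-$3$ contributions indexed by $\mathbb{O}^<_{\tilde{w}}$; and so on down to the degree-$0$ term $T_{\mathbb{O}_{s_{1212}}}$. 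Since class polynomials lie in $\mathbb{Z}_{\geqslant 0}[v-v^{-1}]$ by \cite{HN14}, no cancellations occur and the degrees simply accumulate.

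The main obstacle is the bookkeeping for $\tilde{w}_6$ and $\tilde{w}_7$, where the two different families of ``short'' orbits $\mathbb{O}_{4k+1}$ and $\mathbb{O}'_{6k+1}$ are both hit as length-$3$ residues, and where the collections $\mathbb{O}_\lambda$ with $\lambda$ ranging over $\Lambda^2_\mu\cup\Lambda^1_\mu$ and over $\{i\Lambda_2\},\{2i\Lambda_1\}$ must be assembled consistently. One has to verify that the shifts $\mu=(m-1)\Lambda_2+2n\Lambda_1$ (resp.\ $m\Lambda_2+(2n-2)\Lambda_1$) produced by the strip identities line up with the indices appearing in Propositions \ref{deg6k+1} and \ref{deg6k+3}; this amounts to matching parities of the $\Lambda_2$- and $(2\Lambda_1-\Lambda_2)$-coordinates and is the only genuine computational content beyond transcribing the $\mathbb{O}'_2$ case.
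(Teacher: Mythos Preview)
Your approach is exactly what the paper does: the paper gives no separate proof for this proposition, merely prefacing it with ``Similarly, we have'' after the full proof of Proposition~\ref{2degso2}, so mirroring that argument with the parity shift $2n+1\mapsto 2n$ is precisely the intended route.

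One small computational slip to flag: in your displayed strip identity for $\tilde{w}_1$, the length-$3$ tail should read $T_{t^{\Lambda_1+(2n-1)(\Lambda_2-\Lambda_1)}s_1s_2s_1}$ rather than with exponent $(2n-2)$; compare the corresponding identity in the proof of Proposition~\ref{2degso2}, where the tail carries $\Lambda_1+(N-1)(\Lambda_2-\Lambda_1)$ when the input is $t^{N\Lambda_1}s_1s_2s_1s_2$. This does not affect your strategy but would propagate into the bookkeeping for $\mathbb{O}^{(\tilde{w}_1)}$ if left uncorrected.
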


\begin{rmk}
Since $f_{\tilde{w},\mathbb{O}}=f_{\tau\tilde{w}\tau^{-1},\mathbb{O}}$, we obtain all the degrees of the class polynomials $f_{\tilde{w},\mathbb{O}}$ for all $\tilde{w}\in W_a$ and $\mathbb{O}$.
\end{rmk}
\subsubsection{$\tilde{w}\in W_a\tau$}
\begin{prop}\label{degtaulambda}
For any $\lambda\in Q'_+$ and $\tilde{w}\in\mathbb{O}_{\lambda, \tau}$, we have\[\deg(f_{\tilde{w},\mathbb{O}})=\left\{
                                                             \begin{array}{ll}
                                                              0, & \emph{if}\ \mathbb{O}=\mathbb{O}_{\lambda,\tau}\\
                                                              -\infty, & \emph{otherwise}.
                                                             \end{array}
                                                           \right.\]
\end{prop}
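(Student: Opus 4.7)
The plan is to mimic the proof of Proposition \ref{deglambda}: I will show that every element of $\mathbb{O}_{\lambda,\tau}$ has the same length, so each is a minimum length element of its $\delta$-conjugacy class, and the inductive definition of the class polynomial then immediately gives $f_{\tilde{w},\mathbb{O}} = 1$ when $\mathbb{O} = \mathbb{O}_{\lambda,\tau}$ and $f_{\tilde{w},\mathbb{O}} = 0$ otherwise.

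By Lemma \ref{lambdatau}, the class $\mathbb{O}_{\lambda,\tau}$ consists of at most eight elements of the form $t^{\lambda'} s_2 s_1 s_2 \tau$, one for each $w \in W$. Since $\tau \in \Omega$ has length zero, it suffices to verify that $\ell(t^{\lambda'} s_2 s_1 s_2)$ is independent of the choice of $\lambda'$ on the list. I would use the standard length formula
\[
\ell(t^{\chi} w) = \sum_{\alpha \in \Phi^+} |\alpha(\chi) - \epsilon_\alpha(w)|,
\]
where $\epsilon_\alpha(w) = 1$ if $w^{-1}\alpha \in \Phi^-$ and $\epsilon_\alpha(w) = 0$ otherwise. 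For $w = s_2 s_1 s_2$ a direct check shows that $\epsilon_\alpha(w) = 1$ for $\alpha \in \{\alpha_2,\ \alpha_1+\alpha_2,\ 2\alpha_1+\alpha_2\}$ and $\epsilon_{\alpha_1}(w) = 0$.

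Writing $\lambda = a\Lambda_1 + b\Lambda_2$, the hypothesis $\lambda \in Q'_+$ forces $a \geq 0$ and $b \geq 1$. I would pass to orthonormal coordinates in which $\Lambda_1 = e_1$, $\Lambda_2 = e_1 + e_2$, $\alpha_1 = e_1 - e_2$, $\alpha_2 = 2e_2$, so that $\lambda' = c\Lambda_1 + d\Lambda_2$ becomes $x e_1 + y e_2$ with $x = c+d$, $y = d$, and the length formula specializes to
\[
\ell(t^{\lambda'} s_2 s_1 s_2) = |x-y| + |2y-1| + |x+y-1| + |2x-1|.
\]
The key step is to substitute the eight explicit values of $\lambda'$ from Lemma \ref{lambdatau} and check that each one yields the same number. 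Several of the resulting coordinate pairs (for example $(1-a-b,\ 1-b)$ or $(b,\ 1-a-b)$) have negative entries, so the signs inside the absolute values must be tracked with care; this bookkeeping is the main obstacle. However, $a \geq 0$ and $b \geq 1$ together guarantee that each of $a$, $2b-1$, $a+2b-1$, and $2a+2b-1$ is nonnegative, and with those signs fixed every one of the eight substitutions collapses uniformly to $4a + 6b - 3 = \ell(t^\lambda s_2 s_1 s_2)$. Since every element of $\mathbb{O}_{\lambda,\tau}$ is therefore a minimum length element, the base case of the inductive definition of $f_{\tilde{w},\mathbb{O}}$ gives the claim.
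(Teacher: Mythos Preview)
Your proof is correct and follows exactly the approach implicit in the paper's one-line proof ``The proposition follows from Lemma \ref{lambdatau}'', namely the same argument used for Proposition \ref{deglambda}: once one knows that every element of $\mathbb{O}_{\lambda,\tau}$ has the same length, each is minimal in its conjugacy class and the base case of the inductive definition of $f_{\tilde{w},\mathbb{O}}$ gives the result. You simply supply the explicit length verification that the paper leaves to the reader.
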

\begin{proof}
The proposition follows from Lemma \ref{lambdatau}.
\end{proof}
\begin{prop}\label{2degstau1}
For any $\tilde{w}\in\mathbb{O}_{1,\tau}$, we set $\tilde{w}$ or $\tau\tilde{w}\tau^{-1}=\tilde{w}_i$ $(1\leqslant i\leqslant 8)$, where$$\tilde{w}_1\in\{t^{n\Lambda_1}s_1s_2s_1\tau,\ t^{(n-1)\Lambda_1}s_2\tau,\ t^{(n-1)(\Lambda_2-\Lambda_1)}s_1s_2s_1\tau\ |\ n\geqslant2\}$$ $$\tilde{w}_2=t^{(n-1)(\Lambda_2-\Lambda_1)}s_2\tau,\ \tilde{w}_3=t^{ m(2\Lambda_1-\Lambda_2)+n\Lambda_1}s_1s_2s_1\tau,$$ $$\tilde{w}_4\in\{t^{m\Lambda_2+(n-1)\Lambda_1}s_2\tau,\ t^{m\Lambda_2+(n-1)(\Lambda_2-\Lambda_1)}s_1s_2s_1\tau\},$$ $$\tilde{w}_5=t^{(1-m)(2\Lambda_2-\Lambda_1)+n(\Lambda_2-\Lambda_1)}s_2\tau,$$ $$\tilde{w}_6\in\{t^{m(2\Lambda_1-\Lambda_2)+n\Lambda_1}s_2\tau,\ t^{-m(2\Lambda_1-\Lambda_2)+n(\Lambda_2-\Lambda_1)}s_1s_2s_1\tau,\ t^{m\Lambda_2+(n+1)\Lambda_1}s_1s_2s_1\tau\}$$ $$\tilde{w}_7= t^{m\Lambda_2+n(\Lambda_2-\Lambda_1)}s_2\tau.$$Then
\[\deg(f_{\tilde{w},\mathbb{O}})=\left\{
                                                             \begin{array}{ll}
                                                              2, & \emph{if}\ \mathbb{O}\in\{\mathbb{O}_{\lambda,\tau}\ |\ \lambda\in\lambda(\tilde{w})\}\\
                                                              1, & \emph{if}\ \mathbb{O}\in \mathbb{O}^<_{\tilde{w}}\\
                                                              0, & \emph{if}\ \mathbb{O}=\mathbb{O}_{2}\\
                                                              -\infty, & \emph{otherwise},
                                                             \end{array}
                                                           \right.\]
where,\[\lambda(\tilde{w})=\left\{
                                                             \begin{array}{ll}
                                                             \Lambda^{\leqslant}_{\Lambda_2+(n-1)\Lambda_1}, & \emph{if}\ \tilde{w}=\tilde{w}_1\\
                                                             \Lambda^{\leqslant}_{\Lambda_2+(n-2)\Lambda_1}, & \emph{if}\ \tilde{w}=\tilde{w}_2\\
                                                             \Lambda^{\leqslant}_{(m+1)\Lambda_2+(n-2)\Lambda_1}, & \emph{if}\ \tilde{w}=\tilde{w}_3\\
                                                             \Lambda^{\leqslant}_{m\Lambda_2+(n-1)\Lambda_1}, & \emph{if}\ \tilde{w}=\tilde{w}_4\\
                                                             \Lambda^{\leqslant}_{(m-1)\Lambda_2+n\Lambda_1}, & \emph{if}\ \tilde{w}=\tilde{w}_5\\
                                                             \Lambda^{\leqslant}_{(m+1)\Lambda_2+(n-1)\Lambda_1}, & \emph{if}\ \tilde{w}=\tilde{w}_6\\
                                                             \Lambda^{\leqslant}_{m\Lambda_2+n\Lambda_1}, & \emph{if}\ \tilde{w}=\tilde{w}_7,
                                                             \end{array}
                                                           \right.\]
and $\mathbb{O}^<_{\tilde{w}}=\{\mathbb{O}_{4i+2,\tau}|1\leqslant i\leqslant n(\tilde{w})\}\cup\{\mathbb{O}'_{6i,\tau}|0\leqslant i\leqslant n'(\tilde{w})\} \cup\{\mathbb{O}'_{6i+4,\tau}|1\leqslant i\leqslant n''(\tilde{w})\}$, \[n(\tilde{w})=\left\{
\begin{array}{ll}
n-2, & \emph{if}\ \tilde{w}=\tilde{w}_1\ \emph{or}\ \tilde{w}_2\\
m+n-2, & \emph{if}\ \tilde{w}\in\{\tilde{w}_3,\ \tilde{w}_4,\ \tilde{w}_5\}\\
m+n-1, & \emph{if}\ \tilde{w}\in\{\tilde{w}_6,\ \tilde{w}_7\},
\end{array}
\right.\]
\[n'(\tilde{w})=\left\{
\begin{array}{ll}
\lfloor\frac{n}{2}\rfloor, & \emph{if}\ \tilde{w}=\tilde{w}_1\\
\lfloor\frac{n-1}{2}\rfloor, & \emph{if}\ \tilde{w}=\tilde{w}_2\\
m+\lfloor\frac{n-1}{2}\rfloor, & \emph{if}\ \tilde{w}\in\{\tilde{w}_3,\ \tilde{w}_7\}\\
m+\lfloor\frac{n}{2}\rfloor-1, & \emph{if}\ \tilde{w}=\tilde{w}_4\\
m+\lfloor\frac{n-1}{2}\rfloor-1, & \emph{if}\ \tilde{w}=\tilde{w}_5\\
m+\lfloor\frac{n}{2}\rfloor, & \emph{if}\ \tilde{w}=\tilde{w}_6,
\end{array}
\right.\]
\[n''(\tilde{w})=\left\{
\begin{array}{ll}
\lfloor\frac{n-1}{2}\rfloor, & \emph{if}\ \tilde{w}=\tilde{w}_1\\
\lfloor\frac{n-2}{2}\rfloor, & \emph{if}\ \tilde{w}=\tilde{w}_2\\
m+\lfloor\frac{n}{2}\rfloor-1, & \emph{if}\ \tilde{w}\in\{\tilde{w}_3,\ \tilde{w}_7\}\\
m+\lfloor\frac{n-1}{2}\rfloor-1, & \emph{if}\ \tilde{w}=\tilde{w}_4\\
m+\lfloor\frac{n}{2}\rfloor-2, & \emph{if}\ \tilde{w}=\tilde{w}_5\\
m+\lfloor\frac{n-1}{2}\rfloor, & \emph{if}\ \tilde{w}=\tilde{w}_6.
\end{array}
\right.\]
\end{prop}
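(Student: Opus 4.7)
The plan is to mimic the strategy used for Proposition \ref{1degso2}, adapted to the coset $W_a\tau$. First I would exploit the identity $f_{\tilde w,\mathbb O}=f_{\tau\tilde w\tau^{-1},\mathbb O}$ together with the conjugation formulas in Lemma \ref{s2tau} to cut the eight representatives down to a manageable number of cases. In each case, the representative $\tilde w_i$ lies in $\mathbb O_{1,\tau}$ but is not of minimal length, so the inductive construction
\[ T_{\tilde{w}} \equiv [v]\, T_{s_i\tilde{w}} + T_{s_i\tilde{w}s_{\delta(i)}} \pmod{[\widetilde H,\widetilde H]_\delta} \]
applies whenever one can find a simple reflection $s_i$ with $\ell(s_i\tilde w s_i)<\ell(\tilde w)$. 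The idea is to iterate this along an explicit length-decreasing path that terminates either at the minimal-length representative $s_2\tau$ (contributing the coefficient of $T_{\mathbb O_{1,\tau}}$) or at elements of the form $t^{\mu}w$ with $w\in\{s_1,s_2,s_1s_2,s_2s_1\}$ or $t^\mu s_1s_2s_1$, whose class polynomials have already been determined.

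Concretely, for each $\tilde w_i$ I would write down a reduction chain analogous to the one displayed for $\tilde w_1=t^{n\Lambda_1}s_2s_1$ in Proposition \ref{1degso2}: apply a conjugation by $\tau$ or by a simple reflection to move the translation part toward the chosen normal form, then strip off one simple reflection at a time using the recursion. Each step contributes one factor of $[v]$ and an intermediate term that, by the previously established propositions (Propositions \ref{deglambda}, \ref{1degso2}, \ref{deg1}, \ref{deg4k+1}, \ref{deg1p}, \ref{deg6k+1}, \ref{deg6k+3}), has known class-polynomial expansion. Summing these contributions yields an expression
\[ T_{\tilde w_i}\equiv \sum_{\lambda\in\lambda(\tilde w_i)} [v]^2\,T_{\mathbb O_{\lambda,\tau}}+\sum_{\mathbb O\in\mathbb O^<_{\tilde w_i}}[v]\,T_\mathbb O+T_{\mathbb O_{1,\tau}}+\cdots, \]
from which the degrees $0,1,2$ are read off.

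The main obstacle is entirely combinatorial: one must verify that the reduction chain is length-decreasing at every step and that the intermediate elements fall into one of the previously treated conjugacy classes, and then assemble the totals $n(\tilde w),n'(\tilde w),n''(\tilde w)$ of contributions from $\mathbb O_{4i+2,\tau}$, $\mathbb O'_{6i,\tau}$, $\mathbb O'_{6i+4,\tau}$. Because the translation parts involve independent parameters $m,n$ with different parities and sign conventions, I expect the floor functions $\lfloor\tfrac{n}{2}\rfloor$, $\lfloor\tfrac{n-1}{2}\rfloor$, $\lfloor\tfrac{n-2}{2}\rfloor$ in $n'(\tilde w)$ and $n''(\tilde w)$ to emerge precisely from the bookkeeping of how many $s_0$- or $s_2$-conjugations can be applied before leaving the fundamental domain described by $Q_+$. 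The bulk of the work is therefore to check one representative per case carefully and then cite Lemma \ref{s2tau} to dispatch the $\tau$-equivalent variants that share the same class polynomial. No new conceptual ingredient beyond that of Proposition \ref{1degso2} is required; the argument is a direct, if tedious, extension.
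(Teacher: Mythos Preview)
Your overall strategy---iterate the Hecke recursion $T_{\tilde w}\equiv[v]T_{s_i\tilde w}+T_{s_i\tilde ws_i}$ down a length-decreasing path and collect the $[v]$-contributions---is exactly what the paper does. But there is a genuine confusion in the details that would derail the argument as you have written it.

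All elements of $\mathbb{O}_{1,\tau}$ lie in the coset $W_a\tau$, and both $s_i\tilde w$ and $s_i\tilde ws_i$ stay in $W_a\tau$ whenever $\tilde w\in W_a\tau$. Consequently every intermediate term produced by the recursion still ends in $\tau$; you never obtain an element of the form $t^\mu w$ with $w\in\{s_1,s_2,s_1s_2,s_2s_1,s_1s_2s_1\}$ inside $W_a$. The Propositions you cite (\ref{deglambda}, \ref{1degso2}, \ref{deg1}, \ref{deg4k+1}, \ref{deg1p}, \ref{deg6k+1}, \ref{deg6k+3}) are all about $W_a$ elements and simply do not apply here. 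At this point in the paper the only $W_a\tau$ result available is the trivial Proposition~\ref{degtaulambda}; the later Propositions~\ref{deg4k+2tau}--\ref{2deg1tau11} all cite the present proposition, so you cannot appeal to them either.

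What the paper actually does is establish the needed auxiliary expansions \emph{inside} this proof: it shows that the $[v]$-side terms are of the forms $t^{i\Lambda_1}s_1s_2\tau$, $t^{\Lambda_1+i(\Lambda_2-\Lambda_1)}s_1s_2s_1s_2\tau$, and $t^{\mu}s_2s_1s_2\tau$, then identifies the first as a minimal element of $\mathbb{O}_{4(i-1)+2,\tau}$ (via Lemma~\ref{4k+2tau}), reduces the second by its own short chain to $T_{\mathbb{O}'_{6k,\tau}}$ or $T_{\mathbb{O}'_{6k+4,\tau}}$ plus $[v]$-multiples of $T_{\mathbb{O}_{\lambda,\tau}}$ (via Lemmas~\ref{6ktau}, \ref{6k+4tau}), and recognizes the third as a minimal element of $\mathbb{O}_{\lambda,\tau}$ (via Lemma~\ref{lambdatau}). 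Once you replace your $W_a$ citations with these in-proof $W_a\tau$ reductions, the remainder of your plan---the bookkeeping that produces the floor functions in $n'(\tilde w)$ and $n''(\tilde w)$---goes through exactly as you outline.
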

\begin{proof}
By the following relations:\\
$$T_{t^{n\Lambda_1}s_1s_2s_1\tau}\equiv T_{t^{(n-1)\Lambda_1}s_2\tau}\equiv [v]T_{t^{\Lambda_1+(n-1)(\Lambda_2-\Lambda_1)}s_1s_2s_1s_2\tau}+T_{t^{(n-1)(\Lambda_2-\Lambda_1)}s_2\tau}$$

$$\equiv [v]T_{t^{\Lambda_1+(n-1)(\Lambda_2-\Lambda_1)}s_1s_2s_1s_2\tau}+[v]T_{t^{(n-1)\Lambda_1}s_1s_2\tau}+T_{t^{(n-1)\Lambda_1}s_1s_2s_1\tau}$$and

\[T_{t^{\Lambda_1+i(\Lambda_2-\Lambda_1)}s_1s_2s_1s_2\tau}\equiv \left\{
\begin{array}{ll}
\sum_{\lambda\in\Lambda'_{\Lambda_2+(i-1)\Lambda_1}}[v]T_{\mathbb{O}_{\lambda,\tau}}+T_{\mathbb{O}'_{6\frac{i}{2},\tau}}, & i\ \text{is even}\\
\sum_{\lambda\in\Lambda'_{\Lambda_2+(i-1)\Lambda_1}}[v]T_{\mathbb{O}_{\lambda,\tau}}+T_{\mathbb{O}'_{6\frac{i-1}{2}+4,\tau}}, & i\ \text{is odd},
\end{array}
\right.\]
$T_{t^{i\Lambda_1}s_1s_2\tau}\equiv T_{\mathbb{O}_{4(i-1)+2,\tau}}$, the case for $\tilde{w}$ or $\tau\tilde{w}\tau^{-1}=\tilde{w}_1$ is proved.

The case for $\tilde{w}$ or $\tau\tilde{w}\tau^{-1}=\tilde{w}_2$ follows from the $\tilde{w}_1$ case directly.

By the $\tilde{w}_1$ case and  $$T_{t^{m(2\Lambda_1-\Lambda_2)+n\Lambda_1}s_1s_2s_1\tau}\equiv [v]T_{t^{m\Lambda_2+n\Lambda_1}s_1s_2s_1s_2\tau}+T_{t^{m\Lambda_2+(n-1)\Lambda_1}s_2\tau}$$

$\equiv [v]T_{t^{m\Lambda_2+n\Lambda_1}s_1s_2s_1s_2\tau}+[v]T_{t^{(m-1)\Lambda_2+(n+1)\Lambda_1}s_1s_2s_1s_2\tau}+T_{t^{(1-m)(2\Lambda_1-\Lambda_2)+n(\Lambda_2-\Lambda_1)}s_2\tau}$

$\equiv [v](T_{t^{m\Lambda_2+n\Lambda_1}s_1s_2s_1s_2\tau}+T_{t^{(m-1)\Lambda_2+(n+1)\Lambda_1}s_1s_2s_1s_2\tau}+T_{t^{(m-1)(2\Lambda_1-\Lambda_2)+n\Lambda_1}s_1s_2\tau})$

$ +T_{t^{(m-1)(2\Lambda_1-\Lambda_2)+n\Lambda_1}s_1s_2s_1\tau}$, the cases for $\tilde{w}_3,\ \tilde{w}_4,\ \tilde{w}_5$ are proved.

Similar argument for the cases of $\tilde{w}_6,\ \tilde{w}_7$.
\end{proof}

\begin{prop}\label{deg4k+2tau}
(1) If $\tilde{w}\in\mathbb{O}_{4k+2,\tau}$ and $\ell(\tilde{w})=4k+2+2i$, where $0\leqslant i\leqslant k$. Then \[\deg(f_{\tilde{w},\mathbb{O}})=\left\{
                                                             \begin{array}{ll}
                                                              1, & \emph{if}\ \mathbb{O}\in\{\mathbb{O}_{(k+1)\Lambda_1+j(\Lambda_2-\Lambda_1),\tau}\ |\ 1\leqslant j\leqslant i\}\\
                                                              0, & \emph{if}\ \mathbb{O}=\mathbb{O}_{4k+2,\tau}\\
                                                              -\infty, & \emph{otherwise}.
                                                             \end{array}
                                                           \right.\]

(2) Let $\tilde{w}\in\{\tilde{w}_1,\ \tilde{w}_2\}\subset\mathbb{O}_{4k+2,\tau}$, and $\ell(\tilde{w}_1)= 6(k+1)+4m,\ \ell(\tilde{w}_2)=6(k+1)+4m-2$ ($m\in\mathbb{N}$), then \[\deg(f_{\tilde{w},\mathbb{O}})=\left\{
                                                             \begin{array}{ll}
                                                              3, & \emph{if}\ \mathbb{O}=\mathbb{O}_{\lambda,\tau},\ \lambda\in\Lambda^{\leqslant}_{\lambda(\tilde{w})}\\
                                                              2, & \emph{if}\ \mathbb{O}\in\mathbb{O}^<_{\tilde{w}}\\
                                                              1, & \emph{if}\ \mathbb{O}\in\mathbb{O}(\tilde{w})\\
                                                              -\infty, & \emph{otherwise},
                                                             \end{array}
                                                           \right.\]
where\[\lambda(\tilde{w})=\left\{
\begin{array}{ll}
(k+1)\Lambda_2+m\Lambda_1, & \emph{if}\ \tilde{w}=\tilde{w}_1\\
(k+1)\Lambda_2+(m-1)\Lambda_1, & \emph{if}\ \tilde{w}=\tilde{w}_2,
\end{array}
\right.\]
$\mathbb{O}^<_{\tilde{w}}=\{\mathbb{O}_{4i+2,\tau}|1\leqslant i\leqslant n(\tilde{w})\}\cup\{\mathbb{O}'_{6i,\tau}|0\leqslant i\leqslant n'(\tilde{w})\} \cup\{\mathbb{O}'_{6i+4,\tau}|1\leqslant i\leqslant n''(\tilde{w})\}$, \[n(\tilde{w})=\left\{
\begin{array}{ll}
k+m, & \emph{if}\ \tilde{w}=\tilde{w}_1\\
k+m-1, & \emph{if}\ \tilde{w}=\tilde{w}_2,
\end{array}
\right.\]
\[n'(\tilde{w})=\left\{
\begin{array}{ll}
k+\lfloor\frac{m+1}{2}\rfloor, & \emph{if}\ \tilde{w}=\tilde{w}_1\\
k+\lfloor\frac{m}{2}\rfloor, & \emph{if}\ \tilde{w}=\tilde{w}_2
\end{array}
\right.\]
\[n''(\tilde{w})=\left\{
\begin{array}{ll}
k+\lfloor\frac{m}{2}\rfloor, & \emph{if}\ \tilde{w}=\tilde{w}_1\\
k+\lfloor\frac{m-1}{2}\rfloor, & \emph{if}\ \tilde{w}=\tilde{w}_2,
\end{array}
\right.\]
and
\[\mathbb{O}(\tilde{w})=\left\{
\begin{array}{ll}
\{\mathbb{O}_{1,\tau},\ \mathbb{O}_{(k+1)\Lambda_2,\tau}\}, & \emph{if}\ \tilde{w}=\tilde{w}_1\\
\{\mathbb{O}_{1,\tau},\ \mathbb{O}_{(k+1)\Lambda_2,\tau},\ \mathbb{O}_{(k+1)\Lambda_2+m\Lambda_1,\tau}\}, & \emph{if}\ \tilde{w}=\tilde{w}_2.
\end{array}
\right.\]
\end{prop}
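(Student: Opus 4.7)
The plan is to proceed in complete analogy with the proof of Proposition \ref{deg4k+1}, adapted to the coset $W_a\tau$ instead of $W_a$. First, I will use Lemma \ref{4k+2tau} to enumerate representatives of $\mathbb{O}_{4k+2,\tau}$, and apply the symmetry $f_{\tilde{w},\mathbb{O}}=f_{\tau\tilde{w}\tau^{-1},\mathbb{O}}$ together with the relations of the form $T_{t^{k\Lambda_1+m(\Lambda_2-\Lambda_1)}s_1s_2\tau}\equiv T_{t^{-k\Lambda_1+m(\Lambda_2-\Lambda_1)}s_1s_2\tau}$ (coming from conjugation by $s_0$) to reduce the general element to a small list of canonical representatives, parametrized by length.

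For part (1), where $\ell(\tilde{w})=4k+2+2i$ with $0\leqslant i\leqslant k$, I may assume $\tilde{w}=t^{(k+1)\Lambda_1+i(\Lambda_2-\Lambda_1)}s_1s_2\tau$. Using the recursion with $s_1$ I will obtain
\[
T_{\tilde w}\equiv [v]\sum_{j=1}^{i-1}T_{t^{(k+1)\Lambda_1+j(\Lambda_2-\Lambda_1)}\tau}+T_{\mathbb{O}_{4k+2,\tau}},
\]
each translation element being a minimal length representative of $\mathbb{O}_{(k+1)\Lambda_1+j(\Lambda_2-\Lambda_1),\tau}$; combined with Proposition \ref{degtaulambda}, this gives the stated degrees.

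For part (2), I take $\tilde{w}_1=t^{(k+1)\Lambda_2+m(\Lambda_2-\Lambda_1)}s_1s_2\tau$ and $\tilde{w}_2=t^{(k+1)\Lambda_2+(m+1)\Lambda_1}s_1s_2s_1\tau$ as the canonical representatives of length $6(k+1)+4m$ and $6(k+1)+4m-2$. I will then apply the recursion iteratively, mimicking the chain in Proposition \ref{deg4k+1}(2):
\[
T_{\tilde{w}_1}\equiv [v]T_{t^{(k+1)\Lambda_2+m\Lambda_1}s_1s_2s_1\tau}+T_{t^{(k+1)\Lambda_2+m\Lambda_1}s_1s_2s_1s_2\tau},
\]
then apply a further step using the left action of $s_1$ or $s_2$ to peel off a translation term of the form $[v]T_{t^{(k+1)\Lambda_2+(m-1)\Lambda_1}\tau}$, and continue until the process terminates at the $\tilde{w}_2$--type element. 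This expresses $T_{\tilde{w}_1}$ as an explicit $[v]$-linear combination of $T_{t^{(k+1)\Lambda_2+j\Lambda_1}s_1s_2s_1\tau}$, $T_{t^{(k+1)\Lambda_2+j\Lambda_1}\tau}$, and the shorter element $T_{t^{(k+1)\Lambda_2}s_1s_2\tau}$. Each of the first two families is a minimal representative of an $\mathbb{O}_{\lambda,\tau}$, while the last is handled by Proposition \ref{2degstau1}, which already records the relevant $\Lambda^{\leqslant}$, $\mathbb{O}^<$, and $\mathbb{O}(\tilde{w})$ data.

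The main obstacle will be the bookkeeping: matching the classes produced by each recursion step against the tripartite indexing set $\Lambda^{\leqslant}_{\lambda(\tilde{w})}$, $\mathbb{O}^<_{\tilde{w}}$, $\mathbb{O}(\tilde{w})$, and in particular checking that the specific ranges $n(\tilde{w}),n'(\tilde{w}),n''(\tilde{w})$ for the three kinds of $\tau$-twisted $\mathbb{O}$-classes come out exactly as claimed. I expect this to be a largely mechanical but lengthy verification, essentially parallel to Proposition \ref{1degso2} and Proposition \ref{2degstau1}; the only genuinely new input is the computation of $T_{t^{(k+1)\Lambda_2+j\Lambda_1}\tau}$ in the relevant range, which follows from Lemma \ref{lambdatau} together with Proposition \ref{degtaulambda}. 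The final formula for $\tilde{w}_2$ follows from that of $\tilde{w}_1$ by one additional recursion step, accounting for the extra class $\mathbb{O}_{(k+1)\Lambda_2+m\Lambda_1,\tau}$ appearing in $\mathbb{O}(\tilde{w}_2)$ but not in $\mathbb{O}(\tilde{w}_1)$.
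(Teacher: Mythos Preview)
Your overall strategy---reduce to canonical representatives via Lemma \ref{4k+2tau} and the $\tau$-symmetry, then run the Hecke recursion and feed the resulting side terms into Propositions \ref{degtaulambda} and \ref{2degstau1}---is exactly what the paper does. However, several of the specific elements you write down are wrong, and these are not just typos: they reflect a confusion about which finite Weyl part belongs to which conjugacy class in $W_a\tau$.

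For part (1): the terms peeled off by the recursion are $t^{(k+1)\Lambda_1+j(\Lambda_2-\Lambda_1)}s_2s_1s_2\tau$, not $t^{(k+1)\Lambda_1+j(\Lambda_2-\Lambda_1)}\tau$. By Lemma \ref{lambdatau} the minimal representatives of $\mathbb{O}_{\lambda,\tau}$ end in $s_2s_1s_2\tau$; a pure $t^{\lambda}\tau$ lies instead in some $\mathbb{O}'_{6k,\tau}$ or $\mathbb{O}'_{6k+4,\tau}$ (Lemmas \ref{6ktau}--\ref{6k+4tau}), so your ``translation terms'' would land in the wrong classes. Also the sum runs over $1\le j\le i$, matching the statement, not $1\le j\le i-1$.

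For part (2): your proposed $\tilde{w}_2=t^{(k+1)\Lambda_2+(m+1)\Lambda_1}s_1s_2s_1\tau$ is not in $\mathbb{O}_{4k+2,\tau}$ at all; by Lemma \ref{4k+2tau} that class contains only elements ending in $s_1s_2\tau$ or $s_2s_1\tau$, and anything ending in $s_1s_2s_1\tau$ lies in $\mathbb{O}_{1,\tau}$ (Lemma \ref{s2tau}). The correct choice is $\tilde{w}_2=t^{(k+1)\Lambda_2+m\Lambda_1}s_2s_1\tau$, and the two recursion steps read
\[
T_{\tilde{w}_1}\equiv [v]\,T_{t^{(k+1)\Lambda_2+m\Lambda_1}s_2\tau}+T_{\tilde{w}_2},\qquad
T_{\tilde{w}_2}\equiv [v]\,T_{t^{(k+1)\Lambda_2+m\Lambda_1}s_2s_1s_2\tau}+T_{t^{(k+1)\Lambda_2+(m-1)(\Lambda_2-\Lambda_1)}s_1s_2\tau}.
\]
So the $[v]$-side terms are of type $s_2\tau$ (an element of $\mathbb{O}_{1,\tau}$, handled by Proposition \ref{2degstau1}) and $s_2s_1s_2\tau$ (minimal in $\mathbb{O}_{(k+1)\Lambda_2+m\Lambda_1,\tau}$), not $s_1s_2s_1\tau$, $s_1s_2s_1s_2\tau$, or pure $\tau$. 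Once you use these correct side terms, the bookkeeping against $\Lambda^{\leqslant}_{\lambda(\tilde{w})}$, $\mathbb{O}^<_{\tilde{w}}$, and $\mathbb{O}(\tilde{w})$ goes through exactly as in the paper, with Proposition \ref{2degstau1} supplying the ranges $n(\tilde{w}),n'(\tilde{w}),n''(\tilde{w})$ directly.
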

\begin{proof}
(1) If $\tilde{w},\ \tilde{w}'\in\mathbb{O}_{4k+2,\tau}$ and $\ell(\tilde{w})=\ell(\tilde{w}')=4k+2+2i$, where $0\leqslant i\leqslant k$, then $f_{\tilde{w},\mathbb{O}}=f_{\tilde{w}',\mathbb{O}}$. It is sufficient to consider $\tilde{w}=t^{(k+1)\Lambda_1+i(\Lambda_2-\Lambda_1)}s_1s_2\tau$. If $i=0$, then $T_{\tilde{w}}\equiv T_{\mathbb{O}_{4k+2,\tau}}$. If $i\geqslant1$, then $T_{\tilde{w}}\equiv \sum_{j=1}^{i}[v]T_{t^{(k+1)\Lambda_1+j(\Lambda_2-\Lambda_1)}s_2s_1s_2\tau}+T_{\mathbb{O}_{4k+2,\tau}}$.

(2) By Lemma \ref{4k+2tau}, it is sufficient to consider $\tilde{w}_1=t^{(k+1)\Lambda_2+m(\Lambda_2-\Lambda_1)}s_1s_2\tau,\ \tilde{w}_2=t^{(k+1)\Lambda_2+m\Lambda_1}s_2s_1\tau$. Furthermore, $$T_{t^{(k+1)\Lambda_2+m(\Lambda_2-\Lambda_1)}s_1s_2\tau}\equiv [v]T_{t^{(k+1)\Lambda_2+m\Lambda_1}s_2\tau}+T_{t^{(k+1)\Lambda_2+m\Lambda_1}s_2s_1\tau}$$ and $$T_{t^{(k+1)\Lambda_2+m\Lambda_1}s_2s_1\tau}\equiv [v]T_{t^{(k+1)\Lambda_2+m\Lambda_1}s_2s_1s_2\tau}+T_{t^{(k+1)\Lambda_2+(m-1)(\Lambda_2-\Lambda_1)}s_1s_2\tau}.$$ Thus, together with Proposition \ref{2degstau1}, the proposition is proved.
\end{proof}

\begin{prop}\label{deg0tau}
Let $\tilde{w}\in\mathbb{O}'_{0,\tau}$, then
\[\deg(f_{\tilde{w},\mathbb{O}})=\left\{
                                        \begin{array}{ll}
                                        3, & \emph{if}\ \mathbb{O}=\mathbb{O}_{\lambda,\tau},\ \lambda\in\Lambda^{\leqslant}_{\lambda(\tilde{w})}\\
                                        2, & \emph{if}\ \mathbb{O}\in\mathbb{O}^<_{\tilde{w}}\\
                                        1, & \emph{if}\ \mathbb{O}\in\mathbb{O}(\tilde{w})\ (\ell(\tilde{w})>1)\\
                                        0, & \emph{if}\ \mathbb{O}=\mathbb{O}'_{0,\tau}\\
                                        -\infty, & \emph{otherwise},
                                        \end{array}
                                        \right.\]
where \[\lambda(\tilde{w})=\left\{
                                                             \begin{array}{ll}
                                                              m\Lambda_2-(\Lambda_2-\Lambda_1), & \emph{if}\ \ell(\tilde{w})=6m\\
                                                              (m-1)\Lambda_2-(\Lambda_2-\Lambda_1), & \emph{if}\ \ell(\tilde{w})=6m-2\ \emph{or}\ 6m-4,
                                                             \end{array}
                                                           \right.\]
$\mathbb{O}^<_{\tilde{w}}=\{\mathbb{O}_{4i+2,\tau}|1\leqslant i\leqslant n(\tilde{w})\}\cup\{\mathbb{O}'_{6i,\tau}|0\leqslant i\leqslant m-1\} \cup\{\mathbb{O}'_{6i+4,\tau}|1\leqslant i\leqslant n''(\tilde{w})\}$, \[n(\tilde{w})=n''(\tilde{w})=\left\{
\begin{array}{ll}
m-1, & \emph{if}\ \ell(\tilde{w})=6m\\
m-2, & \emph{if}\ \ell(\tilde{w})=6m-2\ \emph{or}\ 6m-4,
\end{array}
\right.\]

\[\mathbb{O}(\tilde{w})=\left\{
                                  \begin{array}{ll}
                                  \{\mathbb{O}_{1,\tau},\ \mathbb{O}_{i\Lambda_2,\tau}\ (1\leqslant i\leqslant m)\}, & \emph{if}\ \ell(\tilde{w})=6m\ \emph{or}\ 6m-2\\
                                  \{\mathbb{O}_{1,\tau},\ \mathbb{O}_{i\Lambda_2,\tau}\ (1\leqslant i\leqslant m-1)\}, & \emph{if}\ \ell(\tilde{w})=6m-4.
                                  \end{array}
                            \right.\]
\end{prop}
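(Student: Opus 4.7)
My plan is to follow the same pattern as the proofs of Propositions \ref{deg1p} and \ref{deg4k+2tau}: use the conjugacy-class description from Lemma \ref{6ktau} (specialized to $k=0$) to reduce the question to a small list of length-minimal representatives indexed by $\ell(\tilde{w})$, and then iterate the recursion
\[
T_{\tilde{w}} \equiv [v]\, T_{s_i \tilde{w}} + T_{s_i \tilde{w} s_{\delta(i)}}
\quad\text{whenever}\quad \ell(s_i \tilde{w} s_{\delta(i)}) < \ell(\tilde{w}),
\]
to reduce length. Since $f_{\tilde{w},\mathbb{O}} = f_{\tau \tilde{w} \tau^{-1}, \mathbb{O}}$ and the conjugacy class $\mathbb{O}'_{0,\tau}$ is symmetric under $\tau$-conjugation and under $\lambda \mapsto -\lambda$, I expect that (after these symmetries) exactly one representative is needed for each of the three length strata $\ell(\tilde{w}) = 6m$, $6m-2$, $6m-4$. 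Natural choices would be $\tilde{w} = t^{m\Lambda_2}\tau$, $t^{\Lambda_1 + (m-1)(2\Lambda_1-\Lambda_2)}s_1s_2s_1s_2\tau$, and $t^{(m-1)\Lambda_2}\tau$ respectively, mirroring the three-lengths structure seen in Proposition \ref{deg1p}.

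The computational core will be a chain of reductions analogous to those displayed in the proofs of \ref{2degstau1} and \ref{deg4k+2tau}. Starting from $t^{m\Lambda_2}\tau$, I will strip off a simple reflection to produce a term of the form $[v]\cdot T_{x}$ where $x$ lies in a lower-length element of $\mathbb{O}_{1,\tau}$ or $\mathbb{O}_{\lambda,\tau}$ with $\lambda \in Q'_+$, plus a remainder $T_{y}$ with $y \in \mathbb{O}'_{0,\tau}$ of strictly smaller length. Iterating, the expansion should take the form
\[
T_{\tilde{w}} \equiv \sum [v]\, T_{\mathbb{O}_{\lambda,\tau}} \;+\; \sum [v]\, T_{x}\;+\; T_{\mathbb{O}'_{0,\tau}},
\]
where the $x$'s are drawn from the lower-length strata of $\mathbb{O}_{1,\tau}$, $\mathbb{O}_{4k+2,\tau}$, $\mathbb{O}'_{6k+4,\tau}$ (these contributing degree $1$ to $f_{\tilde{w},\mathbb{O}}$), and the $\lambda$'s exhaust $\Lambda^{\leqslant}_{\lambda(\tilde{w})}$. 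Once the recursion produces these terms, Propositions \ref{degtaulambda}, \ref{2degstau1} and \ref{deg4k+2tau} immediately promote them to the stated degrees $0,1,2,3$ for the various conjugacy classes.

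The main obstacle, as in the earlier proofs, is bookkeeping: pinning down exactly which $\mathbb{O}_{\lambda,\tau}$ appear with which multiplicity as the recursion unfolds, and checking that the three families $\mathbb{O}'_{6i,\tau}$, $\mathbb{O}'_{6i+4,\tau}$, $\mathbb{O}_{4i+2,\tau}$ are enumerated with the correct ranges $n(\tilde{w})$, $n''(\tilde{w})$, and the index $i \le m-1$. The delicate points are (i) verifying that the three different length strata $6m$, $6m-2$, $6m-4$ really do give the same $n(\tilde{w}) = n''(\tilde{w})$ when $\ell(\tilde{w}) \in \{6m-2, 6m-4\}$ (reflecting the fact that removing the top layer of length costs one convex-set vertex), and (ii) confirming that the case $\ell(\tilde{w}) = 6m-4$ loses $\mathbb{O}_{m\Lambda_2,\tau}$ from $\mathbb{O}(\tilde{w})$, which I expect to drop out naturally at the first recursion step where the highest-weight vertex is absorbed into the remainder $T_{\mathbb{O}'_{0,\tau}}$ term rather than produced as a $[v]$-contribution. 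After this combinatorial audit, the proposition will follow.
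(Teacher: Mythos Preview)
Your approach is essentially the same as the paper's: reduce via $\tau$-conjugation to one representative per length stratum, run the recursion, and invoke Proposition~\ref{2degstau1} to read off the degrees. One correction to your speculative choice of representatives: the element $t^{\Lambda_1+(m-1)(2\Lambda_1-\Lambda_2)}s_1s_2s_1s_2\tau$ actually has length $6m-4$ (not $6m-2$), and $t^{(m-1)\Lambda_2}\tau$ has length $6(m-1)=6m-6$ (not $6m-4$). The paper's representatives are $t^{m\Lambda_2}\tau$, $t^{-m(2\Lambda_1-\Lambda_2)+\Lambda_1}s_1s_2s_1s_2\tau$, and $t^{(m-1)(2\Lambda_1-\Lambda_2)+\Lambda_1}s_1s_2s_1s_2\tau$ for lengths $6m$, $6m-2$, $6m-4$ respectively, and the recursion chains them directly: each step peels off one $[v]$-term (landing in $\mathbb{O}_{1,\tau}$ or $\mathbb{O}_{\lambda,\tau}$) and drops to the next stratum, terminating at $t^{(m-1)\Lambda_2}\tau$. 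With this fix your plan goes through, and Proposition~\ref{2degstau1} alone (together with \ref{degtaulambda}) suffices to finish; Proposition~\ref{deg4k+2tau} is not needed.
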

\begin{proof}
Since $\tau t^{m\Lambda_2}\tau\tau^{-1}=t^{-m\Lambda_2}\tau$, it is sufficient to consider\[\tilde{w}=\left\{
\begin{array}{ll}
t^{m\Lambda_2}\tau, & \text{if}\ \ell(\tilde{w})=6m\\
t^{-m(2\Lambda_1-\Lambda_2)+\Lambda_1}s_1s_2s_1s_2\tau, & \text{if}\ \ell(\tilde{w})=6m-2\\
t^{(m-1)(2\Lambda_1-\Lambda_2)+\Lambda_1}s_1s_2s_1s_2\tau, & \text{if}\ \ell(\tilde{w})=6m-4.
\end{array}
\right.\]
Moreover, $$T_{t^{m\Lambda_2}\tau}\equiv [v]T_{t^{m\Lambda_2}s_2\tau}+T_{t^{-m(2\Lambda_1-\Lambda_2)+\Lambda_1}s_1s_2s_1s_2\tau}\ \ \ \ \ \ \ \ \ \ \ \ \ \ \ \ \ \ \ \ \ \ \ \ \ \ \ \ \ \ \ \ \ \ \ \ \ \ \ \ \ \ \ \ \ \ $$
$$\equiv [v]T_{t^{m\Lambda_2}s_2\tau}+[v]T_{t^{m\Lambda_2}s_2s_1s_2\tau}+T_{t^{(m-1)(2\Lambda_1-\Lambda_2)+\Lambda_1}s_1s_2s_1s_2\tau}\ \ \ \ \ \ \ \ \ \ \  $$
$$\equiv [v]T_{t^{m\Lambda_2}s_2\tau}+[v]T_{t^{m\Lambda_2}s_2s_1s_2\tau}+[v]T_{t^{\Lambda_1+(m-1)\Lambda_2}s_1s_2s_1\tau}+T_{t^{(m-1)\Lambda_2}\tau}$$ Thus, the proposition follows from Proposition \ref{2degstau1}.
\end{proof}

\begin{prop}\label{deg6ktau}
(1) Let $\tilde{w}\in\mathbb{O}'_{6k,\tau}$, where $k\in\mathbb{N}_+$. If $\ell(\tilde{w})=6k+2m,\ 0\leqslant m\leqslant k$, then \[\deg(f_{\tilde{w},\mathbb{O}})=\left\{
                                                             \begin{array}{ll}
                                                              1, & \emph{if}\ \mathbb{O}\in\{\mathbb{O}_{\Lambda_1+k\Lambda_2+i(2\Lambda_1-\Lambda_2),\tau}\ |\ 0\leqslant i\leqslant m-1\}\\
                                                              0, & \emph{if}\ \mathbb{O}=\mathbb{O}'_{6k,\tau}\\
                                                               -\infty, & \emph{otherwise}.
                                                             \end{array}
                                                           \right.\]

(2) If $\tilde{w}\in\mathbb{O}'_{6k,\tau}$, and let $\tilde{w}_1,\ \tilde{w}_2,\ \tilde{w}_3$ with length $\ell(\tilde{w}_1)=8k+6m,\ \ell(\tilde{w}_2)=8k+6m-2,\ \ell(\tilde{w}_3)=8k+6m-4$, where $k,\ m\in\mathbb{N}_+$. Then \[\deg(f_{\tilde{w},\mathbb{O}})=\left\{
                                        \begin{array}{ll}
                                        3, & \emph{if}\ \mathbb{O}\in\{\mathbb{O}_{\lambda,\tau}\ |\ \lambda\in\Lambda^{\leqslant}_{\lambda(\tilde{w})}\}\\
                                        2, & \emph{if}\ \mathbb{O}\in\mathbb{O}^<_{\tilde{w}}\\
                                        1, & \emph{if}\ \mathbb{O}=\mathbb{O}_{1,\tau}\ \emph{or}\ \tilde{w}=\tilde{w}_2\ \emph{and}\ \mathbb{O}=\mathbb{O}_{2k\Lambda_1+m\Lambda_2}\\
                                        -\infty, & \emph{otherwise},
                                        \end{array}
                                        \right.\]
where \[\lambda(\tilde{w})=\left\{
                                                             \begin{array}{ll}
                                                              m\Lambda_2+2k\Lambda_1, & \emph{if}\ \tilde{w}=\tilde{w}_1\\
                                                              m\Lambda_2+(2k-1)\Lambda_1, & \emph{if}\ \tilde{w}\in\{\tilde{w}_2,\ \tilde{w}_3\},
                                                             \end{array}
                                                           \right.\]
$\mathbb{O}^<_{\tilde{w}}=\{\mathbb{O}_{4i+2,\tau}|1\leqslant i\leqslant n(\tilde{w})\}\cup\{\mathbb{O}'_{6i,\tau}|0\leqslant i\leqslant n'(\tilde{w})\} \cup\{\mathbb{O}'_{6i+4,\tau}|1\leqslant i\leqslant n''(\tilde{w})\}$, \[n(\tilde{w})=\left\{
\begin{array}{ll}
m+2k-1, & \emph{if}\ \tilde{w}=\tilde{w}_1\\
m+2k-4, & \emph{if}\ \tilde{w}\in\{\tilde{w}_2,\ \tilde{w}_3\},
\end{array}
\right.\]
\[n'(\tilde{w})=\left\{
\begin{array}{ll}
m+k-1, & \emph{if}\ \tilde{w}=\tilde{w}_1\\
m+k-2, & \emph{if}\ \tilde{w}\in\{\tilde{w}_2,\ \tilde{w}_3\},
\end{array}
\right.\]
\[n''(\tilde{w})=\left\{
\begin{array}{ll}
m+k-1, & \emph{if}\ \tilde{w}=\tilde{w}_1\\
m+k-3, & \emph{if}\ \tilde{w}\in\{\tilde{w}_2,\ \tilde{w}_3\}.
\end{array}
\right.\]
\end{prop}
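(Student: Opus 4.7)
The plan is to follow closely the template of Propositions \ref{deg6k+1} and \ref{deg0tau}, adapted to the coset $W_a\tau$ with the extra parameter $k\geq 1$. First, by Lemma \ref{6ktau} every element of $\mathbb{O}'_{6k,\tau}$ is $\widetilde{W}$-conjugate to one of the form $t^{\pm k(2\Lambda_1-\Lambda_2)+m'\Lambda_2}\tau$ or $t^{\pm k\Lambda_2+\Lambda_1+m'(2\Lambda_1-\Lambda_2)}s_1s_2s_1s_2\tau$, and using $f_{\tilde{w},\mathbb{O}}=f_{\tau\tilde{w}\tau^{-1},\mathbb{O}}$ I may replace the $\pm$ by $+$. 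For each target length I can therefore fix a single normal form, apply the inductive relation $T_{\tilde{w}}\equiv [v]T_{s_i\tilde{w}_1}+T_{s_i\tilde{w}_1 s_{\delta(i)}}$ whenever a length-decreasing $s_i$-conjugation is available, and then read off $\deg f_{\tilde{w},\mathbb{O}}$ from the resulting $A$-linear expansion.

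For part (1) I pick $\tilde{w}=t^{k(2\Lambda_1-\Lambda_2)+m\Lambda_2}\tau$ of length $6k+2m$. Peeling off one $s_i$ at a time, each reduction step produces exactly one new $T_{\mathbb{O}_{\Lambda_1+k\Lambda_2+j(2\Lambda_1-\Lambda_2),\tau}}$ (for $j=0,1,\dots,m-1$) with coefficient $[v]$, and the chain terminates in $T_{\mathbb{O}'_{6k,\tau}}$; this is the exact analogue of the argument giving Proposition \ref{deg6k+1}(1) and yields
\[T_{\tilde{w}}\equiv [v]\sum_{j=0}^{m-1}T_{\mathbb{O}_{\Lambda_1+k\Lambda_2+j(2\Lambda_1-\Lambda_2),\tau}}+T_{\mathbb{O}'_{6k,\tau}}.\]
For part (2), for each of the three length values I choose an analogue of the $(\tilde{w}_1,\tilde{w}_2,\tilde{w}_3)$-triple from Proposition \ref{deg0tau}, inserting a $t^{2k\Lambda_1}$-shift (or $t^{2k(\Lambda_2-\Lambda_1)}$) into the exponent. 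One application of the reduction formula then splits $T_{\tilde{w}_i}$ as a sum of (a) a $T$-value on a strictly shorter element of $\mathbb{O}'_{6k,\tau}$, handled by induction, and (b) boundary terms of the form $T_{t^\mu s_2\tau}$, $T_{t^\mu s_1s_2\tau}$, $T_{t^\mu s_1s_2s_1\tau}$, $T_{t^\mu s_1s_2s_1s_2\tau}$, whose degrees are delivered directly by Propositions \ref{2degstau1} and \ref{deg4k+2tau}. Summing contributions, the stated degree $3$ comes from the $\mathbb{O}_{\lambda,\tau}$-terms produced by expansions of the boundary pieces via the polyhedron $\Lambda^{\leqslant}_{\lambda(\tilde{w})}$, and the counts $n(\tilde{w}), n'(\tilde{w}), n''(\tilde{w})$ count the side-class $T_{\mathbb{O}_{4i+2,\tau}}, T_{\mathbb{O}'_{6i,\tau}}, T_{\mathbb{O}'_{6i+4,\tau}}$ contributions picked up as the $\Lambda_2$- and $(2\Lambda_1-\Lambda_2)$-coordinates are shaved down.

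The main obstacle is the combinatorial bookkeeping of which $\mathbb{O}$'s acquire each of the degrees $3,2,1$, and in particular explaining why the middle subcase $\ell(\tilde{w})=8k+6m-2$ alone produces the anomalous degree-$1$ contribution from $\mathbb{O}_{2k\Lambda_1+m\Lambda_2}$, while the outer subcases $8k+6m$ and $8k+6m-4$ do not. This asymmetry will arise from a single $T_{t^{\lambda}s_1}$-term (with $s_1\in W_a$) appearing in the $\tilde{w}_2$-chain but not the others; expanding that term via Proposition \ref{1degso2} and Proposition \ref{deg6k+1}(2) contributes a degree-$1$ coefficient against $\mathbb{O}_{2k\Lambda_1+m\Lambda_2}$. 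Once this distinction is pinned down, tabulating the $\lfloor(\cdot)/2\rfloor$-valued parameters $n'(\tilde{w}),n''(\tilde{w})$ across the three subcases is mechanical, just as in Proposition \ref{2degstau1}; the proposition then follows by straightforward case analysis.
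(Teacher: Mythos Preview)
Your overall plan and your treatment of part (1) match the paper's proof exactly: reduce to $\tilde{w}=t^{k(2\Lambda_1-\Lambda_2)+m\Lambda_2}\tau$ via $\tau$- and $s_1$-conjugation, then peel off one step at a time to obtain $T_{\tilde{w}}\equiv [v]\sum_{j=0}^{m-1}T_{\mathbb{O}_{\Lambda_1+k\Lambda_2+j(2\Lambda_1-\Lambda_2),\tau}}+T_{\mathbb{O}'_{6k,\tau}}$. For part (2) the paper likewise takes explicit representatives $\tilde{w}_1=t^{2k\Lambda_1+m\Lambda_2}\tau$, $\tilde{w}_2=t^{(2k+1)(\Lambda_2-\Lambda_1)-(m-1)(2\Lambda_1-\Lambda_2)}s_1s_2s_1s_2\tau$, $\tilde{w}_3=t^{(2k+1)\Lambda_1+(m-1)(2\Lambda_1-\Lambda_2)}s_1s_2s_1s_2\tau$ and chains them via single reduction steps, feeding the side terms into Proposition \ref{2degstau1} and part (1). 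So your scaffolding is right.

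However, your explanation of the anomalous degree-$1$ contribution for $\tilde{w}_2$ is wrong and would not work. You claim it arises from a $T_{t^\lambda s_1}$-term with $s_1\in W_a$, to be expanded via Propositions \ref{1degso2} and \ref{deg6k+1}(2). But $\tilde{w}\in W_a\tau$, and the $\Omega$-grading on $\widetilde{H}$ forces every term in the expansion of $T_{\tilde{w}}$ to lie in $W_a\tau$; no element of $W_a$ can ever appear, and those two propositions (which concern $W_a$) are irrelevant here. What actually happens in the paper's proof is that the single reduction $T_{\tilde{w}_2}\equiv [v]\,T_{t^{(2k+1)\Lambda_1+(m-1)(2\Lambda_1-\Lambda_2)}s_2s_1s_2\tau}+T_{\tilde{w}_3}$ produces a side term of the form $t^{\lambda'}s_2s_1s_2\tau$, which by Lemma \ref{lambdatau} is already a minimal-length element of $\mathbb{O}_{2k\Lambda_1+m\Lambda_2,\tau}$ (the missing $\tau$ in the statement is a typo). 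That $[v]$ coefficient is the degree-$1$ contribution; no further expansion is needed. The reason $\tilde{w}_1$ and $\tilde{w}_3$ do not see this class is simply that their side terms land in $\mathbb{O}_{1,\tau}$ (shapes $t^\mu s_2\tau$ and $t^\mu s_1s_2s_1\tau$), which are then handled by Proposition \ref{2degstau1}, not in an $\mathbb{O}_{\lambda,\tau}$.
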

\begin{proof}
Since $\tau(t^{\pm k(2\Lambda_1-\Lambda_2)-m\Lambda_2}\tau)\tau^{-1}=t^{\pm k(2\Lambda_1-\Lambda_2)+m\Lambda_2}\tau$, $$s_1(t^{-k(2\Lambda_1-\Lambda_2)+m\Lambda_2}\tau)s_1=t^{k(2\Lambda_1-\Lambda_2)+m\Lambda_2}\tau,$$ $$\tau(t^{-k\Lambda_2+\Lambda_1+m(2\Lambda_1-\Lambda_2)}s_1s_2s_1s_2\tau)\tau^{-1}=t^{k\Lambda_2+\Lambda_1+m(2\Lambda_1-\Lambda_2)}s_1s_2s_1s_2\tau,$$it is sufficient to consider $t^{k(2\Lambda_1-\Lambda_2)+m\Lambda_2}\tau$ and $t^{k\Lambda_2+\Lambda_1+m(2\Lambda_1-\Lambda_2)}s_1s_2s_1s_2\tau$.\\
(1) If $\ell(\tilde{w})=6k+2m,\ 0\leqslant m\leqslant k$, then we can set $\tilde{w}=t^{k(2\Lambda_1-\Lambda_2)+m\Lambda_2}\tau$. Thus, $$T_{\tilde{w}}=[v]\sum_{i=0}^{m-1}T_{\mathbb{O}_{\Lambda_1+k\Lambda_2+i(2\Lambda_1-\Lambda_2),\tau}}+T_{\mathbb{O}'_{6k,\tau}}.$$\\
(2) We may assume that $\tilde{w}_1=t^{2k\Lambda_1+m\Lambda_2}\tau,$ $$\tilde{w}_2=t^{(2k+1)(\Lambda_2-\Lambda_1)-(m-1)(2\Lambda_1-\Lambda_2)}s_1s_2s_1s_2\tau,$$ $$\tilde{w}_3=t^{(2k+1)\Lambda_1+(m-1)(2\Lambda_1-\Lambda_2)}s_1s_2s_1s_2\tau.$$ Then $$T_{\tilde{w}_1}\equiv [v]T_{t^{-m(2\Lambda_1-\Lambda_2)+2k(\Lambda_2-\Lambda_1)}s_2\tau}+T_{\tilde{w}_2}$$ $$T_{\tilde{w}_2}\equiv [v]T_{t^{(2k+1)\Lambda_1+(m-1)(2\Lambda_1-\Lambda_2)}s_2s_1s_2\tau}+T_{\tilde{w}_3}$$ $$T_{\tilde{w}_3}\equiv[v]T_{t^{(2k+1)\Lambda_1+(m-1)\Lambda_2}s_1s_2s_1\tau}+T_{t^{2k\Lambda_1+(m-1)\Lambda_2}\tau}.$$Together with (1) and Proposition \ref{2degstau1}, we finish the proof.
\end{proof}

\begin{prop}\label{deg6k+4tau}
(1) Let $\tilde{w}\in\mathbb{O}'_{6k+4,\tau}$, where $k\in\mathbb{N}$. If $\ell(\tilde{w})=6k+4+2m,\ 0\leqslant m\leqslant k$, then \[\deg(f_{\tilde{w},\mathbb{O}})=\left\{
                                                             \begin{array}{ll}
                                                              1, & \emph{if}\ \mathbb{O}\in\{\mathbb{O}_{(k+1)\Lambda_2+i(2\Lambda_1-\Lambda_2),\tau}\ |\ 1\leqslant i\leqslant m\}\\
                                                              0, & \emph{if}\ \mathbb{O}=\mathbb{O}'_{6k+4,\tau}\\
                                                               -\infty, & \emph{otherwise}.
                                                             \end{array}
                                                           \right.\]

(2) If $\tilde{w}\in\mathbb{O}'_{6k+4,\tau}$, where $k\in\mathbb{N}$. And let $\tilde{w}_1,\ \tilde{w}_2,\ \tilde{w}_3$ with length $\ell(\tilde{w}_1)=8k+6m+4,\ \ell(\tilde{w}_2)=8k+6m+2,\ \ell(\tilde{w}_3)=8k+6m$, where $m\in\mathbb{N}_+$. Then \[\deg(f_{\tilde{w},\mathbb{O}})=\left\{
                                        \begin{array}{ll}
                                        3, & \emph{if}\ \mathbb{O}\in\{\mathbb{O}_{\lambda,\tau}\ |\ \lambda\in\Lambda^{\leqslant}_{\lambda(\tilde{w})}\}\\
                                        2, & \emph{if}\ \mathbb{O}\in\mathbb{O}^<_{\tilde{w}}\\
                                        1, & \emph{if}\ \mathbb{O}=\mathbb{O}_{1,\tau}\ \emph{or}\ \mathbb{O}=\mathbb{O}_{(2k+1)\Lambda_1+m\Lambda_2,\tau}\ \emph{when}\ \tilde{w}=\tilde{w}_2\\
                                        -\infty, & \emph{otherwise}.
                                        \end{array}
                                        \right.\]
Where \[\lambda(\tilde{w})=\left\{
                                                             \begin{array}{ll}
                                                              m\Lambda_2+(2k+1)\Lambda_1, & \emph{if}\ \tilde{w}=\tilde{w}_1\\
                                                              m\Lambda_2+2k\Lambda_1, & \emph{if}\ \tilde{w}\in\{\tilde{w}_1,\ \tilde{w}_2\},
                                                             \end{array}
                                                           \right.\]
$\mathbb{O}^<_{\tilde{w}}=\{\mathbb{O}_{4i+2,\tau}|1\leqslant i\leqslant n(\tilde{w})\}\cup\{\mathbb{O}'_{6i,\tau}|0\leqslant i\leqslant m+k\} \cup\{\mathbb{O}'_{6i+4,\tau}|1\leqslant i\leqslant m+k-1\}$, where \[n(\tilde{w})=\left\{
\begin{array}{ll}
m+2k, & \emph{if}\ \tilde{w}=\tilde{w}_1\\
m+2k-4, & \emph{if}\ \tilde{w}\in\{\tilde{w}_1,\ \tilde{w}_2\}.
\end{array}
\right.\]
\end{prop}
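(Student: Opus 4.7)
The plan is to mimic the argument given for Proposition \ref{deg6ktau} almost verbatim, replacing the role of $\tau$-shifted translations by $k\Lambda_2$ with translations by $(k+1)\Lambda_2$, and using the reductions worked out in Propositions \ref{2degstau1} and \ref{deg6ktau}. First I invoke Lemma \ref{6k+4tau} to pin down a list of convenient representatives of $\mathbb{O}'_{6k+4,\tau}$: using $\tau$-conjugation together with conjugation by $s_1$, I may assume that any $\tilde{w}$ of the shape $t^{\pm(k+1)\Lambda_2 + m'(2\Lambda_1-\Lambda_2)}s_1s_2s_1s_2\tau$ is replaced by its $+$--version, and similarly a $\tau$--element with $\tau$--coordinate $t^{\pm(k+1)(2\Lambda_1-\Lambda_2)-\Lambda_1 + m'\Lambda_2}\tau$ may be taken with the $+$ sign. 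This reduces the problem to finitely many canonical representatives indexed by the length.

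For part (1), I take the representative $\tilde{w} = t^{(k+1)\Lambda_2 + m(2\Lambda_1-\Lambda_2)}s_1s_2s_1s_2\tau$ with $0 \leqslant m \leqslant k$. A direct length check shows $\ell(\tilde{w}) = 6k+4+2m$ and applying $s_i$--cycling $m$ times pulls out one factor of $[v]$ per step while landing in the next $\mathbb{O}_{(k+1)\Lambda_2 + i(2\Lambda_1-\Lambda_2),\tau}$ class; after all reductions the leftover term is the minimal representative of $\mathbb{O}'_{6k+4,\tau}$. This yields exactly the congruence
\[
T_{\tilde{w}} \equiv [v]\sum_{i=1}^{m} T_{\mathbb{O}_{(k+1)\Lambda_2+i(2\Lambda_1-\Lambda_2),\tau}} + T_{\mathbb{O}'_{6k+4,\tau}},
\]
from which part (1) follows.

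For part (2), I take the canonical representatives $\tilde{w}_1 = t^{(2k+1)\Lambda_1 + m\Lambda_2}\tau$, $\tilde{w}_2 = t^{(2k+1)(\Lambda_2-\Lambda_1) - (m-1)(2\Lambda_1-\Lambda_2)}s_1s_2s_1s_2\tau$, and $\tilde{w}_3 = t^{(2k+2)\Lambda_1 + (m-1)(2\Lambda_1-\Lambda_2)}s_1s_2s_1s_2\tau$, mirroring exactly the setup of Proposition \ref{deg6ktau}(2) but shifted by $\Lambda_1$. The same three--step chain of relations as in \ref{deg6ktau} applies: a single $s_i$--move produces
\[
T_{\tilde{w}_1} \equiv [v]\,T_{x_1} + T_{\tilde{w}_2}, \quad T_{\tilde{w}_2} \equiv [v]\,T_{x_2} + T_{\tilde{w}_3}, \quad T_{\tilde{w}_3} \equiv [v]\,T_{x_3} + T_{t^{(2k+1)\Lambda_1+(m-1)\Lambda_2}\tau},
\]
where $x_1, x_2, x_3$ are elements of $\mathbb{O}_{1,\tau}$ whose class polynomials are already described by Proposition \ref{2degstau1}, and the last term belongs to the same class $\mathbb{O}'_{6k+4,\tau}$ but with $m$ decreased by $1$. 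Induction on $m$ then finishes part (2), with the initial base cases $m = 0, 1$ treated either by part (1) or by direct calculation.

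The main bookkeeping obstacle is the extra class $\mathbb{O}_{(2k+1)\Lambda_1 + m\Lambda_2, \tau}$ appearing with degree $1$ only when $\tilde{w} = \tilde{w}_2$: this term comes out of the coefficient $[v]T_{x_2}$ in the middle reduction, and Proposition \ref{2degstau1} shows that $T_{x_2}$ contributes a unique new translation class that is absent when we start from $\tilde{w}_1$ or $\tilde{w}_3$. Carefully isolating this contribution, and verifying that the polytope $\Lambda^{\leqslant}_{\lambda(\tilde{w})}$ and the sets $\mathbb{O}^<_{\tilde{w}}$ enumerated in the statement exactly match the union of contributions collected through the recursion (rather than gaining or losing a class at the boundary), is the only nontrivial verification; it is done by the same convex--hull argument used in Proposition \ref{deg6ktau}. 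All other degree bounds follow immediately from the strict length decrease at each recursion step together with the induction hypothesis.
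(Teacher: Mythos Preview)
Your overall strategy is the paper's own: reduce to canonical representatives via Lemma~\ref{6k+4tau} and $\tau$/$s_1$-conjugation, establish part~(1) by a direct $m$-step recursion, and for part~(2) run the three-step chain $T_{\tilde w_1}\to T_{\tilde w_2}\to T_{\tilde w_3}\to T_{\tilde w_1'}$ (with $m\to m-1$) feeding the side terms into Proposition~\ref{2degstau1}. So the architecture matches.

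There is, however, a genuine slip in your choice of $\tilde w_2$. You write $\tilde w_2 = t^{(2k+1)(\Lambda_2-\Lambda_1)-(m-1)(2\Lambda_1-\Lambda_2)}s_1s_2s_1s_2\tau$, which is literally the $\tilde w_2$ from Proposition~\ref{deg6ktau} with no adjustment. That element lies in $\mathbb{O}'_{6k,\tau}$, not in $\mathbb{O}'_{6k+4,\tau}$: writing its translation part in the $\Lambda_1,\Lambda_2$ basis gives $(2k+m)\Lambda_2-(2k+2m-1)\Lambda_1$, and matching against the description in Lemma~\ref{6k+4tau} forces a half-integer parameter, which is impossible. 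The correct shift is $2k+1\mapsto 2k+2$ in the $(\Lambda_2-\Lambda_1)$ coefficient; with $\tilde w_2 = t^{(2k+2)(\Lambda_2-\Lambda_1)-(m-1)(2\Lambda_1-\Lambda_2)}s_1s_2s_1s_2\tau$ the element does sit in $\mathbb{O}'_{6k+4,\tau}$ and the chain $T_{\tilde w_1}\equiv[v]T_{x_1}+T_{\tilde w_2}$, $T_{\tilde w_2}\equiv[v]T_{x_2}+T_{\tilde w_3}$, $T_{\tilde w_3}\equiv[v]T_{x_3}+T_{t^{(2k+1)\Lambda_1+(m-1)\Lambda_2}\tau}$ goes through, with $x_2=t^{(2k+2)\Lambda_1+(m-1)(2\Lambda_1-\Lambda_2)}s_2s_1s_2\tau$ producing the isolated degree-$1$ contribution $\mathbb{O}_{(2k+1)\Lambda_1+m\Lambda_2,\tau}$ for $\tilde w_2$ via Proposition~\ref{degtaulambda}. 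A minor secondary point: in part~(1) you choose the $s_1s_2s_1s_2\tau$-form representative while the paper uses the $\tau$-form $t^{k(2\Lambda_1-\Lambda_2)+\Lambda_1+m\Lambda_2}\tau$; either works, but be aware that the resulting index range on the side terms shifts accordingly, so you should check that your range $1\le i\le m$ really matches your chosen representative.
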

\begin{proof}
Since $\tau(t^{k(2\Lambda_1-\Lambda_2)+\Lambda_1-(m+1)\Lambda_2}\tau)\tau^{-1}=t^{k(2\Lambda_1-\Lambda_2)+\Lambda_1+m\Lambda_2}\tau$, $$s_1(t^{-k(2\Lambda_1-\Lambda_2)-\Lambda_1+m\Lambda_2}\tau)s_1=t^{(k+1)(2\Lambda_1-\Lambda_2)-\Lambda_1+m\Lambda_2}\tau,$$ $$\tau(t^{-k\Lambda_2+m(2\Lambda_1-\Lambda_2)}s_1s_2s_1s_2\tau)\tau^{-1}=t^{(k+1)\Lambda_2+m(2\Lambda_1-\Lambda_2)}s_1s_2s_1s_2\tau,$$it is sufficient to consider $t^{k(2\Lambda_1-\Lambda_2)+\Lambda_1+m\Lambda_2}\tau$ and $t^{(k+1)\Lambda_2+m(2\Lambda_1-\Lambda_2)}s_1s_2s_1s_2\tau$.\\
(1) If $\ell(\tilde{w})=6k+4+2m,\ 0\leqslant m\leqslant k$, then we can set $\tilde{w}=t^{k(2\Lambda_1-\Lambda_2)+\Lambda_1+m\Lambda_2}\tau$. Thus, $$T_{\tilde{w}}=[v]\sum_{i=0}^{m-1}T_{\mathbb{O}_{(k+1)\Lambda_2+i(2\Lambda_1-\Lambda_2),\tau}}+T_{\mathbb{O}'_{6k+4,\tau}}.$$\\
(2) We may assume that $\tilde{w}_1=t^{(2k+1)\Lambda_1+m\Lambda_2}\tau,$ $$\tilde{w}_2=t^{(2k+2)(\Lambda_2-\Lambda_1)-(m-1)(2\Lambda_1-\Lambda_2)}s_1s_2s_1s_2\tau,$$ $$\tilde{w}_3=t^{(2k+2)\Lambda_1+(m-1)(2\Lambda_1-\Lambda_2)}s_1s_2s_1s_2\tau.$$ Then $$T_{\tilde{w}_1}\equiv [v]T_{t^{-2k\Lambda_1-m(2\Lambda_1-\Lambda_2)}s_1s_2s_1\tau}+T_{\tilde{w}_2}$$ $$T_{\tilde{w}_2}\equiv [v]T_{t^{(2k+2)\Lambda_1+(m-1)(2\Lambda_1-\Lambda_2)}s_2s_1s_2\tau}+T_{\tilde{w}_3}$$ $$T_{\tilde{w}_3}\equiv[v]T_{t^{(2k+2)\Lambda_1+(m-1)\Lambda_2}s_1s_2s_1\tau}+T_{t^{(2k+1)\Lambda_1+(m-1)\Lambda_2}\tau}.$$Together with (1) and Proposition \ref{2degstau1}, we finish the proof.
\end{proof}

\begin{prop}\label{2deg1tau11}
Let $\tilde{w}\in\mathbb{O}'_{1,\tau}$.\\
(1) If $\tilde{w}$ or $\tau\tilde{w}\tau^{-1}=\tilde{w}_i$ ($i=1,\ 2$), where $\tilde{w}_1=t^{n\Lambda_1}s_1\tau,\ \tilde{w}_2=t^{n(\Lambda_2-\Lambda_1)}s_1\tau$ with $n\in\mathbb{N}$. Then
\[\deg(f_{\tilde{w},\mathbb{O}})=\left\{
\begin{array}{ll}
2, & \emph{if}\ \mathbb{O}\in\{\mathbb{O}_{\lambda,\tau}\ |\ \lambda\in\Lambda^<_{\lambda(\tilde{w})}\}\\
1, & \emph{if}\ \mathbb{O}\in\mathbb{O}^<_{\tilde{w}}\\
0, & \emph{if}\ \mathbb{O}=\mathbb{O}'_{1,\tau}\\
-\infty, & \emph{otherwise},
\end{array}
\right.\]
where \[\lambda(\tilde{w})=\left\{
                                                             \begin{array}{ll}
                                                              n\Lambda_1, & \emph{if}\ \tilde{w}=\tilde{w}_1\\
                                                              (n+1)\Lambda_1, & \emph{if}\ \tilde{w}=\tilde{w}_2,
                                                             \end{array}
                                                           \right.\]
$\mathbb{O}^<_{\tilde{w}}=\{\mathbb{O}_{4i+2,\tau}|1\leqslant i\leqslant n-1\}\cup\{\mathbb{O}'_{6i,\tau}|0\leqslant i\leqslant n'(\tilde{w})\} \cup\{\mathbb{O}'_{6i+4,\tau}|1\leqslant i\leqslant n''(\tilde{w})\}$, where
\[n'(\tilde{w})=\left\{
\begin{array}{ll}
\lfloor\frac{n-1}{2}\rfloor, & \emph{if}\ \tilde{w}=\tilde{w}_1\\
\lfloor\frac{n}{2}\rfloor, & \emph{if}\ \tilde{w}=\tilde{w}_2,
\end{array}
\right.\]
\[n''(\tilde{w})=\left\{
\begin{array}{ll}
\lfloor\frac{n}{2}\rfloor-1, & \emph{if}\ \tilde{w}=\tilde{w}_1\\
\lfloor\frac{n-1}{2}\rfloor, & \emph{if}\ \tilde{w}=\tilde{w}_2.
\end{array}
\right.\]
(2) If $\tilde{w}$ or $\tau\tilde{w}\tau^{-1}=\tilde{w}_i$ ($i=3,\ 4,\ 5,\ 6$), where $\tilde{w}_3=t^{(2\Lambda_1-\Lambda_2)+n\Lambda_1}s_1\tau,\ \tilde{w}_4=t^{m(2\Lambda_1-\Lambda_2)+n\Lambda_1}s_1\tau,\ \tilde{w}_5=t^{(m-1)\Lambda_2+(n+1)\Lambda_1}s_1\tau,\ \tilde{w}_6=t^{(1-m)(2\Lambda_1-\Lambda_2)+n(\Lambda_2-\Lambda_1)}s_1\tau$ where $m,\ n\in\mathbb{N}$ and $m\geqslant2$. Then
\[\deg(f_{\tilde{w},\mathbb{O}})=\left\{
\begin{array}{ll}
4, & \emph{if}\ \mathbb{O}=\mathbb{O}_{\lambda,\tau},\ \lambda\in\Lambda^\leqslant_{\lambda(\tilde{w})}\\
3, & \emph{if}\ \mathbb{O}\in\mathbb{O}^<_{\tilde{w}}\\
2, & \emph{if}\ \mathbb{O}\in\{\mathbb{O}_{1,\tau}\}\cup\mathbb{O}^{(\tilde{w})}\\
1, & \emph{if}\ \mathbb{O}\in\mathbb{O}_{(\tilde{w})}\\
0, & \emph{if}\ \mathbb{O}=\mathbb{O}'_{1,\tau}\\
-\infty, & \emph{otherwise}.
\end{array}
\right.\]
Where \[\lambda(\tilde{w})=\left\{
                                                             \begin{array}{ll}
                                                              \Lambda_2+(n-1)\Lambda_1, & \emph{if}\ \tilde{w}=\tilde{w}_3\\
                                                              m\Lambda_2+(n-1)\Lambda_1, & \emph{if}\ \tilde{w}=\tilde{w}_4\\
                                                              (m-1)\Lambda_2+(n-1)\Lambda_1, & \emph{if}\ \tilde{w}\in\{\tilde{w}_5,\ \tilde{w}_6\},
                                                             \end{array}
                                                           \right.\]
$\mathbb{O}^<_{\tilde{w}}=\{\mathbb{O}_{4i+2,\tau}|1\leqslant i\leqslant n(\tilde{w})\}\cup\{\mathbb{O}'_{6i,\tau}|0\leqslant i\leqslant n'(\tilde{w})\} \cup\{\mathbb{O}'_{6i+4,\tau}|1\leqslant i\leqslant n''(\tilde{w})\}$, where
\[n(\tilde{w})=\left\{
\begin{array}{ll}
n-1, & \emph{if}\ \tilde{w}=\tilde{w}_3\\
m+n-2, & \emph{if}\ \tilde{w}=\tilde{w}_4\\
m+n-3, & \emph{if}\ \tilde{w}\in\{\tilde{w}_5,\ \tilde{w}_6\},
\end{array}
\right.\]
\[n'(\tilde{w})=\left\{
\begin{array}{ll}
\lfloor\frac{n}{2}\rfloor-1, & \emph{if}\ \tilde{w}=\tilde{w}_3\\
m+\lfloor\frac{n}{2}\rfloor-1, & \emph{if}\ \tilde{w}=\tilde{w}_4\\
m+\lfloor\frac{n}{2}\rfloor-2, & \emph{if}\ \tilde{w}\in\{\tilde{w}_5,\ \tilde{w}_6\},
\end{array}
\right.\]
\[n''(\tilde{w})=\left\{
\begin{array}{ll}
\lfloor\frac{n-1}{2}\rfloor, & \emph{if}\ \tilde{w}=\tilde{w}_3\\
m+\lfloor\frac{n-1}{2}\rfloor-1, & \emph{if}\ \tilde{w}=\tilde{w}_4\\
m+\lfloor\frac{n-1}{2}\rfloor-2, & \emph{if}\ \tilde{w}\in\{\tilde{w}_5,\ \tilde{w}_6\},
\end{array}
\right.\]
\[\mathbb{O}^{(\tilde{w})}=\left\{
\begin{array}{ll}
\{\mathbb{O}_{\Lambda_2,\tau},\ \mathbb{O}_{\Lambda_2+n\Lambda_1,\tau}\}, & \emph{if}\ \tilde{w}=\tilde{w}_3\\
\{\mathbb{O}_{i\Lambda_2,\tau}|1\leqslant i\leqslant m\}\cup\{\mathbb{O}_{\lambda,\tau}|\lambda\in\Lambda^{2,\leqslant}_{m\Lambda_2+n\Lambda_1}\}, & \emph{if}\ \tilde{w}=\tilde{w}_4\\
\{\mathbb{O}_{i\Lambda_2,\tau}|1\leqslant i\leqslant m-1\}\cup\{\mathbb{O}_{\lambda,\tau}|\lambda\in\Lambda^{2,\leqslant}_{(m-1)\Lambda_2+n\Lambda_1}\cup\Lambda^{1,2}_{(m-1)\Lambda_2+(n+1)\Lambda_1}\}, & \emph{if}\ \tilde{w}=\tilde{w}_5\\
\{\mathbb{O}_{i\Lambda_2,\tau}|1\leqslant i\leqslant m-1\}\cup\{\mathbb{O}_{\lambda,\tau}|\lambda\in\Lambda^{2,\leqslant}_{(m-1)\Lambda_2+n\Lambda_1}\cup\Lambda^{1,<}_{(m-1)\Lambda_2+(n+1)\Lambda_1}\}, & \emph{if}\ \tilde{w}=\tilde{w}_6,
\end{array}
\right.\]
\[\mathbb{O}_{(\tilde{w})}=\left\{
\begin{array}{ll}
\{\mathbb{O}_{4n+2,\tau},\ \mathbb{O}'_{6\lfloor\frac{n}{2}\rfloor,\tau}\}, & \emph{if}\ \tilde{w}=\tilde{w}_3\\
\{\mathbb{O}_{4(m+n-1)+2,\tau},\ \mathbb{O}'_{6(m+\frac{n}{2}),\tau} (n\ \emph{is}\ \emph{even}),\ \mathbb{O}'_{6(m+\frac{n-1}{2})+4,\tau} (n\ \emph{is}\ \emph{odd})\}, & \emph{if}\ \tilde{w}=\tilde{w}_4\\
\{\mathbb{O}_{4(m+n-2)+2,\tau}\}\cup\mathbb{O}_{(\tilde{w}_4)}, & \emph{if}\ \tilde{w}=\tilde{w}_5\\
\{\mathbb{O}_{4(m+n-2)+2,\tau},\ \mathbb{O}'_{6(m+\frac{n}{2}),\tau} (n\ \emph{is}\ \emph{even}),\ \mathbb{O}'_{6(m+\frac{n-1}{2})+4,\tau} (n\ \emph{is}\ \emph{odd})\}, & \emph{if}\ \tilde{w}=\tilde{w}_6.
\end{array}
\right.\]
(3) If $\tilde{w}$ or $\tau\tilde{w}\tau^{-1}=t^{m\Lambda_2+n(\Lambda_2-\Lambda_1)}s_1\tau$ where $m,\ n\in\mathbb{N}_+$, then\[\deg(f_{\tilde{w},\mathbb{O}})=\left\{
\begin{array}{ll}
4, & \emph{if}\ \mathbb{O}=\mathbb{O}_{\lambda,\tau},\ \lambda\in\Lambda^\leqslant_{m\Lambda_2+n\Lambda_1}\\
3, & \emph{if}\ \mathbb{O}\in\mathbb{O}^<_{\tilde{w}}\\
2 & \emph{if}\ \mathbb{O}\in\{\mathbb{O}_{1,\tau},\ \mathbb{O}_{i\Lambda_2,\tau}\ |\ 1\leqslant i\leqslant m\}\\
0, & \emph{if}\ \mathbb{O}=\mathbb{O}'_{1,\tau}\\
-\infty, & \emph{otherwise},
\end{array}
\right.\]
where $\mathbb{O}^<_{\tilde{w}}=\{\mathbb{O}_{4i+2,\tau}|0\leqslant i\leqslant m+n-1\}\cup
 \{\mathbb{O}'_{6i,\tau}|1\leqslant i\leqslant m+\lfloor\frac{n-1}{2}\rfloor\}\cup\{\mathbb{O}'_{6i+4,\tau}|0\leqslant i\leqslant m+\lfloor\frac{n}{2}\rfloor-1\}.$\\
(4) If $\tilde{w}$ or $\tau\tilde{w}\tau^{-1}=t^{m\Lambda_2}s_1\tau$ where $m\in\mathbb{N}_+$, then\[\deg(f_{\tilde{w},\mathbb{O}})=\left\{
\begin{array}{ll}
4, & \emph{if}\ \mathbb{O}=\mathbb{O}_{\lambda,\tau},\ \lambda\in\Lambda^\leqslant_{m\Lambda_2}\\
3, & \emph{if}\ \mathbb{O}\in\mathbb{O}^<_{\tilde{w}}\\
2 & \emph{if}\ \mathbb{O}\in\{\mathbb{O}_{1,\tau},\ \mathbb{O}_{i\Lambda_2,\tau}\ |\ 1\leqslant i\leqslant m\}\cup\\
& \{\mathbb{O}_{\lambda,\tau}|\lambda\in\Lambda^{1,<}_{(m-1)\Lambda_2+2\Lambda_1}\cup\Lambda^{2,\leqslant}_{(m-1)\Lambda_2+\Lambda_1}\}\\
1 & \emph{if}\ \mathbb{O}=\mathbb{O}'_{6m+4,\tau}\\
0, & \emph{if}\ \mathbb{O}=\mathbb{O}'_{1,\tau}\\
-\infty, & \emph{otherwise},
\end{array}
\right.\]
where $\mathbb{O}^<_{\tilde{w}}=\{\mathbb{O}_{4i+2,\tau}|0\leqslant i\leqslant m-1\}\cup\{\mathbb{O}'_{6i,\tau}|1\leqslant i\leqslant m-2\}\cup\{\mathbb{O}'_{6i+4,\tau}|0\leqslant i\leqslant m-1\}$
\end{prop}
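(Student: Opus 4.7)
The plan is to follow exactly the inductive template used in Propositions \ref{2degstau1}--\ref{deg6k+4tau}: exploit the $\tau$-conjugation symmetry $f_{\tilde{w},\mathbb{O}}=f_{\tau\tilde{w}\tau^{-1},\mathbb{O}}$ to cut the number of representatives in half, then repeatedly apply the recursion $T_{\tilde{w}}\equiv[v]T_{s\tilde{w}_1}+T_{s\tilde{w}_1 s}$ until we arrive at elements whose class polynomial degrees are already known. Since $s_1\tau$ is a minimal length representative in $\mathbb{O}'_{1,\tau}$, each step will strip off a factor in the translation part and leave behind a $[v]$-multiple of a shorter element of known type (from $\mathbb{O}_{1,\tau}$, $\mathbb{O}_{\lambda,\tau}$, $\mathbb{O}_{4k+2,\tau}$, $\mathbb{O}'_{6k,\tau}$ or $\mathbb{O}'_{6k+4,\tau}$).

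For part (1), I would first use $\tau t^{n\Lambda_1}s_1\tau^{-1}=t^{-n\Lambda_1+(\Lambda_2-\Lambda_1)}s_1$-type identities (as in Lemma \ref{s1tau}) to pick $\tilde{w}_1=t^{n\Lambda_1}s_1\tau$ as the canonical representative, then compute
\[T_{t^{n\Lambda_1}s_1\tau}\equiv[v]T_{s_0\cdot t^{n\Lambda_1}s_1\tau}+T_{s_0\cdot t^{n\Lambda_1}s_1\tau\cdot s_2}\]
to descend either to an $\mathbb{O}_{\lambda,\tau}$-contribution or to a shorter element of $\mathbb{O}'_{1,\tau}$ (via a $t^{(n-1)\Lambda_1}s_1\tau$-type term), then iterate. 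The indexing sets $\Lambda^<_{n\Lambda_1}$, $\{\mathbb{O}_{4i+2,\tau}\}$, $\{\mathbb{O}'_{6i,\tau}\}$, $\{\mathbb{O}'_{6i+4,\tau}\}$ are then read off by collecting the contributions and invoking Propositions \ref{deg4k+2tau}, \ref{deg6ktau}, \ref{deg6k+4tau} for the interior terms. The analogous computation for $\tilde{w}_2$ begins with a single $s_1$-multiplication step and then lands in the $\tilde{w}_1$ case shifted by one.

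For parts (2)--(4), the strategy is identical, but the translation part now has a nontrivial $\Lambda_2$-component, so the reduction proceeds in two directions: a ``horizontal'' step using $s_0$ (or equivalently $t^{\Lambda_1}s_1s_2s_1$) to decrease the $\Lambda_1$-coordinate, and a ``vertical'' step using $s_2$ (after moving to the appropriate $w$-conjugate as in Lemmas \ref{4k+2tau}, \ref{6k+4tau}) to decrease the $\Lambda_2$-coordinate. At each step the leftover term is either (a) an element of the known form in $\mathbb{O}_{\lambda,\tau}$ (yielding degree $2$ after multiplication by $[v]^2$, hence an entry in $\Lambda^{\leqslant}_{\lambda(\tilde{w})}$ for the degree-$4$ row), (b) an element of $\mathbb{O}_{1,\tau}$ whose degree formula is given by Proposition \ref{2degstau1} (yielding the degree-$3$ row), or (c) an element of one of $\mathbb{O}_{4k+2,\tau}$, $\mathbb{O}'_{6k,\tau}$, $\mathbb{O}'_{6k+4,\tau}$ (yielding the degree-$2$ row and the small degree-$1$ contributions listed in $\mathbb{O}^{(\tilde{w})}$ and $\mathbb{O}_{(\tilde{w})}$).

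The main obstacle will not be any single reduction, each of which is a short Hecke-algebra computation of the same flavor as those already carried out in the paper, but rather the careful bookkeeping needed to check that (i) the enumerated sets $\Lambda^\leqslant_{\lambda(\tilde{w})}$, $\mathbb{O}^<_{\tilde{w}}$, $\mathbb{O}^{(\tilde{w})}$, $\mathbb{O}_{(\tilde{w})}$ agree precisely with the union of contributions produced by iterating the recursion, with no double-counting, and (ii) the apparent ``boundary'' cases (the extra terms in part (4) coming from $\Lambda^{1,<}_{(m-1)\Lambda_2+2\Lambda_1}\cup\Lambda^{2,\leqslant}_{(m-1)\Lambda_2+\Lambda_1}$ and the isolated $\mathbb{O}'_{6m+4,\tau}$, and the parity-dependent splitting in $\mathbb{O}_{(\tilde{w}_4)}$, $\mathbb{O}_{(\tilde{w}_6)}$ between $\mathbb{O}'_{6\bullet,\tau}$ and $\mathbb{O}'_{6\bullet+4,\tau}$) arise exactly where the induction hits the $\mathbb{O}_{\lambda,\tau}$ with $\lambda\in Q_+\setminus Q'_+$, i.e.\ the excluded pure-$\Lambda_1$ or $\Lambda_2$ translations, forcing a case split via Lemma \ref{lambdatau}. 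Once this bookkeeping is laid out in a table matching each reduction step to its target class, the verification of each row of the degree formula becomes routine.
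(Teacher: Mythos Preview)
Your plan is essentially the paper's own approach: reduce by $\tau$-conjugation, then apply the Hecke recursion to peel off $[v]$-multiples of shorter elements whose degrees are covered by Propositions \ref{2degstau1}, \ref{deg4k+2tau}, \ref{deg6ktau}, \ref{deg6k+4tau}, and iterate until reaching the minimal element $s_1\tau$. The paper's proof is exactly this, though its specific reduction identities differ slightly from the one you sketch for part~(1): rather than conjugating $t^{n\Lambda_1}s_1\tau$ by $s_0$, the paper records the two-step relation
\[
T_{t^{n\Lambda_1}s_1\tau}\equiv [v]\,T_{\mathbb{O}_{4(n-1)+2,\tau}}+[v]\,T_{t^{(n-1)\Lambda_1}\tau}+T_{t^{(n-1)\Lambda_1}s_1\tau},
\]
together with $T_{t^{n(\Lambda_2-\Lambda_1)}s_1\tau}\equiv [v]T_{t^{n\Lambda_1}\tau}+T_{t^{n\Lambda_1}s_1\tau}$, and then invokes Propositions \ref{deg6ktau}, \ref{deg6k+4tau} for the $t^{(n-1)\Lambda_1}\tau$ term. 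For parts (2)--(4) the paper gives the analogous one-line recursions (e.g.\ $T_{\tilde{w}_4}\equiv[v]T_{t^{m\Lambda_2+n\Lambda_1}s_2s_1\tau}+T_{\tilde{w}_5}$ and $T_{t^{m\Lambda_2}s_1\tau}\equiv[v]T_{t^{m\Lambda_2}s_1s_2\tau}+T_{t^{(1-m)(2\Lambda_1-\Lambda_2)+(\Lambda_2-\Lambda_1)}s_1\tau}$) and leaves the bookkeeping implicit. So your diagnosis that the real work is the tabulation, and your list of which prior propositions feed into each degree row, are both on target; just be prepared to discover experimentally which simple reflection actually decreases length at each step rather than committing in advance to $s_0$.
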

\begin{proof}
Since $\tau t^{m(2\Lambda_1-\Lambda_2)-n\Lambda_2}s_1=t^{m(2\Lambda_1-\Lambda_2)+n\Lambda_2}s_1\tau$ and $$\tau t^{(\Lambda_2-\Lambda_1)+m(2\Lambda_1-\Lambda_2)-(n+1)\Lambda_2}s_1=t^{(\Lambda_2-\Lambda_1)+m(2\Lambda_1-\Lambda_2)+n\Lambda_2}s_1\tau,$$it is sufficient to consider $\tilde{w}=t^{m(2\Lambda_1-\Lambda_2)+n(\Lambda_2-\Lambda_1)}s_1\tau$ with $m\in\mathbb{Z},\ n\in\mathbb{N}$.

(1) We have $T_{t^{n\Lambda_1}s_1\tau}\equiv [v]T_{\mathbb{O}_{4(n-1)+2,\tau}}+[v]T_{t^{(n-1)\Lambda_1}\tau}+T_{t^{(n-1)\Lambda_1}s_1\tau}$ and $$T_{t^{n(\Lambda_2-\Lambda_1)}s_1\tau}\equiv [v]T_{t^{n\Lambda_1}\tau}+T_{t^{n\Lambda_1}s_1\tau}.$$By Propositions \ref{deg6ktau}, \ref{deg6k+4tau}, this case is proved.

(2) Since $T_{t^{(2\Lambda_1-\Lambda_2)+n\Lambda_1}s_1\tau}\equiv [v]T_{t^{\Lambda_2+n\Lambda_1}s_2s_1\tau}+T_{t^{(n+1)\Lambda_1}s_1\tau}$, $$T_{\tilde{w}_4}\equiv [v]T_{t^{m\Lambda_2+n\Lambda_1}s_2s_1\tau}+T_{\tilde{w}_5},\ T_{\tilde{w}_5}\equiv [v]T_{t^{(m-1)\Lambda_2+(n+1)\Lambda_1}s_1s_2\tau}+T_{\tilde{w}_6},$$ together with previous cases, (2) is proved.

(3) By previous arguments, it follows from Propositions \ref{deg4k+2tau}, \ref{deg6ktau},\ \ref{deg6k+4tau} and $T_{t^{m\Lambda_2+n(\Lambda_2-\Lambda_1)}s_1\tau}\equiv [v]T_{t^{m\Lambda_2+n\Lambda_1}\tau}+T_{t^{m\Lambda_2+n\Lambda_1}s_1\tau}$.

(4) In this case, we just have to note the relation: $T_{t^{m\Lambda_2}s_1\tau}\equiv [v]T_{t^{m\Lambda_2}s_1s_2\tau}+T_{t^{(1-m)(2\Lambda_1-\Lambda_2)+(\Lambda_2-\Lambda_1)}s_1\tau}$.
\end{proof}

\section{Applications}
\subsection{The emptiness/nonemptiness and dimension formulas}\label{App1}
We will keep the notations as before. Let $b\in G=Sp_{4}(L)$ and $\tilde{w}\in\widetilde{W}$. By results obtained in Section \ref{Classcomput} and the ``Dimension $=$ Degree'' theorem, we will give explicit descriptions on emptiness/nonemptiness and dimension formula of the affine Deligne-Lusztig varieties $X_{\tilde{w}}(b)$.
\begin{thm}\label{2pattern1}
Let $b\in G$ and $\tilde{w}\in\widetilde{W}$. If $b=1$, or $b$ corresponds to $\mathbb{O}$, where $\mathbb{O}\in\{\mathbb{O}_2,\ \mathbb{O}'_2,\ \mathbb{O}_{s_{1212}},\ \mathbb{O}'_1,\ \mathbb{O}_1,\ \mathbb{O}_0\}$ then
\[X_{\tilde{w}}(b)\neq\emptyset\Longleftrightarrow\tilde{w}\in\left\{
\begin{array}{ll}
\mathbb{O}_0\cup\mathbb{O}_1\cup\mathbb{O}'_1\cup\mathbb{O}_2\cup\mathbb{O}'_2\cup\mathbb{O}_{s_{1212}} \\
\mathbb{O}_{4k+1}\ \emph{and}\ \ell(\tilde{w})\geqslant6k+3\\
\mathbb{O}'_{6k+1}\ \emph{and}\ \ell(\tilde{w})\geqslant8k+3\\
\mathbb{O}'_{6(k-1)+3}\ \emph{and}\ \ell(\tilde{w})\geqslant8(k-1)+7,
\end{array}
\right.\]where $k\in\mathbb{N}_+$. Moreover, if $X_{\tilde{w}}(b)\neq\emptyset$ we have
\[\dim X_{\tilde{w}}(b)=\left\{
\begin{array}{ll}
0, & \emph{if}\ \tilde{w}\in\mathbb{O}_0\\
1, & \emph{if}\ \tilde{w}\in\mathbb{O}_1\cup\mathbb{O}'_1\ \emph{and}\ \ell(\tilde{w})=1\\
\frac{\ell(\tilde{w})+2}{2}, & \emph{if}\ \tilde{w}\in\mathbb{O}_2\ \emph{or}\ \tilde{w}\in\Psi(\mathbb{O}'_2,b)\cup\Psi(\mathbb{O}_{s_{1212}},b)\\
\frac{\ell(\tilde{w})+3}{2}, & \emph{if}\ \tilde{w}\in\mathbb{O}_1\cup\mathbb{O}'_1\ \emph{and}\ \ell(\tilde{w})\geqslant3\ \emph{or}\\
                             & \tilde{w}\in\mathbb{O}_{4k+1}\ \emph{and}\ \ell(\tilde{w})\geqslant6k+3\ \emph{or}\\
                             & \tilde{w}\in\mathbb{O}'_{6k+1}\ \emph{and}\ \ell(\tilde{w})\geqslant8k+3\ \emph{or}\\
                             & \tilde{w}\in\mathbb{O}'_{6(k-1)+3}\ \emph{and}\ \ell(\tilde{w})\geqslant8(k-1)+7\\
\frac{\ell(\tilde{w})+4}{2}, & \emph{if}\ \tilde{w}\in(\mathbb{O}'_2-\Psi(\mathbb{O}'_2,b))\cup(\mathbb{O}_{s_{1212}}-\Psi(\mathbb{O}_{s_{1212}},b))
\end{array}
\right.\]
Here, we set $w=s_1s_2s_1s_2$, $$\Psi_0(\mathbb{O}'_2,b)=\{t^{\Lambda_2-\Lambda_1}w,\ t^{\pm i(2\Lambda_1-\Lambda_2)+\Lambda_1}w,\ t^{i\Lambda_2+\Lambda_1}w\ |\ i\in\mathbb{N}\}$$ $\Psi(\mathbb{O}'_2,b)=\Psi_0(\mathbb{O}'_2,b)\cup\tau\cdot(\Psi_0(\mathbb{O}'_2,b))$ and $$\Psi_0(\mathbb{O}_{s_{1212}},b)=\{t^{2\Lambda_1}w,\ t^{\Lambda_2}w,\ t^{2(\Lambda_2-\Lambda_1)}w,\ t^{\Lambda_2+2\Lambda_1}w\}$$
$\Psi(\mathbb{O}_{s_{1212}},b)=\Psi_0(\mathbb{O}_{s_{1212}},b)\cup\tau\cdot(\Psi_0(\mathbb{O}_{s_{1212}},b))$.
\end{thm}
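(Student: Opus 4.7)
The plan is to apply the ``Dimension $=$ Degree'' Theorem (Theorem \ref{DimDeg}) directly, using the degree computations carried out in Section \ref{Classcomput}. Since $G=Sp_4$ is split, the Frobenius-induced automorphism $\delta$ on $\widetilde W$ is trivial, and the conjugacy classes enumerated in Propositions \ref{conjWa} and \ref{conjWatau} are exactly the $\delta$-conjugacy classes. The first step is to verify that each of the six choices of $b$ in the statement is basic, i.e.\ $\bar\nu_b=0$ and $\kappa(b)=0$; for $b=1$ this is trivial, while for $b$ corresponding to $\mathbb{O}_1,\mathbb{O}'_1,\mathbb{O}_2,\mathbb{O}'_2,\mathbb{O}_{s_{1212}}$ one checks it on minimal length representatives such as $s_2,\,s_1,\,s_1s_2,\,s_0s_2,\,s_1s_2s_1s_2$. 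Consequently $\langle\bar\nu_b,2\rho\rangle=0$, the Kottwitz constraint $\kappa(\tilde w)=\kappa(b)$ restricts attention to $\tilde w\in W_a$, and $\mathcal{C}(b):=\{\mathbb{O}:f(\mathbb{O})=f(b)\}=\{\mathbb{O}_0,\mathbb{O}_1,\mathbb{O}'_1,\mathbb{O}_2,\mathbb{O}'_2,\mathbb{O}_{s_{1212}}\}$ is the full set of basic classes in $W_a$.

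With these reductions the problem becomes purely combinatorial: for each class of $\tilde w$ listed in Proposition \ref{conjWa}, I would read off the values $\deg(f_{\tilde w,\mathbb{O}})$ from the appropriate proposition among \ref{deglambda}, \ref{1degso2}, \ref{deg1}, \ref{deg4k+1}, \ref{deg1p}, \ref{deg6k+1}, \ref{deg6k+3}, \ref{2degso2}, \ref{2degs1212}, keep only the entries with $\mathbb{O}\in\mathcal{C}(b)$, and maximize $\ell(\mathbb{O})+\deg(f_{\tilde w,\mathbb{O}})$. Emptiness is then automatic: $X_{\tilde w}(b)=\emptyset$ if and only if every basic $\mathbb{O}$ gives $\deg=-\infty$, and the length thresholds $6k+3$, $8k+3$, $8(k-1)+7$ in the statement are precisely the minimal $\ell(\tilde w)$ in $\mathbb{O}_{4k+1}$, $\mathbb{O}'_{6k+1}$, $\mathbb{O}'_{6k+3}$ at which a basic class (namely $\mathbb{O}_2$) first appears with finite degree, according to Propositions \ref{deg4k+1}(2), \ref{deg6k+1}(2), \ref{deg6k+3}(2). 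The generic dimension values follow directly: $\tilde w\in\mathbb{O}_0$ yields $0$; $\tilde w$ of length one in $\mathbb{O}_1\cup\mathbb{O}'_1$ yields $1$; $\tilde w\in\mathbb{O}_2$ has only $\mathbb{O}_2$ (length $2$, degree $0$) contributing from $\mathcal C(b)$ and yields $(\ell(\tilde w)+2)/2$; longer $\tilde w$ in the other classes have a basic $\mathbb{O}\in\mathcal{C}(b)$ with $\ell(\mathbb{O})+\deg$ equal to $3$ or $4$, giving $(\ell(\tilde w)+3)/2$ or $(\ell(\tilde w)+4)/2$.

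The main obstacle is pinning down the exceptional small $\tilde w\in\mathbb{O}'_2\cup\mathbb{O}_{s_{1212}}$ for which the generic value $(\ell(\tilde w)+4)/2$ degenerates to $(\ell(\tilde w)+2)/2$, producing the explicit sets $\Psi(\mathbb{O}'_2,b)$ and $\Psi(\mathbb{O}_{s_{1212}},b)$. The formulas of Propositions \ref{2degso2} and \ref{2degs1212} are stated with the parameters $m,n$ implicitly large enough; for the smallest allowed values, the index sets $\lambda(\tilde w)$, $\mathbb{O}^<_{\tilde w}$, and $\mathbb{O}_{(\tilde w)}$ degenerate, so that the degree-$2$ contribution coming from $\mathbb{O}_2$ (via $\{\mathbb{O}_2\}\cup\mathbb{O}_{(\tilde w)}$) can fail to achieve $\ell(\mathbb{O})+\deg=4$ once intersected with $\mathcal C(b)$. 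One must recompute $T_{\tilde w}$ directly for each boundary case, track precisely when no basic class reaches $\ell(\mathbb{O})+\deg\geqslant4$, and list all such $\tilde w$; the symmetry $f_{\tilde w,\mathbb{O}}=f_{\tau\tilde w\tau^{-1},\mathbb{O}}$ then closes the resulting list under $\tau$-conjugation, which explains the form $\Psi=\Psi_0\cup\tau\cdot\Psi_0$ in the statement.
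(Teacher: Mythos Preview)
Your proposal is correct and follows essentially the same approach as the paper: the paper's own proof is a single sentence stating that one checks the degrees computed in Section~\ref{Classcomput} and applies the ``Dimension $=$ Degree'' Theorem. Your write-up expands on exactly this strategy, correctly identifying that the six listed $b$'s are all basic in $W_a$ (so $\bar\nu_b=0$, $\kappa(b)=0$, and $\mathcal C(b)=\{\mathbb O_0,\mathbb O_1,\mathbb O'_1,\mathbb O_2,\mathbb O'_2,\mathbb O_{s_{1212}}\}$), and correctly flagging the boundary cases in Propositions~\ref{2degso2} and~\ref{2degs1212} as the source of the exceptional sets $\Psi$.
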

\begin{proof}
We check the degrees of the class polynomials in Section \ref{Classcomput}, and the theorem follows directly from the ``Dimension $=$ Degree" Theorem.
\end{proof}
In the rest of this section, we give the emptiness/nonemptiness of $X_{\tilde{w}}(b)$ and the corresponding dimension formula.
\begin{enumerate}
\item \label{2pattern2} If $b$ corresponds to $\mathbb{O}_{\lambda_0}$, where $\lambda_0=m_0\Lambda_2+n_0\Lambda_1\in Q_{++}\ (i.e.\ m_0,\ n_0\in\mathbb{N}_+)$, then $X_{\tilde{w}}(b)\neq\emptyset$ if and only if:
\begin{enumerate}
\item $\tilde{w}\in\mathbb{O}_{m_0\Lambda_2+n_0\Lambda_1};$
\item $\tilde{w}$ or $\tau\tilde{w}\tau^{-1}$ lies in Proposition \ref{1degso2} such that $\lambda_0\in\lambda(\tilde{w});$
\item $\tilde{w}$ or $\tau\tilde{w}\tau^{-1}$ lies in Proposition \ref{deg1} such that $\lambda_0\in\Lambda^<_{\tilde{w}-\Lambda_1};$
\item $\tilde{w}$ lies in Proposition \ref{deg4k+1} (1) such that $\lambda_0\in\{k\Lambda_1+i(\Lambda_2-\Lambda_1)\ |\ 1\leqslant i\leqslant m-1\};$
\item $\tilde{w}$ lies in Proposition \ref{deg4k+1} (2) such that $\lambda_0\in\Lambda^{\leqslant}_{k\Lambda_2+(m-1)\Lambda_1};$
\item $\tilde{w}$ lies in Proposition \ref{deg1p} such that $\lambda_0\in\Lambda^{\leqslant}_{\lambda(\tilde{w})};$
\item $\tilde{w}$ lies in Proposition \ref{deg6k+1} (1) such that $\lambda_0\in\{k\Lambda_2+i(2\Lambda_1-\Lambda_2)\ |\ 1\leqslant i\leqslant m-1\};$
\item $\tilde{w}$ lies in Proposition \ref{deg6k+1} (2) such that $\lambda_0\in\Lambda^{\leqslant}_{\lambda(\tilde{w})}$, or $\lambda_0=2k\Lambda_1+(m-1)\Lambda_2;$
\item $\tilde{w}$ lies in Proposition \ref{deg6k+3} (1) such that $\lambda_0\in\{\Lambda_1+k\Lambda_2+i(2\Lambda_1-\Lambda_2)\ |\ 1\leqslant i\leqslant m-1\};$
\item $\tilde{w}$ lies in Proposition \ref{deg6k+3} (2) such that $\lambda_0\in\Lambda^{\leqslant}_{\lambda(\tilde{w})}$, or $\lambda_0=(2k+1)\Lambda_1+(m-1)\Lambda_2;$
\item $\tilde{w}$ lies in Propositions \ref{2degso2}, \ref{2degs1212} such that $\lambda_0\in\lambda(\tilde{w})$, or $\mathbb{O}_{(\tilde{w})}.$
\end{enumerate}
Moreover, if $X_{\tilde{w}}(b)\neq\emptyset$ we have $$\dim X_{\tilde{w}}(b)=\frac{1}{2}(\ell(\tilde{w})+\ell(\mathbb{O}_{\lambda_0})+\deg (f_{\tilde{w},\mathbb{O}_{\lambda_0}}))-\langle\bar{\nu}_b,\ 2\rho\rangle.$$

\item \label{2pattern3} If $b$ corresponds to $\mathbb{O}_{\lambda_0}$, where $\lambda_0=m_0\Lambda_2\ m_0\in\mathbb{N}_+$, then $X_{\tilde{w}}(b)\neq\emptyset$ if and only if:
\begin{enumerate}
\item $\tilde{w}\in\mathbb{O}_{m_0\Lambda_2};$
\item $\tilde{w}\in\mathbb{O}_{4m_0+1}$ with $\ell(\tilde{w})\geqslant6m_0+1;$
\item $\tilde{w}\in\mathbb{O}'_1$ with $\ell(\tilde{w})\geqslant6m-5$ and $m\geqslant m_0+1;$
\item $\tilde{w}$ or $\tau\tilde{w}\tau^{-1}$ equals to $\tilde{w}_5$ in Propositions \ref{2degso2}, \ref{2degs1212} and $m\geqslant m_0;$
\item $\tilde{w}$ or $\tau\tilde{w}\tau^{-1}$ is contained in $\{\tilde{w}_i\ |\ i=6,\ 7,\ 8\}$ in Proposition \ref{2degso2}, \ref{2degs1212} and $m\geqslant m_0+1.$
\end{enumerate}
If $X_{\tilde{w}}(b)\neq\emptyset$, then $$\dim X_{\tilde{w}}(b)=\frac{1}{2}(\ell(\tilde{w})+\ell(\mathbb{O}_{\lambda_0})+\deg (f_{\tilde{w},\mathbb{O}_{\lambda_0}}))-\langle\bar{\nu}_b,\ 2\rho\rangle.$$

\item \label{2pattern4} If $b$ corresponds to $\mathbb{O}_{\lambda_0}$, where $\lambda_0=n_0\Lambda_1\ n_0\in\mathbb{N}_+$, then $X_{\tilde{w}}(b)\neq\emptyset$ if and only if:
\begin{enumerate}
\item $\tilde{w}\in\mathbb{O}_{n_0\Lambda_1};$
\item $\tilde{w}\in\mathbb{O}_{1}$ and $\ell(\tilde{w})=4n-3$ or $4n-1$ with $n\geqslant n_0+1;$
\item $n_0=2k$ and $\tilde{w}\in\mathbb{O}'_{6k+1}$ with $\ell(\tilde{w})\geqslant8k+1;$
\item $n_0=2k+1$ and $\tilde{w}\in\mathbb{O}'_{6k+3}$ with $\ell(\tilde{w})\geqslant8k+5;$
\item $\tilde{w}$ or $\tau\tilde{w}\tau^{-1}$ is contained in $\{\tilde{w}_i\ |\ i=1,\ 2,\ 5,\ 6,\ 7\}$ in Proposition \ref{2degso2} and $n_0=2i-1,\ 1\leqslant i\leqslant n;$
\item $\tilde{w}$ or $\tau\tilde{w}\tau^{-1}$ is contained in $\{\tilde{w}_i\ |\ i=3,\ 4\}$ in Proposition \ref{2degso2} and $n_0=2i-1,\ 1\leqslant i\leqslant n-1;$
\item $\tilde{w}$ or $\tau\tilde{w}\tau^{-1}=\tilde{w}_8$ in Proposition \ref{2degso2} and $n_0=2i-1,\ 1\leqslant i\leqslant n+1;$
\item $\tilde{w}$ or $\tau\tilde{w}\tau^{-1}$ is contained in $\{\tilde{w}_i\ |\ i=1,\ 2,\ 5,\ 6,\ 7\}$ in Proposition \ref{2degs1212} and $n_0=2i,\ 1\leqslant i\leqslant n-1;$
\item $\tilde{w}$ or $\tau\tilde{w}\tau^{-1}$ is contained in $\{\tilde{w}_i\ |\ i=3,\ 4\}$ in Proposition \ref{2degs1212} and $n_0=2i,\ 1\leqslant i\leqslant n-2;$
\item $\tilde{w}$ or $\tau\tilde{w}\tau^{-1}=\tilde{w}_8$ in Proposition \ref{2degs1212} and $n_0=2i,\ 1\leqslant i\leqslant n.$
\end{enumerate}
If $X_{\tilde{w}}(b)\neq\emptyset$, then $$\dim X_{\tilde{w}}(b)=\frac{1}{2}(\ell(\tilde{w})+\ell(\mathbb{O}_{\lambda_0})+\deg (f_{\tilde{w},\mathbb{O}_{\lambda_0}}))-\langle\bar{\nu}_b,\ 2\rho\rangle.$$

\item \label{2pattern5} If $b$ corresponds to $\mathbb{O}_{4k_0+1}$, where $k_0\in\mathbb{N}_+$, then $X_{\tilde{w}}(b)\neq\emptyset$ if and only if
\begin{enumerate}
\item $\tilde{w}\in\mathbb{O}_{k_0\Lambda_1}$ or $\mathbb{O}_{4k_0+1};$
\item $\tilde{w}$ or $\tau\tilde{w}\tau^{-1}$ lies in Proposition \ref{1degso2} such that $k_0\leqslant n(\tilde{w});$
\item $\tilde{w}\in\mathbb{O}_1$ with $\ell(\tilde{w})\geqslant4k_0+1;$
\item $\tilde{w}\in\mathbb{O}_{4k+1}$, such that $k\neq k_0$ and $m\in\mathbb{N}_+$ with $\ell(\tilde{w})\geqslant6k+4m-1$ and $m+k\geqslant k_0+1;$
\item $\tilde{w}\in\mathbb{O}'_1$ and $\ell(\tilde{w})\geqslant6k_0+3;$
\item $\tilde{w}\in\mathbb{O}'_{6k+1}$, where $2k=k_0$ with $\ell(\tilde{w})\geqslant4n_0+1;$
\item $\tilde{w}\in\mathbb{O}'_{6k+1}$, where $2k\neq k_0$ such that $\ell(\tilde{w})\geqslant8k+6m-3$ with $m+2k-1\geqslant k_0;$
\item $\tilde{w}\in\mathbb{O}'_{6k+3}$, where $2k+1=k_0$ with $\ell(\tilde{w})\geqslant8k+5;$
\item $\tilde{w}\in\mathbb{O}'_{6k+3}$, where $2k+1\neq k_0$ such that $\ell(\tilde{w})\geqslant8k+6m+1$ with $m+2k\geqslant k_0;$
\item $\tilde{w}$ or $\tau\tilde{w}\tau^{-1}$ lies in Proposition \ref{2degso2} or Proposition \ref{2degs1212} such that $k_0\leqslant n(\tilde{w})$ or $\mathbb{O}_{k_0\Lambda_1}\in\mathbb{O}_{(\tilde{w})}$, or $\mathbb{O}_{4k_0+1}\in\mathbb{O}^{(\tilde{w})}.$
\end{enumerate}
If $X_{\tilde{w}}(b)\neq\emptyset$, then $$\dim X_{\tilde{w}}(b)=\frac{1}{2}\max_{\mathbb{O}}(\ell(\tilde{w})+\ell(\mathbb{O})+\deg (f_{\tilde{w},\mathbb{O}}))-\langle\bar{\nu}_b,\ 2\rho\rangle,$$where $\mathbb{O}\in\{\mathbb{O}_{4k_0+1},\ \mathbb{O}_{k_0\Lambda_1}\}$.

\item \label{2pattern6} If $b$ corresponds to $\mathbb{O}'_{6k_0+1}$, where $k_0\in\mathbb{N}_+$, then $X_{\tilde{w}}(b)\neq\emptyset$ if and only if
\begin{enumerate}
\item $\tilde{w}\in\mathbb{O}_{k_0\Lambda_2}$ or $\mathbb{O}'_{6k_0+1};$
\item $\tilde{w}$ or $\tau\tilde{w}\tau^{-1}$ lies in Proposition \ref{1degso2} such that $k_0\leqslant n''(\tilde{w});$
\item $\tilde{w}\in\mathbb{O}_1$ and $\ell(\tilde{w})=4m-1$ with $\lfloor\frac{m-1}{2}\rfloor\geqslant k_0$ or $\ell(\tilde{w})=4m-3$ with $\lfloor\frac{m-2}{2}\rfloor\geqslant k_0;$
\item $\tilde{w}\in\mathbb{O}_{4k+1}$ and $\ell(\tilde{w})=6k+4m-1$ or $6k+4m+1$ with $k+\lfloor\frac{m-1}{2}\rfloor\geqslant k_0;$
\item $\tilde{w}\in\mathbb{O}'_1$ and $\ell(\tilde{w})\geqslant6k_0+1;$
\item $\tilde{w}\in\mathbb{O}'_{6k+1}$ with $\ell(\tilde{w})=8k+6m-1$ or $8k+6m-3$ such that $m+k-1\geqslant k_0,$ or $\ell(\tilde{w})=8k+6m-5$ such that $m+k-2\geqslant k_0;$
\item $\tilde{w}\in\mathbb{O}'_{6k+3}$ with $\ell(\tilde{w})=8k+6m+3$ or $8k+6m+1$ such that with $m+k\geqslant k_0,$ or $\ell(\tilde{w})=8k+6m-1$ with $m+k-1\geqslant k_0;$
\item $\tilde{w}$ or $\tau\tilde{w}\tau^{-1}$ lies in Proposition \ref{2degso2} or Proposition \ref{2degs1212} such that $k_0\leqslant n''(\tilde{w})$ or $\mathbb{O}_{k_0\Lambda_2}\in\mathbb{O}_{(\tilde{w})}$, or $\mathbb{O}'_{6k_0+1}\in\mathbb{O}^{(\tilde{w})}.$
\end{enumerate}
If $X_{\tilde{w}}(b)\neq\emptyset$, then $$\dim X_{\tilde{w}}(b)=\frac{1}{2}\max_{\mathbb{O}}(\ell(\tilde{w})+\ell(\mathbb{O})+\deg (f_{\tilde{w},\mathbb{O}}))-\langle\bar{\nu}_b,\ 2\rho\rangle,$$where $\mathbb{O}\in\{\mathbb{O}'_{6k_0+1},\ \mathbb{O}_{k_0\Lambda_2}\}$.

\item \label{2pattern7} If $b$ corresponds to $\mathbb{O}'_{6k_0+3}$, where $k_0\in\mathbb{N}$, then $X_{\tilde{w}}(b)\neq\emptyset$ if and only if
\begin{enumerate}
\item $\tilde{w}$ or $\tau\tilde{w}\tau^{-1}$ lies in Proposition \ref{1degso2} such that $k_0\leqslant n'(\tilde{w});$
\item $\tilde{w}\in\mathbb{O}_1$ and $\ell(\tilde{w})=4m-1$ with $\lfloor\frac{m-2}{2}\rfloor\geqslant k_0$ or $\ell(\tilde{w})=4m-3$ with $\lfloor\frac{m-3}{2}\rfloor\geqslant k_0;$
\item $\tilde{w}\in\mathbb{O}_{4k+1}$ and $\ell(\tilde{w})=6k+4m-1$ or $6k+4m+1$ with $k+\lfloor\frac{m}{2}-1\rfloor\geqslant k_0;$
\item $\tilde{w}\in\mathbb{O}'_1$ with $\ell(\tilde{w})=6m-1$ such that $m-1\geqslant k_0$ or $\ell(\tilde{w})=6m-3$ or $6m-5$ such that $m-2\geqslant k_0;$
\item $\tilde{w}\in\mathbb{O}'_{6k+1}$ with $\ell(\tilde{w})=8k+6m-1$ such that $m+k-1\geqslant k_0,$ or $\ell(\tilde{w})=8k+6m-3$ or $8k+6m-5$ such that $m+k-2\geqslant k_0;$
\item $\tilde{w}\in\mathbb{O}'_{6k_0+3};$
\item $\tilde{w}\in\mathbb{O}'_{6k+3}$ where $k\neq k_0$ with $\ell(\tilde{w})=8k+6m+3$ or $8k+6m\pm1$ such that with $m+k-1\geqslant k_0;$
\item $\tilde{w}$ or $\tau\tilde{w}\tau^{-1}$ lies in Proposition \ref{2degso2} or Proposition \ref{2degs1212} such that $k_0\leqslant n'(\tilde{w})$ or $\mathbb{O}'_{6k_0+3}\in\mathbb{O}^{(\tilde{w})}.$
\end{enumerate}
If $X_{\tilde{w}}(b)\neq\emptyset$, then $$\dim X_{\tilde{w}}(b)=\frac{1}{2}(\ell(\tilde{w})+\ell(\mathbb{O}'_{6k_0+3})+\deg (f_{\tilde{w},\mathbb{O}'_{6k_0+3}}))-\langle\bar{\nu}_b,\ 2\rho\rangle.$$

\item \label{2pattern8} If $b$ corresponds to $\mathbb{O}$, where $\mathbb{O}\in\{\mathbb{O}_{1,\tau},\ \mathbb{O}'_{1,\tau},\ \mathbb{O}'_{0,\tau}\}$ then
\[X_{\tilde{w}}(b)\neq\emptyset\Longleftrightarrow\tilde{w}\in\left\{
\begin{array}{ll}
\mathbb{O}_{1,\tau}\cup\mathbb{O}'_{1,\tau}\cup\mathbb{O}'_{0,\tau}\\
\mathbb{O}_{4k+2,\tau}\ (k\in\mathbb{N})\ \emph{and}\ \ell(\tilde{w})\geqslant6k+4\\
\mathbb{O}'_{6k+4,\tau}\ (k\in\mathbb{N})\ \emph{and}\ \ell(\tilde{w})\geqslant8k\\
\mathbb{O}'_{6k}\ (k\in\mathbb{N}_+)\ \emph{and}\ \ell(\tilde{w})\geqslant8k+2.
\end{array}
\right.\]Moreover, if $X_{\tilde{w}}(b)\neq\emptyset$ we have
\[\dim X_{\tilde{w}}(b)=\left\{
\begin{array}{ll}
0, & \tilde{w}\in\mathbb{O}'_{0,\tau}\ \emph{and}\ \ell(\tilde{w})=0\\
\frac{\ell(\tilde{w})+1}{2}, & \tilde{w}\in\mathbb{O}_{1,\tau}\cup\{t^{n\Lambda_1}s_1\tau,\ t^{n(\Lambda_2-\Lambda_1)}s_1\tau\ |\ n\in\mathbb{Z}\}\\
\frac{\ell(\tilde{w})+2}{2}, & \tilde{w}\in\{\tilde{w}\in\mathbb{O}_{4k+2,\tau}|\ell(\tilde{w})\geqslant6k+4\}\cup\{\tilde{w}\in\mathbb{O}'_{0,\tau}|\ell(\tilde{w})>0\}\\
                             & \cup\{\tilde{w}\in\mathbb{O}'_{6k,\tau}|\ell(\tilde{w})\geqslant8k+2\}\cup\{\tilde{w}\in\mathbb{O}'_{6k+4,\tau}|\ell(\tilde{w})\geqslant8k\}\\
\frac{\ell(\tilde{w})+3}{2}, & \tilde{w}\in\mathbb{O}'_{1,\tau}\setminus\{t^{n\Lambda_1}s_1\tau,\ t^{n(\Lambda_2-\Lambda_1)}s_1\tau\ |\ n\in\mathbb{Z}\}.
\end{array}
\right.\]

\item \label{2pattern9} If $b$ corresponds to $\mathbb{O}_{\lambda,\tau}$, where $\lambda=(m'+1)\Lambda_2,\ m'\in\mathbb{N}$. Then $X_{\tilde{w}}(b)\neq\emptyset$ if and only if
\begin{enumerate}
\item $\tilde{w}\in\mathbb{O}_{(m'+1)\Lambda_2,\tau}$ or $\mathbb{O}'_{6m'+4,\tau},$ or $\tilde{w}\in\mathbb{O}_{4m'+2,\tau}$ such that $\ell(\tilde{w})\geqslant6m'+4;$
\item $\tilde{w}$ or $\tau\tilde{w}\tau^{-1}$ lies in Proposition \ref{2degstau1} with $m'\leqslant n''(\tilde{w}),$ or $\tilde{w}$ lies in Proposition \ref{deg4k+2tau} with $k\neq m'$ such that $m'\leqslant n''(\tilde{w});$
\item $\tilde{w}$ is in Proposition \ref{deg0tau} with $m'\leqslant n''(\tilde{w}$ or $\mathbb{O}_{(m'+1)\Lambda_2,\tau}\in\mathbb{O}(\tilde{w});$
\item $\tilde{w}$ is in Propositions \ref{deg6ktau} (any $k$), \ref{deg6k+4tau} $(k\neq m')$ with $m'\leqslant n''(\tilde{w});$
\item $\tilde{w}$ or $\tau\tilde{w}\tau^{-1}$ is in Proposition \ref{2deg1tau11} (1) with $m'\leqslant n''(\tilde{w});$
\item $\tilde{w}$ or $\tau\tilde{w}\tau^{-1}$ is in Proposition \ref{2deg1tau11} (2) with $m'\leqslant n''(\tilde{w}),$ or $\mathbb{O}_{(m'+1)\Lambda_2,\tau}\in\mathbb{O}^{(\tilde{w})},$ or $\mathbb{O}'_{6m'+4}\in\mathbb{O}_{(\tilde{w})};$
\item $\tilde{w}$ or $\tau\tilde{w}\tau=t^{m\Lambda_2+n(\Lambda_2-\Lambda_1)}s_1\tau$ and $m+\lfloor\frac{n}{2}\rfloor\geqslant m'+1$;
\item $\tilde{w}$ or $\tau\tilde{w}\tau=t^{m\Lambda_2}s_1\tau$ and $m\geqslant m'$.
\end{enumerate}
If $X_{\tilde{w}}(b)\neq\emptyset$, then $$\dim X_{\tilde{w}}(b)=\frac{1}{2}\max_{\mathbb{O}}(\ell(\tilde{w})+\ell(\mathbb{O})+\deg (f_{\tilde{w},\mathbb{O}}))-\langle\bar{\nu}_b,\ 2\rho\rangle,$$where $\mathbb{O}\in\{\mathbb{O}'_{6m'+4,\tau},\ \mathbb{O}_{(m'+1)\Lambda_2,\tau}\}$.

\item \label{2pattern10} If $b$ corresponds to $\mathbb{O}_{\lambda,\tau}$, where $\lambda=(m'+1)\Lambda_2+n'\Lambda_2,\ m'\in\mathbb{N},\ n'\in\mathbb{N}_+$. Then $X_{\tilde{w}}(b)\neq\emptyset$ if and only if
\begin{enumerate}
\item $\tilde{w}\in\mathbb{O}_{\lambda,\tau}$;
\item $\tilde{w}$ or $\tau\tilde{w}\tau^{-1}$ is in Proposition \ref{2degstau1} such that $\lambda\in\lambda(\tilde{w});$
\item $\tilde{w}$ is in Proposition \ref{deg4k+2tau} (1) such that $\lambda\in\{(k+1)\Lambda_1+j(\Lambda_2-\Lambda_1)\ |\ 0\leqslant j\leqslant i\};$
\item $\tilde{w}$ is in Proposition \ref{deg4k+2tau} (2) such that $\lambda\in\Lambda^\leqslant_{\lambda(\tilde{w})}$ or $\lambda=(k+1)\Lambda_2+m\Lambda_1;$
\item $\tilde{w}$ is in Proposition \ref{deg0tau} such that $\lambda\in\Lambda^\leqslant_{\lambda(\tilde{w})};$
\item $\tilde{w}$ is in Proposition \ref{deg6ktau} (1) such that $\lambda\in\{\Lambda_1+k\Lambda_1+i(2\Lambda_1-\Lambda_2)\ |\ 0\leqslant i\leqslant m-1\};$
\item $\tilde{w}$ is in Proposition \ref{deg6ktau} (2) such that $\lambda\in\Lambda^\leqslant_{\lambda(\tilde{w})}$ or $\lambda=2k\Lambda_1+m\Lambda_2;$
\item $\tilde{w}$ is in Proposition \ref{deg6k+4tau} (1) such that $\lambda\in\{(k+1)\Lambda_2+i(2\Lambda_1-\Lambda_2)\ |\ 0\leqslant i\leqslant m\};$
\item $\tilde{w}$ is in Proposition \ref{deg6k+4tau} (2) such that $\lambda\in\Lambda^\leqslant_{\lambda(\tilde{w})}$ or $\lambda=(2k+1)\Lambda_1+m\Lambda_2;$
\item $\tilde{w}$ or $\tau\tilde{w}\tau^{-1}$ is in Proposition \ref{2deg1tau11} (1) with $\lambda\in\Lambda^<_{\lambda(\tilde{w})};$
\item $\tilde{w}$ or $\tau\tilde{w}\tau^{-1}$ is in Proposition \ref{2deg1tau11} (2) with $\lambda\in\Lambda^\leqslant_{\lambda(\tilde{w})}\cup\mathbb{O}^{(\tilde{w})};$
\item $\tilde{w}$ or $\tau\tilde{w}\tau=t^{m\Lambda_2+n(\Lambda_2-\Lambda_1)}s_1\tau$ where $m,\ n\in\mathbb{N}_+$ and $\lambda\in\Lambda^\leqslant_{m\Lambda_2+n\Lambda_1}$;
\item $\tilde{w}$ or $\tau\tilde{w}\tau=t^{m\Lambda_2}s_1\tau$ where $m\in\mathbb{N}_+$ and $\lambda\in\Lambda^\leqslant_{m\Lambda_2}\cup\Lambda^{<,1}_{(m-1)\Lambda_2+2\Lambda_1}\cup\Lambda^{\leqslant,2}_{(m-1)\Lambda_2+\Lambda_1}$.
\end{enumerate}
If $X_{\tilde{w}}(b)\neq\emptyset$, then $$\dim X_{\tilde{w}}(b)=\frac{1}{2}(\ell(\tilde{w})+\ell(\mathbb{O}_{\lambda,\tau})+\deg (f_{\tilde{w},\mathbb{O}_{\lambda,\tau}}))-\langle\bar{\nu}_b,\ 2\rho\rangle.$$

\item\label{2pattern11} If $b$ corresponds to $\mathbb{O}_{\lambda,\tau}$, where $\lambda=(m'+1)\Lambda_2+n'\Lambda_2,\ m'\in\mathbb{N},\ n'\in\mathbb{N}_+$. Then $X_{\tilde{w}}(b)\neq\emptyset$ if and only if
\begin{enumerate}
\item $\tilde{w}\in\mathbb{O}_{\lambda,\tau}$;
\item $\tilde{w}$ or $\tau\tilde{w}\tau^{-1}$ is in Proposition \ref{2degstau1} such that $\lambda\in\lambda(\tilde{w});$
\item $\tilde{w}$ is in Proposition \ref{deg4k+2tau} (1) such that $\lambda\in\{(k+1)\Lambda_1+j(\Lambda_2-\Lambda_1)\ |\ 0\leqslant j\leqslant i\};$
\item $\tilde{w}$ is in Proposition \ref{deg4k+2tau} (2) such that $\lambda\in\Lambda^\leqslant_{\lambda(\tilde{w})}$ or $\lambda=(k+1)\Lambda_2+m\Lambda_1;$
\item $\tilde{w}$ is in Proposition \ref{deg0tau} such that $\lambda\in\Lambda^\leqslant_{\lambda(\tilde{w})};$
\item $\tilde{w}$ is in Proposition \ref{deg6ktau} (1) such that $\lambda\in\{\Lambda_1+k\Lambda_1+i(2\Lambda_1-\Lambda_2)\ |\ 0\leqslant i\leqslant m-1\};$
\item $\tilde{w}$ is in Proposition \ref{deg6ktau} (2) such that $\lambda\in\Lambda^\leqslant_{\lambda(\tilde{w})}$ or $\lambda=2k\Lambda_1+m\Lambda_2;$
\item $\tilde{w}$ is in Proposition \ref{deg6k+4tau} (1) such that $\lambda\in\{(k+1)\Lambda_2+i(2\Lambda_1-\Lambda_2)\ |\ 0\leqslant i\leqslant m\};$
\item $\tilde{w}$ is in Proposition \ref{deg6k+4tau} (2) such that $\lambda\in\Lambda^\leqslant_{\lambda(\tilde{w})}$ or $\lambda=(2k+1)\Lambda_1+m\Lambda_2;$
\item $\tilde{w}$ or $\tau\tilde{w}\tau^{-1}$ is in Proposition \ref{2deg1tau11} (1) with $\lambda\in\Lambda^<_{\lambda(\tilde{w})};$
\item $\tilde{w}$ or $\tau\tilde{w}\tau^{-1}$ is in Proposition \ref{2deg1tau11} (2) with $\lambda\in\Lambda^\leqslant_{\lambda(\tilde{w})}\cup\mathbb{O}^{(\tilde{w})};$
\item $\tilde{w}$ or $\tau\tilde{w}\tau=t^{m\Lambda_2+n(\Lambda_2-\Lambda_1)}s_1\tau$ where $m,\ n\in\mathbb{N}_+$ and $\lambda\in\Lambda^\leqslant_{m\Lambda_2+n\Lambda_1}$;
\item $\tilde{w}$ or $\tau\tilde{w}\tau=t^{m\Lambda_2}s_1\tau$ where $m\in\mathbb{N}_+$ and $\lambda\in\Lambda^\leqslant_{m\Lambda_2}\cup\Lambda^{<,1}_{(m-1)\Lambda_2+2\Lambda_1}\cup\Lambda^{\leqslant,2}_{(m-1)\Lambda_2+\Lambda_1}$.
\end{enumerate}
If $X_{\tilde{w}}(b)\neq\emptyset$, then $$\dim X_{\tilde{w}}(b)=\frac{1}{2}(\ell(\tilde{w})+\ell(\mathbb{O}_{\lambda,\tau})+\deg (f_{\tilde{w},\mathbb{O}_{\lambda,\tau}}))-\langle\bar{\nu}_b,\ 2\rho\rangle.$$

\item \label{2pattern12} If $b$ corresponds to $\mathbb{O}_{4k_0+2,\tau}$, where $k_0\in\mathbb{N}$. Then $X_{\tilde{w}}(b)\neq\emptyset$ if and only if
\begin{enumerate}
\item $\tilde{w}$ or $\tau\tilde{w}\tau^{-1}$ is in Proposition \ref{2degstau1} such that $k_0\leqslant n(\tilde{w});$
\item $\tilde{w}\in\mathbb{O}_{4k_0+2,\tau};$
\item $\tilde{w}$ is in Proposition \ref{deg4k+2tau} with $k\neq k_0$ and $k_0\leqslant n(\tilde{w});$
\item $\tilde{w}$ is in Proposition \ref{deg0tau}, Proposition \ref{deg6ktau} or Proposition \ref{deg6k+4tau} with $k_0\leqslant n(\tilde{w});$
\item $\tilde{w}$ or $\tau\tilde{w}\tau^{-1}$ is in Proposition \ref{2deg1tau11} (1) with $k_0\leqslant n-1;$
\item $\tilde{w}$ or $\tau\tilde{w}\tau^{-1}$ is in Proposition \ref{2deg1tau11} (2) with $k_0\leqslant n(\tilde{w})$ or $\mathbb{O}_{4k_0+2,\tau}\in\mathbb{O}_{(\tilde{w})};$
\item $\tilde{w}$ or $\tau\tilde{w}\tau=t^{m\Lambda_2+n(\Lambda_2-\Lambda_1)}s_1\tau$ where $m,\ n\in\mathbb{N}_+$ and $k_0\leqslant m+n-1$;
\item $\tilde{w}$ or $\tau\tilde{w}\tau=t^{m\Lambda_2}s_1\tau$ where $m\in\mathbb{N}_+$ and $k_0\leqslant m-1$.
\end{enumerate}
If $X_{\tilde{w}}(b)\neq\emptyset$, then $$\dim X_{\tilde{w}}(b)=\frac{1}{2}(\ell(\tilde{w})+\ell(\mathbb{O}_{4k_0+2,\tau})+\deg (f_{\tilde{w},\mathbb{O}_{4k_0+2,\tau}}))-\langle\bar{\nu}_b,\ 2\rho\rangle.$$

\item \label{2pattern13} If $b$ corresponds to $\mathbb{O}'_{6k_0,\tau}$, where $k_0\in\mathbb{N}$. Then $X_{\tilde{w}}(b)\neq\emptyset$ if and only if
\begin{enumerate}
\item $\tilde{w}$ or $\tau\tilde{w}\tau^{-1}$ is in Proposition \ref{2degstau1} or Proposition \ref{deg4k+2tau} such that $k_0\leqslant n'(\tilde{w});$
\item $\tilde{w}$ is in Proposition \ref{deg0tau} with $k_0\leqslant m-1;$
\item $\tilde{w}\in\mathbb{O}'_{6k_0,\tau};$
\item $\tilde{w}$ is in Proposition \ref{deg6ktau} (2) and $k\neq k_0$ with $k_0\leqslant n'(\tilde{w});$
\item $\tilde{w}$ is in Proposition \ref{deg6k+4tau} (2) with $k_0\leqslant m+k;$
\item $\tilde{w}$ or $\tau\tilde{w}\tau^{-1}$ is in Proposition \ref{2deg1tau11} (1) with $k_0\leqslant n'(\tilde{w});$
\item $\tilde{w}$ or $\tau\tilde{w}\tau^{-1}$ is in Proposition \ref{2deg1tau11} (2) with $k_0\leqslant n'(\tilde{w})$ or $\mathbb{O}'_{6k_0,\tau}\in\mathbb{O}_{(\tilde{w})};$
\item $\tilde{w}$ or $\tau\tilde{w}\tau=t^{m\Lambda_2+n(\Lambda_2-\Lambda_1)}s_1\tau$ where $m,\ n\in\mathbb{N}_+$ and $k_0\leqslant m+\lfloor\frac{n-1}{2}\rfloor$;
\item $\tilde{w}$ or $\tau\tilde{w}\tau=t^{m\Lambda_2}s_1\tau$ where $m\in\mathbb{N}_+$ and $k_0\leqslant m-2$.
\end{enumerate}
If $X_{\tilde{w}}(b)\neq\emptyset$, then $$\dim X_{\tilde{w}}(b)=\frac{1}{2}(\ell(\tilde{w}+\ell(\mathbb{O}'_{6k_0,\tau})+\deg (f_{\tilde{w},\mathbb{O}'_{6k_0,\tau}}))-\langle\bar{\nu}_b,\ 2\rho\rangle.$$

\item \label{2pattern14} If $b$ corresponds to $\mathbb{O}'_{6k_0+4,\tau}$, where $k\in\mathbb{N}$. Then $X_{\tilde{w}}(b)\neq\emptyset$ if and only if
\begin{enumerate}
\item $\tilde{w}\in\mathbb{O}_{(k_0+1)\Lambda_2,\tau}$ or $\mathbb{O}'_{6k_0+4,\tau};$
\item $\tilde{w}$ or $\tau\tilde{w}\tau^{-1}$ is in Proposition \ref{2degstau1} such that $k_0\leqslant n''(\tilde{w});$
\item $\tilde{w}$ is in Proposition \ref{deg4k+2tau} with $k_0\neq k$ and $k_0\leqslant n''(\tilde{w});$
\item $\tilde{w}\in\mathbb{O}_{4k_0+2,\tau}$ with $\ell(\tilde{w})\geqslant6k_0+4;$
\item $\tilde{w}$ is in Proposition \ref{deg0tau} with $k_0\leqslant n''(\tilde{w}),$ or $\mathbb{O}_{(k_0+1)\Lambda_2,\tau}\in\mathbb{O}(\tilde{w});$
\item $\tilde{w}$ is in Proposition \ref{deg6ktau} (2) with $k_0\leqslant n''(\tilde{w});$
\item $\tilde{w}$ is in Proposition \ref{deg6k+4tau} (2) with $k_0\neq k$ and $k_0\leqslant m+k-1;$
\item $\tilde{w}$ or $\tau\tilde{w}\tau^{-1}$ is in Proposition \ref{2deg1tau11} (1) with $k_0\leqslant n''(\tilde{w});$
\item $\tilde{w}$ or $\tau\tilde{w}\tau^{-1}$ is in Proposition \ref{2deg1tau11} (2) with $k_0\leqslant n'(\tilde{w})$ or $\mathbb{O}'_{6k_0+4,\tau}\in\mathbb{O}_{(\tilde{w})},$ or $\mathbb{O}_{(k_0+1)\Lambda_2,\tau}\in\mathbb{O}^{(\tilde{w})};$
\item $\tilde{w}$ or $\tau\tilde{w}\tau=t^{m\Lambda_2+n(\Lambda_2-\Lambda_1)}s_1\tau$ where $m,\ n\in\mathbb{N}_+$ and $k_0\leqslant m+\lfloor\frac{n}{2}\rfloor-1$;
\item $\tilde{w}$ or $\tau\tilde{w}\tau=t^{m\Lambda_2}s_1\tau$ where $m\in\mathbb{N}_+$ and $k_0\leqslant m$.
\end{enumerate}
If $X_{\tilde{w}}(b)\neq\emptyset$, then $$\dim X_{\tilde{w}}(b)=\frac{1}{2}\max_{\mathbb{O}}(\ell(\tilde{w})+\ell(\mathbb{O})+\deg (f_{\tilde{w},\mathbb{O}}))-\langle\bar{\nu}_b,\ 2\rho\rangle,$$where $\mathbb{O}\in\{\mathbb{O}'_{6k_0+4,\tau},\ \mathbb{O}_{(k_0+1)\Lambda_2}\}$.

\end{enumerate}

\subsection{A conjecture of G\"{o}rtz-Haines-Kottwitz-Reuman}
When $G=Sp_4(L)$, we can check that a conjecture of G\"{o}rtz-Haines-Kottwitz-Reuman is true.

Let $G(L)$ be as in $\S\ \ref{Sect2}$, for any $b\in G(L)$, we set $J_b$ as the $\sigma$-centralizer of $b$, that is $J_b=\{g\in G(L)|gb\sigma(g)^{-1}=b\}$. The defect of $b$, denoted by $\text{def}(b)$, is the $F$-rank of $G$ minus the $F$-rank of $J_b$. It was conjectured in \cite{GHKR10} that
\begin{conj}\label{conj1}
Let $b\in G(L)$ and $b'$ be a basic element in $G(L)$ such that $\kappa(b)=\kappa(b')$. Then for $\tilde{w}\in\widetilde{W}$ with sufficiently large length, $X_{\tilde{w}}(b)\neq\emptyset$ if and only if $X_{\tilde{w}}(b')\neq\emptyset$. In this situation, $$\dim X_{\tilde{w}}(b)=\dim X_{\tilde{w}}(b')-\langle\nu_b,\rho\rangle+\frac{1}{2}(\emph{def}(b')-\emph{def}(b)).$$
\end{conj}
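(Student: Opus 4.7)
The plan is to verify Conjecture \ref{conj1} for $G = Sp_4(L)$ by combining the ``Dimension $=$ Degree'' Theorem with the exhaustive case analysis of Section \ref{App1}. First I identify the two basic $\sigma$-conjugacy classes: since $(P/Q)_{\delta} \cong \mathbb{Z}/2\mathbb{Z}$ and $\delta$ is trivial for split $Sp_4$, the basic straight $\delta$-conjugacy classes are $\mathbb{O}_0 \subset W_a$ (represented by $b' = 1$, with $\kappa = 0$) and $\mathbb{O}'_{0,\tau} \subset W_a\tau$ (represented by $b' = \tau$, with $\kappa = 1$). Because $X_{\tilde{w}}(b)\neq\emptyset$ forces $\tilde{w}$ and $b$ to lie in the same $\kappa$-fiber, it is enough to treat each of these two cases separately.

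Next I would check the nonemptiness equivalence. For $\kappa = 0$, Theorem \ref{2pattern1} records the nonemptiness pattern for $b' = 1$ on $W_a$, together with sharp length thresholds on the infinite families $\mathbb{O}_{4k+1},\ \mathbb{O}'_{6k+1},\ \mathbb{O}'_{6(k-1)+3}$. Going through cases \ref{2pattern2}--\ref{2pattern7} of Section \ref{App1}, I verify that for each non-basic $b$ with $\kappa(b)=0$ there is a threshold $N_b$ such that for every $\tilde{w}\in W_a$ with $\ell(\tilde{w})\geq N_b$, the condition $X_{\tilde{w}}(b)\neq\emptyset$ matches the basic pattern; the differences between the non-basic and basic nonemptiness lists are confined to a bounded set of ``short'' $\tilde{w}$. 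The same check using cases \ref{2pattern8}--\ref{2pattern14} handles $\kappa = 1$ against the basic element $\tau$.

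The dimension equality is then a computation via Theorem \ref{DimDeg}. Since $\bar{\nu}_{b'} = 0$ and $\ell(\mathbb{O}_{b'}) = 0$ for basic $b'$,
\begin{equation*}
\dim X_{\tilde{w}}(b') = \tfrac{1}{2}\bigl(\ell(\tilde{w}) + \deg f_{\tilde{w}, \mathbb{O}_{b'}}\bigr),
\end{equation*}
whereas for non-basic $b$ the maximizing $\mathbb{O}$ in Theorem \ref{DimDeg} is singled out explicitly in Section \ref{App1}. Subtracting and using $\langle\bar{\nu}_b,2\rho\rangle = 2\langle\nu_b,\rho\rangle$, the claimed identity reduces to the numerical equality
\begin{equation*}
\ell(\mathbb{O}_b) + \deg f_{\tilde{w},\mathbb{O}_b} - \deg f_{\tilde{w},\mathbb{O}_{b'}} = 2\langle\nu_b,\rho\rangle + \text{def}(b') - \text{def}(b),
\end{equation*}
where $\text{def}(b) = \text{rk}_F\,G - \text{rk}_F\,J_b$ is read off from $\bar{\nu}_b$. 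This follows by inspection against the propositions of Section \ref{Classcomput}.

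The main obstacle is pure bookkeeping: the fourteen Newton-stratum patterns in Section \ref{App1}, each split into many subcases according to the shape of the optimal $\mathbb{O}$. A secondary, more conceptual, point is ensuring the thresholds $N_b$ are uniform: for each $b$ the degree of $f_{\tilde{w},\mathbb{O}_b}$ stabilizes to its generic value only once $\ell(\tilde{w})$ exceeds a linear function of the parameters of $\mathbb{O}_b$, and one must check that beyond this length the non-basic and basic nonemptiness lists coincide exactly. Once both are settled, the conjecture for $Sp_4(L)$ reduces to a finite list of numerical comparisons against the class-polynomial tables of Section \ref{Classcomput}.
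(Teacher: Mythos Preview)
Your proposal is correct and follows essentially the same approach as the paper: the paper's proof of the theorem immediately following the conjecture is simply the one-line assertion ``It follows from \S\ref{App1} directly,'' and your proposal is a more detailed (and accurate) sketch of what that verification against the case analysis of Section~\ref{App1} actually entails.
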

\begin{thm}
The conjecture \emph{\ref{conj1}} is true for the group $\mathrm{Sp}_4(L)$.
\end{thm}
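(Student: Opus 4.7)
The plan is to verify the conjecture case-by-case against the explicit dimension and nonemptiness tables compiled in Section~\ref{App1}. For $G = \mathrm{Sp}_4(L)$ one has $(P/Q)_\delta \cong \mathbb{Z}/2\mathbb{Z}$, so $B(G)$ contains exactly two basic classes: $[b'_0] = [1]$ (with $\kappa = 0$ and $\bar\nu = 0$) and $[b'_1] = [\tau]$ (with $\kappa = 1$ and $\bar\nu = 0$). Matching $\kappa$, any $b$ with $\kappa(b) = 0$ is paired with $b'_0$, and any $b$ with $\kappa(b) = 1$ is paired with $b'_1$. In the enumeration of Section~\ref{App1}, Theorem~\ref{2pattern1} handles the basic $\kappa = 0$ case, item~(\ref{2pattern8}) handles the basic $\kappa = 1$ case, and the remaining items describe the non-basic $b$.

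For each non-basic $b$ I first read off $\bar\nu_b$ from the translation part of a minimal length representative of $\mathbb{O}(b)$ as in Section~\ref{IWgp}, and compute $\mathrm{def}(b)$ from the structure of the inner form $J_b$, a standard calculation for the symplectic group. Using Theorem~\ref{DimDeg} the conjectured identity is then equivalent to the purely combinatorial statement
\[
\ell(\mathbb{O}(b)) + \deg f_{\tilde w,\, \mathbb{O}(b)} - \max_{\mathbb{O}'}\!\left(\ell(\mathbb{O}') + \deg f_{\tilde w,\, \mathbb{O}'}\right) = \langle\bar\nu_b,\, 2\rho\rangle + \mathrm{def}(b') - \mathrm{def}(b),
\]
where $\mathbb{O}'$ runs over straight $\delta$-conjugacy classes with $f(\mathbb{O}') = f(b')$. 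For $\tilde w$ of sufficiently large length the maximum on the left is attained by a predictable high-degree class: one of the top-degree families of Propositions~\ref{2degso2} and \ref{2degs1212} when $\kappa = 0$, and one of the families of Proposition~\ref{2deg1tau11} when $\kappa = 1$.

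The verification then proceeds systematically. Nonemptiness of $X_{\tilde w}(b')$ for all sufficiently long $\tilde w$ is built into Theorem~\ref{2pattern1} and item~(\ref{2pattern8}); nonemptiness of $X_{\tilde w}(b)$ for non-basic $b$, for $\tilde w$ of appropriate length in each conjugacy class of $\widetilde W$, is read off from items~(\ref{2pattern2})--(\ref{2pattern7}) and (\ref{2pattern9})--(\ref{2pattern14}), giving the ``if and only if'' of the conjecture for large length. In each case the dimension equation reduces to a one-line identity comparing explicit degree formulas of Section~\ref{Classcomput}, linear in $\ell(\tilde w)$ with matching leading coefficient $1/2$ on both sides, so that once the maximising $\mathbb{O}'$ is fixed the constant term is checked directly. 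The main obstacle is bookkeeping: each non-basic case splits into several subfamilies indexed by $\lambda$, $k$, $m$, $n$, and in each one must identify which straight $\mathbb{O}'$ realises the basic maximum and consistently track the Newton vector and defect data. The check itself is mechanical, requiring no new input beyond the class polynomial computations of Section~\ref{Classcomput} and Theorem~\ref{DimDeg}.
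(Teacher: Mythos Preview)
Your proposal is correct and follows the same approach as the paper, whose entire proof reads ``It follows from \S\ref{App1} directly.'' You have simply spelled out in more detail what that direct verification entails---identifying the two basic classes, reducing via Theorem~\ref{DimDeg} to a comparison of the tabulated degree data, and checking the nonemptiness and dimension identities case by case against the results of Section~\ref{App1}.
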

\begin{proof}
It follows from $\S\ \ref{App1}$ directly.
\end{proof}

\section{Acknowledgement}
The author would like to thank Professor Xuhua He for his constant encouragement and many helpful discussions. The author was partially supported by the National Natural Science Foundation of China (Grant No. 11701473 and 12171397), the Fundamental Research Funds for the Central Universities (Grant No. 2682019CX52), and  The ``1000 Talent Plan'' of Sichuan province.

\end{document}